\documentclass[twoside,11pt,reqno]{amsart}
\usepackage{amsmath,amssymb,amscd,mathrsfs,epic,wasysym,latexsym,tikz,mathrsfs,cite,hyperref,enumerate}
\usepackage{stmaryrd}
\usepackage{pb-diagram}
\usepackage[matrix,arrow]{xy}
\usepackage{commath}
\usepackage{tikz-cd}
\usepackage{upgreek}

\DeclareFontFamily{OT1}{pzc}{}
\DeclareFontShape{OT1}{pzc}{m}{it}{<-> s * [1.10] pzcmi7t}{}
\DeclareMathAlphabet{\mathpzc}{OT1}{pzc}{m}{it}

\makeatletter

\hfuzz 3pt
\vfuzz 2pt

\textheight 222mm
\textwidth 148mm
\oddsidemargin=34pt
\evensidemargin=34pt

\raggedbottom

\synctex=1
\numberwithin{equation}{section}

\newtheorem{Lemma}[equation]{Lemma}
\newtheorem{Theorem}[equation]{Theorem}
\newtheorem{Corollary}[equation]{Corollary}
\newtheorem{Inductive Assumption}[equation]{Inductive Assumption}
\theoremstyle{definition}  

\newtheorem{Remark}[equation]{Remark}

\newtheorem{Example}[equation]{Example}

\newtheorem{Notation}[equation]{Notation}

\let\<\langle
\let\>\rangle

\newcommand\Comment[2][\relax]{\space\par\medskip\noindent%
   \fbox{\begin{minipage}{\textwidth}\textbf{Comment\ifx\relax#1\else---#1\fi}\newline%
        #2\end{minipage}}\medskip
}


\def\bi{\text{\boldmath$i$}}
\def\bj{\text{\boldmath$j$}}

\def\bk{\text{\boldmath$k$}}

\def\bs{\text{\boldmath$s$}}

\def\bt{\text{\boldmath$t$}}

\def\br{\text{\boldmath$r$}}
\def\b1{\text{\boldmath$1$}}

\def\ba{\text{\boldmath$a$}}
\def\bb{\text{\boldmath$b$}}

\def\bu{\text{\boldmath$u$}}

\def\m{\mathfrak{m}}

\def\pmod#1{\text{ }(\text{\rm mod } #1)\,}

\newcommand{\Hom}{\operatorname{Hom}}

\newcommand{\End}{\operatorname{End}}

\newcommand{\im}{\operatorname{im}}

\def\sgn{\mathtt{sgn}}

\newcommand{\cha}{\operatorname{char}}

\newcommand{\bideg}{\operatorname{bideg}}

\def\Blo{\mathcal{B}}

\def\cont{{\operatorname{cont}}}
\def\core{{\operatorname{core}}}
\newcommand{\cus}{{\operatorname{cus}}}

\newcommand{\Z}{\mathbb{Z}}
\newcommand{\N}{\mathbb{N}}
\newcommand{\K}{\mathbb{K}}
\newcommand{\F}{\mathbb{F}}

\newcommand{\0}{{\bar 0}}
\renewcommand{\1}{{\bar 1}}
\def\eps{{\varepsilon}}
\def\phi{{\varphi}}

\newcommand{\zx}{{\mathsf{x}}}

\newcommand{\zc}{{\mathsf{c}}}
\newcommand{\zv}{{\mathsf{v}}}
\newcommand{\zz}{{\mathsf{z}}}
\newcommand{\ze}{{\mathsf{e}}}
\newcommand{\zf}{{\mathsf{f}}}

\newcommand{\zs}{{\mathsf{s}}}
\newcommand{\za}{{\mathsf{a}}}
\newcommand{\zb}{{\mathsf{b}}}
\newcommand{\zu}{{\mathsf{u}}}
\newcommand{\zw}{{\mathsf{w}}}
\newcommand{\zE}{{\mathsf{E}}}
\newcommand{\zB}{{\mathsf{B}}}

\newcommand{\zH}{{\mathsf{H}}}

\newcommand{\cC} {\mathcal{C}}
\newcommand{\cY} {\mathcal{TC}}
\newcommand{\cH} {\mathrm{He}}

\newcommand{\zV}{{\mathsf{V}}}

\newcommand{\funF}{{\mathpzc F}}

\newcommand{\cT} {\mathcal{T}}
\newcommand{\ct} {\mathpzc{t}}

\def\ttl{{\mathtt l}}

\newcommand{\di}{{\operatorname{div}}}

\newcommand{\super}{\mathop{\tt s}\nolimits}
\newcommand{\swr}{\wr_{\super}}

\newcommand{\gsM}{\mathop{\rm gsMor}\nolimits}
\newcommand{\sM}{\mathop{\rm sMor}\nolimits}

\newcommand{\ga}{\gamma}
\newcommand{\Ga}{\Gamma}
\newcommand{\la}{\lambda}
\newcommand{\La}{\Lambda}
\newcommand{\al}{\alpha}
\newcommand{\be}{\beta}

\def\Si{\mathfrak{S}}
\def\Ai{\mathfrak{A}}

\newcommand{\om}{\omega}
\newcommand{\Om}{\Omega}

\newcommand{\de}{\delta}

\newcommand{\ka}{\kappa}



\def\Mtype{\mathtt{M}}
\def\Qtype{\mathtt{Q}}

\newcommand{\Irr}{{\mathrm {Irr}}}

\newcommand{\funQ}{{\mathpzc{Q}}}

\newcommand{\Mor}{{\mathrm {Mor}}}

\def\rank{\mathop{\mathrm{ rank}}\nolimits}

\newcommand{\pr}{{\mathrm {pr}}}

\newcommand{\C}{{\mathbb C}}

\newcommand{\Cent}{Z}
\newcommand{\zC}{{\mathsf{C}}}
\newcommand{\zX}{{\mathsf{X}}}
\newcommand{\zF}{{\mathsf{F}}}

\newcommand{\EC}{{\mathcal E}}

\renewcommand{\mod}{\bmod \,}

\def\K{\mathbb K}

\newcommand{\Zig}{{\mathsf{A}}}
\newcommand{\zA}{{\mathsf{A}}}

\newcommand{\ttA}{{\tt A}}

\newcommand{\tti}{{\mathtt i}}

\def\lgathz{\mathsf{g}}
\def\lgath{g}

\def\col{{\operatorname{col}}}

\def\g{{\mathfrak g}}

\def\Par{{\mathcal P}}

\def\Weights{{\mathscr{W}}}
\def\Nuclei{{\mathscr{N}}}

\def\b{\mathfrak{b}}
\def\k{\Bbbk}

\def\Comp{\Uplambda}
\def\EC{{\mathcal{C}}}

\def\height{{\operatorname{ht}}}

\def\wt{{\operatorname{wt}}}

\def\sop{{\mathrm{sop}}}
\def\re{{\mathrm{re}}}
\def\im{{\mathrm{im}\,}}
\def\onto{{\twoheadrightarrow}}
\def\into{{\hookrightarrow}}

\def\mod#1{#1\!\operatorname{-mod}}

\def\iso{\stackrel{{}_\sim}{\longrightarrow}}


\def\ch{\operatorname{ch}}
\def\lan{\langle}
\def\ran{\rangle}


\def\Seq{\operatorname{Tri}}

\def\parity{\operatorname{par}}

\newcommand{\cc} {\mathpzc{c}}
\newcommand{\cm} {\mathpzc{m}}

\def\t{{\mathsf t}}

\def\ggw{\text{\boldmath$l$}}
\def\ggwhat{\text{\boldmath$g$}}

\newcommand{\ggi}{{f}}
\newcommand{\ggis}{\mathsf{f}}

\def\bla{\text{\boldmath$\lambda$}}
\def\bmu{\text{\boldmath$\mu$}}

{\catcode`\|=\active
  \gdef\set#1{\mathinner{\lbrace\,{\mathcode`\|"8000%
  \let|\midvert #1}\,\rbrace}}
}
\def\midvert{\egroup\mid\bgroup}

\colorlet{darkgreen}{green!50!black}
\tikzset{dots/.style={very thick,loosely dotted},
         greendot/.style={fill,circle,color=darkgreen,inner sep=1.5pt,outer sep=0},
         blackdot/.style={fill,circle,color=black,inner sep=1.1pt,outer sep=0},
         graydot/.style={fill,circle,color=gray,inner sep=1.1pt,outer sep=0},
         reddot/.style={fill,circle,color=red,inner sep=1.1pt,outer sep=0},
         bluedot/.style={fill,circle,color=blue,inner sep=1.1pt,outer sep=0}
}
\def\greendot(#1,#2){\node[greendot] at(#1,#2){}}
\def\blackdot(#1,#2){\node[blackdot] at(#1,#2){}}
\def\graydot(#1,#2){\node[graydot] at(#1,#2){}}
\def\reddot(#1,#2){\node[reddot] at(#1,#2){}}
\def\bluedot(#1,#2){\node[bluedot] at(#1,#2){}}

\newenvironment{braid}{
  \begin{tikzpicture}[baseline=6mm,black,line width=.7pt, scale=0.32,
                      draw/.append style={rounded corners},
                      every node/.append style={font=\fontsize{5}{5}\selectfont}]%
  }{\end{tikzpicture}
}

\def\Grid(#1,#2){
  \draw[very thin,gray,step=2mm] (0,0)grid(#1,#2);
  \draw[very thin,darkgreen,step=10mm] (0,0)grid(#1,#2);
}

\newcommand\Tableau[2][\relax]{
  \begin{tikzpicture}[scale=0.5,draw/.append style={thick,black}]
    \ifx\relax#1\relax%
    \else 
      \foreach\box in {#1} { \filldraw[blue!30]\box+(-.5,-.5)rectangle++(.5,.5); }
    \fi
    \newcount\row\newcount\col
    \row=0
    \foreach \Row in {#2} {
       \col=1
       \foreach\k in \Row {
          \draw(\the\col,\the\row)+(-.5,-.5)rectangle++(.5,.5);
          \draw(\the\col,\the\row)node{\k};
          \global\advance\col by 1
       }
       \global\advance\row by -1
    }
  \end{tikzpicture}
}

\newcommand\YoungDiagram[2][\relax]{
  \begin{tikzpicture}[scale=0.5,draw/.append style={thick,black}]
    \ifx\relax#1\relax%
    \else 
    \foreach\box in {#1} {
      \filldraw[blue!30]\box rectangle ++(1,1);
    }
    \fi
    \newcount\row
    \row=0
    \foreach \col in {#2} {
       \draw(1,\the\row)grid ++(\col,1);
       \global\advance\row by -1
    }
  \end{tikzpicture}
}

\newenvironment{Young}{\begingroup
       \def\vr{\vrule height0.89\hoogte width\dikte depth 0.2\hoogte}
       \def\fbox##1{\vbox{\offinterlineskip
                    \hrule height\dikte
                    \hbox to \breedte{\vr\hfill##1\hfill\vr}
                    \hrule height\dikte}}
       \vbox\bgroup \offinterlineskip \tabskip=-\dikte \lineskip=-\dikte
            \halign\bgroup &\fbox{##\unskip}\unskip  \crcr }
       {\egroup\egroup\endgroup}
\def\diagram#1{\relax\ifmmode\vcenter{\,\begin{Young}#1\end{Young}\,}\else%
              $\vcenter{\,\begin{Young}#1\end{Young}\,}$\fi}

\begin{document}

\title[RoCK blocks for double covers]{{RoCK blocks of double covers of symmetric groups and generalized Schur algebras}}

\author{\sc Alexander Kleshchev}
\address{Department of Mathematics\\ University of Oregon\\ Eugene\\ OR 97403, USA}
\email{klesh@uoregon.edu}

\subjclass[2020]{20C30, 20C20}

\thanks{The author was supported by the NSF grant DMS-2346684. }

\begin{abstract}
We study blocks of the double covers of symmetric and alternating groups. The main result is a `local' description, up to Morita equivalence, of arbitrary defect RoCK blocks of these groups in terms of generalized Schur superalgebras corresponding to an explicit Brauer tree superalgebra. In view of the recent results on 
Brou\'e's Conjecture for these groups, our result provides a `local'  description of an arbitrary block of the double covers of symmetric and alternating groups up to derived equivalence. 
\end{abstract}

\maketitle

\section{Introduction}

\subsection{Double covers and generalized Schur algebras}

Let $\tilde\Si_n$ be a double cover of the symmetric group $\Si_n$, and $\tilde\Ai_n$ be the double cover of the alternating group $\Ai_n$. In this paper, we are concerned with the blocks of $\tilde\Si_n$ and $\tilde\Ai_n$. Our main result is a `local' description, up to Morita superequvalence, of arbitrary defect RoCK blocks of these groups of $\bar p$-weight $d$   in terms of generalized Schur superalgebras $T^{\Zig_\ell}(n,d)$ corresponding to an explicit Brauer tree superalgebra $\Zig_\ell$. 
The analogous results for the symmetric groups were obtained in \cite{EK2}. In view of the recent results \cite{ELV,BK} on 
Brou\'e's Conjecture for $\tilde \Si_n$ and $\tilde\Ai_n$, our result provides a `local'  description of an arbitrary block of these groups  up to derived equivalence.

Let $\F$ be an algebraically closed field of odd characteristic $p$. 
We have a canonical central element $z\in\tilde\Si_n$ and a central idempotent $e_z:=(1-z)/2\in\F\tilde\Si_n.$ This yields an ideal decomposition $\F\tilde\Si_n=\F\tilde\Si_ne_z\oplus \F\tilde\Si_n(1-e_z).$  
So the blocks of $\F\tilde\Si_n$ split into blocks of $\cT_n:=\F\tilde\Si_ne_z$ and the blocks of $\F\tilde\Si_n(1-e_z)$. 
Note that $\F\tilde\Si_n(1-e_z)\cong\F\Si_n$. A `local'  description of the blocks of $\F\Si_n$ in terms of certain generalized Schur algebras was conjecture by Turner \cite{Turner} and established in \cite{EK2}. So in this paper we concentrate on the blocks of $\cT_n$ sometimes referred to as the {\em spin blocks of the symmetric group $\Si_n$}. 

The algebra $\cT_n$ comes with a natural $\Z/2$-grading $\cT_n=(\cT_n)_\0\oplus (\cT_n)_\1$ with $(\cT_n)_\0=\F\Ai_ne_z$, and the blocks of $(\cT_n)_\0$ are the {\em spin blocks of the alternating group $\Ai_n$}. As $\cT_n$ is a superalgebra, we can also consider the {\em superblocks} of $\cT_n$, i.e. indecomposable superideals of $\cT_n$. It is well-known that the superblocks are indecomposable as usual ideals (i.e. superblocks are blocks) unless we deal with trivial defect blocks. Trivial defect blocks are simple algebras and are completely elementary, thus working with superblocks vs. blocks does not create any serious problems.

Superblocks of $\cT_n$ are labeled by the pairs $(\rho,d)$, where $\rho$ is a $\bar p$-core partition and $d$ is a non-negative integer such that $|\rho|+dp=n$. We denote by  $\Blo^{\rho,d}$ the block of $\cT_n$ corresponding to such a pair $(\rho,d)$. The integer $d$ is then referred to as the {\em $\bar p$-weight} of  $\Blo^{\rho,d}$. Denoting by $h(\rho)$ the number of non-zero parts of the partition $\rho$, the {\em parity} of  $\Blo^{\rho,d}$ is defined as 
$\parity(\Blo^{\rho,d}):=|\rho|-h(\rho)+d\pmod{2}.$ The even (resp.  odd) parity blocks are the ones whose irreducible supermodules have type $\tt M$ (resp. type $\tt Q$), see \cite[\S12.2, Theorem~22.3.1]{Kbook}.

The case $d=0$ corresponds to the defect zero situation, so we often assume that $d>0$. The even part $\Blo^{\rho,d}_\0$ is then a spin block of $\tilde\Ai_n$. The defect group of $\Blo^{\rho,d}$ (which is the same as the defect group of $\Blo^{\rho,d}_\0$) depends only on $d$ and is abelian if and only if $d<p$. 

The following was conjectured by Schaps and Kessar, cf. \cite{AriSch} and proved in \cite{ELV,BK}: 

\vspace{2.5mm}
\noindent
{\bf Theorem (Ebert-Lauda-Vera and Brundan-Kleshchev).} 
{\em 
Let $\rho,\rho'$ be $\bar p$-cores and $d,d'\in\Z_{\geq 0}$. 

\begin{enumerate}
\item[{\rm (i)}] $\Blo^{\rho,d}$ and $\Blo^{\rho',d'}$ are derived equivalent if and only if $d=d'$ and $\parity(\Blo^{\rho,d})=\parity(\Blo^{\rho',d'})$. 
\item[{\rm (ii)}] $\Blo^{\rho,d}_\0$ and $\Blo^{\rho',d'}_\0$ are derived equivalent if and only if $d=d'$ and $\parity(\Blo^{\rho,d})=\parity(\Blo^{\rho',d'})$.
\item[{\rm (iii)}] $\Blo^{\rho,d}$ and $\Blo^{\rho',d'}_\0$ are derived equivalent if and only if $d=d'$ and $\parity(\Blo^{\rho,d})\neq\parity(\Blo^{\rho',d'})$.
\end{enumerate}
}

\vspace{2mm}

In \cite{KlLi}, we have defined the notion of a {\em RoCK spin block}. To be more precise, for every $d\in\Z_{\geq 0}$, we define {\em $d$-Rouquier $\bar p$-cores}; then, if $\rho$ is a $d$-Rouquier $\bar p$-core, the spin blocks $\Blo^{\rho,d}$  and $\Blo^{\rho,d}_\0$ (of symmetric and alternating groups,  respectively) are called {\em RoCK}. Importantly, for every $d$ and every $\eps\in\Z/2$ there exists a $d$-Rouquier $\bar p$-core $\rho$ with $\parity(\rho)=\eps$. So, in view of the theorem above, 
every spin block of a symmetric or alternating group is derived equivalent to a RoCK spin block. Therefore, to get a `local' description of an arbitrary spin block of a symmetric or alternating group up to derived equivalence, it suffices to get a `local'  description of an arbitrary RoCK spin block.

Let $A_\ell$ be the (graded) Brauer tree (super)algebra defined in (\ref{EAEll}). Denote by $\cC_1$ the {\em rank $1$ Clifford superalgebra}, defined in Example~\ref{ExCliffird}. For superalgebras  $A$ and $B$, we always denote by $A\otimes B$ their {\em tensor product as superalgebras}, and by $A \swr \Si_d$ the {\em wreath superproduct} defined in (\ref{EWdA}). In \cite[Proposition 5.4.10]{KlLi}, the following Morita (super)equivalences were established for the (non-trivial) abelian defect case $d<p$:

\vspace{2.5mm}
\noindent
{\bf Theorem (Kleshchev-Livesey).} 
{\em 
Let $\Blo^{\rho,d}$ be a RoCK spin block with $d<p$. 
\begin{enumerate}
\item[{\rm (i)}] If $\parity(\Blo^{\rho,d})$ is even, then $\Blo^{\rho,d} \sim_{\sM} A_\ell \swr \Si_d$ and $\Blo^{\rho,d}_\0 \sim_{\Mor}(A_\ell \swr \Si_d)_\0$. 
\item[{\rm (ii)}] If $\parity(\Blo^{\rho,d})$ is odd, then $\Blo^{\rho,d} \sim_{\sM} (A_\ell \swr \Si_d)\otimes \cC_1$ and $\Blo^{\rho,d}_\0 \sim_{\Mor}A_\ell \swr \Si_d$. 
\end{enumerate}
}

\vspace{2mm}

As explained in \cite{KlLi}, this can be considered a local description of an {\em abelian defect} RoCK block. 

The case of an arbitrary defect $d$ is much more subtle. First of all we need a {\em regrading} $\Zig_\ell$ of the graded superalgebra $A_\ell$, defined in \S\ref{SSBrTree}. The wreath superproduct $A_\ell\swr \Si_d$ is isomorphic to a regrading of the wreath superproduct $\Zig_\ell\swr \Si_d$, see \S\ref{SSRegrRem}, therefore  $A_\ell\swr \Si_d\sim_{\sM}\Zig_\ell\swr \Si_d$. So in the abelian defect case, treated in the previous theorem, we could use either $A_\ell$ or $\Zig_\ell$. However, for the arbitrary defect case the distinction between $A_\ell$ and $\Zig_\ell$ becomes crucial. 

In \cite{KM2}, certain {\em generalized Schur algebras} $T^A(n,d)$ were studied. These generalize Turner's construction \cite{Turner} studied in \cite{EK1}. Let $T^{\Zig_\ell}(n,d)$ be the {\em generalized Schur (graded super)algebra corresponding to $A=\Zig_\ell$}, see \S\ref{SST}. One can think of it informally as an `upgrade' of the wreath superproduct $\Zig_\ell\swr \Si_d$; in fact, in general $\Zig_\ell\swr \Si_d$ is an idempotent truncation of $T^{\Zig_\ell}(d,d)$, and $T^{\Zig_\ell}(d,d)\sim_{\sM} \Zig_\ell\swr \Si_d$ if and only if $d<p$. Note also that for a fixed $d$, the graded superalgebras $T^{\Zig_\ell}(n,d)$ are graded Morita superequivalent for all $n\geq d$, so it will suffice to consider only $T^{\Zig_\ell}(d,d)$. Our first main result is (see Theorem~\ref{TAMainBody}):

\vspace{2mm}
\noindent
{\bf Theorem A.} 
{\em 
Let $\Blo^{\rho,d}$ be a RoCK spin block. 
\begin{enumerate}
\item[{\rm (i)}] If $\parity(\Blo^{\rho,d})$ is even, then $\Blo^{\rho,d} \sim_{\sM} T^{\Zig_\ell}(d,d)$ and $\Blo^{\rho,d}_\0 \sim_{\Mor}T^{\Zig_\ell}(d,d)_\0$. 
\item[{\rm (ii)}] If $\parity(\Blo^{\rho,d})$ is odd, then $\Blo^{\rho,d} \sim_{\sM} T^{\Zig_\ell}(d,d)\otimes \cC_1$ and $\Blo^{\rho,d}_\0 \sim_{\Mor}T^{\Zig_\ell}(d,d)$. 
\end{enumerate}
}
\vspace{2mm}

To prove Theorem~A, we relate the blocks of $\tilde\Si_n$ to  cyclotomic quiver Hecke superalgebras following the work of Kang-Kashiwara-Tsuchioka \cite{KKT} and then study RoCK blocks in the setting of cyclotomic quiver Hecke superalgebras. 

\subsection{Cyclotomic quiver Hecke superalgebras}
Let $\ell=(p-1)/2$ and $R_\theta$ be the {\em quiver Hecke superalgebra} of Lie type $\ttA_{\ell}^{\hspace{-.3mm}{}^{(2)}}$ corresponding to an element $\theta$ of the non-negative 
part of the root lattice of Lie type $\ttA_{\ell}^{\hspace{-.3mm}{}^{(2)}}$. (In the main body of the paper we work in a more general situation, see \S\ref{SSDefQHSA}, in particular a ground field of characteristic $0$ is allowed and $\ell$ does not have to be of the form $\ell=(p-1)/2$ for a prime $p$.) 

Let $H_\theta$ be the {\em cyclotomic quotient} of $R_\theta$ corresponding to the fundamental dominant weight $\La_0$, see \S\ref{SSHTheta}. We have $H_\theta\neq 0$ if and only if $\theta\in \{\La_0-w\La_0+d\de\mid w\in W,\, d\in\Z_{\geq 0}\}$,  where $W$ is the Weyl group and $\de$ is the null root, see \S\ref{SSW}. Any $\theta\in  \{\La_0-w\La_0+d\de\mid w\in W,\, d\in\Z_{\geq 0}\}$ can be written in the from $\theta=\rho(\theta)+d(\theta)\de$ for unique $\rho(\theta)\in \{\La_0-w\La_0\mid w\in W\}$ and  unique $d(\theta)\in\Z_{\geq 0}$. 

We call $H_\theta$ a {\em RoCK block} (of cyclotomic quiver Hecke superalgebras) if $(\theta\mid \al_0^\vee)\geq 2d(\theta)$ and $(\theta\mid \al_i^\vee)\geq d(\theta)-1$ for $i=1,\dots,\ell$, see \S\ref{SRock}. This Lie-theoretic condition is equivalent to the combinatorial condition considered in \cite{KlLi}. 
Theorem A is deduced from the following theorem (see Corollary~\ref{CMainField}):

\vspace{2mm}
\noindent
{\bf Theorem B.} 
{\em 
Let $H_\theta$ be a RoCK block and $d=d(\theta)$. 
If $n\geq d$ then the graded superalgebras $H_{\theta}$ and $T^{\Zig_\ell}(n,d)$ are graded Morita superequivalent. 
}
\vspace{2mm}

Let $H_\theta$ be a RoCK block with $\rho=\rho(\theta)$ and $d=d(\theta)$. To prove Theorem~B, we construct an explicit idempotent $e\in H_\theta$ such that the idempotent truncation $X_{\rho,d}:=e H_\theta e$ is (graded) Morita (super)equivalent to $H_\theta$. The construction of the idempotent $e$ involves crucially the so-called Gelfant-Graev idempotents, see \S\ref{SSGGW}. 

A subtle point is that we then need to consider an explicit regrading $\zX_{\rho,d}$ of $X_{\rho,d}$. Thus $\zX_{\rho,d}=X_{\rho,d}$ as algebras, but the degree and parity of elements is changed. It is easy to see that such algebras 
$X_{\rho,d}$ and $\zX_{\rho,d}$
are graded Morita superequivalent, see \S\ref{SSRegr}. 

We use Gelfand-Graev idempotents 
$\bar \ggis^\bla$ corresponding to $J$-multicompositions $\bla\in\Comp^J(n,d)$ to 
construct a projective generator 
$$
\Ga_{\rho,d,n}:=\bigoplus_{\bla\in\Comp^J(n,d)}\zX_{\rho,d}\bar \ggis^\bla.
$$
Theorem B then follows from the following (see Theorem~\ref{TEIsoT}):  

\vspace{2mm}
\noindent
{\bf Theorem C.} 
{\em 
Let $H_\theta$ be a RoCK block with $\rho=\rho(\theta)$ and $d=d(\theta)$.  If $n\geq d$ then there is an isomorphism of graded superalgebras $\End_{\zX_{\rho,d}}(\Ga_{\rho,d,n})^\sop\cong  T^{\Zig_\ell}(n,d)$. 
}
\vspace{2mm}

To prove Theorem C, we need to work `integrally', i.e.  over an appropriate DVR $\k$. Working over $\k$, we proceed in the following steps: 

(1) For a special Gelfand-Graev idempotent $\bar \ggis_{\om_d}$ we have an explicit graded superalgebra isomorphism 
$
\Xi_{\rho,d}:\Zig_\ell\swr\Si_d\iso \bar \ggis_{\om_d}\zX_{\rho,d}\bar \ggis_{\om_d}, 
$
see Theorem~\ref{TXiIso}. 

(2) For $\bla\in\Comp^J(n,d)$, the idempotent truncation $\bar \ggis^{\bla}\zX_{\rho,d}\bar \ggis_{\om_d}$ is naturally a graded  right $\bar\ggis_{\om_d}\zX_{\rho,d}\bar \ggis_{\om_d}$-supermodule. In view of the isomorphism in (1), it can be considered as a   graded  right supermodule over the wreath superproduct $\Zig_\ell\swr\Si_d$. In Theorem~\ref{TIdentifyM}, we identify $\bar \ggis^{\bla}\zX_{\rho,d}\bar \ggis_{\om_d}$ with a certain `permutation module' $M_{\bla}$ over $\Zig_\ell\swr\Si_d$. 

(3) In view of (2), we get an explicit homomorphism (see (\ref{EPhi})):
$$
\Upphi_{\rho,d,n}:\zE_{\rho,d,n}=\bigoplus_{\bla,\bmu\in\Comp^J(n,d)}\bar \ggis^\bmu\zX_{\rho,d}\bar \ggis^\bla\to S^{\Zig_\ell}(n,d):=\End_{\Zig_\ell\swr\Si_d}\bigg(\bigoplus_{\bla\in\Comp^J(n,d)} M_\bla\bigg).
$$

(4) We use the faithfulness of the 
$\zX_{\rho,d}$-module $\zX_{\rho,d}\bar \ggi_{\om_d}$ to prove that $\Upphi_{\rho,d,n}$ is injective, 
see Corollary~\ref{C8.11} and Lemma~\ref{C8.19}. 

(5) We use the identification in (2), we show that the image $\Upphi_{\rho,d,n}(\zE_{\rho,d,n})$ contains certain explicit endomorphisms of the form $\tti^\bla(\zx)$ for $\bla\in\Comp^J(n-1,d-1)$ and $\zx\in \Zig_\ell$, see Lemma~\ref{L8.21}. 

(6) We use the theory of {\em imaginary cuspidal representations} of the cyclotomic quiver Hecke algebras developed in \cite{KIS}, especially Theorem~\ref{TGath2}, to show that $\Upphi_{\rho,d,n}(\zE_{\rho,d,n})$ contains the degree zero component $S^{\Zig_\ell}(n,d)^0$, see Lemma~\ref{L8.20}. 

(7) We use the fact established in Corollary~\ref{CGen2} that, under the assumption $d\leq n$, the $\k$-subalgebra $T^{\Zig_\ell}(n,d)\subseteq S^{\Zig_\ell}(n,d)$ is generated by the degree zero component $S^{\Zig_\ell}(n,d)^0$ and the set $\{\tti^\bla(\zx)\mid \zx\in \Zig_\ell,\,\bla\in\Comp^J(n-1,d-1)\}$. Together with (5) and (6), this implies that $\Upphi_{\rho,d,n}(\zE_{\rho,d,n})$ contains $T^{\Zig_\ell}(n,d)$. 

(8) To show that the image $\Upphi_{\rho,d,n}(\zE_{\rho,d,n})$ is exactly $T^{\Zig_\ell}(n,d)$ and not larger, we bring in a symmetricity argument. First, we show that the algebra $\Upphi_{\rho,d,n}(\zE_{\rho,d,n})\cong \zE_{\rho,d,n}$ is symmetric (extending scalars to a field), see Lemma~\ref{L8.22}. Then we apply the main result of \cite{KlSym} showing that $T^{\Zig_\ell}(n,d)$ is a {\em maximal} symmetric subalgebra of $S^{\Zig_\ell}(n,d)$.

\section{Preliminaries}
\subsection{General conventions}
\label{SSGenConv}
We denote $\N:=\Z_{\geq 0}$, $\N_+:=\Z_{> 0}$, $\Z/2=\Z/2\Z=\{\0,\1\}$. 

We will work over a principal ideal domain $\k$ such that $2$ is invertible in $\k$. Additional assumptions will eventually be made on $\k$ when necessary. 


Throughout the paper we fix 
$\ell\in\N_+$ and denote 
\begin{equation}\label{EIJ}
I := \{0,1,\dots,\ell\}\quad\text{and}\quad J:=\{0,1,\dots,\ell-1\}.
\end{equation}
We eventually identify $I$ with the set of vertices of a certain Dynkin diagram, see \S\ref{SSLTN}. 

For $ d\in \N$ and a set $S$, we write elements of $S^ d$ as $\bs=(s_1,\dots,s_ d)=s_1\cdots s_ d$, with $s_1,\dots,s_ d\in S$, and refer to them as {\em words with entries in $S$}. 
We use the notation $s^d:=s\cdots s\in S^d$ for $s\in S$. 
Given words $\bs\in S^ d$ and $\bt\in S^c$, we have the concatenation word $\bs\bt:=s_1\cdots s_ dt_1\cdots t_c\in S^{d+c}$.  


\subsection{Partitions and compositions}
\label{SSPar}

A {\em composition} is an infinite sequence $\la=(\la_1,\la_2,\dots)$ of non-negative integers which are eventually zero. 
Any composition $\la$ has finite sum $|\la|:=\la_1+\la_2+\dots$, and we say that $\la$ is a composition of $|\la|$. We write $\Comp$ for the set of all compositions, and for each $d\in\N$ we write $\Comp(d)$ for the set of all compositions of $d$. When writing compositions, we may collect consecutive equal parts together with a superscript, and omit an infinite tail of $0$'s. For example, for $d\in\N_+$, we have the composition 
\begin{equation}\label{EOmd}
\om_d:=(1^d)\in \Comp(d).
\end{equation}
For $n,d\in\N$, we set
$$
\Comp(n,d):=\{\la=(\la_1,\la_2,\dots)\in\Comp(d)\mid \la_k=0
\ \text{for all $k>n$}\}.
$$
If $\la\in\Comp(n,d)$ we always write $\la=(\la_1,\dots,\la_n)$. 

Let $\Comp^J( d)$ denote the set of all \emph{$J$-multicompositions} of $d$. So the elements of $\Comp^J(d)$ are  the tuples 
$\bla=(\la^{(0)},\dots,\la^{(\ell-1)})$ of compositions satisfying $\sum_{j\in J}|\la^{(j)}|= d$. We set
$$
\Comp^J(n,d):=\{\bla\in \Comp^J(d)\mid \la^{(j)}_k=0\ \text{for all $j\in J$ and $k>n$}\}.
$$
We have a bijection
\begin{equation}\label{EBijMultiComp}
\al:\Comp^J(n,d)\iso \Comp(n\ell,d),\ \bla\mapsto (\la^{(0)}_1,\la^{(1)}_1,\dots,\la^{(\ell-1)}_1,\dots, \la^{(0)}_n,\la^{(1)}_n,\dots,\la^{(\ell-1)}_n).
\end{equation}

A \emph{partition} is a composition whose parts are weakly decreasing. We write 
$\Par(d)$ for the set of partitions of $d$. 
We set 
\begin{align*}
\Comp_+(n,d):=\Comp(n,d)\cap\Par(d).
\end{align*}
Let $\Par^\ell( d)$ denote the set of all \emph{$\ell$-multipartitions} of $d$. We identify $\Par^\ell(d)$ with the set $\Par^J(d)$ of all $J$-multipartitions of $d$, i.e. the set of the $\ell$-tuples $(\la^{(0)},\dots,\la^{(\ell-1)})$ of partitions satisfying  $\sum_{i=0}^{\ell-1}|\la^{(i)}|= d$. 

Define the sets  of {\em colored compositions}
$$
\Comp^\col(n,d):=\{(\la,\bj)\mid \la\in \Comp(n,d),\ \bj\in J^n\},\quad\Comp^\col(d):=\bigsqcup_{n\in\N}\Comp^\col(n,d).
$$
Let $(\la,\bj)\in \Comp^\col(n,d)$. For each $j\in J$, set 
\begin{equation}\label{ELaBjj}
|\la,\bj|_{j}:=\sum_{\substack{1\leq r\leq n\\ j_r=j}}\la_r.
\end{equation}

We denote
$\bk:=0\,1\,\cdots\,(\ell-1)\in J^\ell.$
Then 
$$\bk^n=0\,1\,\cdots\,(\ell-1)\ 0\,1\,\cdots\,(\ell-1)\ \cdots\ 0\,1\,\cdots\,(\ell-1)\in J^{n\ell},$$
We now have the injective map 
\begin{equation}\label{EBeta}
\upgamma^J_{n,d}:\Comp^J(n,d)\to \Comp^\col(n\ell,d),\ \bla\mapsto (\al(\bla),\bk^n).
\end{equation}
We often consider multicompositions in $\Comp^J(n,d)$ as colored compositions in $\Comp^\col(n\ell,d)$ via this injection.

A composition $\la=(\la_1,\la_2,\dots)$ is called {\em essential} if $\la_k=0$ for some $k\in\N_+$ implies that $\la_l=0$ for all $l\geq k$. We denote by $\EC(d)$ the set of all essential compositions of $d$. We also have the set $\EC^\col(d)$ of {\em essential colored composition of $d$}, i.e. those colored compositions $(\la,\bj)$ of $d$ with $\la$ essential.

\subsection{Symmetric group}
\label{SSSG}

We denote by $\Si_ d$  the  symmetric group on $\{1,\dots, d\}$ (with $\Si_0$ interpreted as the trivial group).
Let 
$s_r:=(r,r+1) \in \Si_d$ for $r=1,\dots, d-1$ be the elementary transpositions in $\Si_d$. 
We denote by $\ttl(w)$ the corresponding {\em length} of $w\in \Si_d$ and set $\sgn(w):=(-1)^{\ttl(w)}$. We denote by $w_d$ the longest element of $\Si_d$.

For any $\la=(\la_1,\dots,\la_n)\in\Comp(n, d)$, we have the {\em standard parabolic subgroup} 
$$\Si_\la:= \Si_{\la_1}\times\dots\times \Si_{\la_n}\leq \Si_d.$$
Using the bijection (\ref{EBijMultiComp}), for any $\bla\in\Comp^J(n,d)$, we  have the parabolic subgroup 
\begin{equation}\label{ESiBla}
\Si_\bla:=\Si_{\al(\bla)}.
\end{equation} 

For any set $S$, the group $\Si_d$ acts on $S^d$ on the left by place permutations:  $ g\cdot \bs=s_{ g^{-1}(1)}\cdots s_{ g^{-1}(d)}$ for $ g\in \Si_d$ and $\bs=s_1\cdots s_d\in S^d$.

\section{Graded superalgebras}\label{SSBasicRep}

\subsection{Graded superspaces and superalgebras}
By a graded superspace we understand a $\k$-module with $\k$-module decomposition $V=\bigoplus_{n\in\Z,\,\eps\in\Z/2}V_{n,\eps}$. 
We refer to the elements of $V_{n,\eps}$ as elements of {\em bidegree} $(n,\eps)$. We also refer to $n$ as {\em degree} and $\eps$ as {\em parity}, and write for $v\in V_{n,\eps}$:
$$
\bideg(v)=(n,\eps),\ \deg(v)=n,\ |v|=\eps.
$$
We use the notation $V_\eps:=\bigoplus_{n\in \Z}V_{n,\eps}$ and 
\begin{equation}\label{EA>0}
V^n:= V_{n,\0}\oplus V_{n,\1},\quad V^{>n}:=\bigoplus_{m>n,\,\eps\in\Z/2}V_{m,\eps}.
\end{equation}
A {\em graded subsuperspace} $W\subseteq V$ is a $\k$-submodule such that $W=\sum_{n,\eps}(W\cap V_{n,\eps})$. 

Let $V,W$ be graded superspaces. The tensor product $V\otimes W$ is considered as a graded superspace via $\bideg(v\otimes w)=(\deg(v)+\deg(w),|v|+|w|)$ for all homogeneous $v\in V$ and $w\in W$.
For $m\in\Z$ and $\de\in\Z/2$, a bidegree $(m,\de)$ homogeneous linear map $f:V\to W$ is a $\k$-linear map satisfying $f(V_{n,\eps})\subseteq W_{n+m,\eps+\de}$ for all $n,\eps$. We denote the $\k$-module of all bidegree $(m,\de)$ homogeneous linear maps  from $V$ to $W$ by $\Hom_\k(V,W)_{m,\de}$, and set 
$$
\Hom_\k(V,W):=\bigoplus_{m\in\Z,\,\de\in\Z/2}\Hom_\k(V,W)_{m,\de}.
$$

Let $V$ be a graded superspace and $d\in\N$.
The symmetric group $\Si_d$ acts on $V^{\otimes d}$ via
\begin{equation}\label{EWSignAction}
{}^{g}(v_1\otimes\dots\otimes v_d):=(-1)^{[g;v_1,\dots,v_d]}v_{g^{-1}(1)}\otimes\dots\otimes v_{g^{-1}(d)},
\end{equation}
where
$$
[g;v_1,\dots,v_d]:=\sum_{1\leq a<b\leq d,\, g^{-1}(a)>g^{-1}(b)}|v_a||v_b|.
$$

A graded ($\k$-)superalgebra is a graded superspace $A=\bigoplus_{n\in \Z,\,\eps\in\Z/2}A_{n,\eps}$ which is a unital algebra such that $A_{n,\eps}A_{m,\de}\subseteq A_{n+m,\eps+\de}$. 
The {\em opposite superalgebra}\, $A^\sop$ is the same as $A$ as a $\k$-module but has multiplication $a*b=(-1)^{|a||b|}ba$. 

Let $A,B$ be graded superalgebras. An {\em isomorphism} $f:A \to B$ is an algebra isomorphism which is homogeneous of bidegree $(0,\0)$. We write $A\cong B$ if $A$ and $B$ are isomorphic in this sense. The tensor product $A\otimes B$ is considered as a graded superalgebra via 
$
(a\otimes b)(a'\otimes b')=(-1)^{|b| |a'|}aa'\otimes bb'
$
for all homogeneous $a,a'\in A$ and $b,b'\in B$.

\begin{Example} 
{\rm 
Let $V$ be a graded superspace which is finite rank free as a $\k$-module. Then $\End_\k(V)$ is a graded superalgebra. 
Since $\k$ is a PID, every $V_{n,\eps}$ is a free $\k$-module of finite rank. Choosing a homogeneous basis of $V$ we see that $\End_\k(V)$ is isomorphic to a matrix algebra over $\k$ with appropriate bi-degrees assigned to the matrix units. 
We often refer to the graded superalgebra of the form $\End_\k(V)$ as a {\em graded matrix superalgebra}.
}
\end{Example}

Let $A$ be a graded superalgebra and $B\subseteq A$ be a (unital) graded subsuperalgebra. The 
{\em supercentralizer} $\Cent_A(B)$ of $B$ in $A$ is the 
graded subsuperalgebra of $A$ defined as the linear span of all (homogeneous) $a\in A$ such that $ba=(-1)^{|b||a|}ab$ for all (homogeneous) $b\in B$. 
We will need the following analogue of \cite[Proposition 4.10]{Evseev}. 

\begin{Lemma}\label{LEvseev}
Let $A$ be a graded superalgebra and $B$ a unital graded subsuperalgebra isomorphic to a graded matrix superalgebra. Then we have an isomorphism of graded superalgebras 
$
B \otimes \Cent_{A}(B) \iso A,\ b\otimes z \mapsto bz.
$
\end{Lemma}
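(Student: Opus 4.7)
The plan is to exhibit an explicit two-sided inverse of $\mu\colon B \otimes \Cent_A(B) \to A$, $b\otimes z \mapsto bz$, in the spirit of the ungraded Peirce-type decomposition $A = M_n(\k) \otimes \Cent_A(M_n(\k))$ and of the proof of \cite[Proposition~4.10]{Evseev}.

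Fix an isomorphism $B \cong \End_\k(V)$ for a finite-rank free graded superspace $V$, choose a homogeneous $\k$-basis $\{v_i\}_{i\in I}$ of $V$, and let $\{e_{ij}\}_{i,j\in I}\subseteq B$ be the corresponding matrix units, so that $e_{ij}e_{kl}=\delta_{jk}e_{il}$, $\sum_i e_{ii}=1$, and $\bideg(e_{ij})=(\deg v_i-\deg v_j,\,|v_i|+|v_j|)$. That $\mu$ is a homogeneous homomorphism of graded superalgebras is immediate: with the super tensor-product multiplication $(b_1\otimes z_1)(b_2\otimes z_2)=(-1)^{|z_1||b_2|}b_1b_2\otimes z_1z_2$, the equality $\mu((b_1\otimes z_1)(b_2\otimes z_2))=\mu(b_1\otimes z_1)\mu(b_2\otimes z_2)$ is precisely the defining supercentralizer relation $z_1 b_2=(-1)^{|z_1||b_2|}b_2 z_1$.

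For the inverse, given a homogeneous $a\in A$ set
\[
c_{ij}(a):=\sum_{k\in I}(-1)^{|v_k|(|a|+|v_i|+|v_j|)}\,e_{ki}\,a\,e_{jk},\qquad \nu(a):=\sum_{i,j\in I}(-1)^{|v_j|(|v_i|+|v_j|+|a|)}\,e_{ij}\otimes c_{ij}(a),
\]
and extend $\nu$ to all of $A$ by $\k$-linearity using the bigrading. A direct check with $e_{mn}e_{ki}=\delta_{nk}e_{mi}$ and $e_{jk}e_{mn}=\delta_{km}e_{jn}$ shows that the factor $(-1)^{|v_k|(|a|+|v_i|+|v_j|)}$ is exactly what is needed to force $c_{ij}(a)$ to supercommute with every $e_{mn}$, so $c_{ij}(a)\in \Cent_A(B)$ and $\nu$ does land in $B\otimes \Cent_A(B)$. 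The identity $\mu\circ\nu=\id_A$ collapses, via $e_{ij}e_{ki}=\delta_{jk}e_{ii}$, to the standard Peirce decomposition $a=\sum_{i,j}e_{ii}ae_{jj}$, with all signs cancelling by $(-1)^{2|v_j|(|v_i|+|v_j|+|a|)}=1$. Conversely, $\nu\circ\mu=\id$ is verified on elementary tensors $e_{pq}\otimes z$ with $z\in \Cent_A(B)$: pushing $z$ through each $e_{jk}$ using $z\,e_{jk}=(-1)^{|z|(|v_j|+|v_k|)}e_{jk}\,z$ reduces $c_{ij}(e_{pq}z)$ to $\delta_{ip}\delta_{qj}(-1)^{|z||v_q|}z$, and this stray sign cancels against the outer sign in $\nu$.

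The principal obstacle is solely the bookkeeping of super signs. In the ungraded or purely even case this is the classical Peirce-type decomposition; in the super case the signs enter only through the $|v_i|,|v_j|$ contributions to the bidegrees of matrix units, and once the two sign factors in $c_{ij}(a)$ and $\nu(a)$ are chosen compatibly (as above) every identity collapses via $(-1)^{2x}=1$.
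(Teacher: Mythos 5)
Your proof is correct. Both you and the paper follow the same Peirce-type decomposition in the spirit of Evseev's Proposition~4.10, adapted with super signs; the difference is purely in packaging. The paper first identifies $\Cent_A(B)$ with the corner $E_{1,1}AE_{1,1}$ via the map $\phi(c)=\sum_i(-1)^{|E_{i,1}||c|}E_{i,1}cE_{1,i}$ and its inverse $z\mapsto zE_{1,1}$, then observes that every Peirce block $E_{i,i}AE_{j,j}$ is $E_{i,j}\Cent_A(B)$, and deduces surjectivity and injectivity of $b\otimes z\mapsto bz$ from the direct-sum decomposition $A=\bigoplus_{i,j}E_{i,i}AE_{j,j}$. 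You instead bundle the same computations into an explicit two-sided inverse $\nu$ and check $\mu\circ\nu=\id$ and $\nu\circ\mu=\id$ by hand. Your inner sum $c_{ij}(a)=\sum_k(-1)^{|v_k|(|a|+|v_i|+|v_j|)}e_{ki}ae_{jk}$ is, up to the outer sign, the same averaging operator as the paper's $\phi$; the sign bookkeeping you carry out (parity of $c_{ij}(a)$ is $|v_i|+|v_j|+|a|$, the cancellation $(-1)^{2|v_j|(\cdots)}=1$ in $\mu\circ\nu$, and the cancelling $(-1)^{|z||v_q|}$ in $\nu\circ\mu$) all check out. The paper's route is a bit more modular and reusable (the isomorphism $\Cent_A(B)\cong E_{1,1}AE_{1,1}$ is a statement of independent interest); yours is more self-contained since it avoids naming the intermediate corner algebra. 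Either is a fine proof.
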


\begin{proof}
By definition, we may assume that $B= \End_\k(V)$ for some finite rank free graded $\k$-superspace $V$. Let $v_1,\dots,v_n$ be a homogeneous $\k$-basis of $V$, and $E_{i,j}\in B$ be the element given by $E_{i,j}(v_k)=\de_{j,k}v_i$. 
Let $C:=E_{1,1}AE_{1,1}$. Note that for any $c\in C$, the element 
$\phi(c):=\sum_{i=1}^n(-1)^{|E_{i,1}||c|}E_{i,1}cE_{1,i}$ super-commutes with every $E_{j,k}$, so $\phi(c)\in \Cent_{A}(B)$. It follows that the maps $\phi:C\to \Cent_{A}(B)$ and $\Cent_{A}(B)\to C,\ z\mapsto zE_{1,1}$ are 
mutually inverse isomorphisms of graded superalgebras. Moreover, for all $i,j$, we have mutually inverse bidegree $(0,\0)$ isomorphisms of superspaces $C\to E_{i,i}AE_{j,j},\ c\mapsto E_{i,1}cE_{1,j}$ and $E_{i,i}AE_{j,j}\to C,\ a\mapsto E_{1,i}aE_{j,1}$. Note that for all homogeneous $c\in C$ and all $i,j$ we have $E_{i,j}\phi(c)=(-1)^{|E_{j,1}||c|}E_{i,1}cE_{1,j}$, so $E_{i,j}Z_B(A)=E_{i,i}AE_{j,j}$. It follows that the graded superalgebra homomorphism 
$
B \otimes \Cent_{A}(B) \iso A,\ b\otimes z \mapsto bz
$
is an isomorphism. 
\end{proof}

We always consider the group algebra $\k\Si_d$ of the symmetric group $\Si_d$ as a graded superalgebra in the trivial way, i.e. concentrated in bidegree $(0,\0)$. 
Given a graded superalgebra $A$, we consider the {\em wreath superproduct} 
\begin{equation}\label{EWdA}
W_d(A):=A\swr \Si_d, 
\end{equation}
where $A\swr \Si_d=A^{\otimes d}\otimes \k\Si_d$ as graded superspaces. To define the algebra structure,  we identify $A^{\otimes d}$ and $\k\Si_d$ with subspaces of $A^{\otimes d}\otimes \k\Si_d$ in the obvious way, and postulate that $A^{\otimes d}$, $\k\Si_d$ are subalgebras of  $A\swr \Si_d$, and that, recalling (\ref{EWSignAction}),  we have 
\begin{align*}
w\,(a_1\otimes\dots\otimes a_d)={}^w(a_1\otimes\dots\otimes a_d)\,w\qquad(w\in\Si_d,\ a_1,\dots, a_d\in A).
\end{align*}

Let $A$ be a superalgebra and $d\in\N_+$. 
For $1\leq t\leq d$, $1\leq r<s\leq d$, we have maps 
\begin{align}
\iota^{(d)}_{t}&:A\to A^{\otimes d},\ a\mapsto 1_A^{\otimes (t-1)}\otimes a\otimes 1_A^{\otimes(d-t)},\\
\label{EIotars}
\iota^{(d)}_{r,s}&:A\otimes A\to A^{\otimes d},\ a\otimes b\mapsto 1_A^{\otimes (r-1)}\otimes a\otimes 1_A^{\otimes(s-r-1)}\otimes b\otimes 1_A^{\otimes(d-s)}.
\end{align}
For $x\in A$ and $y\in A\otimes A$, we sometimes denote 
\begin{equation}\label{EInsertion}
x_{r}:= \iota^{(d)}_r(x)\in A^{\otimes d},\quad y_{r,s}:=\iota^{(d)}_{r,s}(y)\in A^{\otimes d}.
\end{equation}

\subsection{Graded supermodules} Let $A$ be a superalgebra. 
A {\em graded $A$-supermodule} $V$ is a (left) $A$-module which is also a graded superspace such that $A_{n,\eps}V_{m,\de}\subseteq V_{n+m,\eps+\de}$ for all $n,m,\eps,\de$. 
A graded subsupermodule is then an $A$-invariant graded subsuperspace of $V$. 
A graded $A$-supermodule $V$ is {\em irreducible} if it has exactly two graded subsupermodules: $0$ and $V$.

Let $V,W$ be graded $A$-supermodules. A 
{\em bidegree $(m,\de)$ homogeneous graded $A$-supermodule homomorphism from $V$ to $W$} is a bidegree $(m,\de)$ homogeneous linear map $f:V\to W$ satisfying $f(av)=(-1)^{\de|a|}af(v)$ for all (homogeneous) $a\in A,v\in V$. We denote by $\Hom_A(V,W)_{m,\de}$ the $\k$-module of all bidegree $(m,\de)$ homogeneous graded $A$-supermodule homomorphism from $V$ to $W$, and set 
$$
\Hom_A(V,W):=\bigoplus_{m\in\Z,\,\de\in\Z/2}\Hom_A(V,W)_{m,\de}.
$$
We refer to the elements of $\Hom_A(V,W)$ as 
{\em graded $A$-supermodule homomorphisms from $V$ to $W$}.

We denote by 
$\mod{A}$ the category of all 
{\em finitely generated}\, graded $A$-supermodules and all graded $A$-supermodule homomorphisms. 
An isomorphism in $\mod{A}$ will be denoted $\cong$. 
On the other hand, a bidegree $(0,\0)$ homogeneous  isomorphism in $\mod{A}$ will be denoted $\simeq$. 
We denote by $\Irr(A)$ the set of the isomorphism classes of the graded irreducible $A$-supermodules (for the isomorphism $\cong$). 

In fact, $\mod{A}$ is a graded $(\funQ,\Uppi)$-supercategory in the sense of \cite[Definition 6.4]{BE}, with $\Uppi$ the parity change functor and $\funQ$ the degree shift functor. To be more precise, if $M=\bigoplus_{n,\eps}M_{n,\eps}\in \mod{A}$, we have $(\Uppi M)_{n,\eps}=M_{n,\eps-\1}$ with the new action $a\cdot v=(-1)^{|a|}av$, and $(\funQ M)_{n,\eps}=M_{n-1,\eps}$ (with the same action).

Let $A,B$ be graded superalgebras. Given a graded $A$-supermodule $V$ and a graded $B$-supermodule $W$, we have the graded $(A\otimes B)$-supermodule $V\boxtimes W$ with the action 
$$
(a\otimes b)(v\otimes w)=(-1)^{|b||m|}(av\otimes bw)\qquad(a\in A,\, b\in B,\, v\in V,\, w\in W).
$$
A graded {\em $(A,B)$-bisupermodule}  $V$ is an $(A,B)$-bimodule which is also a graded superspace such that $A_{n,\eps}V_{m,\de}\subseteq V_{n+m,\eps+\de}$ and $V_{m,\de}B_{n,\eps}\subseteq V_{n+m,\eps+\de}$ for all $n,m,\eps,\de$. 
A homomorphism of graded $(A,B)$-bisupermodules is defined similarly to a homomorphism of graded $A$-supermodules. In particular, a bidegree $(m,\de)$ homogeneous homomorphism of graded $(A,B)$-bisupermodules $f:V\to W$ satisfies  $f(avb)=(-1)^{\de|a|}af(v)b$ for all homogeneous $a\in A,v\in V,b\in B$. 
Just like for graded supermodules, we use the notation $\cong$ to denote an isomorphism of graded bisupermodules, and  $\simeq$ to denote a bidegree $(0,\0)$ homogeneous  isomorphism of graded bisupermodules.

\begin{Example} \label{ExEndSopBim}
{\rm 
Given a graded $A$-supermodule $M$, we have the graded superalgebra $\End_A(M)^\sop$, and $M$ is naturally a graded $(A,\End_A(M)^\sop)$-bisupermodule with $f\in \End_A(M)^\sop=\End_A(M)$ acting via
$m\cdot f=(-1)^{|f||m|}f(m)$. 
}
\end{Example}

\subsection{Graded Morita superequivalence}
Let $A$ and $B$ be graded superalgebras. A {\em graded Morita superequivalence} between $A$ and $B$ is a Morita equivalence between $A$ and $B$ induced by a graded $(A,B)$-bisupermodule $M$ and a graded $(B,A)$-bisupermodule $N$, i.e. $M\otimes_B N\simeq A$ and $N\otimes_A M\simeq B$ 
(both bimodule isomorphisms homogeneous of bidegree $(0,\0)$). 
We write $A\sim_{\gsM}B$ for graded Morita superequivalence.

\begin{Example} \label{ExProgenerator} 
{\rm 
Let $A$ be a graded superalgebra and $\{e_1,\dots,e_n\}$ be (not necessarily distinct) bidegree $(0,\0)$ idempotents of $A$. 
We have a graded superalgebra $\bigoplus_{1\leq i,j\leq n}e_iAe_j$ with multiplication inherited from $A$. In fact, setting 
$$
P:=\bigoplus_{i=1}^n Ae_i,
$$
there the isomorphism of graded superalgebras
\begin{equation}\label{EEndAei}
\bigoplus_{1\leq i,j\leq n}e_iAe_j\iso \End_A(P)^\sop,\ a\mapsto\phi_a,
\end{equation}
where for (a homogeneous) $a\in e_iAe_j$, the endomorphism $\phi_a$ sends the summand $Ae_k$ of $P$ to $0$ if $k\neq i$ and the summand $Ae_i$ to the summand $Ae_j$ as follows:
$$
\phi_a(be_i)=(-1)^{|a||b|}be_ia
$$ 
(for a homogeneous $b\in A$). If $P$ is a projective generator for $A$ (equivalently, if $\sum_{i=1}^nAe_iA=A$), then $A$ is graded Morita superequivalent to $\End_A(P)^\sop$, with $M$ being $P$ considered as a graded $(A,\End_A(P)^\sop)$-superbimodule as in Example~\ref{ExEndSopBim}. 
}
\end{Example}

Let the graded $\k$-superalgebra $A$ be finitely generated as a $\k$-module, $e\in A$ be a bidegree $(0,\0)$ idempotent, and $\F=\k/\m$ be the quotient field of $\k$ by a maximal ideal $\m$. Denote $A_\F:=\F\otimes_\k A$ and $e_\F:=1_\F\otimes e\in A_\F$. 
We have the functors
\begin{equation}\label{EFunctors}
\begin{split}
Ae\otimes_{eAe}-:\mod{eAe}\to\mod{A},\\ 
eA\otimes_{A}-:\mod{A}\to \mod{eAe}.
\end{split}
\end{equation}
and
\begin{equation}\label{EFunctorsF}
\begin{split}
A_\F e_\F\otimes_{e_\F A_\F e_\F}-:\mod{e_\F A_\F e_\F}\to\mod{A_\F},\\ e_\F A_\F\otimes_{A_\F}-:\mod{A_\F}\to \mod{e_\F A_\F e_\F }.
\end{split}
\end{equation}
The following lemma can be found in \cite[Lemma 2.2.20]{KIS}, see also \cite[Lemma 8.26]{EK2}.

\begin{Lemma} \label{LMorExtScal} 
If for all fields\, $\F\in{\mathscr F}=\{\k/\m\mid \text{$\m$ is a maximal ideal of $\k$}\}
$, the functors (\ref{EFunctorsF}) are mutually quasi-inverse equivalences, then so are the functors (\ref{EFunctors}). \end{Lemma}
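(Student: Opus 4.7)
The plan is to reduce the statement to the purely ring-theoretic condition $AeA = A$ and then extract this condition from the residue-field hypothesis via finite generation over the principal ideal domain $\k$.

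First, I would establish (the graded super version of) the standard Morita-theoretic dictionary: the functors $Ae \otimes_{eAe} -$ and $eA \otimes_A -$ are mutually quasi-inverse equivalences $\mod{eAe} \leftrightarrow \mod{A}$ if and only if $AeA = A$. Since $e$ has bidegree $(0,\0)$, all the bisupermodule structures on $Ae$, $eA$ and $eAe$ are compatible with the grading and parity, and the unit $X \to eA \otimes_A (Ae \otimes_{eAe} X)$ of the adjunction is always an isomorphism (it is identified with $X \cong eAe \otimes_{eAe} X$), while the counit $Ae \otimes_{eAe} eM \to M$ factors through $AeM$ and is an isomorphism for every $M$ precisely when $AeA = A$. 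The same dichotomy holds verbatim for $A_\F, e_\F$ in place of $A, e$.

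By hypothesis, the functors (\ref{EFunctorsF}) are quasi-inverse equivalences for every residue field $\F = \k/\m \in \mathscr F$, so $A_\F e_\F A_\F = A_\F$. Since extension of scalars is right exact, this gives
$$\F \otimes_\k (A/AeA) \;\cong\; A_\F/(A_\F e_\F A_\F) \;=\; 0$$
for every such $\F$.

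Finally, $A/AeA$ is a finitely generated $\k$-module because $A$ is, so it remains to invoke the standard fact that a finitely generated module $M$ over the PID $\k$ with $M \otimes_\k \k/\m = 0$ for every maximal ideal $\m$ must vanish: Nakayama applied to the localization $M_\m$ over $\k_\m$ forces $M_\m = 0$ for every maximal $\m$, so $\Ann_\k(M)$ lies in no maximal ideal and therefore equals $\k$. Applied to $M = A/AeA$ this yields $AeA = A$, and the first step then delivers the desired equivalences (\ref{EFunctors}). The only potential subtlety is tracking the graded super structure through the Morita dictionary, but because $e$ sits in bidegree $(0,\0)$ this is automatic, so there is no substantive obstacle; the argument is essentially a one-paragraph Nakayama reduction once the idempotent-Morita correspondence is in place.
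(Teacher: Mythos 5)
Your proof is correct, and it takes the standard route: reduce both hypothesis and conclusion to the ideal-theoretic condition $A_\F e_\F A_\F = A_\F$ (resp.\ $AeA = A$) via the idempotent-Morita dictionary, then observe by right exactness of $\F\otimes_\k -$ that $\F\otimes_\k (A/AeA)\cong A_\F/(A_\F e_\F A_\F)=0$ for every residue field $\F$, and finish with Nakayama and finite generation of $A/AeA$ over $\k$. The grading/parity bookkeeping is indeed trivial since $e$ is homogeneous of bidegree $(0,\0)$, so all the adjunction morphisms are bidegree-$(0,\0)$ maps. The paper itself does not reproduce a proof but delegates to \cite[Lemma 2.2.20]{KIS} and \cite[Lemma 8.26]{EK2}; those sources use essentially this argument, so your proposal matches the intended approach.
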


\subsection{Regrading}
\label{SSRegr}
An easy special case of a graded Morita superequivalence comes from a `regrading' of a graded superalgebra in the following sense. Suppose that $A$ is a graded superalgebra and we are given an orthogonal idempotent decomposition $1_A=\sum_{r=1}^n e_r$ with the idempotents $e_r$ of bidegree $(0,\0)$. Then as graded superspaces we have  $A=\bigoplus_{1\leq r,s\leq n}e_sAe_r$. Given {\em grading supershift parameters}
\begin{equation}\label{EShiftParameters}
t_1,\dots,t_n\in \Z\quad\text{and}\quad \eps_1,\dots,\eps_n\in\Z/2, 
\end{equation}
we can consider the new graded superalgebra 
\begin{equation}\label{EARegraded}
\zA:=\bigoplus_{1\leq r,s\leq n}\funQ^{t_r-t_s}\Uppi^{\eps_r-\eps_s} e_sAe_r,
\end{equation} 
which equals $A$ as an algebra but has different $\Z$- and $\Z/2$-gradings. To distinguish between the elements of $A$ and $\zA$, we denote by $\za$ the element of $\zA$ corresponding to $a\in A$. In particular, we have the idempotents $\ze_1,\dots,\ze_n\in\zA$. 
We have the graded $(\zA,A)$-bisupermodule
$$X:=\bigoplus_{r=1}^n \funQ^{-t_r}\Uppi^{-\eps_r}e_r A=
\bigoplus_{s=1}^n \funQ^{-t_s}\Uppi^{-\eps_s} \zA\ze_s
,$$
and the graded $(A,\zA)$-bisupermodule 
$$
Y:=\bigoplus_{s=1}^n \funQ^{t_s}\Uppi^{\eps_s} Ae_s=\bigoplus_{r=1}^n \funQ^{t_r}\Uppi^{\eps_r} \ze_r \zA,
$$
inducing a graded Morita superequivalence between $A$ and $\zA$. Under this graded Morita superequivalence, a graded $A$-supermodule $V$, corresponds to the graded $\zA$-supermodule 
\begin{equation}\label{EzVMor}
\zV:=X\otimes_A V\simeq \bigoplus_{r=1}^n \funQ^{-t_r}\Uppi^{-\eps_r}e_rV.
\end{equation}

\subsection{Superalgebras and supermodules}

A {\em superalgebra} is a $\Z/2$-graded algebra. In order to apply the theory developed in the previous subsections, we identify superalgebras with graded superalgebras concentrated in $\Z$-graded degree $0$, and similarly for (bi)supermodules. 
On the other hand, we consider graded superalgebras as  superalgebras by forgetting $\Z$-grading.

In particular, 
we get the notion of {\em Morita superequivalence} of superalgebras $A$ and $B$ induced by 
an $(A,B)$-bisupermodule $M$ and a $(B,A)$-bisupermodule $N$, i.e. $M\otimes_B N\simeq A$ and $N\otimes_A M\simeq B$.   We write $A\sim_{\sM}B$ for Morita superequivalence.

Suppose that $A$ is a superalgebra  which is free of finite rank as a $\k$-module. An element $\t\in \Hom_\k(A,\k)$ is called a {\em symmetrizing form} if $\t(A_{\1})=0$ and the bilinear form $(a,b)_\t:=\t(ab)
$ is a perfect symmetric pairing. 
If $A$ is equipped with a symmetrizing form, we say $A$ is a {\em symmetric superalgebra.} If $A,B$ are symmetric superalgebras, then  $A \otimes B$ is also a symmetric superalgebra with symmetrizing form 
$
a \otimes b \mapsto \t_A(a)\t_B(b),
$
where $\t_A$ and $\t_B$ are the symmetrizing forms of $A$ and $B$ respectively. 

\begin{Lemma} \label{LSopSymm} 
If $A$ is a symmetric superalgebra then so is~$A^\sop$. 
\end{Lemma}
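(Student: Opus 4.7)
The plan is to define the symmetrizing form on $A^\sop$ to be literally the same $\k$-linear map $\t:A\to\k$, viewed as a map $\t^\sop:A^\sop\to\k$. Since $A$ and $A^\sop$ agree as $\Z/2$-graded $\k$-modules, the condition $\t^\sop((A^\sop)_\1)=0$ is immediate from $\t(A_\1)=0$. Hence the only real content is to verify symmetry and non-degeneracy of the bilinear form
\[
(a,b)_{\t^\sop}:=\t^\sop(a\ast b)=(-1)^{|a||b|}\t(ba),
\]
for homogeneous $a,b\in A$, where $\ast$ is the multiplication in $A^\sop$.

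First I would check symmetry. For homogeneous $a,b\in A$, the calculation
\[
(a,b)_{\t^\sop}=(-1)^{|a||b|}\t(ba)=(-1)^{|b||a|}\t(ab)=(b,a)_{\t^\sop}
\]
reduces the problem to the equality $\t(ba)=\t(ab)$. This in turn follows from the symmetry of $(-,-)_\t$ on $A$: when $a$ and $b$ have the same parity, both $ab$ and $ba$ lie in $A_{\bar 0}$ and $\t(ab)=\t(ba)$ is exactly the symmetry of the original form; when $a,b$ have opposite parities, $ab,ba\in A_{\bar 1}$, so both sides vanish by the hypothesis $\t(A_{\bar 1})=0$.

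Next I would verify that $(-,-)_{\t^\sop}$ is perfect. Suppose $a\in A$ is homogeneous and $(a,b)_{\t^\sop}=0$ for all homogeneous $b\in A$. Then $(-1)^{|a||b|}\t(ba)=0$ for all $b$, so $\t(ba)=0$ for all $b$, i.e.\ $(b,a)_\t=0$ for all $b$. Non-degeneracy of $(-,-)_\t$ forces $a=0$. The same argument applied on the opposite side shows that if $b$ pairs trivially with all $a$ then $b=0$. Hence $(-,-)_{\t^\sop}$ is a perfect symmetric pairing, and $\t^\sop$ is a symmetrizing form for $A^\sop$.

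The proof is essentially bookkeeping; the only potential pitfall is the sign $(-1)^{|a||b|}$ introduced by the opposite multiplication, but this sign is itself symmetric in $a$ and $b$ and therefore disappears when comparing $(a,b)_{\t^\sop}$ with $(b,a)_{\t^\sop}$, so no genuine obstacle arises.
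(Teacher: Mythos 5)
Your proof is correct and follows the same essential computation as the paper, which simply notes $\t(a*b)=(-1)^{|a||b|}\t(ba)=(-1)^{|a||b|}\t(ab)=\t(b*a)$. You additionally spell out the (trivial but omitted in the paper) perfectness check; the case split on parities in your symmetry step is unnecessary, since $\t(ab)=\t(ba)$ already holds unconditionally from the symmetry of $(\cdot,\cdot)_\t$ on $A$.
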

\begin{proof}
Denoting the product in $A^\sop$ by $*$, the lemma follows from the computation $\t(a*b)=\t((-1)^{|a||b|}ba)=\t((-1)^{|a||b|}ab)=\t(b*a)$. 
\end{proof}

\begin{Lemma} \label{LSymRegr} 
Suppose that a graded superalgebra $\zA$ is obtained from a graded superalgebra $A$ by regrading, as in (\ref{EARegraded}). If $A$ is a symmetric superalgebra then so is~$\zA$. 
\end{Lemma}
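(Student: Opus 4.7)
The plan is to take the symmetrizing form $\t$ of $A$, reinterpret it as a linear functional on $\zA$, and show that it is still a symmetrizing form. This is natural because $\zA=A$ as $\k$-modules and as algebras---only the $\Z$- and $\Z/2$-gradings differ, according to (\ref{EARegraded}). Of the three defining properties of a symmetrizing form---vanishing on the odd part, symmetry of the induced bilinear form, and non-degeneracy---the latter two are automatic, since $(a,b)\mapsto\t(ab)$ on $\zA$ is literally the same bilinear pairing on the same underlying $\k$-module as on $A$. Only the vanishing condition $\t(\zA_\1)=0$ requires any care.

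The key preliminary observation is that $\t$ is already supported on the `diagonal Peirce blocks' $\bigoplus_{s=1}^n e_sAe_s$: for $s\neq r$ and $x\in A$, the orthogonality of the idempotents together with the symmetry of $\t$ give $\t(e_sxe_r)=\t(xe_re_s)=0$ (no signs appear since $|e_s|=|e_r|=\0$). Inspecting (\ref{EARegraded}), on each diagonal block $e_sAe_s$ the parity shift $\eps_s-\eps_s=0$ is trivial, so the $\Z/2$-grading on $e_sAe_s$ coincides in $A$ and in $\zA$; hence $\zA_\1\cap e_sAe_s=A_\1\cap e_sAe_s$, on which $\t$ already vanishes in view of $\t(A_\1)=0$. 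On the off-diagonal blocks the form vanishes by the preceding observation, regardless of how the parity shift $\eps_r-\eps_s$ reassigns elements. Combining these two cases yields $\t(\zA_\1)=0$, completing the verification.

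There is no real obstacle beyond unwinding definitions. The conceptual point is that the regrading parameters $(t_r,\eps_r)$ modify the gradings only on the off-diagonal Peirce blocks, but these blocks already lie in the kernel of every symmetrizing form, so the regrading cannot disturb symmetricity.
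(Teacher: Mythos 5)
Your proof is correct and follows essentially the same route as the paper's: both reduce the claim to checking $\t(\zA_\1)=0$, observe via cyclicity and orthogonality of the idempotents that $\t$ kills off-diagonal Peirce blocks $e_iAe_j$ with $i\neq j$, and note that on the diagonal blocks the parity is unchanged so $\ze_i\zA_\1\ze_i=e_iA_\1e_i$. You simply spell out what the paper compresses into one sentence.
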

\begin{proof}
Identify $A$ and $\zA$ as sets. We claim that if $\t$ is a symmetrizing form on $A$ then it is also a symmetrizing form on $\zA$. The only thing that is not clear is that $\t(\zA_\1)=0$. 
But is easy to see that $\t(e_iAe_j)=0$ unless $i=j$, and $\ze_i\zA_\1\ze_i=e_iA_\1e_i$.  
\end{proof}

\begin{Lemma} \label{LSymIdTr}
Let $A$ be a superalgebra  which is free of finite rank as a $\k$-module, and $e_1,\dots,e_n\in A$ be even idempotents. 
If $A$ is a symmetric superalgebra then so is the endomorphism superalgebra $\End_A(\bigoplus_{i=1}^n Ae_i)$. In particular, if $A$ is a symmetric superalgebra then so is the idempotent truncation $eAe$ for any even idempotent $e$. 
\end{Lemma}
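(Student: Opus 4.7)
The plan is to reduce to showing that the graded superalgebra $B:=\bigoplus_{1\leq i,j\leq n}e_iAe_j$ is symmetric. By Example~\ref{ExProgenerator} (specifically the isomorphism (\ref{EEndAei})), one has $B\cong\End_A(\bigoplus_i Ae_i)^\sop$, so once $B$ is known to be symmetric, Lemma~\ref{LSopSymm} forces $\End_A(\bigoplus_i Ae_i)$ itself to be symmetric. The ``in particular'' clause is the specialization $n=1$, $e_1=e$, under which $eAe\cong\End_A(Ae)^\sop$, and a further application of Lemma~\ref{LSopSymm} yields that $eAe$ is symmetric.

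To produce a symmetrizing form on $B$, I take the natural candidate: let $\t$ denote the given symmetrizing form on $A$, and set
$$\t_B(b):=\sum_{i=1}^n\t(b_{ii}),$$
where $b_{ij}\in e_iAe_j$ denotes the $(i,j)$-component of $b\in B$. Three things have to be checked: vanishing on $B_{\1}$, the cyclic identity $\t_B(bc)=\t_B(cb)$, and non-degeneracy of the induced bilinear form. The first is immediate since each $e_i$ is even, so $(e_iAe_i)_{\1}\subseteq A_{\1}$ and $\t(A_{\1})=0$. The cyclic identity follows by expanding $(bc)_{ii}=\sum_k b_{ik}c_{ki}$ and invoking $\t(xy)=\t(yx)$ on $A$.

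The only substantive step is non-degeneracy. Given $b\in B$ with $\t_B(bc)=0$ for all $c\in B$, I would test against elements supported in a single block: fixing indices $k,l$ and an arbitrary element $x\in e_lAe_k$, take $c\in B$ whose $(l,k)$-component is $x$ and all of whose other components vanish. A direct expansion then yields $\t_B(bc)=\t(b_{kl}x)$, so $\t$ annihilates $b_{kl}\cdot e_lAe_k$. To promote this to $\t(b_{kl}y)=0$ for all $y\in A$, and thereby deduce $b_{kl}=0$ from non-degeneracy of $\t$, I would use the identity
$$\t(b_{kl}y)=\t(e_kb_{kl}e_ly)=\t(b_{kl}(e_lye_k)),$$
which combines the absorption $b_{kl}=e_kb_{kl}e_l$ with cyclicity $\t(e_kz)=\t(ze_k)$; since $e_lye_k\in e_lAe_k$, the right-hand side vanishes by the previous step. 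Hence every $b_{kl}$ is zero, so $b=0$, and $\t_B$ is a symmetrizing form on $B$. I don't anticipate a real obstacle here; the only delicate point is the absorption-plus-cyclicity manipulation in the last display, which is what lets a testing family concentrated in the off-diagonal blocks of $B$ detect non-vanishing in all of $A$.
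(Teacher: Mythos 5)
Your approach is essentially the explicit version of the argument the paper delegates to the citation \cite[Proposition IV.4.4]{SY}: you build the trace form on $B=\bigoplus_{i,j}e_iAe_j$ directly, check vanishing on the odd part (using that the $e_i$ are even) and cyclicity, and then use the absorption $b_{kl}=e_kb_{kl}e_l$ together with $\t(xy)=\t(yx)$ to leverage non-degeneracy of $\t$ on $A$. The transfer to $\End_A(\bigoplus_iAe_i)$ via $B\cong\End_A(\bigoplus_iAe_i)^{\sop}$ and Lemma~\ref{LSopSymm}, and the specialization $n=1$ for the ``in particular'' clause, are both fine. This is correct in spirit and is not a genuinely different route, just a spelled-out one.

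There is, however, one real gap over the ground ring $\k$ of the paper, which is a PID (later a DVR), not a field. The definition of a symmetrizing form requires $(\cdot,\cdot)_{\t_B}$ to be a \emph{perfect} pairing, i.e.\ that $B\to\Hom_\k(B,\k)$, $b\mapsto\t_B(b\,\cdot\,)$, be an \emph{isomorphism} of $\k$-modules. What your argument establishes is injectivity of that map (``non-degeneracy''). Over a field these coincide for free modules of finite rank, but over a PID they do not (the Gram matrix $\mathrm{diag}(1,2)$ over $\Z$ is injective but not perfect). The fix is short and worth recording: your own computation shows that $\t_B$ decouples into pairings $e_iAe_j\times e_jAe_i\to\k$, $(\beta,\gamma)\mapsto\t(\beta\gamma)$. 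Use the $\k$-module splittings $A=\bigoplus_{a,b\in\{0,1\}}e_i^{(a)}Ae_j^{(b)}$ and $A=\bigoplus_{c,d\in\{0,1\}}e_j^{(c)}Ae_i^{(d)}$ (with $e^{(0)}=e$, $e^{(1)}=1-e$). Absorption plus cyclicity shows that $\t$ pairs $e_i^{(a)}Ae_j^{(b)}$ only with $e_j^{(b)}Ae_i^{(a)}$, so the perfect pairing on $A$ decomposes block-by-block along these splittings; a perfect pairing that decomposes this way restricts to a perfect pairing on each block, in particular on $e_iAe_j\times e_jAe_i$. Hence $\t_B$ is perfect, not just non-degenerate. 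With that supplement your proof is complete.
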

\begin{proof}
The proof in \cite[Proposition IV.4.4]{SY} generalizes to superalgebras.
\end{proof}

\begin{Example} \label{ExCliffird} 
{\rm 
The rank $n$ {\em Clifford superalgebra\, $\cC_n$}  is the superalgebra
given by odd generators $\cc_1,\dots,\cc_n$ subject to the relations 
\begin{eqnarray}
\hspace{5mm}\cc_r^2=1 \quad \text{and}\quad 
\cc_r\cc_s = -\cc_s\cc_r\ \text{for}\ r\neq s.
\label{AHC2}
\end{eqnarray}
The superalgebra\, $\cC_n$ has basis $\{\cc_1^{\eps_1} \dots \cc_n^{\eps_n}\mid \eps_1,\dots,\eps_n\in\{0,1\}\}$.  
It is a symmetric superalgebra with the symmetrizing form $\t$ defined on basis elements via $\t(\cc_1^{\eps_1} \dots  \cc_n^{\eps_n})=\de_{\eps_1,0}\dots\de_{\eps_n,0}.
$ 
We have $\cC_n\otimes \cC_m\cong \cC_{n+m}.$ In particular, $\cC_n\cong \cC_1^{\otimes n}$. 
If $\k$ is an algebraically closed field of characteristic $\neq 2$ and $n$ is even, then $\cC_n$ is isomorphic to a matrix superalgebra (see for example \cite[Example 12.1.3]{Kbook}). Hence 
\begin{equation}\label{ECliffEq}
\cC_n\sim_{\sM}
\left\{
\begin{array}{ll}
\k &\hbox{if $n$ is even,}\\
\cC_1 &\hbox{if $n$ is odd.}
\end{array}
\right.
\end{equation}
}
\end{Example}

\begin{Lemma} \label{LSymTensCliff}
Let $n\in\N_+$ and $A$ be a $\k$-superalgebra which is free of finite rank as a $\k$-module. Then $A$ is a symmetric superalgebra if and only if $A\otimes\cC_n$ is a symmetric superalgebra.
\end{Lemma}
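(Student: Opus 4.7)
The forward direction is immediate from the fact (noted just before Lemma \ref{LSopSymm}) that a tensor product of symmetric superalgebras is symmetric, together with $\cC_n$ being symmetric (Example \ref{ExCliffird}). For the reverse direction, my plan is to induct on $n$, using $\cC_n\cong \cC_{n-1}\otimes \cC_1$ from Example \ref{ExCliffird} to reduce to the case $n=1$. So suppose $\t$ is a symmetrizing form on $A\otimes \cC_1$, and define $\t_A\colon A\to \k$ by $\t_A(a):=\t(a\otimes 1)$.

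First I would verify that $\t_A(A_\1)=0$ (since $A_\1\otimes 1\subseteq (A\otimes\cC_1)_\1$) and $\t_A(ab)=\t_A(ba)$ (from $\t((a\otimes 1)(b\otimes 1))=\t((b\otimes 1)(a\otimes 1))$). The hard part will be perfectness of the pairing $(a,b)\mapsto \t_A(ab)$. The key step is to show that $\t(xy\otimes \cc_1)=0$ for all $x,y\in A$: if $|x|+|y|$ is even, $xy\otimes \cc_1$ lies in $(A\otimes\cC_1)_\1$ and we are done; if $|x|+|y|$ is odd, applying the symmetry of $\t$ to $\t((x\otimes 1)(y\otimes \cc_1))=\t((y\otimes \cc_1)(x\otimes 1))$ and to $\t((x\otimes \cc_1)(y\otimes 1))=\t((y\otimes 1)(x\otimes \cc_1))$ and computing the products (using the super sign in moving $\cc_1$ past $x$ or $y$) produces the two identities
\[
\t(xy\otimes \cc_1)=(-1)^{|x|}\t(yx\otimes \cc_1)\qquad\text{and}\qquad \t(xy\otimes \cc_1)=(-1)^{|y|}\t(yx\otimes \cc_1),
\]
whose difference forces $((-1)^{|x|}-(-1)^{|y|})\t(yx\otimes \cc_1)=0$. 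Since $(-1)^{|x|}\neq (-1)^{|y|}$ and $2$ is invertible in $\k$, we obtain $\t(yx\otimes \cc_1)=0$, and hence also $\t(xy\otimes \cc_1)=0$.

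Given this vanishing, for any $\k$-basis $\{u_i\}$ of $A$ the Gram matrix of $\t$ in the basis $\{u_i\otimes 1\}\cup\{u_i\otimes \cc_1\}$ becomes block-diagonal: the off-diagonal blocks involve only entries of the form $\pm\t(u_iu_j\otimes \cc_1)$ and so vanish; the $(u_i\otimes 1,u_j\otimes 1)$-block equals the Gram $G_{\t_A}$ of $\t_A$; and the $(u_i\otimes \cc_1,u_j\otimes \cc_1)$-block equals $G_{\t_A}D$, where $D$ is the diagonal matrix with $D_{jj}=(-1)^{|u_j|}$ (coming from $\cc_1^2=1$ together with the super sign in $(u_i\otimes \cc_1)(u_j\otimes \cc_1)$). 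Therefore $\det(\text{Gram of }\t)=\pm\det(G_{\t_A})^2$, so $\t_A$ induces a perfect pairing if and only if $\t$ does. The main obstacle is spotting the vanishing of the mixed terms $\t(xy\otimes \cc_1)$: without it one is left with a genuine block Gram matrix from which invertibility of the $G_{\t_A}$-block cannot in general be extracted.
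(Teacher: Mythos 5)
Your proof is correct and follows essentially the same route as the paper: reduce to $n=1$, establish the key vanishing $\t(A\otimes\cc_1)=0$ (the paper handles $A_\0\otimes\cc_1$ by parity and $A_\1\otimes\cc_1$ by writing $a\otimes\cc_1=(a\otimes 1)(1\otimes\cc_1)=-(1\otimes\cc_1)(a\otimes 1)$, which is precisely your pair of identities specialized to $y=1$), and then deduce that the restriction of $(\cdot,\cdot)_\t$ to $A\otimes 1$ is perfect. Your block Gram-matrix computation simply makes explicit the orthogonal-decomposition step that the paper leaves implicit.
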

\begin{proof}
The `only-if' direction follows from the fact that $\cC_n$ is a symmetric superalgebra and tensor product of symmetric superalgebras is a symmetric superalgebra. 

To prove the converse, since $\cC_n\cong \cC_1^{\otimes n}$,  it suffices to establish that $A$ is a symmetric superalgebra if so is $A\otimes\cC_1$. 
So suppose that $A\otimes\cC_1$ is a symmetric superalgebra with symmetrizing form $\t$ and the corresponding symmetric perfect pairing $(\cdot,\cdot)_\t$. By definition, $\t((A\otimes\cC_1)_\1)=0$, so $\t(A_\1\otimes 1)=\t(A_\0\otimes \cc_1)=0$. We claim that, moreover,  $\t(A_\1\otimes \cc_1)=0$. Indeed, for $a\in A_\1$, we have $a\otimes \cc_1=(a\otimes 1)(1\otimes \cc_1)=-(1\otimes \cc_1)(a\otimes 1)$, hence $\t(a\otimes \cc_1)=\t((a\otimes 1)(1\otimes \cc_1))=\t((1\otimes \cc_1)(a\otimes 1))=-\t(a\otimes \cc_1)$ which now must be zero by our running assumption that $2$ is invertible in $\k$. It follows that the restriction of $(\cdot,\cdot)_\t$ to $A_\0\otimes 1$ is a perfect symmetric pairing, 
so we can define the symmetrizing form on $A$ via $a\mapsto \t(a\otimes 1)$. 
\end{proof}

\begin{Example} \label{ExT}
{\rm 
The {\em twisted group algebra $\cT_n$} of the symmetric group $\Si_n$ given by odd generators $\ct_1,\dots,\ct_{n-1}$ subject to the relations  
\begin{equation*}\label{ET_nRel}
\ct_r^2=1,\quad \ct_r\ct_s = -\ct_s\ct_r\text{ if }|r-s|>1,\quad (\ct_r\ct_{r+1})^3=1.
\end{equation*}
(By convention, $\cT_1$ is the totally even superalgebra $\k$.)
Choosing for each $w\in \Si_n$ a reduced decomposition $w=s_{r_1}\cdots s_{r_l}$, we define $\ct_w:=\ct_{r_1}\cdots \ct_{r_l}$ (which depends up to a sign on the choice of a reduced decomposition). Then  $\{\ct_w\mid w\in \Si_n\}$ is a basis of $\cT_n$, and $\cT_n$ is a symmetric superalgebra with symmetrizing form given by $\t(\ct_w)=\de_{w,1}$. 
}
\end{Example}

\begin{Example} \label{ExTC} 
{\rm 
Recalling the notation (\ref{EWdA}), we consider the wreath superproduct $\cY_n:=W_n(\cC_1)$ 
with basis 
$$
\{g\otimes \cc_1^{\eps_1}\otimes \dots\otimes \cc_1^{\eps_n}\mid g\in\Si_n,\,\eps_1,\dots,\eps_n\in\{0,1\}\}.
$$ 
This superalgebra is a symmetric superalgebra with the symmetrizing form $\t$ defined on basis elements via $$\t(g\otimes \cc_1^{\eps_1}\otimes \dots\otimes \cc_1^{\eps_n})=\de_{g,1}\de_{\eps_1,0}\dots\de_{\eps_n,0}.
$$ 
By \cite[\S2]{Ser} and \cite{Yam}, if $\k$ contains a square root of $-2$ then 
there is an isomorphism of superalgebras $\cY_n\iso \cT_n\otimes \cC_n$ (explaining our notation $\cY_n$).  
We refer the reader to \cite[Proposition 5.1.3, Lemma 5.1.4]{KlLi} and  \cite[Lemma 13.2.3]{Kbook} for more details on this.
}
\end{Example}

\begin{Example} \label{ExIH} 
{\rm 
Let $q\in \k$ with $q^2\neq 1$  and set $\xi:=q-q^{-1}$. 
The {\em Iwahori-Hecke algebra of the symmetric group $\Si_n$}  
is the purely even (super)algebra $\cH_n(q)$ given by 
(even) generators $T_1,\dots,T_{n-1}$ subject only to the following relations:
\begin{eqnarray}
T_r^2 = \xi T_r + 1,\ T_rT_{r+1}T_r = T_{r+1}T_rT_{i+1},\ \text{and}\  T_rT_s = T_s T_r\ \text{for}\ |r - s| > 1.  
\label{AHC3}
\end{eqnarray}
For $g\in\Si_n$ with reduced decomposition $g=s_{r_1}\dots s_{r_l}$, set $T_g:=T_{r_1}\dots T_{r_l}$.  
Then 
$\{T_g\mid g\in\Si_n\}
$ 
is  a basis of $\cH_n(q)$. 
It is well-known, see for example \cite[8.1.1]{GP}, that the algebra $\cH_n(q)$ is symmetric with the symmetrizing form $\t$ defined on basis elements via $\t(T_g)=\de_{g,1}$. Moreover, $\t(T_gT_h)=\de_{g,h^{-1}}$. 
}
\end{Example}

\begin{Example} \label{ExTC(q)} 
{\rm 
Let $q\in \k$ with $q^2\neq 1$ and set $\xi:=q-q^{-1}$. 
The {\em quantum wreath superproduct} $\cY_n(q)$, also known as {\em Olshanski's superalgebra}, is given by 
even generators $T_1,\dots,T_{n-1}$ and odd generators $\cc_1,\dots,\cc_n$ subject only to the relations (\ref{AHC2}), (\ref{AHC3}) and 
\begin{eqnarray*}
T_r\cc_r = \cc_{r+1}T_r\ \text{and}\  
T_r\cc_s = \cc_s T_r\ \text{for}\ s \neq r,r+1.
\label{AHC4}
\end{eqnarray*}
For $g\in\Si_n$ we define $T_g$ as in Example~\ref{ExIH}.  
By \cite[\S\S\,2-d,\,2-e]{BK1}, $\cY_n(q)$ has basis
$$\{T_g\cc_1^{\eps_1} \dots \cc_n^{\eps_n}\mid g\in\Si_n,\,\eps_1,\dots,\eps_n\in\{0,1\}\}.
$$ 
Now, $\cY_n(q)$ is a symmetric superalgebra with the symmetrizing form $\t$ defined on basis elements via $$\t(T_g\cc_1^{\eps_1} \dots \cc_n^{\eps_n})=\de_{g,1}\de_{\eps_1,0}\dots\de_{\eps_n,0}.
$$ 
To see this, it is sufficient to note that
$$
\t(T_g\cc_1^{\eps_1} \dots \cc_n^{\eps_n}T_h\cc_1^{\ka_1} \dots \cc_n^{\ka_n})=\de_{g,h^{-1}}\de_{\eps_1,\ka_1}\cdots \de_{\eps_n,\ka_n},
$$ 
which follows using $\t(T_gT_h)=\de_{g,h^{-1}}$, see Example~\ref{ExIH}.
}

\end{Example}

\section{Brauer tree superalgebras and generalized Schur algebras}

\subsection{Brauer tree superalgebras $\Zig_\ell$}
\label{SSBrTree}

Let $\k Q$ be the path algebra of the quiver $Q$ 
\begin{align*}
\begin{braid}\tikzset{baseline=3mm}
\coordinate (1) at (0,0);
\coordinate (2) at (4,0);
\coordinate (3) at (8,0);
\coordinate (4) at (12,0);
\coordinate (6) at (16,0);
\coordinate (L1) at (20,0);
\coordinate (L) at (24,0);
\draw [thin, black, ->] (-0.3,0.2) arc (15:345:1cm);
\draw [thin, black,->,shorten <= 0.1cm, shorten >= 0.1cm]   (1) to[distance=1.5cm,out=100, in=100] (2);
\draw [thin,black,->,shorten <= 0.25cm, shorten >= 0.1cm]   (2) to[distance=1.5cm,out=-100, in=-80] (1);
\draw [thin,black,->,shorten <= 0.25cm, shorten >= 0.1cm]   (2) to[distance=1.5cm,out=80, in=100] (3);
\draw [thin,black,->,shorten <= 0.25cm, shorten >= 0.1cm]   (3) to[distance=1.5cm,out=-100, in=-80] (2);
\draw [thin,black,->,shorten <= 0.25cm, shorten >= 0.1cm]   (3) to[distance=1.5cm,out=80, in=100] (4);
\draw [thin,black,->,shorten <= 0.25cm, shorten >= 0.1cm]   (4) to[distance=1.5cm,out=-100, in=-80] (3);
\draw [thin,black,->,shorten <= 0.25cm, shorten >= 0.1cm]   (6) to[distance=1.5cm,out=80, in=100] (L1);
\draw [thin,black,->,shorten <= 0.25cm, shorten >= 0.1cm]   (L1) to[distance=1.5cm,out=-100, in=-80] (6);
\draw [thin,black,->,shorten <= 0.25cm, shorten >= 0.1cm]   (L1) to[distance=1.5cm,out=80, in=100] (L);
\draw [thin,black,->,shorten <= 0.1cm, shorten >= 0.1cm]   (L) to[distance=1.5cm,out=-100, in=-100] (L1);
\blackdot(0,0);
\blackdot(4,0);
\blackdot(8,0);
\blackdot(20,0);
\blackdot(24,0);
\draw(0,0) node[left]{$0$};
\draw(4,0) node[left]{$1$};
\draw(8,0) node[left]{$2$};
\draw(14,0) node {$\cdots$};
\draw(20,0) node[right]{$\ell-2$};
\draw(24,0) node[right]{$\ell-1$};
 \draw(-2.6,0) node{$u$};
\draw(2,1.2) node[above]{$\za^{[1,0]}$};
\draw(6,1.2) node[above]{$\za^{[2,1]}$};
\draw(10,1.2) node[above]{$\za^{[3,2]}$};
\draw(18,1.2) node[above]{$\za^{[\ell-3,\ell-2]}$};
\draw(22,1.2) node[above]{$\za^{[\ell-1,\ell-2]}$};
\draw(2,-1.2) node[below]{$\za^{[0,1]}$};
\draw(6,-1.2) node[below]{$\za^{[1,2]}$};
\draw(10,-1.2) node[below]{$\za^{[2,3]}$};
\draw(18,-1.2) node[below]{$\za^{[\ell-3,\ell-2]}$};
\draw(22,-1.2) node[below]{$\za^{[\ell-2,\ell-1]}$};
\end{braid}
\end{align*}
generated by length $0$ paths $\{\ze^{[j]}\mid j\in J\}$, and length $1$ paths $\zu$ and 
$\za^{[k,k+1]},\za^{[k+1,k]}$ for $k=0,1,\dots,\ell-2$. 

Define the algebra $\Zig_\ell$ to be $\k Q$  
 modulo the following relations:
 
 \noindent 
 $\bullet$ all paths of length three or greater are zero;
 
 \noindent 
 $\bullet$  all paths of length two that are not cycles are zero;
 
 \noindent 
 $\bullet$ length-two cycles based at the same vertex are equal (in particular, $\zu^2=\za^{[0,1]}\za^{[1,0]}$).

\vspace{2mm}
Let 
$
\zc^{[0]}:=\zu^2$ and $\zc^{[i]}:=\za^{[i,i-1]}\za^{[i-1,i]}\quad \text{for $i=1,\dots,\ell-1$}. 
$
Then 
\begin{equation}\label{EZBasis}
\zB_\ell:=\{\ze^{[j]},\zc^{[j]}\mid j\in J\}
\cup\big\{\za^{[k,k+1]},\za^{[k+1,k]}\mid k\in \{0,1,\dots,\ell-2\}\big\}
\cup\{\zu\}
\end{equation}
is a basis of $\Zig_\ell$. It follows that
\begin{equation}\label{EzejZig}
\rank_\k \ze^{[j]}\Zig_\ell=
\left\{
\begin{array}{ll}
3 &\hbox{if $j=\ell-1$;}\\
4 &\hbox{otherwise.}
\end{array}
\right.
\end{equation}

We consider $\Zig_\ell$ as a graded superalgebra by setting
$$
\bideg(\ze^{[j]})=(0,\0),\ \bideg(\zu)=\bideg(\za^{[k+1,k]})=\bideg(\za^{[k,k+1]})=(2,\1).
$$
Note that then $\bideg(\zc^{[j]})=(4,\0)$ for all $j$. 

The graded superalgebra $\Zig_\ell$ has a (degree $-2$) symmetrizing form $\t$ such that $\t(\zc^{[j]})=1$ for all $j\in J$ and $\t(\ze^{[j]})=\t(\za^{[i,k]})=\t(\zu)=0$ for all admissible $j,i,k$. 

\subsection{Affine Brauer tree superalgebras $H_d(\Zig_\ell)$}
\label{SSAffZig}
Let $\zz$ be a variable of bidegree $(4,\0)$ and consider the graded polynomial superalgebra $\k[\zz]$. 
Denote by $\Zig_\ell[\zz]$ the free product $\k[\zz]\star \Zig_\ell$ subject to the relations 
$\zb\zz=-\zz\zb$ for all odd $\zb\in \zB_\ell$ and $\zb\zz=\zz\zb$ for all even $\zb\in \zB_\ell$. 
Then  
$\Zig_\ell[\zz]$ has basis  
$
\{\zz^n\zb\mid n\in\N,\,\zb\in\zB_\ell\}
$. 
We consider $\Zig_\ell$ naturally as a subalgebra of $\Zig_\ell[\zz]$.

Fix $d\in\N_+$. 
Using the notation (\ref{EInsertion}), 
the graded superalgebra $\Zig_\ell[\zz]^{\otimes d}$ 
has basis
\begin{align*}
\{\zz_1^{n_1}\cdots\zz_d^{n_d}\zb^{(1)}_1\cdots \zb^{(d)}_d\mid 
n_1,\dots,n_d\in\N,\ \zb^{(1)},\dots,\zb^{(d)}\in\zB_\ell\}.
\end{align*}
Recall that we always consider the group algebra $\k \Si_d$ as a graded superalgebra concentrated in bidegree $(0,\0)$. We consider the free product $\Zig_\ell[\zz]^{\otimes d}\star\k \Si_d$. For any $\bi\in J^d$, we have that  
\begin{equation}\label{EEI}
\ze^\bi:=\ze^{[i_1]}\otimes\dots\otimes \ze^{[i_d]}\in \Zig_\ell^{\otimes d}\subseteq \Zig_\ell[z]^{\otimes d}\subseteq \Zig_\ell[z]^{\otimes d}\star\k \Si_d.
\end{equation} 
Recalling the notation (\ref{EWSignAction}), (\ref{EInsertion}), we define the {\em affine Brauer tree (graded super)algebra  $H_d(\Zig_\ell)$} to be the free product $\Zig_\ell[z]^{\otimes d}\star\k \Si_d$ subject to the following relations:
\begin{align}
w\,(\za^{(1)}\otimes\dots\otimes \za^{(d)})&={}^w(\za^{(1)}\otimes\dots\otimes \za^{(d)})\,w
\label{ERAff1}
\\
(s_r \zz_t - \zz_{s_r(t)} s_r)\ze^\bi
&=
\begin{cases}
\big((\delta_{r,t}- \delta_{r+1,t})(\zc_r^{[i_r]} + \zc_{r+1}^{[i_{r+1}]})
&
\\
\qquad\qquad\qquad+\de_{i_r,0}\zu_r\zu_{r+1}\big)\ze^\bi
&
\text{if}\ i_r = i_{r+1},\\
(\delta_{r,t}+\delta_{r+1,t})\za_r^{[i_{r+1},i_r]} \za_{r+1}^{[i_r,i_{r+1}]} \ze^\bi
&
\text{if}\ |i_r-i_{r+1}|=1,\\
0
& 
\textup{otherwise,}
\end{cases}
\label{ERAff2}
\end{align}
for all $w\in \Si_d$, $\za^{(1)},\dots,\za^{(d)}\in \Zig_\ell$, 
$1\leq r<n$, $1\leq t\leq n$ and $\bi\in J^n$.

There are natural graded superalgebra homomorphisms (of bidegree  $(0,\0)$): 
\begin{align}\label{EIOTAS}
\iota_1:\Zig_\ell[\zz]^{\otimes d} \to H_d(\Zig_\ell)
\quad\text{and}
\quad \iota_2:\k \Si_d\to H_d(\Zig_\ell).
\end{align}
By the following theorem, these are embeddings. 
We will use the same symbols for elements of the domain of these maps as for their images in $H_d(\Zig_\ell)$. 

\begin{Theorem}\label{TAffBasis} \cite[Theorem 3.9]{KlLi}
The map 
$$\Zig_\ell[\zz]^{\otimes d} \otimes \k \Si_d\to H_d(\Zig_\ell),\ x \otimes y \mapsto \iota_1(x)\iota_2(y)
$$
is an isomorphism of graded superspaces. In particular, the following are bases of $H_d(\Zig_\ell)$:
\begin{align*}
&\{w\,\zz_1^{ n_1}\cdots \zz_d^{ n_d}\zb^{(1)}_1\cdots \zb^{(d)}_d\mid 
 n_1,\dots, n_d\in\N,\ \zb^{(1)},\dots,\zb^{(d)}\in \zB_\ell,\,w\in \Si_d\},
 \\
&\{\zz_1^{ n_1}\cdots \zz_d^{ n_d} \zb^{(1)}_1\cdots \zb^{(d)}_d\,w\mid 
 n_1,\dots, n_d\in\N,\ \zb^{(1)},\dots,\zb^{(d)}\in \zB_\ell,\,w\in \Si_d\}.
\end{align*}
\end{Theorem}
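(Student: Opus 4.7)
The plan is to prove this PBW-type theorem by the usual two-step strategy: first show the asserted monomials span $H_d(\Zig_\ell)$, then prove linear independence by exhibiting a faithful polynomial representation.

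\textbf{Spanning.} I would start from an arbitrary word in the generators of $H_d(\Zig_\ell)$ and rewrite it in the form $\iota_1(x)\iota_2(y)$ by repeatedly applying (\ref{ERAff1}) and (\ref{ERAff2}). Relation (\ref{ERAff1}) lets one push any factor from $\Zig_\ell^{\otimes d}$ past an $s_r$ at the cost of permuting tensor positions and introducing Koszul signs, so all $\Zig_\ell$-contributions can be collected together. Relation (\ref{ERAff2}) rewrites $s_r\zz_t$ as $\zz_{s_r(t)}s_r$ plus an element purely in $\Zig_\ell^{\otimes d}$ — that is, strictly lower in $\zz$-degree — so iterated use of (\ref{ERAff2}) terminates and drives all $s_r$'s to the right. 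Combined with the evident basis of $\Zig_\ell[\zz]^{\otimes d}$ recalled at the start of \S\ref{SSAffZig}, this yields the second spanning set in the statement. The first set is obtained by running the same rewriting with left and right swapped.

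\textbf{Linear independence.} For this I would construct an explicit left module structure on $V:=\Zig_\ell[\zz]^{\otimes d}$ on which $\iota_1(\Zig_\ell[\zz]^{\otimes d})$ acts by left multiplication, and each $s_r$ acts by a Demazure-type twisted permutation
$$
s_r\cdot(\zz^{\bn}\,\zb^{(1)}_1\cdots\zb^{(d)}_d):={}^{s_r}(\zz^{\bn}\,\zb^{(1)}_1\cdots\zb^{(d)}_d)+(\text{correction})
$$
where the correction is forced by (\ref{ERAff2}) and is of strictly smaller total $\zz$-degree. The key verifications are then that these operators (i) satisfy the Coxeter presentation of $\Si_d$, (ii) intertwine the $\Zig_\ell^{\otimes d}$-action as dictated by (\ref{ERAff1}), and (iii) reproduce (\ref{ERAff2}) on the nose. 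Once this is established, evaluating the operator $\iota_1(x)\iota_2(w)$ on $1\in V$ returns $x\cdot({}^{w^{-1}}1)$ plus corrections of strictly lower $\zz$-degree; a triangularity/degree argument then shows the PBW monomials $\zz^{\bn}\zb^{(1)}_1\cdots\zb^{(d)}_d \cdot w$ act as linearly independent operators on $V$, which is what we need.

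\textbf{Main obstacle.} The delicate part will be checking the braid relation $s_rs_{r+1}s_r = s_{r+1}s_rs_{r+1}$ for the twisted action, because the correction in (\ref{ERAff2}) splits into three cases depending on whether $i_r=i_{r+1}$, whether $|i_r-i_{r+1}|=1$, or neither, with the case $i_r=i_{r+1}=0$ carrying an extra $\zu_r\zu_{r+1}$ summand. I would handle this by fixing $\bi\in J^d$, restricting to the component $\Zig_\ell^{\otimes d}\ze^\bi\otimes\k[\zz_1,\dots,\zz_d]\subseteq V$, and verifying the equality on each triple $(i_r,i_{r+1},i_{r+2})$ separately, using the Brauer-tree relations of \S\ref{SSBrTree} (in particular $\zu^2=\za^{[0,1]}\za^{[1,0]}$ and the vanishing of non-cyclic length-two paths) to reconcile the leftover $\Zig_\ell$-terms. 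The remaining relations — $s_r^2=1$ and commutation of $s_r$ with $s_{r'}$ for $|r-r'|>1$ — reduce to short direct checks that should be essentially automatic once the action of a single $s_r$ has been set up.
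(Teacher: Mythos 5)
Your two-step plan (spanning via rewriting; linear independence via a faithful representation on $\Zig_\ell[\zz]^{\otimes d}$ with $\iota_1$ acting by left multiplication and $s_r$ by a twisted place-permutation plus a $\zz$-degree-lowering correction) is the standard route for such PBW statements and matches the method of the cited source. The spanning step is fine, and you correctly single out the braid-relation check as the real work; your plan of verifying it component-by-component according to $(i_r,i_{r+1},i_{r+2})$ using the Brauer-tree relations of \S\ref{SSBrTree} is the right one.

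The linear-independence step as you have written it, however, has a real gap. Evaluating $\iota_1(x)\iota_2(w)$ at $1\in V$ gives nothing useful: $1=1_{\Zig_\ell}^{\otimes d}$ is fixed by every place permutation, and since the corrections strictly lower $\zz$-degree while $1$ already sits in $\zz$-degree $0$, they annihilate $1$; hence $\iota_2(w)\cdot 1=1$ for every $w$ and $\iota_1(x)\iota_2(w)\cdot 1=x$ is the same for all $w$. Applied to a putative relation $\sum_{x,w}c_{x,w}\,xw=0$, evaluation at $1$ only yields $\sum_w c_{x,w}=0$ for each $x$, which is far from $c_{x,w}=0$. To actually separate the $w$'s you must evaluate on generic high-degree vectors: take $v=\zz_1^{N_1}\cdots\zz_d^{N_d}\,\ze^{\bi}$ with $N_1\gg N_2\gg\cdots\gg N_d$ exceeding all the $\zz$-degrees of the $x$'s appearing. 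Then the top-$\zz$-degree part of $\iota_2(w)\cdot v$ is $\zz_{w(1)}^{N_1}\cdots\zz_{w(d)}^{N_d}\,\ze^{w\cdot\bi}$, and the exponent patterns now distinguish distinct $w$'s; so for each fixed $w$ the element $\sum_x c_{x,w}\,x$ times a non-zero-divisor must vanish, giving $c_{x,w}=0$. Without a choice of test vectors like this, the phrase ``triangularity/degree argument'' has nothing to be triangular with respect to.
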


By the theorem and relations, $\Zig_\ell^{\otimes d}\otimes\k \Si_d$ is a graded subsuperalgebra of $H_d(\Zig_\ell)$ isomorphic to the wreath superproduct $W_d(\Zig_\ell)$ (recall the notation (\ref{EWdA})). Thus we have the natural embedding of graded superalgebra
\begin{equation}\label{EIotaZig}
\iota^{\Zig_\ell}:W_d(\Zig_\ell)\,\into\,H_d(\Zig_\ell).
\end{equation}
The quotient of $H_d(\Zig_\ell)$ by the graded superideal generated by $\zz_1,\dots,\zz_d$ is also isomorphic to $W_d(\Zig_\ell)$. 

\subsection{Some modules over $W_d(\Zig_\ell)$}  
From (\ref{EEI}), we have the idempotents 
$
\ze^\bi\in \Zig_\ell^{\otimes d}\subseteq W_d(\Zig_\ell)$ with
$$
\sum_{\bi\in J^d} \ze^\bi=1_{W_d(\Zig_\ell)}.
$$

Let $(\la,\bi)\in\Comp^\col(n,d)$. We define the idempotent 
\begin{equation}\label{EZeI}
\ze^{\la,\bi}:=\ze^{i_1^{\la_1}\cdots i_n^{\la_n}}=(\ze^{[i_1]})^{\otimes \la_1}\otimes\dots\otimes (\ze^{[i_n]})^{\otimes \la_n}\in \Zig_\ell^{\otimes d}\subseteq W_d(\Zig_\ell)
\end{equation}
and the {\em parabolic subalgebra}
$$
W_{\la,\bi}(\Zig_\ell)=\ze^{\la,\bi}\otimes \k \Si_\la\subseteq \ze^{\la,\bi} W_d(\Zig_\ell) \ze^{\la,\bi}.
$$ 
We have the {\em trivial} right graded $W_{\la,\bi}(\Zig_\ell)$-supermodule $\k_{\la,\bi}=\k$ concentrated in bidegree $(0,\0)$ with the action $1_\k\cdot(\ze^{\la,\bi}\otimes g)=1_\k$ for all $g\in\Si_\la$. We consider the induced graded right $W_d(\Zig_\ell)$-supermodule
\begin{equation}\label{EMLaBi}
M_{\la,\bi}:=\k_{\la,\bi}\otimes_{W_{\la,\bi}(\Zig_\ell)}\ze^{\la,\bi}W_d(\Zig_\ell)
\end{equation}
with generator $m_{\la,\bi}:=1_\k\otimes \ze^{\la,\bi}$ satisfying $m_{\la,\bi}\ze^{\la,\bi}=m_{\la,\bi}$ and $m_{\la,\bi} w=m_{\la,\bi}$ for all $w\in\Si_\la$.

Recall the notation (\ref{ELaBjj}). 

\begin{Lemma} \label{LDimM} 
Let $(\la,\bi)\in\Comp^\col(n,d)$. 
Then the module $M_{\la,\bi}$ is $\k$-free, with
$$
\rank_\k M_{\la,\bi}=
\binom{d}{\la_1\,\cdots\,\la_n}\,4^{d-|\la,\bi|_{\ell-1}}\,3^{|\la,\bi|_{\ell-1}}.
$$
\end{Lemma}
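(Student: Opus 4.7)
My plan is to compute $\rank_\k M_{\la,\bi}$ by identifying $M_{\la,\bi}$ with the $\Si_\la$-coinvariants of $\ze^{\la,\bi}W_d(\Zig_\ell)$, computing the rank of the latter via Theorem~\ref{TAffBasis}, and dividing by $|\Si_\la|$.

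First, I unpack the definition. Because the right $W_{\la,\bi}(\Zig_\ell)$-action on $\k_{\la,\bi}$ is trivial and $\ze^{\la,\bi}$ is the unit of $W_{\la,\bi}(\Zig_\ell)=\ze^{\la,\bi}\otimes\k\Si_\la$, the defining relation $m_{\la,\bi}\otimes(\ze^{\la,\bi}g)x=m_{\la,\bi}\otimes x$ for $g\in\Si_\la$ and $x\in\ze^{\la,\bi}W_d(\Zig_\ell)$ collapses. For $g\in\Si_\la$ one has ${}^g(\ze^{\la,\bi})=\ze^{\la,\bi}$, because $g$ only permutes positions of constant color in the expanded word $\bj:=i_1^{\la_1}\cdots i_n^{\la_n}$; hence $(\ze^{\la,\bi}g)x=gx$ for every $x\in\ze^{\la,\bi}W_d(\Zig_\ell)$. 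Thus as a $\k$-module
\[
M_{\la,\bi} \;\cong\; \bigl(\ze^{\la,\bi}W_d(\Zig_\ell)\bigr)_{\Si_\la},
\]
the coinvariants under the left $\Si_\la$-action inherited from $W_d(\Zig_\ell)$.

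Second, I compute $\rank_\k \ze^{\la,\bi}W_d(\Zig_\ell)$. Applying Theorem~\ref{TAffBasis} to $W_d(\Zig_\ell)=H_d(\Zig_\ell)/(\zz_1,\dots,\zz_d)$ furnishes the $\k$-basis $\{\zb^{(1)}_1\cdots\zb^{(d)}_d\,w \mid \zb^{(r)}\in\zB_\ell,\,w\in\Si_d\}$. For $j\in J$ set $\zB_\ell^{(j)}:=\{\zb\in\zB_\ell : \ze^{[j]}\zb=\zb\}$; by (\ref{EzejZig}) we have $|\zB_\ell^{(j)}|=3$ if $j=\ell-1$ and $|\zB_\ell^{(j)}|=4$ otherwise. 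Left multiplication by $\ze^{\la,\bi}=\ze^{[j_1]}_1\cdots\ze^{[j_d]}_d$ fixes a basis monomial when $\zb^{(r)}\in\zB_\ell^{(j_r)}$ for every $r$, and sends it to zero otherwise. This yields a $\k$-basis of $\ze^{\la,\bi}W_d(\Zig_\ell)$ of cardinality
\[
d!\cdot\prod_{r=1}^d|\zB_\ell^{(j_r)}| \;=\; d!\cdot 4^{d-|\la,\bi|_{\ell-1}}\cdot 3^{|\la,\bi|_{\ell-1}}.
\]

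Finally, I argue that $\Si_\la$ acts freely on this basis (ignoring signs). By the wreath relation (\ref{ERAff1}), $g\cdot(\zb^{(1)}_1\cdots\zb^{(d)}_d\,w)=\pm\,\zb^{(g^{-1}(1))}_1\cdots\zb^{(g^{-1}(d))}_d\,gw$, and the image lies in the same basis because $g\in\Si_\la$ preserves $\bj$. Freeness follows from the observation that left multiplication of $\Si_\la$ on $\Si_d$ is free: distinct group elements produce basis monomials with distinct symmetric-group factors. In particular no "signed fixed point" $g\cdot x=-x$ with $g\ne 1$ can occur, so the potential $2$-torsion that might be introduced by the sign twist (the main point requiring care, given the invertibility of $2$) is vacuous. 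A free action on a basis gives coinvariants whose $\k$-rank equals the number of orbits, so
\[
\rank_\k M_{\la,\bi} \;=\; \frac{d!\cdot 4^{d-|\la,\bi|_{\ell-1}}\cdot 3^{|\la,\bi|_{\ell-1}}}{|\Si_\la|} \;=\; \binom{d}{\la_1\,\cdots\,\la_n}\,4^{d-|\la,\bi|_{\ell-1}}\,3^{|\la,\bi|_{\ell-1}},
\]
as claimed. Choosing a transversal of shortest right coset representatives for $\Si_\la\backslash\Si_d$ then provides an explicit free $\k$-basis of $M_{\la,\bi}$ and proves freeness.
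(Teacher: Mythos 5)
Your proof is correct and takes essentially the same route as the paper's (one-line) proof, which likewise appeals to (\ref{EzejZig}) and the freeness of $\k\Si_d$ over $\k\Si_\la$. You have simply spelled out the mechanics: identify $M_{\la,\bi}$ with $\Si_\la$-coinvariants of $\ze^{\la,\bi}W_d(\Zig_\ell)$, exhibit a monomial basis (one can read this directly off the description $W_d(\Zig_\ell)=\Zig_\ell^{\otimes d}\otimes\k\Si_d$ rather than passing through Theorem~\ref{TAffBasis}), and observe that the $\Si_\la$-action permutes the basis freely because it acts freely on the $\Si_d$-factor. One small remark: the freeness argument already shows that the orbit sums form a $\k\Si_\la$-basis making $\ze^{\la,\bi}W_d(\Zig_\ell)$ a free $\k\Si_\la$-module, so the coinvariants are automatically $\k$-free of the expected rank; the appeal to invertibility of $2$ to rule out signed fixed points is thus not needed, though it does no harm.
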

\begin{proof}
This follows from (\ref{EzejZig}) and the fact that $\k\Si_d$ is a free left $\k\Si_\la$-module of rank $[\Si_d:\Si_\la]=\binom{d}{\la_1\,\cdots\,\la_n}$, cf. \cite[Lemma~3.12]{EK2}.
\end{proof}

\begin{Remark} \label{RBLa}
{\rm 
Recall from \S\ref{SSPar} that we consider multicompositions in $\Comp^J(n,d)$ as colored compositions in $\Comp^\col(n\ell,d)$ via the injection (\ref{EBeta}). In particular, for any $\bla\in\Comp^J(n,d)$, recalling the notation (\ref{ESiBla}),  we have the idempotent 
$$\ze^\bla=(\ze^{[0]})^{\otimes \la^{(0)}_1}\otimes\cdots\otimes(\ze^{[\ell-1]})^{\otimes\la^{(\ell-1)}_1}\otimes\cdots\otimes (\ze^{[0]})^{\otimes\la^{(0)}_n}\otimes\cdots\otimes(\ze^{[\ell-1]})^{\otimes\la^{(\ell-1)}_n}\in W_d(\Zig_\ell),$$ 
the parabolic subalgebra $W_\bla(\Zig_\ell)=\ze^\bla\otimes\k\Si_\bla\subseteq \ze^\bla W_d(\Zig_\ell)\ze^\bla$, the module $M_\bla$ with generator $m_\bla$ satisfying $m_\bla\ze^\bla=m_\bla$ and $m_\bla w=m_\bla$ for all $w\in\Si_\bla$.  
}
\end{Remark}

\section{Generalized Schur algebras}
Let $n\in \N_+$ and $d\in\N$. Throughout the section we assume that the field of fractions of our principal ideal domain $\k$ has characteristic $0$. 

\subsection{Classical Schur algebra}
In this subsection we will work over an arbitrary field $\F$. 
Recall from \cite{Green} the {\em classical Schur algebra $S(n,d)_\F$}, with the standard idempotents $\{\xi_\la\mid\la\in\Comp(n,d)\}$ as in \cite[\S3.2]{Green}. The irreducible $S(n,d)_\F$-modules are labeled by their highest weights which are the elements of $\Comp_+(n,d)$:
$$
\Irr(S(n,d)_\F)=\{L_{n,d}(\la)_\F\mid \la\in \Comp_+(n,d)\},
$$
see \cite[(3.5a)]{Green}. For $\la\in\Comp_+(n,d)$ and $\mu\in\Comp(n,d)$, we have the weight multiplicities
\begin{equation*}\label{EKostka}
K_{\la,\mu,\F}:=
\left\{
\begin{array}{ll}
\dim \xi_\mu L_{n,d}(\la)_\F &\hbox{if $\mu\in\Comp(n,d)$,}\\
0 &\hbox{otherwise.}
\end{array}
\right.
\end{equation*}
($K_{\la,\mu,\C}$ are known as the Kostka numbers, but if $\cha \F\neq 0$ the numbers $K_{\la,\mu,\F}$ are in general not known.) 

The irreducible modules of an algebra of the form $S(n,d_0)_\F\otimes\dots\otimes S(n,d_{\ell-1})_\F$ are $L_{n,d}(\bla)_\F:=L_{n,d_0}(\la^{(0)})_\F\boxtimes\dots\boxtimes L_{n,d_{\ell-1}}(\la^{(\ell-1)})_\F$, labeled by the multicompositions $\bla=(\la^{(0)},\dots,\la^{(\ell-1)})\in\Comp^J_+(n,d)$ with $|\la^{(j)}|=d_j$ for all $j\in J$. For the direct sum of algebras 
\begin{equation}\label{ESJ}
S^J(n,d)_\F:=
\bigoplus_{d_0+\dots+d_{\ell-1}=d}S(n,d_0)_\F\otimes\dots\otimes S(n,d_{\ell-1})_\F,
\end{equation}
we then have
\begin{equation}\label{ESJIrr}
\Irr(S^J(n,d)_\F):=\{L_{n,d}(\bla)_\F\mid\bla\in \Comp^J_+(n,d)\}.
\end{equation}

For for $\bla\in\Comp^J_+(n,d)$ and $\bmu\in\Comp^J(n,d)$, we set
\begin{equation}\label{EKBlaBmu}
K_{\bla,\bmu,\F}:=\prod_{j\in J}K_{\la^{(j)},\mu^{(j)},\F}.
\end{equation}
Then for the element $\xi_{\mu^{(0)}}\otimes\dots\otimes \xi_{\mu^{(\ell-1)}}\in S^J(n,d)_\F$, we have 
\begin{equation}\label{E300924}
\dim\Big((\xi_{\mu^{(0)}}\otimes\dots\otimes \xi_{\mu^{(\ell-1)}})
L_{n,d}(\bla)_\F\Big)=K_{\bla,\bmu,\F}.
\end{equation}

\subsection{Generalized Schur algebra $S^{\Zig_\ell}(n,d)$}
Let $M_n(\Zig_\ell)$ be the graded $\k$-superalgebra of $n\times n$ matrices with entries in $\Zig_\ell$. For $\za\in\Zig_\ell$, set $\xi_{r,s}^\za:=\za E_{r,s}\in M_n(\Zig_\ell)$, where $E_{r,s}$ is the $(r,s)$ matrix unit. Recalling (\ref{EZBasis}), we have a basis $\{\xi_{r,s}^\zb\mid 1\leq r,s\leq n,\,\zb\in\zB_\ell\}$ of $M_n(\Zig_\ell)$. The symmetric group $\Si_d$ acts on the graded superalgebra $M_n(\Zig_\ell)^{\otimes d}$ according to (\ref{EWSignAction}). We define the  graded $\k$-superalgebra of invariants
$$
S^{\Zig_\ell}(n,d):=\big(M_n(\Zig_\ell)^{\otimes d}\big)^{\Si_d}.
$$

Let $\zH$ be a set of non-zero homogeneous elements of $\Zig_\ell$ (for example, $\zH=\zB_\ell$). We denote by $\Seq^\zH(n,d)$ the set of triples $(\ba,\br,\bs)$ with $\ba=\za_1\cdots\za_d\in\zH^d$ and $\br=r_1\cdots r_d,\bs=s_1\cdots s_d\in\{1,\dots,n\}^d$ such that for any $1\leq k\neq l\leq d$ we have $(\za_k,r_k,s_k)=(\za_l,r_l,s_l)$ only if $\za_k$ is even. Then $\Si_d$ acts on $\Seq^\zH(n,d)$ by (simultaneous) place permutations on $d$-tuples $\ba,\br,\bs$, and we write  
$(\ba,\br,\bs)\sim(\bb,\bt,\bu)$ if $(\ba,\br,\bs),(\bb,\bt,\bu)\in \Seq^\zH(n,d)$ are in the same orbit. For any $(\ba,\br,\bs)\in \Seq^\zH(n,d)$, we now have the element 
$$
\xi^\ba_{\br,\bs}:=\sum_{(\bb,\bt,\bu)\sim(\ba,\br,\bs)}(-1)^{\lan\ba,\br,\bs\ran+\lan\bb,\bt,\bu\ran}\xi^{b_1}_{t_1,u_1}\otimes\dots\otimes \xi^{b_d}_{t_d,u_d}\in S^{\Zig_\ell}(n,d),
$$
where 
$
\lan\ba,\br,\bs\ran
$
is defined as in \cite[\S5.2]{KM2}. Then 
$
\{\xi^\bb_{\br,\bs}\mid (\bb,\br,\bs)\in\Seq^{\zB_\ell}(n,d)/\Si_d\}
$ 
is a basis of $S^{\Zig_\ell}(n,d)$, see \cite[Lemma 3.3]{KM2}. 

Let $\bla=(\la^{(0)},\dots,\la^{(\ell-1)})\in\Comp^J(n,d)$. 
Consider the $d$-tuple
\begin{equation}\label{EECurlyR}
\ze\{\bla\}:=(\ze^{[0]})^{\la^{(0)}_1}\cdots(\ze^{[\ell-1]})^{\la^{(\ell-1)}_1}\,\cdots\, (\ze^{[0]})^{\la^{(0)}_n}\cdots(\ze^{[\ell-1]})^{\la^{(\ell-1)}_n}\in\zB_\ell^d,
\end{equation}
(not to be confused with the element $\ze^\bla\in W_d(\Zig_\ell)$ from Remark~\ref{RBLa}) 
and the $d$-tuple
$$
\br\{\bla\}:=1^{\la^{(0)}_1+\dots+\la^{(\ell-1)}_1}\cdots \,n^{\la^{(0)}_n+\dots+\la^{(\ell-1)}_n}\in\{1,\dots,n\}^d.
$$
As in \cite[(5.5)]{KM2}, we have the idempotent
$$
\xi_\bla:=\xi^{\ze\{\bla\}}_{\br\{\bla\},\br\{\bla\}}\in S^{\Zig_\ell}(n,d)
$$
and the orthogonal idempotent decomposition
\begin{equation*}\label{EXiLaS}
1_{S^{\Zig_\ell}(n,d)}=\sum_{\bla\in\Comp^J(n,d)}\xi_\bla.
\end{equation*}

\subsection{Tensor space}
Let $V_n=\Zig_\ell^{\oplus n}$ considered as a right graded $\Zig_\ell$-supermodule in the natural way. Considering $V_n$ as column vectors, we have the isomorphism $M_n(\Zig_\ell) \iso \End_{\Zig_\ell}(V_n)$ sending a matrix $\xi\in M_n(\Zig_\ell)$ to the left multiplication by $\xi$. This extends to 
the isomorphism $M_n(\Zig_\ell)^{\otimes d} \iso \End_{\Zig_\ell^{\otimes d}}(V_n^{\otimes d})$. Recalling (\ref{EWSignAction}), $\Si_d$ acts on $V_n^{\otimes d}$ on the right with $\k$-linear maps via $vg := {}^{g^{-1}}v$ for
$v \in V_n^{\otimes d},\, g \in\Si_d$. Thus we have a right graded  supermodule structures on $V_n^{\otimes d}$ over both
$\k \Si_d$ and $\Zig_\ell^{\otimes d}$. In this way, $V_n^{\otimes d}$  becomes a graded right $W_d(\Zig_\ell)$-supermodule, cf. \cite[\S5.2]{EK1}. Moreover:

\begin{Lemma} \label{LTensWdS} {\rm \cite[Lemma 5.7]{EK1}} 
The embedding $S^{\Zig_\ell}(n,d)\,\into\, M_n(\Zig_\ell)^{\otimes d}\iso \End_{\Zig_\ell^{\otimes d}}(V_n^{\otimes d})$ defines an isomorphism of graded superalgebras $S^{\Zig_\ell}(n,d)\cong \End_{W_d(\Zig_\ell)}(V_n^{\otimes d})$. 
\end{Lemma}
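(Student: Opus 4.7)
The plan is to lift the given isomorphism $M_n(\Zig_\ell)^{\otimes d} \iso \End_{\Zig_\ell^{\otimes d}}(V_n^{\otimes d})$ to one between the $\Si_d$-invariants $S^{\Zig_\ell}(n,d)=(M_n(\Zig_\ell)^{\otimes d})^{\Si_d}$ and the further subalgebra $\End_{W_d(\Zig_\ell)}(V_n^{\otimes d})$. Since $W_d(\Zig_\ell)$ is generated as an algebra by its subalgebras $\Zig_\ell^{\otimes d}$ and $\k\Si_d$, a $\Zig_\ell^{\otimes d}$-linear endomorphism $f$ of $V_n^{\otimes d}$ extends to a $W_d(\Zig_\ell)$-linear endomorphism if and only if $f$ commutes with the right $\Si_d$-action on $V_n^{\otimes d}$; because $\k\Si_d$ sits in bidegree $(0,\0)$, no super-sign enters this condition.

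The first step would be to observe that the group $\Si_d$ acts on $M_n(\Zig_\ell)^{\otimes d}$ by superalgebra automorphisms via (\ref{EWSignAction}), and acts on $\End_{\Zig_\ell^{\otimes d}}(V_n^{\otimes d})$ by conjugation via the right $\Si_d$-action on $V_n^{\otimes d}$. I would show that the natural isomorphism $M_n(\Zig_\ell)^{\otimes d}\iso \End_{\Zig_\ell^{\otimes d}}(V_n^{\otimes d})$ is $\Si_d$-equivariant. Taking $\Si_d$-fixed points on both sides then produces the desired isomorphism $S^{\Zig_\ell}(n,d)\iso \End_{W_d(\Zig_\ell)}(V_n^{\otimes d})$, and the bidegree $(0,\0)$ is preserved since the $\Si_d$-action on $M_n(\Zig_\ell)^{\otimes d}$ preserves bidegree.

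To verify equivariance on a pure tensor, I would take a basis element $\xi_{r_1,s_1}^{\za_1}\otimes\cdots\otimes \xi_{r_d,s_d}^{\za_d}\in M_n(\Zig_\ell)^{\otimes d}$ and the corresponding endomorphism $\Phi$ of $V_n^{\otimes d}$. Computing $\Phi$ applied to a pure tensor $v_1\otimes\cdots\otimes v_d\in V_n^{\otimes d}$, using the Koszul rule $(f_1\otimes\cdots\otimes f_d)(v_1\otimes\cdots\otimes v_d)=(-1)^{\sum_{k<l}|f_l||v_k|}f_1(v_1)\otimes\cdots\otimes f_d(v_d)$, and then conjugating by $g\in\Si_d$ using (\ref{EWSignAction}) on both the inputs and the outputs, the two sets of sign contributions cancel to yield exactly the image of $\xi_{r_1,s_1}^{\za_1}\otimes\cdots\otimes \xi_{r_d,s_d}^{\za_d}$ under (\ref{EWSignAction}). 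The main technical obstacle is precisely this bookkeeping of Koszul signs: sorting out how the signs from permuting the $v_k$ interact with those from permuting the $\za_k$, to see that the resulting total sign is the $\Si_d$-action sign $[g;\za_1,\dots,\za_d]$ prescribed by (\ref{EWSignAction}).
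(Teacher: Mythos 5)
Your approach is correct and is the standard argument; the paper offers no proof of its own, instead citing \cite[Lemma 5.7]{EK1}, which runs along precisely these lines. The $\Si_d$-equivariance of the action map $M_n(\Zig_\ell)^{\otimes d}\otimes V_n^{\otimes d}\to V_n^{\otimes d}$ that you single out (with the Koszul sign bookkeeping) is indeed the heart of the matter, and it simultaneously shows that conjugation by $g\in\Si_d$ preserves $\Zig_\ell^{\otimes d}$-linearity, so that passing to $\Si_d$-invariants inside $\End_{\Zig_\ell^{\otimes d}}(V_n^{\otimes d})$ is well posed and lands exactly on $\End_{W_d(\Zig_\ell)}(V_n^{\otimes d})$.
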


From now on we identify $S^{\Zig_\ell}(n,d)$ with the endomorphism algebra $\End_{W_d(\Zig_\ell)}(V_n^{\otimes d})$ via the isomorphism of Lemma~\ref{LTensWdS}.  

For $1\leq r\leq n$ and $j\in J$, we set $v_{r,j}:=(0,\dots,0,\ze^{[j]},0,\cdots,0)\in V_n$ with $\ze^{[j]}$ in the $r$th position. For $\bla\in \Comp^J(n,d)$, we set
$$
v_\bla:=v_{1,0}^{\otimes \la^{(0)}_1}\otimes\dots\otimes v_{1,\ell-1}^{\otimes \la^{(\ell-1)}_1}\otimes\dots\otimes v_{n,0}^{\otimes \la^{(0)}_n}\otimes\dots\otimes v_{n,\ell-1}^{\otimes \la^{(\ell-1)}_n}\in V_n^{\otimes d}.
$$

Recall the notation of Remark~\ref{RBLa}.

\begin{Lemma} \label{LVLaMLa} {\rm \cite[Lemma 5.25]{EK1}}
Let $\bla\in\Comp^J(n,d)$. 
Then $v_\bla\in \xi_\bla V_n^{\otimes d}$ and 
there is an isomorphism of graded right $W_d(\Zig_\ell)$-supermodules $\xi_\bla V_n^{\otimes d}\iso M_\bla$ mapping  $v_\bla$ to $m_\bla$.
\end{Lemma}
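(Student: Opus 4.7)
The plan is to produce an explicit homomorphism $\phi: M_\bla \to \xi_\bla V_n^{\otimes d}$ of graded right $W_d(\Zig_\ell)$-supermodules sending $m_\bla \mapsto v_\bla$, and then to show it is an isomorphism by rank comparison.

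First I would verify the containment $v_\bla \in \xi_\bla V_n^{\otimes d}$, i.e., that $\xi_\bla$ acts as the identity on $v_\bla$. Unwinding the definition of $\xi_\bla = \xi^{\ze\{\bla\}}_{\br\{\bla\},\br\{\bla\}}$ as an $\Si_d$-orbit sum with the sign factor $(-1)^{\lan\cdot,\cdot,\cdot\ran}$, each summand acts on the basic tensor $v_\bla$ by matrix-unit-style multiplication on single tensor factors, and one checks directly that only those orbit representatives that exactly match the shape of $v_\bla$ contribute. Because the labels in $\br\{\bla\}$ have multiplicity exactly $\la^{(0)}_r+\dots+\la^{(\ell-1)}_r$ at position $r$, and the idempotent labels in $\ze\{\bla\}$ are pure $\ze^{[j]}$'s in matching positions, the surviving terms reassemble into $v_\bla$; the sign convention causes no trouble since every $v_{r,j}$ has bidegree $(0,\0)$.

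Next I would establish well-definedness of $\phi$. One has $v_\bla\cdot \ze^\bla = v_\bla$ because each $v_{r,j}$ satisfies $v_{r,j}\ze^{[j]}=v_{r,j}$; and $v_\bla\cdot w = v_\bla$ for all $w\in \Si_\bla$ because such $w$ only permutes tensor factors within blocks of identical, even vectors, so the sign from (\ref{EWSignAction}) is trivial. Hence $v_\bla$ is fixed by the whole of $W_\bla(\Zig_\ell)=\ze^\bla\otimes\k\Si_\bla$, so the map $\ze^\bla W_d(\Zig_\ell)\to \xi_\bla V_n^{\otimes d}$, $x\mapsto v_\bla x$, factors through the quotient $\k_\bla\otimes_{W_\bla(\Zig_\ell)}\ze^\bla W_d(\Zig_\ell)=M_\bla$, giving the homomorphism $\phi$. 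Since $v_\bla$ and $m_\bla$ both sit in bidegree $(0,\0)$, $\phi$ is bidegree preserving.

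For the isomorphism claim I would compare $\k$-ranks. By Lemma~\ref{LDimM} (read through the injection (\ref{EBeta}), with $|\upgamma^J_{n,d}(\bla)|_{\ell-1}=|\la^{(\ell-1)}|$),
\[
\rank_\k M_\bla = \binom{d}{\la^{(0)}_1,\dots,\la^{(\ell-1)}_n}\,4^{d-|\la^{(\ell-1)}|}\,3^{|\la^{(\ell-1)}|}.
\]
On the other side, the basis $\zB_\ell$ of $\Zig_\ell$ together with (\ref{EzejZig}) yields a basis of $\xi_\bla V_n^{\otimes d}$ consisting of $\Si_\bla$-orbit sums of tensors of the form $v_{1,0}\zb^{(1)}\otimes\cdots\otimes v_{n,\ell-1}\zb^{(d)}$ with $\zb^{(t)}\in \ze^{[j_t]}\zB_\ell$, and counting these (multinomial over the block shape, times $4$ or $3$ per factor depending on whether the color is $\ell-1$) reproduces the same rank. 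A direct check that $\phi$ hits a spanning set of right translates of $v_\bla$ gives surjectivity, and matching ranks together with $\k$-freeness then yields the isomorphism.

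The main obstacle is the first step, the verification $\xi_\bla v_\bla=v_\bla$, which requires careful bookkeeping with the symmetrization and the sign $(-1)^{\lan\ba,\br,\bs\ran}$ in the definition of $\xi^\ba_{\br,\bs}$. Once that is in hand, the remaining steps parallel the classical Schur-algebra calculation carried out in \cite{EK1}.
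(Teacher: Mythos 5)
The paper itself gives no proof of this lemma—it simply cites \cite[Lemma 5.25]{EK1}—so there is no in-text argument to compare against. Your overall strategy (check $\xi_\bla v_\bla = v_\bla$, invoke Frobenius reciprocity at the parabolic $W_\bla(\Zig_\ell)$ to get $\phi\colon M_\bla\to\xi_\bla V_n^{\otimes d}$, $m_\bla\mapsto v_\bla$, then conclude via surjectivity and matching $\k$-ranks) is the right one and is presumably the argument of \cite{EK1} adapted to the super setting; the sign bookkeeping is indeed harmless because all entries of $\ze\{\bla\}$ are even.

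The one point that needs repair is your description of the $\k$-basis of $\xi_\bla V_n^{\otimes d}$ as ``$\Si_\bla$-orbit sums'' of tensors. The space $\xi_\bla V_n^{\otimes d}$ is the $\bla$-weight space, not a space of $\Si_\bla$-invariants or symmetrized tensors: the operator $\xi_\bla=\xi^{\ze\{\bla\}}_{\br\{\bla\},\br\{\bla\}}$ acts as the identity on precisely those basic tensors $w_1\otimes\dots\otimes w_d$ (with $w_k$ equal to some $\zb\in\ze^{[j_k]}\zB_\ell$ placed in position $p_k$ of $\Zig_\ell^{\oplus n}$) for which the multiset of pairs $(p_k,j_k)$ has content $\bla$, and it kills all other basic tensors. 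Hence the basis consists of those basic tensors themselves; counting them (a multinomial $\binom{d}{\bla}$ for the position/color pattern, times $4$ or $3$ per factor by (\ref{EzejZig})) reproduces $\rank_\k M_\bla$ from Lemma~\ref{LDimM}. Surjectivity then follows cleanly because each such basic tensor is visibly of the form $v_\bla\,x\,g$ for some $x\in\Zig_\ell^{\otimes d}$ and $g\in\Si_d$, i.e.\ a right $W_d(\Zig_\ell)$-translate of $v_\bla$. With that correction, your argument is complete.
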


Identifying $V_n^{\otimes d}=\bigoplus_{\bla\in \Comp^J(n,d)}\xi_\bla V_n^{\otimes d}$ with $\bigoplus_{\bla\in \Comp^J(n,d)}M_\bla$ via the lemma, we obtain an isomorphism 
\begin{equation}\label{EIsoSchurPermEnd}
S^{\Zig_\ell}(n,d)\iso\End_{W_d(\Zig_\ell)}\Big(\bigoplus_{\bla\in \Comp^J(n,d)}M_\bla\Big)
\end{equation}
of graded $\k$-superalgebras, under which the idempotent $\xi_\bla$ corresponds to the projection onto the summand $M_\bla$. We identify $S^{\Zig_\ell}(n,d)$ with the endomorphism algebra in the right hand side via this isomorphism, cf. \cite[Corollary 5.26]{EK1}. Then, upon restriction, we have for any 
$\bla,\bmu\in \Comp^J(n,d)$ the identification
\begin{equation}\label{EIdentifyXilaSXimu}
\xi_\bmu S^{\Zig_\ell}(n,d) \xi_\bla=\End_{W_d(\Zig_\ell)}(M_\bla,M_\bmu).
\end{equation}

\subsection{Some special elements of $S^{\Zig_\ell}(n,d)$}
\label{SSSpecialElements}

Let $\bla=(\la^{(0)},\dots,\la^{(\ell-1)})\in\Comp^J(n-1,d-1)$. For $j\in J$, we define 
$$\bla_{\{j\}}=(\la_{\{j\}}^{(0)},\dots,\la_{\{j\}}^{(\ell-1)})\in\Comp^J(n,d)$$ where
$$
\la_{\{j\}}^{(i)}:=
\left\{
\begin{array}{ll}
(0,\la^{(i)}_1,\dots,\la^{(i)}_{n-1}) &\hbox{if $i\neq j$,}\\
(1,\la^{(j)}_1,\dots,\la^{(j)}_{n-1})  &\hbox{if $i=j$.}
\end{array}
\right.
$$

Using the notation (\ref{EInsertion}), for $\zx\in\Zig_\ell$, we have the element $\zx_1=\zx\otimes 1_{\Zig_\ell}^{\otimes (d-1)}\in \Zig_\ell^{\otimes d}\subseteq W_d(\Zig_\ell)$. Now, suppose $\zx\in\ze^{[j]}\Zig_\ell\ze^{[k]}$ and consider the element $m_{\bla_{\{j\}}}\zx_1\in M_{\bla_{\{j\}}}$. 
Recall the notation of Remark~\ref{RBLa} and (\ref{ESiBla}). 
Since $\ze^{\bla_{\{k\}}}=\ze^{[k]}\otimes \ze^\bla$ and $\Si_{\bla_{\{k\}}}=\Si_1\times \Si_\bla$, we have $m_{\bla_{\{j\}}}\zx_1(\ze^{\bla_{\{k\}}}\otimes g)=m_{\bla_{\{j\}}}\zx_1$ for all $g\in \Si_{\bla_{\{k\}}}$. So by the adjointness of induction and restriction, there exists a unique $W_d(\Zig_\ell)$-module homomorphism $M_{\bla_{\{k\}}}\to M_{\bla_{\{j\}}},\ m_{\bla_{\{k\}}}\mapsto m_{\bla_{\{j\}}}\zx_1$. Therefore there exists a unique endomorphism $\tti^\bla(\zx)\in S^{\Zig_\ell}(n, d)=\End_{W_d(\Zig_\ell)}\big(\bigoplus_{\bmu\in \Comp^J(n,d)}M_\bmu\big)$ with
$$
\tti^\bla(\zx):m_\bmu\mapsto
\left\{
\begin{array}{ll}
m_{\bla_{\{j\}}}\zx_1 &\hbox{if $\bmu=\bla_{\{k\}}$,}\\
0 &\hbox{otherwise.}
\end{array}
\right.
$$
This yields an (injective) homomorphism of graded superalgebras
\begin{equation}\label{Etti}
\tti^\bla:\Zig_\ell\to S^{\Zig_\ell}(n,d).
\end{equation}

Recall the notation (\ref{EECurlyR}). For $\bla\in\Comp^J(n-1,d-1)$, denote $h_r(\bla):=\sum_{j\in J}\la^{(j)}_r$ for $r=1,\dots,n-1$ and set 
\begin{equation*}\label{EHr}
h(\bla):=(h_1(\bla),\dots,h_{n-1}(\bla))\in \Comp(n-1,d-1).
\end{equation*}

\begin{Lemma} \label{LSomettila} 
Let $\bla\in\Comp^J(n-1,d-1)$ and set $h_r:=h_r(\bla)$ for $r=1,\dots,n-1$. Then
$$
\tti^\bla(\zx)=\xi^{\zx\ze\{\bla\}}_{1\,2^{h_1}\cdots \,n^{h_{n-1}},1\,2^{h_1}\cdots \,n^{h_{n-1}}}.
$$
\end{Lemma}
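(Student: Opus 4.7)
The plan is to verify that $\eta:=\xi^{\zx\ze\{\bla\}}_{\br,\br}$, where $\br:=1\,2^{h_1}\cdots n^{h_{n-1}}$, agrees with $\tti^\bla(\zx)$ as an endomorphism of $V_n^{\otimes d}$. By Lemma~\ref{LVLaMLa}, the identifications $\xi_\bmu V_n^{\otimes d}\simeq M_\bmu$, $v_\bmu\mapsto m_\bmu$, characterize $\tti^\bla(\zx)$ uniquely by its effect on the elementary tensors $v_\bmu$: for $\zx\in\ze^{[j]}\Zig_\ell\ze^{[k]}$ it vanishes on $v_\bmu$ with $\bmu\neq\bla_{\{k\}}$ and sends $v_{\bla_{\{k\}}}$ to $v_{\bla_{\{j\}}}\zx_1$. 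Since $\eta\in S^{\Zig_\ell}(n,d)$ is automatically $W_d(\Zig_\ell)$-linear, it suffices to verify these two properties for $\eta$ directly.

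The main computational tool is the elementary formula, for homogeneous $\zy\in\ze^{[j']}\Zig_\ell\ze^{[k']}$ and basis vectors of $V_n$ regarded as column vectors,
$$(\zy E_{s,t})(v_{r,i})=\delta_{t,r}\,\delta_{i,k'}\,v_{s,j'}\,\zy.$$
Because each $v_{r,i}$ is even, no Koszul signs coming from the vectors themselves appear when a tensor product of such matrices is applied to $v_\bmu=v_{p_1,q_1}\otimes\cdots\otimes v_{p_d,q_d}$. Expanding $\eta$ as the $\Si_d$-orbit sum of signed elementary tensors and applying to $v_\bmu$, each nonzero summand forces the first-index multiset of $\bmu$ to equal $\{1,2^{h_1},\ldots,n^{h_{n-1}}\}$ together with the color conditions $\mu^{(k)}_1=1$, $\mu^{(i)}_1=0$ for $i\neq k$, and $\mu^{(i)}_r=\la^{(i)}_{r-1}$ for all $r\ge 2$. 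These constraints precisely force $\bmu=\bla_{\{k\}}$, so $\eta\cdot v_\bmu=0$ for every other weight.

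For $\bmu=\bla_{\{k\}}$, the surviving summands are indexed by the cosets of the stabilizer of $(\zx\ze\{\bla\},\br,\br)$ in $\Si_d$, and applying the action formula each contributes, up to an overall sign, a copy of the elementary tensor $v_{\bla_{\{j\}}}\zx_1$. The main, indeed only, delicate point is the sign verification: the signs $(-1)^{\langle\ba,\br,\bs\rangle+\langle\bb,\bt,\bu\rangle}$ built into the orbit-sum definition of $\xi^\ba_{\br,\bs}$ are exactly those required to cancel the Koszul signs accumulated when $\zx$ (the single potentially odd factor in the tuple $\zx\ze\{\bla\}$) is commuted past the intervening idempotents $\ze^{[\cdot]}$ in the course of reordering the tensor slots. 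This bookkeeping parallels the arguments in \cite[Lemma~5.25]{EK1} and \cite[\S5]{KM2} and represents the only real obstacle in the proof; we handle it by the same template, and the net outcome is that the full orbit sum collapses to the single vector $v_{\bla_{\{j\}}}\zx_1$, completing the identification $\eta=\tti^\bla(\zx)$.
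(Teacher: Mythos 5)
Your proof takes essentially the same route as the paper: both check that the two endomorphisms agree on each basis tensor $v_\bmu$ of $V_n^{\otimes d}$ (via Lemma~\ref{LVLaMLa}), observe that both vanish unless $\bmu=\bla_{\{k\}}$, and then compute the action on $v_{\bla_{\{k\}}}$; the paper simply compresses the last step into ``it is easy to see.'' One simplification worth noting: since $\br=1\,2^{h_1}\cdots n^{h_{n-1}}$ has the value $1$ only at position $1$, any permutation stabilizing $\br$ fixes position $1$, so when $\xi^{\zx\ze\{\bla\}}_{\br,\br}$ is applied to $v_{\bla_{\{k\}}}$ exactly one orbit term survives (the identity representative, with sign $+1$) --- the Koszul sign cancellation you flag as the delicate point never actually arises in this computation.
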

\begin{proof}
Let $\bmu\in\Comp^J(n,d)$. 
It suffices to observe that for $\zx \in \ze^{[j]}\Zig_\ell\ze^{[k]}$, the endomorphisms $\tti^\bla(\zx)$ and $\xi^{\zx\ze\{\bla\}}_{1\,2^{h_1}\cdots \,n^{h_{n-1}},1\,2^{h_1}\cdots \,n^{h_{n-1}}}
$ act the same way on the basis element  $v_\bmu$ of $ V_n^{\otimes d}$. But both endomorphisms send $v_\bmu$ to $0$ unless $\bmu=\bla_{\{k\}}$, and it is easy to see that 
$$
\xi^{\zx\ze\{\bla\}}_{1\,2^{h_1}\cdots \,n^{h_{n-1}},1\,2^{h_1}\cdots \,n^{h_{n-1}}} (v_{\bla_{\{k\}}})
=v_{\bla_{\{j\}}}\zx_1,
$$
as required (cf. the proof of \cite[Theorem 7.7]{EK1}). 
\end{proof}

\begin{Lemma} \label{LSumILa} 
Let $\chi=(h_1,\dots,h_{n-1})\in\Comp(n-1,d-1)$. 
Then for any $\zx\in\Zig_\ell$, we have 
$$
\sum_{\substack{\bla\in\Comp^J(n-1,d-1),\\ h(\bla)=\chi}}\tti^\bla(\zx)=\xi^{\zx 1_{\Zig_\ell}^{d-1}}_{1\,2^{h_1}\cdots\,n^{h_{n-1}},1\,2^{h_1}\cdots\,n^{h_{n-1}}}.
$$
\end{Lemma}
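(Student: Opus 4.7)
\medskip
\noindent
\textbf{Proof plan for Lemma~\ref{LSumILa}.} The strategy is to apply Lemma~\ref{LSomettila} termwise on the left-hand side, and then recognize the result as a multilinear expansion of the right-hand side in the superscript entries. The core observation is that as $\bla$ ranges over $\Comp^J(n-1,d-1)$ with $h(\bla)=\chi$, the tuples $\ze\{\bla\}\in\zB_\ell^{d-1}$ run through a complete set of orbit representatives for the action on $J^{d-1}$ of the Young subgroup $\Si_1\times\Si_{h_1}\times\dots\times\Si_{h_{n-1}}\leq\Si_d$, which is the stabilizer of the pair $(\br\chi,\br\chi)$ where $\br\chi:=1\,2^{h_1}\cdots n^{h_{n-1}}$.

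First, I would apply Lemma~\ref{LSomettila} to each summand on the left: by that lemma and the definition of $\ze\{\bla\}$ in (\ref{EECurlyR}), we have
$$
\tti^\bla(\zx)=\xi^{\zx\ze\{\bla\}}_{\br\chi,\br\chi}
$$
for every $\bla\in\Comp^J(n-1,d-1)$ with $h(\bla)=\chi$, since the $r$th block of the tuple $\br\chi$ has length exactly $h_r=h_r(\bla)$ and consists entirely of the row-index $r+1$. Thus the left-hand side equals $\sum_{\bla:\,h(\bla)=\chi}\xi^{\zx\ze\{\bla\}}_{\br\chi,\br\chi}$.

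Next, I would expand the right-hand side using $1_{\Zig_\ell}=\sum_{j\in J}\ze^{[j]}$ together with the multilinearity of the $\xi$-symbol in its superscript: this gives
$$
\xi^{\zx\,1_{\Zig_\ell}^{d-1}}_{\br\chi,\br\chi}=\sum_{\bj\in J^{d-1}}\xi^{\zx\,\ze^{[j_1]}\cdots\ze^{[j_{d-1}]}}_{\br\chi,\br\chi}.
$$
Since each $\xi^\ba_{\br,\bs}$ depends only on the $\Si_d$-orbit of $(\ba,\br\chi,\br\chi)$, the terms indexed by $\bj$'s lying in the same $\Si_1\times\Si_{h_1}\times\cdots\times\Si_{h_{n-1}}$-orbit on $J^{d-1}$ are identified under the orbit-sum convention of \cite[\S5.2]{KM2}, so each such orbit contributes exactly once to the sum. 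The bijection between orbit representatives of $J^{d-1}$ under this Young subgroup and multicompositions $\bla\in\Comp^J(n-1,d-1)$ with $h(\bla)=\chi$, sending $\bla$ to $\ze\{\bla\}$ by reading off the colors block-by-block in increasing order of $j$, then identifies the reindexed right-hand side with the left-hand side.

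The only delicate point is the bookkeeping in the second step, where one must be careful that the convention for extending $\xi^{\bullet}_{\br,\bs}$ by multilinearity in tuples with repeated basis entries matches the orbit-representative convention; this is exactly the phenomenon that makes Step~3 a bijection rather than a multinomial weighted sum, and it reduces to a standard combinatorial check on the Young subgroup $\Si_1\times\Si_{h_1}\times\cdots\times\Si_{h_{n-1}}$ acting on $J^{d-1}$ by block-wise place permutations.
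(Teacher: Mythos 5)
Your overall strategy is the same as the paper's: apply Lemma~\ref{LSomettila} to each summand, reducing the statement to the combinatorial identity
$\sum_{\bla:h(\bla)=\chi}\xi^{\zx\ze\{\bla\}}_{\br\chi,\br\chi}=\xi^{\zx 1_{\Zig_\ell}^{d-1}}_{\br\chi,\br\chi}$
where $\br\chi:=1\,2^{h_1}\cdots n^{h_{n-1}}$; the paper simply states this identity without further proof. However, your attempt to justify it contains a genuine gap. The intermediate ``multilinear expansion''
$\xi^{\zx\,1_{\Zig_\ell}^{d-1}}_{\br\chi,\br\chi}=\sum_{\bj\in J^{d-1}}\xi^{\zx\,\ze^{[j_1]}\cdots\ze^{[j_{d-1}]}}_{\br\chi,\br\chi}$
is \emph{false}: the symbol $\xi^\ba_{\br,\bs}$ is an orbit sum, and orbit sums are not linear in the entries of $\ba$, because the orbit size changes when entries coincide. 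Concretely, take $d=3$, $n=2$, $\chi=(2)$, $\ell=2$: the left-hand side expands into $\binom{3}{1,2}\cdot\ell^2=12$ pure tensors, while the right-hand side, interpreted with the paper's orbit-sum definition, equals $\xi^{\zx\ze^{[0]}\ze^{[0]}}_{122,122}+\xi^{\zx\ze^{[1]}\ze^{[1]}}_{122,122}+2\,\xi^{\zx\ze^{[0]}\ze^{[1]}}_{122,122}$ (since $\xi^{\zx\ze^{[0]}\ze^{[1]}}_{122,122}=\xi^{\zx\ze^{[1]}\ze^{[0]}}_{122,122}$), which has $3+3+2\cdot 6=18$ pure-tensor terms with multiplicity. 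Your follow-up claim that the terms in the same Young-subgroup orbit ``are identified... so each such orbit contributes exactly once'' does not rescue this: equal terms in a sum contribute additively, not once. You flag the subtlety in your closing paragraph as a ``standard combinatorial check,'' but the argument as written does not close it.

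The repair runs in the opposite direction: first expand the \emph{orbit sum} $\xi^{\zx 1_{\Zig_\ell}^{d-1}}_{\br\chi,\br\chi}$ into pure tensors (the orbit has size $d!/\prod_r h_r!$, and each $\xi^{1_{\Zig_\ell}}_{r,r}=\sum_{j\in J}\xi^{\ze^{[j]}}_{r,r}$ contributes a factor of $\ell$ per non-$\zx$ position), and then observe that the resulting multiset of pure tensors \emph{partitions} exactly into the pure tensors occurring in the orbit sums $\xi^{\zx\ze\{\bla\}}_{\br\chi,\br\chi}$ over all $\bla$ with $h(\bla)=\chi$; a pure tensor determines $\bla$ via the color multiset at each row index $r$. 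The consistency of the counting is confirmed by the multinomial identity $\sum_{\bla:h(\bla)=\chi}d!/\prod_{j,r}\la^{(j)}_r!=\ell^{d-1}\,d!/\prod_r h_r!$. This is an orbit-decomposition argument rather than a multilinearity argument, which is the distinction your write-up blurs.
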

\begin{proof}
Note that 
$$
\sum_{\substack{\bla\in\Comp^J(n-1,d-1),\\ h(\bla)=\chi}}\xi^{\zx\ze\{\bla\}}_{1\,2^{h_1}\cdots \,n^{h_{n-1}},1\,2^{h_1}\cdots \,n^{h_{n-1}}}=\xi^{\zx 1_{\Zig_\ell}^{d-1}}_{1\,2^{h_1}\cdots\,n^{h_{n-1}},1\,2^{h_1}\cdots\,n^{h_{n-1}}}.
$$
So the result follows from Lemma~\ref{LSomettila}.
\end{proof}

\subsection{Generalized Schur algebra $T^{\Zig_\ell}(n,d)$.} 
\label{SST}
Following \cite[(3.9)]{KM2}, we define for every $(\bb,\br,\bs)\in\Seq^{\zB_\ell}(n,d)$ the element
$$
\eta^\bb_{\br,\bs}:=[\bb,\br,\bs]_\zc^!\,\,\xi^\bb_{\br,\bs}\in S^{\Zig_\ell}(n,d)_\k,
$$
where 
$$
[\bb,\br,\bs]^!_\zc:=\prod_{\substack{j\in J\\ 1\leq r,s\leq n}}\sharp\big\{k\mid 1\leq k\leq d\ \text{and}\ (\zb_k,r_k,s_k)=(\zc^{[j]},r,s)\big\}!
$$
By \cite[Proposition~3.12]{KM2}, the (full rank) $\k$-submodule $T^{\Zig_\ell}(n,d)$ of $S^{\Zig_\ell}(n,d)$ spanned by all $\eta^\bb_{\br,\bs}$'s is a graded $\k$-subsuperalgebra of $S^{\Zig_\ell}(n,d)$. 
Note that by definition we have $\eta^\bb_{\br,\bs}=\xi^\bb_{\br,\bs}$ unless at least two $\zc^{[j]}$'s appear in $\bb$, in which case the degree of $\xi^\bb_{\br,\bs}$ is at least $4$. So for small degrees we have:
\begin{equation}\label{ESmallDegST}
S^{\Zig_\ell}(n,d)^m=T^{\Zig_\ell}(n,d)^m\qquad (m=0,1,2,3).
\end{equation} 
In particular, the degree zero idempotents $\xi_\bla$ belong to $T^{\Zig_\ell}(n,d)$, and we have an orthogonal idempotent decomposition 
\begin{equation}\label{EXiLa}
1_{T^{\Zig_\ell}(n,d)}=\sum_{\bla\in\Comp^J(n,d)}\xi_\bla.
\end{equation}

By \cite[Corollary 6.7]{KM2}, the symmetrizing form $\t$ on $\Zig_\ell$ defined in \S\ref{SSBrTree}, extends to give a (degree $-2d$) symmetrizing form on $T^{\Zig_\ell}(n,d)$. 
Moreover, $T^{\Zig_\ell}(n,d)$ is a {\em maximal symmetric subalgebra} of $S^{\Zig_\ell}(n,d)$ in the following sense:

\begin{Theorem} \label{TMaxSymm} {\rm \cite[Corollary 3.11]{KlSym}}
If $n\geq d$ then the subalgebra $T^{\Zig_\ell}(n,d)\subseteq S^{\Zig_\ell}(n,d)$ is maximally symmetric, i.e. if $C$ is an intermediate $\k$-subalgebra with $T^{\Zig_\ell}(n,d)\subseteq C\subseteq S^{\Zig_\ell}(n,d)$ such that the $(\k/\m)$-algebras $(\k/\m)\otimes_\k C$ are symmetric for all maximal ideals $\m$ of $\k$, then $C=T^{\Zig_\ell}(n,d)$. 
\end{Theorem}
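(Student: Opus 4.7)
The plan is to reduce to a field, then compare the putative intermediate subalgebra $C$ with $T^{\Zig_\ell}(n,d)$ by analysing the non-degenerate trace pairing forced by the symmetric structure. Since $T^{\Zig_\ell}(n,d)$ already carries a degree $-2d$ symmetrizing form by [KM2, Corollary 6.7], one essentially has to rule out strict enlargements inside $S^{\Zig_\ell}(n,d)$ that remain symmetric modulo every maximal ideal.

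First I would reduce to the field case. Since $\k$ is a PID and $T^{\Zig_\ell}(n,d)\subseteq C\subseteq S^{\Zig_\ell}(n,d)$ is a sandwich of $\k$-submodules, with both ends being finite rank free, $C$ is finitely generated torsion-free, hence $\k$-free. If for every maximal ideal $\m\subset\k$ the fibre $C_\F:=(\k/\m)\otimes_\k C$ coincides with $T^{\Zig_\ell}(n,d)_\F$, then $\rank_\k C=\rank_\k T^{\Zig_\ell}(n,d)$ and $T^{\Zig_\ell}(n,d)\subseteq C$ forces equality. So the task is the purely field-theoretic statement: any symmetric subalgebra $T_\F\subseteq C\subseteq S_\F$ is equal to $T_\F$.

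Next I would exploit the explicit basis. By definition $\eta^\bb_{\br,\bs}=[\bb,\br,\bs]^!_\zc\,\xi^\bb_{\br,\bs}$, so the quotient $S_\F/T_\F$ is spanned by those $\xi^\bb_{\br,\bs}$ for which $[\bb,\br,\bs]^!_\zc$ vanishes in $\F$; by (\ref{ESmallDegST}) these all sit in degrees $\geq 4$. Given any symmetrizing form $\tau'$ on $C$, the degree of $\tau'$ must match the top nonzero graded component of $C$. Since the top component of $T_\F$ (of degree $2d$, spanned by products of the $\zc$'s) is preserved in any enlargement and the symmetry of the form forces the form's degree to be the negative of the top degree, $\tau'$ must be homogeneous of degree $-2d$. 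Restricting $\tau'$ to $T_\F$ and comparing with the reference form from [KM2] (both valued in $\F$ at the top-degree basis elements $\eta^{\zc\{\bla\}}_{\br\{\bla\},\br\{\bla\}}\xi_\bla$ indexed by the idempotent decomposition (\ref{EXiLa})) forces $\tau'|_{T_\F}$ to agree with a scalar multiple of the reference form, so it is non-degenerate already on $T_\F$.

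The main step is then to show that this forces $T_\F=C$. Here is where $n\geq d$ enters: by (\ref{EIdentifyXilaSXimu}) the weight decomposition $C=\bigoplus_{\bla,\bmu}\xi_\bmu C\xi_\bla$ refines cleanly, and $n\geq d$ guarantees that every multicomposition $\bla\in\Comp^J(n,d)$ (including $\om_d$-type ones in which all parts are $1$) is available as a weight. The perfect pairing from $\tau'$ restricts to a perfect pairing $\xi_\bmu C\xi_\bla\times \xi_\bla C\xi_\bmu\to\F$. I would argue that if some basis vector $\xi^\bb_{\br,\bs}\in \xi_\bmu C\xi_\bla\setminus \xi_\bmu T_\F\xi_\bla$ is present, then its required dual partner under $\tau'$, after pairing against every $\eta^{\bb'}_{\br',\bs'}\in T_\F$, is determined by the pairing on $T_\F$ to equal the scalar $1/[\bb,\br,\bs]^!_\zc$ times a $T_\F$-element; since $[\bb,\br,\bs]^!_\zc=0$ in $\F$ for such $\bb,\br,\bs$, no such dual partner exists inside $C$. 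This contradiction gives $C=T_\F$.

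The hard part is the last contradiction argument: one must actually compute enough of the pairing matrix of $\tau'$ on mixed weight spaces to identify the constraint. The cleanest route is probably to set up, for each pair $(\bla,\bmu)$ with $\bla$ a refinement of the "standard" weight where $n\geq d$ ensures existence of a weight of type $\om_d$, the pairing with basis vectors of $T_\F$ and use the wreath-product description (\ref{EIsoSchurPermEnd}) to reduce the pairing computation to the symmetric bilinear form on $W_d(\Zig_\ell)$. I expect that working carefully with the wreath product structure and Lemma~\ref{LSomettila}--\ref{LSumILa} lets one pinpoint exactly the divided-power factorial as the obstruction, closing the argument.
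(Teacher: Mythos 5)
This statement is cited in the paper from \cite[Corollary 3.11]{KlSym} and is not proved there; there is no in-paper proof to compare against. Evaluated on its own merits, your proposal has several gaps, the most serious being the reduction step.

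Your reduction to the field case does not work as written. You assert that if $(\k/\m)\otimes_\k C$ ``coincides with'' $T^{\Zig_\ell}(n,d)_\F$ for every $\m$, then $\rank_\k C = \rank_\k T^{\Zig_\ell}(n,d)$ and $T^{\Zig_\ell}(n,d)\subseteq C$ force equality. But both $T^{\Zig_\ell}(n,d)$ and $C$ have full rank in $S^{\Zig_\ell}(n,d)$, so equality of ranks is automatic and says nothing. Nor does agreement of the images of $T^{\Zig_\ell}(n,d)$ and $C$ in $S^{\Zig_\ell}(n,d)_\F$ for all $\m$ give $T^{\Zig_\ell}(n,d)=C$: already for $\k=\Z$, $T=4\Z\subseteq C=2\Z\subseteq S=\Z$ the images in $S/pS$ agree for every prime $p$, yet $T\neq C$. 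Moreover the sandwich $T^{\Zig_\ell}(n,d)_\F\subseteq C\subseteq S^{\Zig_\ell}(n,d)_\F$ that you declare to be ``the purely field-theoretic statement'' is not well-posed: the natural map $T^{\Zig_\ell}(n,d)_\F\to S^{\Zig_\ell}(n,d)_\F$ is \emph{not} injective when $0<\cha\F\le d$ (its kernel is spanned by the images of the $\eta^{\bb}_{\br,\bs}$ whose factorial $[\bb,\br,\bs]^!_\zc$ vanishes mod $\m$), so $T^{\Zig_\ell}(n,d)_\F$ is not a subalgebra of $S^{\Zig_\ell}(n,d)_\F$, and it is different from the image of $T^{\Zig_\ell}(n,d)$ in $S^{\Zig_\ell}(n,d)_\F$. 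A correct reduction would have to show something like surjectivity of $T^{\Zig_\ell}(n,d)_\F\to (\k/\m)\otimes_\k C$ for all $\m$ (which does yield $C=T^{\Zig_\ell}(n,d)$, by Nakayama applied to $C/T^{\Zig_\ell}(n,d)$), and this is a genuinely different statement from the one you reduce to.

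The middle step is also unjustified: you claim the symmetrizing form $\tau'$ on $C$ must be homogeneous of degree $-2d$ and must restrict to a scalar multiple of the reference form on $T^{\Zig_\ell}(n,d)_\F$. Neither assertion is proved. A symmetrizing form on a positively graded finite-dimensional algebra need not be homogeneous (one needs a separate lemma to extract a homogeneous symmetrizing component), and symmetrizing forms form a torsor under central units rather than a one-dimensional space, so the restriction to $T^{\Zig_\ell}(n,d)_\F$ is not pinned down by degree considerations alone. Finally, the concluding contradiction ``no dual partner exists because $1/[\bb,\br,\bs]^!_\zc$ is undefined'' is heuristic: an abstract perfect pairing on $C$ has no a priori reason to be governed by the explicit pairing on $T^{\Zig_\ell}(n,d)$, and you acknowledge this yourself. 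As it stands the proposal is an outline of an intuition rather than a proof.
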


\subsection{Generation of $T^{\Zig_\ell}(n,d)$}
For $\zx\in\Zig_\ell$ we denote
$$
\tti_{r,s}(\zx):=\sum_{c=0}^{d-1} 1_{M_n(\Zig_\ell)}^{\otimes c}\otimes \xi_{r,s}^\zx\otimes 1_{M_n(\Zig_\ell)}^{\otimes d-c-1}\in (M_n(\Zig_\ell)^{\otimes d})^{\Si_d}=S^{\Zig_\ell}(n,d).
$$

\begin{Lemma} \label{LGen1} {\rm \cite[Theorem 4.13]{KM2}}
The $\k$-subalgebra $T^{\Zig_\ell}(n,d)\subseteq S^{\Zig_\ell}(n,d)$ is generated by the degree zero component  $S^{\Zig_\ell}(n,d)^0$ and the set $\{\tti_{r,s}(\zx)\mid \zx\in\Zig_\ell,\, 1\leq r,s\leq n\}$. 
\end{Lemma}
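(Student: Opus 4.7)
The plan is to show that every basis element $\eta^\bb_{\br,\bs}$ of $T^{\Zig_\ell}(n,d)$ lies in the $\k$-subalgebra $C$ generated by $S^{\Zig_\ell}(n,d)^0$ together with all $\tti_{r,s}(\zx)$. The argument has three main ingredients.

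First I would expand $\tti_{r,s}(\zx)$ for homogeneous $\zx$ in the $\xi$-basis: after writing each $1_{M_n(\Zig_\ell)}$ as a sum of diagonal matrix units $\xi^{\ze^{[j]}}_{k,k}$ and folding in the $\Si_d$-symmetrization built into $S^{\Zig_\ell}(n,d)$, $\tti_{r,s}(\zx)$ becomes a sum of elements of the form $\xi^{\zx\,\ze\{\bla\}}_{\br,\bs}$ over appropriate colored compositions; this is closely related to Lemmas~\ref{LSomettila} and~\ref{LSumILa}. Since a word $\bb$ containing at most one letter from $\{\zc^{[j]}\mid j\in J\}$ has trivial divided-power factor $[\bb,\br,\bs]^!_\zc=1$, the elements $\xi^\bb_{\br,\bs}$ and $\eta^\bb_{\br,\bs}$ coincide in this case, and every such $\eta^\bb_{\br,\bs}$ already lies in $C$. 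Together with (\ref{ESmallDegST}), this handles the low-degree part.

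Next, given a general basis element $\eta^\bb_{\br,\bs}$, write $\bb=\zb^{(1)}\cdots \zb^{(d)}$ and factor each occurrence of $\zc^{[j]}$ as $\za^{[j,k]}\za^{[k,j]}$ (or as $\zu\cdot \zu$ when $j=0$), obtaining a list of letters $\zy^{(1)},\ldots,\zy^{(k)}$ none of which is a $\zc^{[j]}$. I would then compute the product $\tti_{r'_1,s'_1}(\zy^{(1)})\cdots\tti_{r'_k,s'_k}(\zy^{(k)})$ for suitable row/column choices matching $\br,\bs$. Expanding each $\tti$-factor over its tensor slots, the total product decomposes into two types of contributions: (a) an "all-distinct-slots" part which, by Step 1, is a linear combination of $\eta$-elements whose word has no $\zc^{[j]}$ letters and therefore lies in $C$; and (b) "collision" parts where pairs of $\tti$-factors land in the same slot and multiply via $\xi^{\za^{[j,k]}}_{r,s}\xi^{\za^{[k,j]}}_{s,r}=\xi^{\zc^{[j]}}_{r,r}$ (respectively $\zu\cdot\zu=\zc^{[0]}$). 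The collision terms recombine into $\xi^\bb_{\br,\bs}$, and the number of ordered ways in which the given $\tti$-factorization can realize a prescribed collision pattern equals precisely $[\bb,\br,\bs]^!_\zc$, matching the divided-power normalization in the definition of $\eta^\bb_{\br,\bs}$.

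Finally I would close the argument by induction on the lexicographic pair $\bigl(\deg(\eta^\bb_{\br,\bs}),\;\sharp\{k\mid \zb_k\in\{\zc^{[j]}\mid j\in J\}\}\bigr)$, with base case supplied by (\ref{ESmallDegST}) and inductive step supplied by the product computation above; any terms appearing in case (a) are strictly smaller in the second coordinate and so lie in $C$ by induction. The principal obstacle is the combinatorial and sign bookkeeping in part (b): one must verify that the signs from the $\Si_d$-sign action (\ref{EWSignAction}) on the odd generators $\za^{[i,k]}$, $\zu$, combined with the multiplicities arising from ordered collision patterns, produce exactly the integer $[\bb,\br,\bs]^!_\zc$ rather than an off-by-one multiple. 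Once this factorial matches cleanly, the divided-power definition of $T^{\Zig_\ell}(n,d)\subseteq S^{\Zig_\ell}(n,d)$ fits the generators $\tti_{r,s}(\zx)$ as intended, and the generation claim follows.
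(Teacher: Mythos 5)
The paper does not prove this lemma itself; it is quoted directly from \cite[Theorem 4.13]{KM2}, so there is no internal argument against which to compare. Taking your proposal on its own merits, the central gap is exactly the one you flag yourself: you never carry out the verification that the collision multiplicities, combined with the signs coming from (\ref{EWSignAction}), combine to produce precisely $[\bb,\br,\bs]^!_\zc$. In a divided-power setting this factorial is the whole content of the statement --- an off-by-a-unit multiple would show $T^{\Zig_\ell}(n,d)\subsetneq C$ is actually equality with a \emph{bigger} subalgebra, contradicting Theorem~\ref{TMaxSymm}. Until that computation is done, what you have is a plan, not a proof.

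The surrounding machinery also has problems. The induction on $(\deg,\ \#\{k\mid \zb_k\in\{\zc^{[j]}\}\})$ does not close as stated: a pair of atoms colliding in the reverse order also produces a $\zc$-letter --- for instance $\za^{[j-1,j]}\za^{[j,j-1]}=\zc^{[j-1]}$ by the length-two-cycle relations in $\Zig_\ell$ --- so the case-(a) leftovers can have the \emph{same} $\zc$-count as the target, and one must induct on something finer (the value of $[\bb,\br,\bs]^!_\zc$ itself, or a dominance order on the repetition multiplicities). Likewise, the assertion in your first step that "every such $\eta^\bb_{\br,\bs}$ already lies in $C$" for $\bb$ with at most one $\zc$-letter is asserted rather than shown: you establish $\tti_{r,s}(\zx)\in C$ as a \emph{sum} of such $\eta$'s, which does not hand you the individual summands, and (\ref{ESmallDegST}) covers only degrees up to $3$ while a $\zc$-free word can have degree up to $2d$. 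Finally, the factoring of each $\zc^{[j]}$ into two arrows is unnecessary extra work: $\tti_{r,s}(\zc^{[j]})$ is itself one of the permitted generators, and multiplying those directly produces fewer and more controllable collision terms (unwanted products are length-$\ge 3$ paths, hence zero in $\Zig_\ell$), so the bookkeeping you deferred would be lighter on that route.
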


\begin{Corollary} \label{CGen1} 
The $\k$-subalgebra $T^{\Zig_\ell}(n,d)\subseteq S^{\Zig_\ell}(n,d)$ is generated by the degree zero component $S^{\Zig_\ell}(n,d)^0$ and the set $\{\tti_{1,1}(\zx)\mid \zx\in\Zig_\ell\}$. 
\end{Corollary}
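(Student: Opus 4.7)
Plan: The plan is to invoke Lemma~\ref{LGen1} and show that every $\tti_{r,s}(\zx)$ lies in the subalgebra $B$ generated by $S^{\Zig_\ell}(n,d)^0$ together with $\{\tti_{1,1}(\zw)\mid \zw\in\Zig_\ell\}$. Since $\Zig_\ell=\bigoplus_{i,j\in J}\ze^{[i]}\Zig_\ell\ze^{[j]}$, by linearity we may assume $\zx\in\ze^{[i]}\Zig_\ell\ze^{[j]}$ for some $i,j\in J$. The key observation is that each $\tti_{r,s}(\ze^{[k]})$ is homogeneous of degree $0$, hence lies in $S^{\Zig_\ell}(n,d)^0\subseteq B$ for all $r,s,k$; these supply the ``degree-zero hopping'' elements that we will commute against $\tti_{1,1}(\zx)$.

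Writing $\tti_{r,s}(\zy)=\sum_{c=0}^{d-1}\iota_{c+1}^{(d)}(\zy E_{r,s})$ and using the matrix-unit rule
$(\zy E_{a,b})(\zw E_{c,d})=\delta_{b,c}\,\zy\zw\,E_{a,d}$
inside $M_n(\Zig_\ell)^{\otimes d}$, a product $\tti_{a,b}(\zy)\tti_{c,d}(\zw)$ naturally splits into a \emph{diagonal} part (both insertions in the same tensor slot) and an \emph{off-diagonal} part (insertions in distinct slots). An insertion at one slot super-commutes with any insertion at a different slot via the Koszul sign rule on $M_n(\Zig_\ell)^{\otimes d}$, so when one of $\zy,\zw$ is even, the off-diagonal contributions to $\tti_{a,b}(\zy)\tti_{c,d}(\zw)$ and $\tti_{c,d}(\zw)\tti_{a,b}(\zy)$ coincide and hence cancel in the ordinary commutator.

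Applying this with the even element $\ze^{[j]}$ and noting that $E_{1,1}E_{1,s}=E_{1,s}$ while $E_{1,s}E_{1,1}=0$ for $s\neq 1$, we obtain
$$\tti_{1,s}(\zx)=\tti_{1,1}(\zx)\,\tti_{1,s}(\ze^{[j]})-\tti_{1,s}(\ze^{[j]})\,\tti_{1,1}(\zx)\in B,$$
so $\tti_{1,s}(\zx)\in B$ for every $s$. Next, for $r\neq s$, applying the same trick with the even element $\ze^{[i]}$ and using $E_{r,1}E_{1,s}=E_{r,s}$ but $E_{1,s}E_{r,1}=\delta_{r,s}E_{1,1}=0$, we get
$$\tti_{r,s}(\zx)=\tti_{r,1}(\ze^{[i]})\,\tti_{1,s}(\zx)-\tti_{1,s}(\zx)\,\tti_{r,1}(\ze^{[i]})\in B.$$
Finally, for $r=s\neq 1$ the analogous bracket produces $\tti_{r,r}(\zx)-\tti_{1,1}(\zx)$, since now $E_{1,r}E_{r,1}=E_{1,1}$ contributes to the second diagonal; rearranging shows $\tti_{r,r}(\zx)\in B$. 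The case $r=s=1$ is given, so all cases are covered.

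The only delicate point is verifying the off-diagonal cancellation, which is purely bookkeeping with the Koszul sign rule and uses crucially that $\ze^{[k]} E_{\bullet,\bullet}$ is even; no deeper input is needed.
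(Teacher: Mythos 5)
Your proof is correct and follows essentially the same commutator strategy as the paper's, differing only in the choice of degree-zero ``hopping'' element: the paper uses $\tti_{r,1}(1_{\Zig_\ell})$ and $\tti_{1,s}(1_{\Zig_\ell})$, which makes the bookkeeping uniform, whereas you reduce to $\zx\in\ze^{[i]}\Zig_\ell\ze^{[j]}$ and use $\tti_{r,1}(\ze^{[i]})$, $\tti_{1,s}(\ze^{[j]})$. One minor inaccuracy in your final step: the second diagonal contribution is $\zx E_{1,r}\cdot\ze^{[i]}E_{r,1}=\zx\ze^{[i]}E_{1,1}=\delta_{i,j}\,\zx E_{1,1}$, so the bracket actually produces $\tti_{r,r}(\zx)-\delta_{i,j}\,\tti_{1,1}(\zx)$ rather than always $\tti_{r,r}(\zx)-\tti_{1,1}(\zx)$; the conclusion $\tti_{r,r}(\zx)\in B$ holds in either case, but the displayed identity is literally correct only when $i=j$, a wrinkle that the paper's choice of $1_{\Zig_\ell}$ avoids entirely.
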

\begin{proof}
If $d=1$, it is easy to see that the matrices in $M_n(\Zig_\ell^0)$ together with the matrix units of the form $\xi_{1,1}^\zx$ generate  $M_n(\Zig_\ell)$. 

Let $d>1$ and $T$ be the subalgebra of $S^{\Zig_\ell}(n,d)$ generated by $S^{\Zig_\ell}(n,d)^0\cup\{\tti_{1,1}(\zx)\mid \zx\in\Zig_\ell\}$. 
For $1< r,s\leq n$  
and $\zx\in\Zig_\ell$, using the fact that $\tti_{r,1}(1_{\Zig_\ell})\in  S^{\Zig_\ell}(n,d)^0$, we get
\begin{align*}
\tti_{r,1}(\zx)&=\tti_{r,1}(1_{\Zig_\ell})\tti_{1,1}(\zx)-\tti_{1,1}(\zx)\tti_{r,1}(1_{\Zig_\ell})\in T,
\\
\tti_{1,s}(\zx)&=
\tti_{1,1}(\zx)\tti_{1,s}(1_{\Zig_\ell})
-
\tti_{1,s}(1_{\Zig_\ell})\tti_{1,1}(\zx)\in T.
\end{align*}
Now, if $r\neq s$, we also get
$$
\tti_{r,s}(\zx)=\tti_{r,1}(1_{\Zig_\ell})\tti_{1,s}(\zx)-
\tti_{1,s}(\zx)\tti_{r,1}(1_{\Zig_\ell})\in T.
$$
If $r>1$, we get 
$$
\tti_{r,r}(\zx)-\tti_{1,1}(\zx)=\tti_{r,1}(1_{\Zig_\ell})\tti_{1,r}(\zx)-
\tti_{1,r}(\zx)\tti_{r,1}(1_{\Zig_\ell})\in T,
$$
whence $\tti_{r,r}(\zx)\in T$. The proof is now completed using Lemma~\ref{LGen1}. 
\end{proof}

Recall the homomorphism (\ref{Etti}). 

\begin{Corollary} \label{CGen2} 
Let $d\leq n$. Then the $\k$-subalgebra $T^{\Zig_\ell}(n,d)\subseteq S^{\Zig_\ell}(n,d)$ is generated by the degree zero component $S^{\Zig_\ell}(n,d)^0$ and the set $\{\tti^\bla(\zx)\mid \zx\in \Zig_\ell,\,\bla\in\Comp^J(n-1,d-1)\}$. 
\end{Corollary}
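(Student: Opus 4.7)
The plan is to reduce to Corollary~\ref{CGen1}. Let $T' \subseteq T^{\Zig_\ell}(n,d)$ denote the subalgebra generated by $S^{\Zig_\ell}(n,d)^0$ and $\{\tti^\bla(\zx) \mid \zx \in \Zig_\ell,\,\bla \in \Comp^J(n-1, d-1)\}$. By Corollary~\ref{CGen1} it is enough to show $\tti_{1,1}(\zx) \in T'$ for every $\zx \in \Zig_\ell$. Since the idempotents $\xi_\bmu$, $\bmu\in\Comp^J(n,d)$, all lie in $S^{\Zig_\ell}(n,d)^0 \subseteq T'$, I would decompose $\tti_{1,1}(\zx) = \sum_{\bmu,\bnu} \xi_\bnu \tti_{1,1}(\zx)\xi_\bmu$ and check each component belongs to $T'$. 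For $\zx \in \ze^{[j]}\Zig_\ell\ze^{[k]}$ one sees (using the identifications (\ref{EIsoSchurPermEnd})--(\ref{EIdentifyXilaSXimu})) that the component $\xi_\bnu \tti_{1,1}(\zx)\xi_\bmu$ is non-zero only when $\bnu$ is obtained from $\bmu$ by swapping one $k$-colored particle at row $1$ for a $j$-colored one; in particular $m := \sum_i \mu^{(i)}_1 = \sum_i \nu^{(i)}_1 \geq 1$.

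If $m=1$, then $\bmu = \bla_{\{k\}}$ and $\bnu = \bla_{\{j\}}$ for the $\bla \in \Comp^J(n-1,d-1)$ obtained by deleting row~$1$, and the defining formula for $\tti^\bla(\zx)$ yields $\xi_\bnu \tti_{1,1}(\zx)\xi_\bmu = \tti^\bla(\zx) \in T'$.

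The case $m \geq 2$ is where the assumption $d \leq n$ enters. Since only $d - m \leq n - 2$ particles are distributed over the $n-1$ rows $2,\dots,n$, pigeonhole produces a row $r \in \{2,\dots,n\}$ that is empty in both $\bmu$ and $\bnu$. To exploit this, I would use that $S^{\Zig_\ell}(n,d)^0$ is identified, via $\Zig_\ell^0 \cong \k^J$, with the colored classical Schur algebra $S^J(n,d) = \bigoplus_{d_0 + \dots + d_{\ell-1} = d}\bigotimes_j S(n,d_j)$, which contains well-known elements moving particles of a fixed color between rows. Shunting $m-1$ of the row-$1$ particles into the empty row $r$ produces multicompositions $\bmu^\sharp, \bnu^\sharp$ with row-$1$ count exactly $1$, and I would construct degree-zero transfer operators $\alpha \in \xi_{\bmu^\sharp} S^{\Zig_\ell}(n,d)^0 \xi_\bmu$ and $\beta \in \xi_\bnu S^{\Zig_\ell}(n,d)^0 \xi_{\bnu^\sharp}$ so that
$$\xi_\bnu \tti_{1,1}(\zx)\xi_\bmu \;=\; c \cdot \beta \cdot \bigl(\xi_{\bnu^\sharp}\,\tti_{1,1}(\zx)\,\xi_{\bmu^\sharp}\bigr) \cdot \alpha$$
for some non-zero scalar $c$. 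The middle factor is in $T'$ by the base case, and the outer factors are in $T'$ by construction, so the whole product is in $T'$.

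The main obstacle is the precise construction and verification of the transfer operators $\alpha,\beta$: one must check that the row-$1$ action of $\tti_{1,1}(\zx)$ commutes with shunting particles between rows $2,\dots,n$ in exactly the way required to reproduce the target component. I expect this to be verifiable either by expanding both sides in the $\xi$-basis using Lemmas~\ref{LSomettila}--\ref{LSumILa}, or equivalently by tracking the action on the weight vectors $v_\bmu$ of the tensor space $V_n^{\otimes d}$ via Lemma~\ref{LVLaMLa}, where the commutation becomes transparent because $\xi_{1,1}^\zx$ only touches row-$1$ tensor factors while $\alpha,\beta$ only touch factors indexed by rows other than $1$.
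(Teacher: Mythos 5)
Your overall strategy — reduce to Corollary~\ref{CGen1}, observe that $d\le n$ forces an empty row, and conjugate by degree-zero transfer operators to push the situation into the "one particle in row $1$" regime where $\tti^\bla(\zx)$ applies — is exactly what the paper does. The difference is where you slice: you decompose $\tti_{1,1}(\zx)=\sum_{\bmu,\bnu}\xi_\bnu\tti_{1,1}(\zx)\xi_\bmu$ over pairs of colored multicompositions, whereas the paper decomposes coarsely by uncolored $\mu\in\Comp(n,d-1)$, writing $\tti_{1,1}(\zx)=\sum_\mu \xi^{\zx\mathtt{1}^{d-1}}_{\bi(\mu),\bi(\mu)}$ with $\bi(\mu)=1^{\mu_1+1}2^{\mu_2}\cdots n^{\mu_n}$, and only resolves colors at the very end by invoking Lemma~\ref{LSumILa} on the conjugated term $\xi^{\zx\mathtt{1}^{d-1}}_{\bj(\mu),\bj(\mu)}$.

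The coarser slicing buys the paper a cleaner conjugation step: the identity
$\xi^{\zx\mathtt{1}^{d-1}}_{\bi(\mu),\bi(\mu)}=\xi^{\mathtt{1}^d}_{\bi(\mu),\bj(\mu)}\,\xi^{\zx\mathtt{1}^{d-1}}_{\bj(\mu),\bj(\mu)}\,\xi^{\mathtt{1}^d}_{\bj(\mu),\bi(\mu)}$
is an \emph{exact} equality of $\xi$-basis elements, with no multiplicative constant to worry about. This is important, and it is where your proposal has a real gap: you allow an arbitrary non-zero scalar $c$, but here $\k$ is only a PID (later a DVR with residue field of characteristic $p$), so "non-zero" is not enough — you need $c$ to be a \emph{unit}. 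If your transfer operators $\alpha,\beta$ are chosen carelessly, compositions like $\beta\alpha$ can introduce multinomial factors that vanish modulo $p$. With the standard choice (the $\xi$-basis elements $\xi^{\mathtt{1}^d}_{\bi(\mu),\bj(\mu)}$ and their transposes), the constant is in fact $1$, but this must be checked and is precisely the "main obstacle" you flagged; your sketch of how to check it (either by $\xi$-basis structure constants via Lemmas~\ref{LSomettila}--\ref{LSumILa}, or by tracking the action on tensor-space weight vectors) is the right idea and is the verification the paper implicitly carries out. A secondary cost of your finer decomposition is that you must also match up the residual multicompositions $\bla'$ obtained from $\bmu^\sharp$ and $\bnu^\sharp$ — a bookkeeping step the paper's $\mu$-indexed formulation sidesteps by keeping the colors unresolved until the final application of Lemma~\ref{LSumILa}.
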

\begin{proof}
Let $T$ be the subalgebra of $S^{\Zig_\ell}(n,d)$ generated by $S^{\Zig_\ell}(n,d)^0\cup \{\tti^\bla(\zx)\mid \zx\in \Zig_\ell,\,\bla\in\Comp^J(n-1,d-1)\}$.  
Note that $T\subseteq T^{\Zig_\ell}(n,d)$ by (\ref{ESmallDegST}) since $\deg(\tti^\bla(\zx))=\deg(\zx)\leq 2$. To prove that $T=T^{\Zig_\ell}(n,d)$, in view of Corollary~\ref{CGen1}, it suffices to prove that any $\tti_{1,1}(\zx)$, with $\zx\in\Zig_\ell^{>0}$, lies in $T$.  

Let $\mu\in\La(n,d-1)$. Since $d-1<n$ by assumption, we have $\mu_k=0$ for some $1\leq k\leq n$. Define the following $d$-tuples
\begin{align*}
\bi(\mu):=1^{\mu_1+1}2^{\mu_2}\cdots n^{\mu_n},
\quad
\bj(\mu):=1\,2^{\mu_1}\cdots k^{\mu_{k-1}}(k+1)^{\mu_{k+1}}\cdots n^{\mu_n}.
\end{align*}
Denote ${\mathtt 1}:=1_{\Zig_\ell}$. Note that 
$$
\tti_{1,1}(\zx)=\sum_{\mu\in\La(n,d-1)}\xi^{\zx {\mathtt 1}^{d-1}}_{\bi(\mu),\bi(\mu)},$$
so it suffices to prove that 
$\xi^{\zx {\mathtt 1}^{d-1}}_{\bi(\mu),\bi(\mu)}\in T$ for any $\zx\in\Zig_\ell^{>0}$ and $\mu\in\La(n,d-1)$.  
But for such $\zx$ and $\mu$, we have 
\begin{align*}
\xi^{\zx {\mathtt 1}^{d-1}}_{\bi(\mu),\bi(\mu)}=
\xi^{{\mathtt 1}^d}_{\bi(\mu),\bj(\mu)}\xi^{\zx{\mathtt 1}^{d-1}}_{\bj(\mu),\bj(\mu)}
\xi^{{\mathtt 1}^d}_{\bj(\mu),\bi(\mu)}.
\end{align*}
The second factor in the right hand side belongs to $T$ since by Lemma~~\ref{LSumILa} we have 
$$
\xi^{\zx{\mathtt 1}^{d-1}}_{\bj(\mu),\bj(\mu)}=
\sum_{\substack{\bla\in\Comp^J(n-1,d-1),\\ h(\bla)=(\mu_1,\dots,\mu_{k-1},\mu_{k+1},\dots,\mu_n)}}\tti^\bla(\zx).
$$
It remains to note that the first and third factors have degree $0$,  so also belong to $T$.
\end{proof}

\subsection{Irreducible modules over $T^{\Zig_\ell}(n,d)_\F$}
In this subsection, $\F$ will denote a quotient field of $\k$ or the  field of fractions of $\k$. In particular, we will be able to change scalars from $\k$ to $\F$. 
To distinguish which ground ring we are working over, we use the index $\k$ or $\F$. So we have the algebras $S^{\Zig_\ell}(n,d)_\k$ and $S^{\Zig_\ell}(n,d)_\F$ with $S^{\Zig_\ell}(n,d)_\F\cong\F\otimes_\k S^{\Zig_\ell}(n,d)_\k$. 

To define the graded superalgebra $T^{\Zig_\ell}(n,d)_\F$ over $\F$, we extend scalars from $\k$ to $\F$:
\begin{equation}\label{ETF}
T^{\Zig_\ell}(n,d)_\F:=\F\otimes_\k T^{\Zig_\ell}(n,d)_\k.
\end{equation}
Note that $T^{\Zig_\ell}(n,d)_\F$ has the same dimension as $S^{\Zig_\ell}(n,d)_\F$ but the two algebras are not in general isomorphic if $0<\cha \F<d$. In fact, in this case one can use Theorem~\ref{TMaxSymm} to see that the algebras $T^{\Zig_\ell}(n,d)_\F$ and $S^{\Zig_\ell}(n,d)_\F$ are not Morita equivalent. 
We write $\eta^\bb_{\br,\bs}=1_\F\otimes\eta^\bb_{\br,\bs}\in\F\otimes_\k T^{\Zig_\ell}(n,d)_\k=T^{\Zig_\ell}(n,d)_\F$, $\xi_{\bla}=1_\F\otimes\xi_{\bla}\in T^{\Zig_\ell}(n,d)_\F$, etc.

The algebra 
$T^{\Zig_\ell}(n,d)_\F$ 
is 
non-negatively graded, and the irreducible graded $T^{\Zig_\ell}(n,d)_\F$-supermodules are the inflations of the irreducible supermodules over the degree zero component $T^{\Zig_\ell}(n,d)^0_\F$. 
Using (\ref{ESmallDegST}), we have 
$$
T^{\Zig_\ell}(n,d)^0_\F=S^{\Zig_\ell}(n,d)^0_\F\cong S^{\Zig_\ell^0}(n,d)_\F,
$$
see \cite{KM2}. 
Moreover, $\Zig_\ell^0\cong \k^{J}$, and by \cite[Lemma 4.8]{KM2}, we deduce that  
\begin{equation}\label{EDegZero}
T^{\Zig_\ell}(n,d)^0_\F\cong S^J(n,d)_\F,
\end{equation}
using the notation (\ref{ESJ}). So, recalling (\ref{ESJIrr}), we have 
\begin{equation}\label{EIrrT}
\Irr(T^{\Zig_\ell}(n,d)_\F)=\{L^{\Zig_\ell}_{n,d}(\bla)_\F\mid\bla\in \Comp^J_+(n,d)\},
\end{equation}
where $L^{\Zig_\ell}_{n,d}(\bla)_\F$ denotes the inflation of $L_{n,d}(\bla)_\F$ from $T^{\Zig_\ell}(n,d)^0_\F= S^J(n,d)_\F$ to $T^{\Zig_\ell}(n,d)_\F$. 

For $\bla\in\Comp^J_+(n,d)$ and $\bmu\in\Comp^J(n,d)$, 
recalling (\ref{E300924}), note that 
\begin{equation}\label{EWtMulBMuBLa}
\dim\xi_\bmu L_{n,d}^{\Zig_\ell}(\bla)_\F
=
\dim\Big((\xi_{\mu^{(0)}}\otimes\dots\otimes \xi_{\mu^{(\ell-1)}})L_{n,d}(\bla)_\F\Big)
=
K_{\bla,\bmu,\F}.
\end{equation}

If $n\geq d$, we have $\Comp^J_+(n,d)=\Par^J(d)$. In particular, we deduce:

\begin{Lemma} \label{LIrrT}
Let $n\geq d$. Then $|\Irr(T^{\Zig_\ell}(n,d)_\F)|=|\Par^\ell( d)|$.
\end{Lemma}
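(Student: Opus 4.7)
The proof plan is straightforward, essentially unpacking the parametrization of irreducibles given in \eqref{EIrrT}. First I would recall that, by the paragraph just above the lemma, we have
$$
\Irr(T^{\Zig_\ell}(n,d)_\F)=\{L^{\Zig_\ell}_{n,d}(\bla)_\F\mid\bla\in \Comp^J_+(n,d)\},
$$
and that the $L^{\Zig_\ell}_{n,d}(\bla)_\F$ for distinct $\bla$ are pairwise non-isomorphic (since they are inflations of the pairwise non-isomorphic irreducibles $L_{n,d}(\bla)_\F$ of the degree zero component $S^J(n,d)_\F$, cf.\ \eqref{EDegZero}). Hence $|\Irr(T^{\Zig_\ell}(n,d)_\F)|=|\Comp^J_+(n,d)|$.

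Next I would observe that under the hypothesis $n\geq d$, the set $\Comp_+(n,d)$ of partitions of $d$ fitting in $n$ parts equals the set $\Par(d)$ of all partitions of $d$: indeed any partition of $d$ has at most $d\leq n$ nonzero parts. Passing to $J$-multicompositions, this gives $\Comp^J_+(n,d)=\Par^J(d)$.

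Finally, by the identification of $\Par^J(d)$ with $\Par^\ell(d)$ made in \S\ref{SSPar}, we obtain
$$
|\Irr(T^{\Zig_\ell}(n,d)_\F)|=|\Comp^J_+(n,d)|=|\Par^J(d)|=|\Par^\ell(d)|,
$$
which completes the proof. There is no real obstacle here; the lemma is a direct consequence of the classification \eqref{EIrrT} together with the elementary combinatorial identifications already set up in \S\ref{SSPar}.
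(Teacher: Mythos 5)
Your proposal is correct and follows exactly the paper's route: the paper gives no separate proof, but simply deduces the lemma from the parametrization \eqref{EIrrT}, the observation that $\Comp^J_+(n,d)=\Par^J(d)$ when $n\geq d$, and the identification $\Par^J(d)=\Par^\ell(d)$ made in \S\ref{SSPar}. You have unpacked the same chain of identifications with slightly more detail.
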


\begin{Lemma} \label{LTTrunc}
Let $n\geq d$. Then $T^{\Zig_\ell}(n,d)_\F$ is graded Morita superequivalent to $W_{d,\F}^{\Zig_\ell}$ if and only if $\cha\F=0$ or $\cha\F>d$.
\end{Lemma}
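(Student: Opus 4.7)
The plan is to establish the two implications separately, both pivoting on the tensor-space realization of $S^{\Zig_\ell}(n,d)$ from Lemma~\ref{LTensWdS} together with a count of simple supermodules.

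First I will handle the implication `$\Leftarrow$'. Assuming $\cha\F=0$ or $\cha\F>d$, every factorial coefficient $[\bb,\br,\bs]^!_\zc$ divides $d!$ and is thus a unit in $\F$, forcing the equality $T^{\Zig_\ell}(n,d)_\F = S^{\Zig_\ell}(n,d)_\F = \End_{W_d(\Zig_\ell)_\F}(V_n^{\otimes d})$. It will then suffice to show that $V_n^{\otimes d}$ is a graded projective generator over $W_d(\Zig_\ell)_\F$. By Lemma~\ref{LVLaMLa} it decomposes as $\bigoplus_{\bla\in\Comp^J(n,d)} M_\bla$, and since $n\geq d$, for each $\bi\in J^d$ one can choose $\bla\in\Comp^J(n,d)$ whose parts are all at most $1$ (so that $\Si_\bla=1$) and for which $\ze^\bla=\ze^\bi$; this realizes $M_\bla\cong\ze^\bi W_d(\Zig_\ell)_\F$ as a graded summand of $V_n^{\otimes d}$. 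Summing over $\bi\in J^d$, and using $\sum_\bi\ze^\bi=1$, exhibits $W_d(\Zig_\ell)_\F$ itself as a graded direct summand of $V_n^{\otimes d}$, giving the graded Morita superequivalence.

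For `$\Rightarrow$' I will contrapose and compare simple-module counts, which are preserved under graded Morita superequivalence. By Lemma~\ref{LIrrT}, $|\Irr(T^{\Zig_\ell}(n,d)_\F)| = |\Par^\ell(d)|$. To count $|\Irr(W_d(\Zig_\ell)_\F)|$, I note that $\rad(\Zig_\ell)\supseteq\Zig_\ell^{>0}$ and $\Zig_\ell^0=\bigoplus_{j\in J}\k\ze^{[j]}\cong\k^\ell$, so the simple graded supermodules of $\Zig_\ell$ are the $\ell$ one-dimensional purely even modules $\k_j$ concentrated in bidegree $(0,\0)$. Applying Clifford theory to the wreath superproduct $W_d(\Zig_\ell)_\F=\Zig_\ell^{\otimes d}\rtimes\Si_d$ yields
\[
|\Irr(W_d(\Zig_\ell)_\F)| \;=\; \sum_{d_0+\cdots+d_{\ell-1}=d}\prod_{i=0}^{\ell-1}|\Irr(\F\Si_{d_i})|.
\]
When $\cha\F=p$ with $0<p\leq d$, the partition $(1^d)$ is not $p$-regular, so $|\Irr(\F\Si_d)|<|\Par(d)|$; combining this with the parallel factorization $|\Par^\ell(d)|=\sum_{d_0+\cdots+d_{\ell-1}=d}\prod_i|\Par(d_i)|$ forces strict inequality, and the Morita superequivalence cannot hold.

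The main obstacle I anticipate is verifying cleanly that the Clifford-theoretic classification of simple supermodules of $W_d(\Zig_\ell)_\F$ really gives precisely the stated count in the graded super setting — in particular, that the simples of $\Zig_\ell$ being purely even with trivial parity precludes any Type $\mathtt Q$ phenomena, and that bidegree or parity shifts on the wreath side do not inflate the count of $\cong$-classes beyond the above formula. Once this bookkeeping is in place, the remaining steps are straightforward consequences of the setup already developed.
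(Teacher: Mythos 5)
Your argument is correct, but it takes a genuinely different route from the paper. The paper's proof is short: it invokes from \cite{EK1} and \cite{KM2} that $W_{d,\F}^{\Zig_\ell}$ is an idempotent truncation of $T^{\Zig_\ell}(n,d)_\F$, so graded Morita superequivalence holds precisely when the truncation kills no simple supermodules, i.e.\ when $|\Irr(W_{d,\F}^{\Zig_\ell})|=|\Par^J(d)|$; this count is then cited as known (e.g.\ \cite{FKM} for the `if' and \cite{Green} for the `only if'). You avoid the idempotent-truncation lemma altogether. For `if' you observe that when $\cha\F=0$ or $\cha\F>d$ all factorial coefficients $[\bb,\br,\bs]^!_\zc$ become units, so $T^{\Zig_\ell}(n,d)_\F=S^{\Zig_\ell}(n,d)_\F=\End_{W_d(\Zig_\ell)_\F}(V_n^{\otimes d})$, and you exhibit $V_n^{\otimes d}$ directly as a progenerator; for `only if' you re-derive the irreducible count for $W_{d,\F}^{\Zig_\ell}$ from scratch via Clifford theory for the wreath superproduct. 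Your version is more self-contained and avoids external citations, at the cost of being longer and reproving standard material.

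Two points to tighten. First, in the `if' direction you show $V_n^{\otimes d}$ is a \emph{generator} (each $\ze^{\bi}W_d(\Zig_\ell)_\F$ is realized as some $M_\bla$ with $\Si_\bla$ trivial, and these sum to the regular module), but you never verify \emph{projectivity}. This does hold: under the hypothesis $\cha\F=0$ or $\cha\F>d$, every $\F\Si_\bla$ with $\Si_\bla\leq\Si_d$ is semisimple, so each $\k_\bla$ is $W_\bla(\Zig_\ell)_\F$-projective, hence $M_\bla=\k_\bla\otimes_{W_\bla(\Zig_\ell)_\F}\ze^\bla W_d(\Zig_\ell)_\F$ is a direct summand of $\ze^\bla W_d(\Zig_\ell)_\F$ and is $W_d(\Zig_\ell)_\F$-projective. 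Without this, the equivalence $\End_{W_d(\Zig_\ell)_\F}(V_n^{\otimes d})\sim_{\gsM}W_d(\Zig_\ell)_\F$ is not justified. Second, your Clifford-theoretic count is fine for the reason you anticipate: every simple $\Zig_\ell$-supermodule is one-dimensional, purely even, concentrated in bidegree $(0,\0)$ and of type $\mathtt M$, the group $\Si_d$ acts by even automorphisms, and the inertia groups are the Young subgroups $\Si_{d_0}\times\cdots\times\Si_{d_{\ell-1}}$ with trivial cocycle; so no type $\mathtt Q$ or twisting phenomena arise, and $|\Irr|$ counts $\cong$-classes (which absorb bidegree shifts), giving exactly the stated formula.
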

\begin{proof}
By \cite[Lemma 5.15]{EK1} and \cite[Remark 5.17]{KM2}, $W_{d,\F}^{\Zig_\ell}$ is an idempotent truncation of $T^{\Zig_\ell}(n,d)_\F$. So in view of Lemma~\ref{LIrrT}, it suffices to note that $|\Irr(W_{d,\F}^{\Zig_\ell})|=|\Par^J(d)|$ if and only if $\cha\F=0$ or $\cha\F>d$, which is well-known. For example, the `if'-part of this statement follows from \cite[Proposition 4.6]{FKM}. On the other hand, if $0<\cha\F\leq d$ one can use \cite[(6.4b)]{Green} to deduce $|\Irr(W_{d,\F}^{\Zig_\ell})|<|\Par^J(d)|$.  
\end{proof}

\section{Lie Theory}

\subsection{Lie type $\ttA_{\ell}^{\hspace{-.3mm}{}^{(2)}}$}
\label{SSLTN}
Let $\g$ be the Kac-Moody Lie algebra (over $\C$) of type $\ttA_{2\ell}^{(2)}$,  
see \cite[Chapter 4]{Kac}. The Dynkin diagram of $\g$ has vertices labeled by $I$:
\vspace{2mm} 
$$
{\begin{picture}(330, 15)%
\put(6,5){\circle{4}}%
\put(101,2.45){$<$}%
\put(12,2.45){$<$}%
\put(236,2.42){$<$}%
\put(25, 5){\circle{4}}%
\put(44, 5){\circle{4}}%
\put(8, 4){\line(1, 0){15.5}}%
\put(8, 6){\line(1, 0){15.5}}%
\put(27, 5){\line(1, 0){15}}%
\put(46, 5){\line(1, 0){1}}%
\put(49, 5){\line(1, 0){1}}%
\put(52, 5){\line(1, 0){1}}%
\put(55, 5){\line(1, 0){1}}%
\put(58, 5){\line(1, 0){1}}%
\put(61, 5){\line(1, 0){1}}%
\put(64, 5){\line(1, 0){1}}%
\put(67, 5){\line(1, 0){1}}%
\put(70, 5){\line(1, 0){1}}%
\put(73, 5){\line(1, 0){1}}%
\put(76, 5){\circle{4}}%
\put(78, 5){\line(1, 0){15}}%
\put(95, 5){\circle{4}}%
\put(114,5){\circle{4}}%
\put(97, 4){\line(1, 0){15.5}}%
\put(97, 6){\line(1, 0){15.5}}%
\put(6, 11){\makebox(0, 0)[b]{$_{0}$}}%
\put(25, 11){\makebox(0, 0)[b]{$_{1}$}}%
\put(44, 11){\makebox(0, 0)[b]{$_{{2}}$}}%
\put(75, 11){\makebox(0, 0)[b]{$_{{\ell-2}}$}}%
\put(96, 11){\makebox(0, 0)[b]{$_{{\ell-1}}$}}%
\put(114, 11){\makebox(0, 0)[b]{$_{{\ell}}$}}%
\put(230,5){\circle{4}}%
\put(249,5){\circle{4}}%
\put(231.3,3.2){\line(1,0){16.6}}%
\put(232,4.4){\line(1,0){15.2}}%
\put(232,5.6){\line(1,0){15.2}}%
\put(231.3,6.8){\line(1,0){16.6}}%
\put(230, 11){\makebox(0, 0)[b]{$_{{0}}$}}%
\put(249, 11){\makebox(0, 0)[b]{$_{{1}}$}}%
\put(290,2){\makebox(0,0)[b]{\quad if $\ell = 1$.}}%
\put(175,2){\makebox(0,0)[b]{if $\ell \geq 2$, \qquad and}}%
\end{picture}}
$$
We denote by $P$ the {\em weight lattice} of $\g$. 
We have the set $\{\alpha_i \:|\:i \in I\}\subset P$ of {\em simple roots} and the set $\{\La_i \:|\:i \in I\}\subset P$ of {\em fundamental dominant weights}. 
We denote by $Q$ the sublattice of $P$ generated by the simple roots and set 
$$Q_+:=\Big\{\sum_{i\in I}m_i\al_i\mid m_i\in\N\  \text{for all $i\in I$}\Big\}\subset Q.
$$
For $\theta=\sum_{i\in I}m_i\al_i\in Q_+$, its {\em height} is  
$
\height(\theta):=\sum_{i\in I}m_i.
$

We have the {\em Weyl group} $W$ and 
  a {\em normalized $W$-invariant form $(.|.)$} on $P$ whose Gram matrix with respect to the linearly independent set $\al_0,\al_1,\dots,\al_\ell$ is: 
$$
\left(
\begin{matrix}
2 & -2 & 0 & \cdots & 0 & 0 & 0 \\
-2 & 4 & -2 & \cdots & 0 & 0 & 0 \\
0 & -2 & 4 & \cdots & 0 & 0 & 0 \\
 & & & \ddots & & & \\
0 & 0 & 0 & \dots & 4 & -2& 0 \\
0 & 0 & 0 & \dots & -2 & 4& -4 \\
0 & 0 & 0 & \dots & 0 & -4& 8 \\
\end{matrix}
\right)
\quad
\text{if $\ell\geq 2$, and}
\quad
\left(
\begin{matrix}
2 & -4 \\
-4 & 8
\end{matrix}
\right)
\quad \text{if $\ell=1$.}
$$
In particular, we have 
$(\al_0|\al_0)=2$, $(\al_\ell|\al_\ell)=8$, and $(\al_i|\al_i)=4$ for all other $i\in I$.

For $\bi=i_1\cdots i_n\in I^n$, we denote 
\begin{equation}\label{EWt}
\wt(\bi):=\al_{i_1}+\dots+\al_{i_n}\in Q_+.
\end{equation} 
For $\theta\in Q_+$ of height $n$, we set
$$I^\theta:=\{\bi\in I^n\mid \wt(\bi)=\theta\}.$$ 
We define $I^{\theta}_{\di}$ to be the set of all expressions of the form
$i_1^{(m_1)} \cdots i_r^{(m_r)}$ with
$m_1,\dots,m_r\in \N_+$, $i_1,\dots,i_r\in I$
 and $m_1 \al_{i_1} + \cdots + m_r \al_{i_r} = \theta$. We refer to such expressions as {\em divided power words}. 
 We identify $I^\theta$ with the subset of $I^\theta_\di$ which consists of all expressions as above with all $m_k=1$. We use the same notation for concatenation of divided power words as for concatenation of words. 
For $\bi=i_1^{(m_1)} \cdots i_r^{(m_r)}\in I^\theta_\di$, we denote
\begin{equation}\label{EBi!}
\bi!:=m_1!\cdots m_r!
\end{equation}
and
\begin{equation}\label{EBiBar}
\bar\bi:=i_1^{m_1} \cdots i_r^{m_r}\in I^\theta.
\end{equation}

\subsection{The irreducible module $V(\La_0)$}
\label{SSW}
We denote by $V(\La_0)$ the irreducible integrable $\g$-module  with highest weight $\La_0$. Its weights are of the form $\La_0-\theta$ for $\theta\in Q_+$. For such $\theta$ we denote 
the $(\La_0-\theta)$-weight space of $V(\La_0)$ by $V(\La_0)_{\La_0-\theta}$. We denote  
\begin{equation}\label{EWeights}
\Weights:=\{\theta\in Q_+\mid V(\La_0)_{\La_0-\theta}\neq 0\}. 
\end{equation}

By \cite[(12.6.1),(12.6.2),(12.13.5)]{Kac}, we have
$$
\Weights=\{\La_0-w\La_0+d\de\mid w\in W,\ d\in\N\}. 
$$
We define the set of {\em nuclei}:
\begin{equation}\label{ENuclei}
\Nuclei:=\{\La_0-w\La_0\mid w\in W\}\subseteq \Weights.
\end{equation}
Since the union in \cite[(12.6.1)]{Kac} is disjoint, any $\theta\in \Weights$ can be written in the from $\rho+d\de$ for {\em unique} $\rho\in \Nuclei$ and {\em unique} $d\in\N$. 
We refer to $\rho$ as the {\em nucleus} of $\theta$ and denote it $\rho(\theta)$, and to $d$ as the {\em mass} of $\theta$ and denote it $d(\theta)$. 

Setting $p:=2\ell+1$, denote the set of {\em $p$-strict partitions} by $\Par_p$, see \cite[\S2.3a]{KlLi}. There is a well-known surjective map 
\begin{equation}\label{ECont}
\cont:\Par_{p}\to \Weights,
\end{equation}
taking $p$-strict partitions to their residue contents, see for example \cite[Lemma 3.1.39]{KlLi}. 
For a $p$-strict partition $\la$ we denote its 
{\em $\bar p$-core} by $\core_{\bar p}(\la)$ and its {\em $\bar p$-weight} by $\wt_{\bar p}(\la)$, see \cite[\S2.3a]{KlLi}. Then $\rho(\cont(\la))=\cont(\core_{\bar p}(\la))$ and $d(\cont(\la))=\wt_{\bar p}(\la)$, i.e. under the map $\cont$, nucleus corresponds to $\bar p$-core and mass corresponds to $\bar p$-weight.

\subsection{The root system}
\label{SSRootSystem}
Let 
$
\delta = \sum_{i=0}^{\ell - 1} 2 \alpha_i + \alpha_\ell
$
be the {\em null-root}. 
As in \cite[\S5]{Kac}, the set $\Phi$ of {\em roots} of $\g$ is a disjoint union of the set $\Phi^\im=\{n\de\mid n\in \Z\}$ of {\em imaginary roots} and the set $\Phi^\re$ of {\em real roots}. 
Let $\Phi'$ be the root system of type $C_\ell$ whose Dynkin diagram is obtained by dropping the simple root $\al_0$ from our type $A_{2\ell}^{(2)}$ Dynkin diagram. Then $\Phi'=\Phi'_{\text{s}}\sqcup \Phi'_{\text{l}}$ where $\Phi'_{\text{s}}=\{\al\in\Phi'\mid (\al|\al)=4\}$ and $\Phi'_{\text{l}}=\{\al\in\Phi'\mid (\al|\al)=8\}$. 
By \cite[\S6]{Kac}, we have $\Phi^\re=\Phi^\re_{\text{s}}\sqcup \Phi^\re_{\text{m}}\sqcup \Phi^\re_{\text{l}}$ for 
\begin{align}
\Phi^\re_{\text{s}}&= \{(\al+(2n-1)\de)/2 \mid \al\in\Phi'_l,\, n\in\Z\},
\label{EPhiS}
\\
\Phi^\re_{\text{m}}&=\{\al+n\de \mid \al\in\Phi'_s,\, n\in\Z\},
\label{EPhiM}
\\ 
\Phi^\re_{\text{l}}&=\{\al+2n\de \mid \al\in\Phi'_l,\, n\in\Z\}.
\label{EPhiL}
\end{align}
The set of {\em positive roots} is 
$\Phi_+=\Phi_+^\im\sqcup\Phi_+^\re,$  
where $\Phi_+^\im=\{n\de\mid n\in\N_+\}$ and $\Phi_+^\re$ consists of the roots in $\Phi'_+$ together with the roots in (\ref{EPhiS})-(\ref{EPhiL}) with $n>0$, cf. \cite[Proposition 6.3]{Kac}. 

Let   $\tilde\al=2\al_1+\dots+2\al_{l-1}+\al_\ell\in\Phi'$ be the highest root in $\Phi'$. Denote by $\Phi'_{\sharp}$ the set of all roots in 
$\Phi'$ which are non-negative linear combinations of $\al_1,\dots,\al_{\ell-1},-\tilde\al$ (this is a non-standard choice of a system of positive roots in $\Phi'$). 

Following \cite{BKT}, a {\em convex preorder} on $\Phi_+$ is a preorder $\preceq$ such that the following three conditions hold for all $\be,\ga\in\Phi_+$:
(1) $\be\preceq\ga$ or $\ga\preceq \be$; 
(2) {if $\be\preceq \ga$ and $\be+\ga\in\Phi_+$, then $\be\preceq\be+\ga\preceq\ga$; 
(3) $\be\preceq\ga$ and $\ga\preceq\be$ if and only if $\be$ and $\ga$ are proportional.

{\em Throughout this paper}, we work with a convex preorder $\preceq$ of \cite[Example 3.3.4]{KlLi}. 
As pointed out in \cite[Example 3.3.4]{KlLi}, such a preorder has the property that
\begin{align*}
\{\be\in \Phi_+^\re\mid \be\succ\de\}
&=\{\be\in\Phi_+\mid\text{$\be$ is of the form $r\al+s\de$ with $r\in\{1/2,1\}$, $\al\in\Phi'_{\sharp}$}\},
\\
\{\be\in \Phi_+^\re\mid \be\prec\de\}
&=\{\be\in\Phi_+\mid\text{$\be$ is of the form $-r\al+s\de$ with $r\in\{1/2,1\}$, $\al\in\Phi'_{\sharp}$}\}.
\end{align*}

\section{Quiver Hecke superalgebras}

\subsection{Quiver Hecke superalgebra $R_\theta$}
\label{SSDefQHSA}
We work with the quiver Hecke superalgebra of type $A_{2\ell}^{(2)}$, as defined in \cite{KKT}. 

For $i\in I$, we define its parity $|i|\in\Z/2$ as follows 
\begin{equation}\label{EIParity}
|i|:=
\left\{
\begin{array}{ll}
\1 &\hbox{if $i=0$,}\\
\0 &\hbox{otherwise.}
\end{array}
\right.
\end{equation}

For $i,j,k\in I$, we define polynomials $Q_{i,j}(u,v),B_{i,j,k}(u,v)\in\k[u,v]$ as follows:
\begin{eqnarray*}
Q_{i,j}(u,v)&:=&
\left\{
\begin{array}{ll}
0 &\hbox{if $i=j$,}\\
1 &\hbox{if $|i-j|>1$,}\\
(j-i)(u-v) &\hbox{if $|i-j|=1$ and $1\leq i,j<\ell$,}
\\

(j-i)(u^2-v) &\hbox{if $\{i,j\}=\{0,1\}$ or $\{\ell-1,\ell\}$, and $\ell>1$,}\\

(j-i)(u^4-v) &\hbox{if $\{i,j\}=\{0,1\}$ and $\ell=1$,}
\end{array}
\right.
\\
B_{i,j,k}(u,v)&:=&\left\{
\begin{array}{ll}
-1 &\hbox{if $i=k=j+1$,}\\
1 &\hbox{if $i=k= j-1\not\in \{0,\ell-1\}$,}\\
(u+v) &\hbox{if $i=k= j-1=\ell-1>0$,}\\
(v-u) &\hbox{if $i=k=j-1=0$ and $\ell>1$,}\\
(u^2+v^2)(v-u) &\hbox{if $i=k= j-1=0$ and $\ell=1$,}\\
0 &\hbox{otherwise,}
\end{array}
\right.
\end{eqnarray*}

Let $\theta\in Q_+$ be of height $n$. The {\em quiver Hecke superalgebra} $R_\theta$ is the unital graded $\k$-superalgebra generated by the elements 
$
\{1_\bi\mid\bi\in I^\theta\}\cup \{y_1,\dots,y_n\}\cup \{\psi_1,\dots,\psi_{n-1}\}
$
and the following defining relations (for all admissible $r,s,\bi$, etc.)
\begin{equation}
\label{R1}
1_\bi 1_\bj=\de_{\bi,\bj}1_\bi,
\end{equation}
\begin{equation}
\sum_{\bi\in I^\theta}1_\bi=1,
\label{R2}
\end{equation}
\begin{equation}
\label{R2.5}
y_r1_\bi=1_\bi y_r,
\end{equation}
\begin{equation}
\label{R2.75}
\psi_r1_\bi=1_{s_r\cdot \bi}\psi_r,
\end{equation}
\begin{equation}
y_ry_s1_\bi=
\left\{
\begin{array}{ll}
-y_sy_r1_{\bi} &\hbox{if $r\neq s$ and $|i_r|=|i_s|=\1$,}\\
 y_sy_r1_{\bi}&\hbox{otherwise,}
\end{array}
\right.
\label{R3}
\end{equation}
\begin{equation}
\psi_r y_s1_{\bi}=(-1)^{|i_r||i_{r+1}||i_s|}y_s \psi_r 1_{\bi}\qquad(s\neq r,r+1),
\label{R4}
\end{equation}
\begin{equation}
\begin{split}
(\psi_ry_{r+1}-(-1)^{|i_r||i_{r+1}|}y_r \psi_r) 1_{\bi}&=
(y_{r+1} \psi_r-(-1)^{|i_r||i_{r+1}|} \psi_ry_r) 1_{\bi}
\\&=
\left\{
\begin{array}{ll}
1_{\bi} &\hbox{if $i_r=i_{r+1}$,}\\
0 &\hbox{otherwise,}
\end{array}
\right.
\end{split}
\label{R5}
\end{equation}
\begin{equation}\label{R6}
\psi_r^21_{\bi}=Q_{i_r,r_{r+1}}(y_r,y_{r+1}),
\end{equation}
\begin{equation}\label{R65}
\psi_r\psi_s 1_{\bi}=(-1)^{|i_r||i_{r+1}||i_s||i_{s+1}|}\psi_s\psi_r 1_{\bi}\qquad(|r-s|>1),
\end{equation}
\begin{equation}
(\psi_{r+1}\psi_r\psi_{r+1}-\psi_{r}\psi_{r+1} \psi_r) 1_{\bi}=B_{i_r,i_{r+1},i_{r+2}}(y_r,y_{r+2})1_{\bi}
\label{R7}
\end{equation}
The structure of a graded superalgebra on $R_\theta$ is defined by setting 
\begin{align*}
\bideg(1_{\bi})&:=(0,\0),
\\ \bideg(y_s1_{\bi})&:=((\al_{i_s}|\al_{i_s}),|i_s|),
\\ \bideg(\psi_r 1_{\bi})&:=-((\al_{i_r}|\al_{i_{r+1}}),|i_r||i_{r+1}|).
\end{align*}
We denote the identity in $R_\theta$ by $1_\theta$. 

\subsection{Basis, divided power idempotents, parabolic subalgebras}
For every $w\in \Si_n$, we choose a reduced decomposition $w=s_{r_1}\dots s_{r_l}$ and define $\psi_{w}:=\psi_{r_1}\cdots\psi_{r_l}$. In general $\psi_w$ depends on the choice of a reduced decomposition. By \cite[Corollary 3.15]{KKT}, for $\theta\in Q_+$ with $n=\height(\theta)$, the following sets are $\k$-bases of  $R_\theta$:
\begin{align*}
&\{\psi_w y_1^{k_1}\dots y_n^{k_n}1_\bi \mid w\in \Si_n,\ k_1,\dots,k_n\in\N, \ \bi\in I^\theta\},
\\
&\{ y_1^{k_1}\dots y_n^{k_n}\psi_w1_\bi \mid w\in \Si_n,\ k_1,\dots,k_n\in\N, \ \bi\in I^\theta\}
\end{align*}

Let $\theta\in Q_+$. Recall the set $I^{\theta}_{\di}$ of  divided power words from \S\ref{SSLTN}. Following \cite[\S3.2a]{KlLi}, to every $\bi\in I^{\theta}_{\di}$, we associate the {\em divided power idempotent} $1_{\bi}\in R_\theta$.

\begin{Lemma} \label{L!} \cite[Lemma 3.2.4]{KlLi}
Let $\k=\F$ and $V$ be a finite dimensional graded $R_\theta$-supermodule. For any $\bi\in I^{\theta}_{\di}$ we have $\dim 1_{\bar \bi} V=\bi!\dim 1_\bi V$.  
\end{Lemma}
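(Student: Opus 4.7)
The plan is to reduce the identity $\dim 1_{\bar\bi} V = \bi!\dim 1_\bi V$ to the case of a single divided-power block and then exploit a (super) nil-Hecke structure. By the definition of the divided power idempotent in \cite[\S3.2a]{KlLi}, if $\bi = i_1^{(m_1)}\cdots i_r^{(m_r)}$, then under the concatenation embedding
\[
\iota : R_{m_1\al_{i_1}} \otimes \cdots \otimes R_{m_r\al_{i_r}} \to R_\theta
\]
we have $1_\bi = \iota(1_{i_1^{(m_1)}} \otimes \cdots \otimes 1_{i_r^{(m_r)}})$ and $1_{\bar{\bi}} = \iota(1_{i_1^{m_1}} \otimes \cdots \otimes 1_{i_r^{m_r}})$. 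So both idempotents lie in the image of $\iota$ and the two truncations can be performed one tensor factor at a time. This reduces the claim to proving, for each $i \in I$ and $m \in \N_+$, the single-block statement
\[
\dim 1_{i^m} W \;=\; m!\,\dim 1_{i^{(m)}} W
\]
for every finite-dimensional graded $R_{m\al_i}$-supermodule $W$.

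For the single-block assertion, I would analyze the truncated algebra $1_{i^m} R_{m\al_i} 1_{i^m}$. Specializing the defining relations (\ref{R1})--(\ref{R7}) to the word $i^m$, one finds that $Q_{i,i}=0$ forces $\psi_r^2 1_{i^m} = 0$, that $B_{i,i,i}=0$ forces the straight braid relation $\psi_{r+1}\psi_r\psi_{r+1}\,1_{i^m} = \psi_r\psi_{r+1}\psi_r\,1_{i^m}$, and that (\ref{R3})--(\ref{R5}) determine the commutation of $\psi_r$ with the $y_k$ up to parity signs depending on $|i|$. The resulting algebra is a parity-twisted version of the classical nil-Hecke algebra of rank $m$, and standard structure theory (applied in its super form) identifies it with a matrix superalgebra $M_{m!}(\Pol_i)$ over an appropriate polynomial superalgebra in $y_1,\dots,y_m$. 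By construction in \cite[\S3.2a]{KlLi}, the divided power idempotent $1_{i^{(m)}}$ corresponds under this identification to a primitive matrix unit.

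Having the matrix-algebra description, the dimension identity follows by elementary Morita theory. Lemma~\ref{LEvseev} yields $1_{i^m} R_{m\al_i} 1_{i^m} \cong M_{m!}(\k) \otimes \zC$ where $\zC$ is the supercentralizer of the matrix factor. Consequently any graded supermodule $M$ over $1_{i^m} R_{m\al_i} 1_{i^m}$ is isomorphic, as a $\k$-superspace (forgetting any bidegree shift), to $\k^{\oplus m!} \otimes 1_{i^{(m)}} M$. Applying this to $M = 1_{i^m} W$ gives $\dim 1_{i^m} W = m!\,\dim 1_{i^{(m)}} W$, as required.

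The main obstacle is verifying the matrix-algebra structure of the super nil-Hecke algebra in the case $i = 0$ (so $|i|=\1$), where the $y_k$'s anticommute and the explicit primitive idempotent carries nontrivial parity signs. This is a well-known but delicate technical point in the theory of quiver Hecke superalgebras of type $\ttA_{2\ell}^{(2)}$; the verification is a routine (if careful) super-analogue of the classical computation identifying the rank-$m$ nil-Hecke algebra with $M_{m!}(\k[y_1,\dots,y_m]^{\Si_m})$.
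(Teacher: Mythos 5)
The paper does not prove this lemma itself: it is stated with the citation \cite[Lemma 3.2.4]{KlLi}, so there is no in-paper argument to compare against. Evaluating your proposal on its own merits, the structure of the argument is sound and is essentially the standard way one would prove this.

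The reduction to a single divided-power block via the parabolic embedding $\iota_{(m_1\al_{i_1},\dots,m_r\al_{i_r})}$ is correct: since each $R_{m_k\al_{i_k}}$ has only the single word $i_k^{m_k}$, the tensor factor idempotents embed to $1_{\bar\bi}$ and the tensor product of the single-block divided-power idempotents embeds to $1_\bi$, so restricting $V$ along $\iota$ and applying the block-by-block conclusion (with the factors multiplying, because tensor products of matrix superalgebras are matrix superalgebras and tensor products of primitive idempotents remain primitive) does recover $\bi! = m_1!\cdots m_r!$. In the single-block case, $1_{i^m}R_{m\al_i}1_{i^m} = R_{m\al_i}$ is the ordinary nil-Hecke algebra for $i\neq 0$ and the odd (spin) nil-Hecke algebra for $i=0$ (since $|0|=\1$), and both are indeed isomorphic as graded superalgebras to a rank-$m!$ matrix superalgebra tensored with the corresponding ring of (ordinary or odd) symmetric polynomials; the divided power idempotent is by construction a rank-one primitive idempotent of the matrix factor, so Lemma~\ref{LEvseev} and the Morita argument give $\dim W = m!\,\dim 1_{i^{(m)}}W$.

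The one place you should be more precise is exactly the point you flag: for $i=0$ the identification of $R_{m\al_0}$ with a matrix superalgebra over odd symmetric polynomials (and the fact that $1_{0^{(m)}}$ is a primitive idempotent of it of even parity) is not something that follows from ``routine'' super-sign bookkeeping alone; it is a genuine structural theorem about the odd nil-Hecke algebra due to Ellis--Khovanov--Lauda, and in the present $A^{(2)}_{2\ell}$ framework it is established in \cite{KKT} and \cite{KlLi}. As written your proposal treats this as a technical footnote, but it is the real content of the lemma. Also, a minor terminological slip: $1_{i^{(m)}}$ corresponds to a primitive \emph{idempotent} (a diagonal matrix unit $E_{11}$), not to an arbitrary ``matrix unit''. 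With these caveats addressed, the proposal is a correct proof.
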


Let $\theta_1,\dots,\theta_n\in Q_+$ and $\theta=\theta_1+\dots+\theta_n$. Denote 
$\underline{\theta}:=(\theta_1,\dots,\theta_n)\in Q_+^n$, 
$$R_{\underline{\theta}}
=R_{\theta_1,\dots,\theta_n}:=R_{\theta_1}\otimes\dots\otimes R_{\theta_n}$$ and 
\begin{equation}\label{ETheta}
1_{\underline{\theta}}
=1_{\theta_1,\dots,\theta_n}:=\sum_{\bi^1\in I^{\theta_1},\dots,\bi^n\in I^{\theta_n}}1_{\bi^1\cdots\bi^n}\in R_{\theta}.
\end{equation}
By \cite[\S4.1]{KKO1}, we have the natural embedding 
\begin{equation}\label{EIota}
\iota_{\underline{\theta}}:R_{\underline{\theta}}\to 1_{\underline{\theta}}R_{\theta}1_{\underline{\theta}},
\end{equation}
and we identify $R_{\underline{\theta}}$ with a subalgebra of $1_{\underline{\theta}}R_{\theta}1_{\underline{\theta}}$ via this embedding. We refer to this subalgebra as a {\em parabolic subalgebra}. Note that $\iota_{\underline{\theta}}(1_{\theta_1}\otimes\dots\otimes 1_{\theta_n})=1_{\underline{\theta}}$. 

The special case $R_{\theta,\eta}\subseteq 1_{\theta,\eta}R_{\theta+\eta}1_{\theta,\eta}$ will be especially important.

\subsection{Imaginary cuspidal algebra $\hat C_d$}
Let $\preceq$ be a convex preorder on $\Phi_+$ chosen as in  \S\ref{SSRootSystem} and $d\in\N_+$. Recalling the notation $\wt(\bi)$ from (\ref{EWt}), a word $\bi\in I^{d\de}$ is called {\em cuspidal} if $\bi=\bj\bk$ for words $\bj,\bk$ implies that $\wt(\bj)$ is a sum of positive roots $\preceq \de$ and $\wt(\bk)$ is a sum of positive roots $\succeq\de$. We denote by 
$I^{d\de}_\cus$ the set of all cuspidal words in $I^{d\de}$ and let $(1_\bi \mid \bi\in I^{d\de}\setminus I^{d\de}_\cus)$ be the two-sided ideal of $R_{d\de}$ generated by all $1_\bi$ with $\bi\in I^{d\de}\setminus I^{d\de}_\cus$. 
The {\em (rank $d$) imaginary cuspidal algebra} is the quotient 
\begin{equation}\label{ECuspidalAlgebra}
\hat C_d:=R_{d\de}/(1_\bi \mid \bi\in I^{d\de}\setminus I^{d\de}_\cus)
\end{equation}
($\hat C_0$ is interpreted as $\k$). 
For an element $r\in R_{d\de}$ we often denote its image $r+(1_\bi \mid \bi\in I^{d\de}\setminus I^{d\de}_\cus)\in \hat C_d$ again by $r$. So we have elements $\psi_w,\, 1_\bi,\, y_k$, etc. in $C_d$. 

For $n\in\N_+$ and $\la=(\la_1,\dots,\la_n)\in\Comp(n,d)$, we denote  
$
\la\de:=(\la_1\de,\dots,\la_n\de)\in Q_+^n.
$ 
So, recalling (\ref{ETheta}), we have the idempotent $1_{\la\de}\in R_{d\de}$ and the parabolic subalgebra 
$R_{\la\de}\subseteq 1_{\la\de}R_{d\de}1_{\la\de}$ identified with $R_{\la_1\de}\otimes\dots\otimes R_{\la_n\de}$ via the embedding $\iota_{\la\de}$ of (\ref{EIota}). So we can consider $\hat C_{\la_1}\otimes\dots\otimes \hat C_{\la_n}$ as a quotient of $R_{\la\de}$. 
Define the {\em cuspidal parabolic subalgebra} $\hat C_{\la}\subseteq 1_{\la\de} \hat C_d 1_{\la\de}$ to be the image of $R_{\la\de}$ under the natural projection $1_{\la\de}R_{d\de}1_{\la\de}\onto 1_{\la\de}\hat C_d1_{\la\de}$.

\begin{Lemma}\label{L030216} 
{\rm \cite[Lemma 3.3.21]{KlLi}} 
The natural map
$$
R_{\la_1\de}\otimes\dots\otimes R_{\la_n\de}\stackrel{\iota_{\la\de}}{\longrightarrow} 1_{\la\de}R_{d\de}1_{\la\de}\onto 1_{\la\de}\hat C_d1_{\la\de}
$$
factors through the surjection $R_{\la_1\de}\otimes\dots\otimes R_{\la_n\de}\,\onto\, \hat C_{\la_1}\otimes\dots\otimes  \hat  C_{\la_n}$ and induces an isomorphism
$ \hat C_{\la_1}\otimes\dots\otimes  \hat C_{\la_n}\iso  \hat C_{\la}.$ 
\end{Lemma}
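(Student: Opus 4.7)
The plan is to verify two things in sequence: (i) that the composition $R_{\la_1\de}\otimes\cdots\otimes R_{\la_n\de}\to 1_{\la\de}\hat C_d 1_{\la\de}$ factors through the tensor-product surjection to $\hat C_{\la_1}\otimes\cdots\otimes \hat C_{\la_n}$, and (ii) that the induced map is an isomorphism onto $\hat C_\la$. Surjectivity onto $\hat C_\la$ is automatic from the definition of $\hat C_\la$ as the image of $R_{\la\de}$ in $1_{\la\de}\hat C_d 1_{\la\de}$, so the real content lies in the factoring and in injectivity.

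For the factoring, the key combinatorial observation is that for $\bi^k\in I^{\la_k\de}$, the concatenation $\bi:=\bi^1\cdots\bi^n$ is cuspidal in $I^{d\de}$ if and only if each $\bi^k$ is cuspidal in $I^{\la_k\de}$. Indeed, by reflexivity of the convex preorder, $\de\preceq\de$ and $\de\succeq\de$, so any multiple of $\de$ is simultaneously a sum of positive roots $\preceq\de$ and of positive roots $\succeq\de$. Hence a bad split $\bi^k=\bj\bk$ witnessing non-cuspidality of some $\bi^k$ yields a bad split $\bi^1\cdots\bi^{k-1}\bj\mid \bk\bi^{k+1}\cdots\bi^n$ of $\bi$, and conversely any bad split of $\bi$ lying between two consecutive full blocks $\bi^k$ is trivially absorbed, while a split internal to $\bi^k$ restricts to a bad split of $\bi^k$ (after stripping the $\de$-weight prefixes/suffixes). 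Consequently, if some $\bi^k$ is non-cuspidal, then $\iota_{\la\de}(1_{\bi^1}\otimes\cdots\otimes 1_{\bi^n})=1_{\bi^1\cdots\bi^n}$ vanishes in $\hat C_d$. Since the kernel of $R_{\la_1\de}\otimes\cdots\otimes R_{\la_n\de}\twoheadrightarrow\hat C_{\la_1}\otimes\cdots\otimes\hat C_{\la_n}$ is the two-sided ideal generated by the $1_{\bi^1}\otimes\cdots\otimes 1_{\bi^n}$ with at least one $\bi^k$ non-cuspidal, the composition factors as claimed.

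For injectivity I would argue via a PBW-type basis. The standard bases of $R_\theta$ described before the lemma restrict under $\iota_{\la\de}$ to a $\k$-basis of $R_{\la\de}$ of the form $\{\psi_w y_1^{k_1}\cdots y_{|\theta|}^{k_{|\theta|}}1_\bi\}$ with $w\in\Si_\la=\Si_{\la_1}\times\cdots\times\Si_{\la_n}$ and $\bi=\bi^1\cdots\bi^n$ in parabolic form. Passing to the cuspidal quotient, the tensor product of cuspidal bases of the individual $\hat C_{\la_k}$ maps to the corresponding standard basis elements of $\hat C_\la$ with all $\bi^k$ cuspidal. To show these images are $\k$-linearly independent in $\hat C_d$, I would use a standard PBW-type basis of $\hat C_d$ itself, indexed by cuspidal words together with permutations preserving cuspidality—such a basis is implicit in the general theory of cuspidal modules for imaginary root subalgebras and underlies the treatment in \cite{KIS}. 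The parabolic restriction of this basis then matches exactly the tensor product basis, giving the desired isomorphism. The main technical obstacle is the basis/linear-independence step in $\hat C_d$: one must know that the ideal killing non-cuspidal idempotents does not swallow additional basis elements with all $\bi^k$ cuspidal, which requires a careful triangularity argument based on the convex preorder and the shuffle structure on $I^{d\de}$.
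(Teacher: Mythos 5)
The broad template (reduce the factoring to a combinatorial statement about cuspidal words, then argue injectivity via a PBW-type basis of $\hat C_d$) is a sensible one, but both key steps contain genuine gaps. For the factoring, the needed implication is that if some $\bi^k$ is non-cuspidal then $\bi^1\cdots\bi^n$ is non-cuspidal, so that $1_{\bi^1\cdots\bi^n}=0$ in $\hat C_d$. You exhibit the candidate bad split $\bi^1\cdots\bi^{k-1}\bj\mid\bk\,\bi^{k+1}\cdots\bi^n$ and assert it is bad because multiples of $\de$ are simultaneously sums of positive roots $\preceq\de$ and $\succeq\de$. But that observation gives the \emph{wrong} direction: it shows that if $\wt(\bj)$ is a sum of roots $\preceq\de$ then so is $m\de+\wt(\bj)$, whereas you need the converse, that if $m\de+\wt(\bj)$ is such a sum then $\wt(\bj)$ already is. This cancellation property (for $\theta\in Q_+$ and $m\geq 0$: $\theta+m\de$ a sum of positive roots $\preceq\de$ implies $\theta$ is too, and similarly for $\succeq\de$) is true, but it is a nontrivial consequence of the cone geometry of the convex preorder---morally, that $\de$ lies on the common face separating the cones generated by the roots $\prec\de$ and $\succ\de$---and is not a formal consequence of $\de\preceq\de$. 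It requires its own proof or citation.

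For injectivity you invoke a PBW-type basis of $\hat C_d$ indexed by cuspidal words, i.e., the assertion that the ideal $(1_\bi : \bi\notin I^{d\de}_\cus)$ of $R_{d\de}$ kills exactly the standard basis monomials $\psi_w y_1^{k_1}\cdots y_m^{k_m}1_\bi$ with $\bi$ non-cuspidal and nothing more. You flag this yourself as ``the main technical obstacle,'' and rightly so: a two-sided ideal generated by idempotents can a priori swallow further elements whose outer idempotents are cuspidal, and ruling this out is precisely the content of the basis theorem for the cuspidal algebra. That theorem requires a serious triangularity/shuffle argument on $R_{d\de}$ (or, equivalently, graded-dimension control via the categorification theorem), and this is exactly the heavy lifting that \cite[Lemma 3.3.21]{KlLi} and its supporting results package. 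As written, your proposal correctly identifies the two ingredients the proof needs but establishes neither.
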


In view of the lemma, we identify 
\begin{equation}\label{ECuspPar}
 \hat C_{\la_1}\otimes\dots\otimes  \hat C_{\la_n}= \hat C_{\la}. 
\end{equation}
 
\section{Gelfand-Graev truncation}

\subsection{Gelfand-Graev truncated imaginary cuspidal algebra $C_d$}\label{SSGGW}
For $j\in J$ and $m\in \N$, following \cite{KIS}, we consider cuspidal (divided power) words
\begin{equation}\label{EGGW}
\ggw^{m,j}:=\ell^{(m)}(\ell-1)^{(2m)}\,\cdots\, ( j+1)^{(2m)} j^{(m)}\cdots\, 1^{(m)} 0^{(2m)}1^{(m)}\cdots\,  j^{(m)}\,\in\, I^{m\de}_\di. 
\end{equation}
Recalling (\ref{EBi!}), we have 
\begin{equation}\label{EGG!}
\ggw^{m,j}!=((2m)!)^{\ell-j}(m!)^{2j+1}.
\end{equation}

More generally, given $d\in \N$, $n\in\N_+$ and  
a colored composition $(\mu,\bj)\in\Comp^\col(n,d)$, we define the  corresponding {\em Gelfand-Graev (cuspidal divided power) word}
\begin{equation}\label{EGHatG}
\ggw^{\mu,\bj}:=\ggw^{\mu_1,j_1}\cdots \ggw^{\mu_n,j_n}\in I^{d\de}_\di
\end{equation}
and the {\em Gelfand-Graev idempotent} 
\begin{equation}\label{EGGIdempotent}
\ggi^{\mu,\bj}:=1_{\ggw^{\mu,\bj}}\in \hat C_d.
\end{equation}
Recalling (\ref{EOmd}), in the special case where $\mu=\om_d$ and $\bj\in I^d$, we denote 
\begin{equation}\label{EIdBj}
\ggi^{\bj}:=\ggi^{\om_d,\bj}=1_{\ggw^{1,j_1}\cdots \ggw^{1,j_d}}. 
\end{equation}
For example, in the case where $\bj=j^d$ we have the idempotent $\ggi^{j^d}=1_{(\ggw^{1,j})^d}$.

The idempotents $\{\ggi^{\mu,\bj}\mid (\mu,\bj)\in\EC^\col(d)\}$ in $\hat C_d$ are orthogonal. 
Define the idempotent 
\begin{equation}\label{EGad}
\ggi_d:=\sum_{(\mu,\bj)\in\EC^\col(d)}\ggi^{\mu,\bj}\in \hat C_d,
\end{equation}
and the {\em Gelfand-Graev truncated cuspidal algebra}
$$
C_d:=\ggi_d\hat C_d\ggi_d.
$$

\subsection{The idempotents $\ggi_\la$ and the parabolic subalgebra $C_\la\subseteq \ggi_\la C_d \ggi_\la$}
Let $\la=(\la_1,\dots,\la_n)\in\Comp(n,d)$. 
Recalling (\ref{ECuspPar}), we have the cuspidal parabolic subalgebra 
$
\hat C_{\la_1}\otimes\cdots\otimes \hat C_{\la_n}=\hat C_{\la}\subseteq 1_{\la\de}\hat C_d1_{\la\de}.
$ 
Define the idempotent 
$
\ggi_\la
:=\ggi_{\la_1}\otimes \dots\otimes \ggi_{\la_n}\in\hat C_{\la}.
$
Truncating with this idempotent we get the parabolic subalgebra 
\begin{eqnarray}\label{ECPar}
C_{\la_1}\otimes \cdots\otimes C_{\la_n}
&\cong& C_{\la}\,\,:=\,\,\ggi_{\la}\hat C_{\la}\ggi_{\la}\,\,\subseteq\,\, \ggi_{\la}C_d\ggi_\la.
\end{eqnarray}
As with $\hat C_{\la}$, we identify  $C_{\la}$ with $C_{\la_1}\otimes \cdots\otimes C_{\la_n}$.

In the special case $\la=\om_d$ the idempotent $f_{\om_d}$ is especially important. In particular:

\begin{Theorem} \label{TLargeChar} If $\k$ is a field with $\cha\k=0$ or  $\cha\k>d$ then $C_d\ggi_{\om_d}$ is a projective generator for $C_d$. 
\end{Theorem}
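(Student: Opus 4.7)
The plan is to reduce Theorem~\ref{TLargeChar} to the assertion that $\ggi_{\om_d}L\neq 0$ for every simple left $C_d$-module $L$. Indeed, $C_d\ggi_{\om_d}$ is automatically projective as a direct summand of $C_d$, and it is a generator precisely when the two-sided ideal $C_d\ggi_{\om_d}C_d$ equals $C_d$; equivalently, the exact truncation functor $\ggi_{\om_d}(-)\colon\mod{C_d}\to\mod{(\ggi_{\om_d}C_d\ggi_{\om_d})}$ kills no simple. By standard idempotent-truncation theory this functor induces a bijection from $\{L\in\Irr(C_d)\mid \ggi_{\om_d}L\neq 0\}$ onto $\Irr(\ggi_{\om_d}C_d\ggi_{\om_d})$, giving
\[
|\Irr(\ggi_{\om_d}C_d\ggi_{\om_d})|\leq|\Irr(C_d)|,
\]
with equality if and only if the theorem holds.

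The cleanest route is a counting comparison. First I would invoke the classification of $\Irr(C_d)$ from the imaginary cuspidal theory of \cite{KIS}: under the hypothesis $\cha\k=0$ or $\cha\k>d$, the simples of $C_d$ are parameterized by $\ell$-multipartitions, so $|\Irr(C_d)|=|\Par^J(d)|$. It then suffices to produce a matching lower bound on $|\Irr(\ggi_{\om_d}C_d\ggi_{\om_d})|$.

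To do this I would identify---up to the regrading procedure of \S\ref{SSRegr}---the truncated graded superalgebra $\ggi_{\om_d}C_d\ggi_{\om_d}=\ggi_{\om_d}\hat C_d\ggi_{\om_d}$ (using $\ggi_{\om_d}\ggi_d=\ggi_{\om_d}$) with the wreath superproduct $W_d(\Zig_\ell)$ of \S\ref{SSAffZig}. This Gelfand--Graev endomorphism calculation is the affine counterpart of the cyclotomic isomorphism that appears later in the paper as Theorem~\ref{TXiIso}, and is carried out in \cite{KIS}. Over a field of characteristic $0$ or $>d$ the algebra $W_d(\Zig_\ell)$ has exactly $|\Par^J(d)|$ simple supermodules, which is precisely the count used in the ``if'' direction of Lemma~\ref{LTTrunc} via \cite[Proposition~4.6]{FKM}; this closes up the desired equality.

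The principal obstacle lies in the two cuspidal inputs from \cite{KIS}: the classification of simple $C_d$-modules by $\ell$-multipartitions, and the explicit identification of $\ggi_{\om_d}\hat C_d\ggi_{\om_d}$ with $W_d(\Zig_\ell)$ up to regrading. Both require the technology of standard and costandard cuspidal modules and their Gelfand--Graev weight spaces. The threshold $\cha\k>d$ is sharp for this strategy: in smaller characteristic the count of simples of $W_d(\Zig_\ell)$ drops strictly below $|\Par^J(d)|$ (cf.\ \cite[(6.4b)]{Green} as in Lemma~\ref{LTTrunc}), and correspondingly $\ggi_{\om_d}$ genuinely fails to meet every simple of $C_d$, so the hypothesis cannot be relaxed.
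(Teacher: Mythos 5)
Your overall strategy---reduce to showing $\ggi_{\om_d}L\neq 0$ for every $L\in\Irr(C_d)$, then pin down both $|\Irr(C_d)|$ and $|\Irr(\ggi_{\om_d}C_d\ggi_{\om_d})|$ to $|\Par^J(d)|$---is the right one, and it is essentially the content hidden behind the paper's one-line citation to \cite[Theorems 4.2.55, 4.5.9]{KlLi}. However, there is a genuine error in the second ingredient. You assert that (up to regrading) $\ggi_{\om_d}C_d\ggi_{\om_d}=\ggi_{\om_d}\hat C_d\ggi_{\om_d}$ is isomorphic to the wreath superproduct $W_d(\Zig_\ell)$. That cannot be literally true: by Theorem~\ref{THCIsoZ} the regrading $\ggis_{\om_d}\zC_d\ggis_{\om_d}$ is isomorphic to the \emph{affine} Brauer tree superalgebra $H_d(\Zig_\ell)$, which is infinite rank over $\k$ (it carries the polynomial generators $\zz_1,\dots,\zz_d$), whereas $W_d(\Zig_\ell)$ is finite-dimensional. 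The wreath product is only a proper subalgebra of $H_d(\Zig_\ell)$ via $\iota^{\Zig_\ell}$ (and also a quotient by the ideal generated by the $\zz_r$), not the whole thing.

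Your counting argument can be rescued, but you must say why the count survives the passage from $W_d(\Zig_\ell)$ to $H_d(\Zig_\ell)$: since $H_d(\Zig_\ell)$ is non-negatively graded with degree-zero component $\k^{J^d}\otimes\k\Si_d$ identical to that of $W_d(\Zig_\ell)$, every graded irreducible supermodule is inflated from this degree-zero component, so
$|\Irr(H_d(\Zig_\ell))|=|\Irr(W_d(\Zig_\ell))|=|\Par^J(d)|$
in characteristic $0$ or $>d$. As written, your proposal invokes the simple count for $W_d(\Zig_\ell)$ as if it applied directly to $\ggi_{\om_d}C_d\ggi_{\om_d}$, and that identification is false. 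A smaller quibble: you attach the hypothesis $\cha\k=0$ or $\cha\k>d$ to the classification $|\Irr(C_d)|=|\Par^J(d)|$, but that classification should hold over an arbitrary field (compare Lemma~\ref{LIrrHTheta}, which has no characteristic restriction); the hypothesis is needed only on the wreath-product/Schur-algebra side, exactly where Lemma~\ref{LTTrunc} and \cite[Proposition~4.6]{FKM} use it. Misplacing the hypothesis does not break the argument, but conflating the finite-dimensional wreath product with the infinite-dimensional truncation does need to be corrected.
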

\begin{proof}
By \cite[Theorems 4.2.55,\,4.5.9]{KlLi}, we have that $\ggi_{\om_d}C_d\ggi_{\om_d}$ is graded Morita superequivalent to $C_d$. This implies the theorem. 
\end{proof}

\subsection{Regraded truncated imaginary cuspidal algebra $\zC_d$}
\label{SSRegradingCd}
Let $(\mu,\bj)\in\Comp^\col(m,d)$. 
Define 
\begin{equation}\label{EShiftsCuspidal}
t_{\mu,\bj}:=d(2\ell+1)+\sum_{s=1}^m\mu_s^2(2j_s-4\ell)
\quad\text{and}\quad
\eps_{\mu,\bj}:=\sum_{s=1}^m\mu_sj_s\pmod{2}.
\end{equation}

Recalling the general setting of \S\ref{SSRegr}, the parameters (\ref{EShiftsCuspidal}) will be taken as grading supershift parameters which correspond to the orthogonal decomposition (\ref{EGad}) in $C_d$. This yields the new graded superalgebra 
\begin{equation}\label{EReGradingC}
\zC_d=\bigoplus_{(\la,\bi),(\mu,\bj)\in\EC^\col(d)}
\funQ^{t_{\la,\bi}-t_{\mu,\bj}}\Uppi^{\eps_{\la,\bi}-\eps_{\mu,\bj}} \ggi^{\mu,\bj}C_d\ggi^{\la,\bi}.
\end{equation}

\begin{Notation}\label{Not}
We denote by $\zc\in\zC_d$ the element corresponding to an  element $c\in C_d$; for example, 
we have the idempotents $\ggis^{\la,\bi},\ggis_\nu,\ggis^\bj\in\zC_d$ corresponding to the idempotents $\ggi^{\la,\bi},\ggi_\nu,\ggi^\bj\in C_d$. 
\end{Notation}

For any $\la\in\Comp(n,d)$, the parabolic subalgebra $C_\la=C_{\la_1}\otimes \dots\otimes C_{\la_n}\subseteq \ggi_\la C_d\ggi_\la$ 
corresponds to the parabolic subalgebra 
\begin{equation}\label{ECParz}
\zC_{\la_1}\otimes \cdots\otimes \zC_{\la_n}
\cong \zC_{\la}\subseteq \ggis_{\la}\zC_d\ggis_\la.
\end{equation}
As with $C_{\la}$, we identify  $\zC_{\la}$ with $\zC_{\la_1}\otimes \cdots\otimes \zC_{\la_n}$.

Recall the notation (\ref{ELaBjj}).

\begin{Theorem} \label{TNonNeg} {\rm \cite[Theorem 5.2.23]{KIS}} 
The algebra $\zC_d$ is non-negatively graded. Moreover, $(\ggis^{\mu,\bj}\zC_d\ggis^{\la,\bi})^0=0$ unless $|\la,\bi|_k=|\mu,\bj|_k$ for all $k\in J$. 
\end{Theorem}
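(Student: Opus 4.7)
The plan is to analyze the minimal degrees of elements in the truncated pieces $\ggi^{\mu,\bj}\hat C_d \ggi^{\la,\bi}$ before the regrading is applied, and show that each such minimal degree is at least $t_{\mu,\bj} - t_{\la,\bi}$, with equality attainable only when $|\la,\bi|_k = |\mu,\bj|_k$ for every $k \in J$. After the regrading defined in (\ref{EReGradingC}), these two statements translate directly into the non-negativity of $\zC_d$ and the vanishing of $(\ggis^{\mu,\bj}\zC_d\ggis^{\la,\bi})^0$ in the required cases.

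First I would handle the ``atomic'' one-part case, computing the minimal degree of elements in $\ggi^{m,j}\hat C_m \ggi^{m,k}$ for $j,k \in J$. The Gelfand-Graev idempotent $\ggi^{m,j} = 1_{\ggw^{m,j}}$ is the divided-power idempotent associated with the cuspidal word in (\ref{EGGW}), whose weight profile is explicit. Using a PBW-type basis of $\hat C_m$ (sums $\psi_w y^{\mathbf{k}} 1_{\bi}$) together with the degree conventions $\bideg(\psi_r 1_{\bi}) = -((\al_{i_r}|\al_{i_{r+1}}),|i_r||i_{r+1}|)$ and $\bideg(y_s 1_{\bi}) = ((\al_{i_s}|\al_{i_s}),|i_s|)$, one computes the minimal length of a reduced word rearranging $\bar\ggw^{m,j}$ to $\bar\ggw^{m,k}$ and verifies that the formula for $t_{(m),(j)}$ in (\ref{EShiftsCuspidal}), namely $m(2\ell+1) + m^2(2j - 4\ell)$, is exactly the ``correction'' that makes this minimum equal to $t_{(m),(j)} - t_{(m),(k)}$ when $j=k$ and strictly larger when $j \ne k$.

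Second, I would propagate this via the parabolic decomposition (\ref{ECPar}), (\ref{ECParz}). For multi-part $\mu$ and $\la$, any element of $\ggi^{\mu,\bj}\hat C_d \ggi^{\la,\bi}$ is a sum of products of ``local'' contributions lying in $\hat C_{\mu_1}\otimes\cdots\otimes \hat C_{\mu_m}$ and ``intertwining'' contributions coming from $\psi$'s that cross between tensor factors. The local pieces contribute exactly what is needed to match $t_{\mu,\bj} - t_{\la,\bi}$ by the atomic case, and the key is to show that any nontrivial intertwining contribution strictly increases the regraded degree. Here one exploits the fact that the shifts $t_{\mu,\bj}$ are additive over parts, combined with a combinatorial bound on reduced expressions that permute Gelfand-Graev blocks against each other. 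The degree-zero statement then follows: a degree-zero element after regrading achieves the minimum in both the local and intertwining contributions simultaneously, which forces a block-by-block color-preserving bijection between the letters of $\ggw^{\mu,\bj}$ and $\ggw^{\la,\bi}$, hence the equalities $|\la,\bi|_k = |\mu,\bj|_k$ for all $k \in J$.

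The hard part will be the second step, in particular obtaining a sharp lower bound on the contribution from cross-block $\psi$-intertwiners. This requires analyzing the interaction of the $Q_{i,j}$ and $B_{i,j,k}$ polynomials from relations (\ref{R6})--(\ref{R7}) with the cuspidal truncation, since rewriting products of $\psi$'s and $y$'s modulo the non-cuspidal ideal used in (\ref{ECuspidalAlgebra}) can produce cancellations that artificially reduce degree; one must verify that all surviving terms still satisfy the required lower bound. This cuspidal reduction is what ultimately forces the color-content constraint: without it, one obtains only non-negativity of $\zC_d$, whereas the stronger refinement comes from tracking which residues can be ``created'' or ``destroyed'' modulo the cuspidal ideal.
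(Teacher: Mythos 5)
The paper does not prove this statement here; it is quoted wholesale as \cite[Theorem 5.2.23]{KIS}, and the development in that companion paper goes via explicit Gelfand--Graev elements $\lgathz_{\la,\bi}$ and their degrees (cf.\ the cited \cite[Lemmas 5.2.13, 5.2.33, 5.2.34]{KIS}, summarized in Theorem~\ref{TGath}(ii) above) rather than via a direct PBW degree bound of the kind you sketch. There is thus nothing in this paper to check your argument against, but the shape of your plan --- bound minimal pre-regrading degrees in $\ggi^{\mu,\bj}\hat C_d\ggi^{\la,\bi}$ by $t_{\mu,\bj}-t_{\la,\bi}$, exploit additivity of the shift over parts, track residue content --- is sensible in outline.

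Two concrete gaps keep it from being a proof. First, ``minimal length of a reduced word rearranging $\bar\ggw^{m,j}$ to $\bar\ggw^{m,k}$'' is not the same quantity as minimal degree: a crossing $\psi_r 1_\bi$ contributes $-(\al_{i_r}|\al_{i_{r+1}})$, which in type $\ttA_{2\ell}^{(2)}$ ranges over $\{-8,-4,-2,0,2,4\}$, with equal-residue crossings lowering degree, adjacent-residue crossings raising it, and distant pairs contributing nothing; so the minimizer is a weighted, residue-resolved crossing count, and the assertion that the minimum equals $t_{(m),(j)}-t_{(m),(k)}$ has to be derived, not matched by inspection. Second, and this is the decisive gap, the proposal defers the only hard point. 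In $R_{d\de}$ the PBW basis contains monomials $\psi_w y_1^{k_1}\cdots y_n^{k_n}1_\bi$ of degree below your target bound (take $w$ with many equal-residue crossings and no $y$'s), so the lower bound is simply false before truncation. It becomes true because for such low-degree $w$ the intermediate residue sequences leave the cuspidal set $I^{d\de}_\cus$, killing $\psi_w 1_\bi$ in $\hat C_d$; but you supply no mechanism --- no identification of which $w$ survive, no induction on cuspidality of intermediate words along a reduced expression --- to establish this. Acknowledging that the cuspidal reduction ``is what ultimately forces the color-content constraint'' correctly locates the difficulty, but it is not a proof of it, and without it the argument does not close.
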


\subsection{The idempotent truncation $\ggis_{\om_d}\zC_d\ggis_{\om_d}$}
In this subsection, we first work in the algebra $C_d$ and then switch to $\zC_d$. 
In the special case $d=1$, we have $f_1=\sum_{j\in J}f^j$. 
Following \cite[\S4.2c]{KlLi}, we have the following special elements of $C_1$:
\begin{align}
\label{EUFormula}
\dot u&:=\ggi^0y_{2\ell+1}\ggi^0
\\
\label{EZFormula}
\dot z&:=\textstyle\sum_{j\in J}\ggi^jy_{2\ell+1-j-1}y_{2\ell+1-j}\ggi^j, 
\\
\label{EAII-1Formula}
\dot a^{[j,j-1]}&:=\ggi^j\psi_{2\ell+1-1}\cdots\psi_{2\ell+1-2j}\ggi^{j-1}\qquad (j=1,\dots,\ell-1),
\\
\label{EAI-1IFormula}
\dot a^{[j-1,j]}&:=(-1)^j\ggi^{j-1}\psi_{2\ell+1-2j-1}\cdots\psi_{2\ell+1-1}\ggi^{j}\qquad (j=1,\dots,\ell-1).
\end{align}

We have the parabolic subalgebra 
$
C_1^{\otimes d}=C_{\om_d} \subseteq \ggi_{\om_d}C_d\ggi_{\om_d}.
$ 
Given an element $x\in C_1$ and $1\leq r\leq d$, we denote 
\begin{equation*}\label{EAR}
x_r:=\ggi_1^{\otimes (r-1)}\otimes x\otimes \ggi_1^{\otimes(d-r)}\in C_1^{\otimes d}\subseteq \ggi_{\om_d}C_d\ggi_{\om_d}.
\end{equation*}
This notation agrees with (\ref{EInsertion}). 
The elements (\ref{EUFormula})-(\ref{EAI-1IFormula}) now yield the elements 
\begin{equation}\label{EElementsCd}
\dot u_r,\dot z_r, 
\dot a^{[i,j]}_r\in \ggi_{\om_d}C_d\ggi_{\om_d}\qquad(1\leq r\leq d).
\end{equation} 
Further, following \cite[(4.3.28)]{KlLi}, we set  
$$
\upsigma:=\ggi_{\om_2}(\psi_p\psi_{p+1}\cdots \psi_{2p-1})(\psi_{p-1}\psi_p\cdots \psi_{2p-2})\cdots(\psi_{1}\psi_{2}\cdots \psi_p)\ggi_{\om_2}\in \ggi_{\om_2}C_2\ggi_{\om_2}
$$
Now, following \cite[(4.4.11)]{KlLi}, we define $\dot s\in \ggi_{\om_2}C_2\ggi_{\om_2}$ via: 
\begin{equation}\label{ETau}
\dot s\ggi^{ij}=(\upsigma+(-1)^{i}\de_{i,j})\ggi^{ij}\qquad(i,j\in J).
\end{equation}
For $1\leq r<d$, we define 
\begin{eqnarray*}\label{EIotaTauR}
\dot s_{r}&:=&\ggi_1^{\otimes (r-1)}\otimes \dot s\otimes \ggi_1^{\otimes (d-r-1)}\in C_1^{\otimes (r-1)}\otimes \ggi_{\om_2}C_2\ggi_{\om_2}\otimes C_1^{\otimes (d-r-1)}\subseteq  \ggi_{\om_d}C_d\ggi_{\om_d}. 
\end{eqnarray*}
For $w\in \Si_d$ with an arbitrarily chosen reduced decomposition $w=s_{t_1}\cdots s_{t_l}$, we define
\begin{equation}\label{ETauSigma}
\dot w=\dot s_{t_1}\cdots\dot s_{t_l}\in \ggi_{\om_d}C_d\ggi_{\om_d}. 
\end{equation}

Recalling Notation~\ref{Not}, we now have the elements 
$
\dot \zu_r,\dot \zz_r,
\dot \za^{[i,j]}_r,
\dot \zw
$  
of the regraded algebra $\ggis_{\om_d}\zC_d\ggis_{\om_d}$ corresponding to the elements $
\dot u_r,\dot z_r,
\dot a^{[i,j]}_r,
\dot w\in \ggi_{\om_d}C_d\ggi_{\om_d}
$, respectively.

\begin{Theorem} \label{THCIsoZ} {\rm \cite[Theorem 5.1.16]{KIS}} 
There is an isomorphism of graded superalgebras
\begin{align*}
\zF_d:H_d(\Zig_\ell)&\iso  \ggis_{\om_d}\zC_d\ggis_{\om_d},\\ 
\ze^\bj&\mapsto \ggis^\bj,\\ 
\zu_r&\mapsto \dot \zu_r\sum_{\bj\in J^d}(-1)^{d+j_1+\dots+j_d}\ggis^\bj,\\ 
 \zz_r&\mapsto \dot \zz_r\sum_{\bj\in J^d}(-1)^{j_r+1}\ggis^\bj,\\ 
\za^{[i,j]}_r&\mapsto \dot \za^{[i,j]}_r\sum_{\bj\in J^d}(-1)^{j_1+\dots+j_{r-1}+r-1}\ggis^\bj,\\ 
s_r&\mapsto \dot \zs_r\sum_{\bj\in J^d}(-1)^{(j_r+1)(j_{r+1}+1)}\ggis^\bj.
\end{align*}
\end{Theorem}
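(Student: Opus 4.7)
The plan is to construct the isomorphism $\zF_d$ by specifying it on the generators of $H_d(\Zig_\ell)$ as listed, verify that it respects the bidegree and the defining relations of $H_d(\Zig_\ell)$, and then compare bases of source and target.

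First I would check that the proposed assignments are bidegree-preserving. For the idempotents $\ze^\bj$ this is immediate from the definition of the regrading (\ref{EReGradingC}) once one recalls the shift parameters (\ref{EShiftsCuspidal}): in the single-tensor-factor case $(\mu,\bj)=(1,j)$ the parameter $t_{1,j}=(2\ell+1)+(2j-4\ell)$ recovers the value used in \cite[\S4]{KlLi} and combines additively under tensor products, so that the generators $\dot\zu_r,\dot\zz_r,\dot\za^{[i,j]}_r,\dot\zs_r$ acquire precisely the bidegrees $(2,\1),(4,\0),(2,\1),(0,\0)$ assigned to $\zu_r,\zz_r,\za^{[i,j]}_r,s_r$ in $H_d(\Zig_\ell)$. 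The sign twists of the form $(-1)^{\text{something}}$ are even of bidegree $(0,\0)$, so they do not affect this step.

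Next I would verify the defining relations (\ref{R1})--(\ref{ERAff2}) of $H_d(\Zig_\ell)$ in the target. The idempotent relations (\ref{R1}), (\ref{R2}) reduce to orthogonality and completeness of the $\ggis^\bj$, which hold within $\ggis_{\om_d}$. The wreath-type relation (\ref{ERAff1}) for $s_r$ and elements of $\Zig_\ell^{\otimes d}$ follows from the one-variable (i.e.\ $d=2$) cuspidal computation recorded in \cite[\S4.4]{KlLi}: the sign factors attached to $\dot\zu_r,\dot\za^{[i,j]}_r,\dot\zs_r$ are rigged exactly so that the supercommutation in $W_d(\Zig_\ell)$ becomes the plain commutation available inside $\zC_d$ after regrading. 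The most subtle relation is (\ref{ERAff2}), which requires knowing how $\dot\zs_r$ and $\dot\zz_t$ interact; this is where the core computation lives, and verifying it reduces to the case $d=2$ via the parabolic decomposition $\zC_{\om_d}\cong \zC_1^{\otimes d}$ from (\ref{ECParz}) together with the identification of $\dot\zz,\dot\zu,\dot\za^{[i,j]},\dot\zs$ inside $\zC_1$ and $\ggis_{\om_2}\zC_2\ggis_{\om_2}$ performed in \cite{KlLi,KIS}. This computation of the action of $\upsigma$ on the cuspidal polynomial generators, together with the case analysis on whether $i_r=i_{r+1}$, $|i_r-i_{r+1}|=1$, or $|i_r-i_{r+1}|>1$, is the main obstacle; it is a bookkeeping exercise with signs but it is the heart of the theorem.

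Once the relations are verified, $\zF_d$ is a well-defined graded superalgebra homomorphism. To conclude it is an isomorphism I would use Theorem~\ref{TAffBasis}, which writes $H_d(\Zig_\ell)$ as $\Zig_\ell[\zz]^{\otimes d}\otimes \k\Si_d$ with an explicit basis indexed by $(w,\bn,\zb^{(\bullet)})$. The images of the generators give, for each such tuple, an element of $\ggis_{\om_d}\zC_d\ggis_{\om_d}$ of the form $\dot\zw$ times a monomial in the $\dot\zz_r$ times a product of generators of $\zC_1$ in each factor, up to signs. Surjectivity amounts to showing that such expressions span $\ggis_{\om_d}\zC_d\ggis_{\om_d}$, which follows from the basis theorem for $\ggis_{\om_d}\zC_d\ggis_{\om_d}$ proved in \cite{KlLi,KIS} (each $\ggis^\bj$-truncation of $\zC_d$ has a basis naturally in bijection with the asserted basis of $H_d(\Zig_\ell)$). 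Injectivity then follows because the map sends a basis to a basis, or equivalently by a rank count over $\k$. The sign factors guarantee nothing is lost: they are invertible scalars (units in $\k$), so multiplying basis elements by them preserves the spanning/linear independence properties. Thus $\zF_d$ is bijective and hence an isomorphism of graded superalgebras.
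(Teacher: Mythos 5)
This theorem is stated in the paper as a citation to \cite[Theorem 5.1.16]{KIS}; the paper supplies no proof of its own. What the paper \emph{does} reveal (in the proof of Theorem~\ref{TXiIso}) is the route taken in the cited reference: one first establishes, via \cite[Theorem 4.2.47]{KIS}, an isomorphism $F_d:H_d(A_\ell)\iso \ggi_{\om_d}C_d\ggi_{\om_d}$ for the differently graded Brauer tree algebra $A_\ell$ of (\ref{EAEll}) and the \emph{unregraded} algebra $C_d$, where the sign bookkeeping is absent; one then applies the explicit regrading isomorphism $\phi$ from \cite[Proposition 3.2.28]{KIS} to both sides to obtain $\zF_d$, and the twist factors $(-1)^{d+j_1+\dots+j_d}$, $(-1)^{j_r+1}$, etc.\ in the statement are exactly what $\phi$ produces. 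Your proposal takes a genuinely different route: a direct verification, entirely at the $\Zig_\ell$/$\zC_d$ level, that the given assignments respect bidegrees and the relations (\ref{ERAff1})--(\ref{ERAff2}), followed by a rank count using Theorem~\ref{TAffBasis}. This is a legitimate strategy and your outline of it is sound in its structure, but it is more laborious precisely where the paper's route is effortless: you correctly identify the check of (\ref{ERAff2}) and the sign arithmetic as ``the heart of the theorem,'' yet in the reference those signs are not verified by hand but are \emph{derived} as the inevitable byproduct of regrading an already-proved, sign-free isomorphism $F_d$. In effect you are proposing to redo the $A_\ell\leadsto\Zig_\ell$ regrading computation implicitly inside each relation check rather than once and for all at the algebra level. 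Your sketch is plausible as a plan, but as written it does not actually carry out the nontrivial cuspidal computations (the interaction of $\dot\zs_r$ with $\dot\zz_t$, the three-way case analysis in (\ref{ERAff2}), and the surjectivity onto $\ggis_{\om_d}\zC_d\ggis_{\om_d}$ via the basis theorem for the truncated cuspidal algebra) that constitute the actual content; it defers to \cite{KlLi,KIS} at exactly the points where the work lives. So the proposal is not wrong, but it has traded the paper's clean two-step reduction (establish $F_d$, then regrade) for a single direct verification whose hard part remains to be done.
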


\subsection{The idempotent truncation $\ggis^{\la,\bi}\zC_d\ggis_{\om_d}$}
Composing the isomorphism $\zF_d$ from  
Theorem~\ref{THCIsoZ} and the embedding $\iota^{\Zig_\ell}$ from (\ref{EIotaZig}), we have a graded superalgebra homomorphism
\begin{equation}
\label{EThetad}
\Theta_d:=\zF_d\circ \iota^{\Zig_\ell}: W_d(\Zig_\ell)\to \ggis_{\om_d}\zC_{d}\ggis_{\om_d}.
\end{equation}
Note that  
\begin{equation}\label{EThetaId}
\Theta_d(\ze^\bj)=\ggis^\bj \qquad(\bj\in J^d).
\end{equation}

Let $(\la,\bi)\in \Comp^\col(n,d)$. Then $\ggis^{\la,\bi}\zC_{d}\ggis_{\om_d}$ is naturally a right graded $\ggis_{\om_d}\zC_{d}\ggis_{\om_d}$-supermodule. 
So we have the structure of right graded $W_d(\Zig_\ell)$-supermodule on $\ggis^{\la,\bi}\zC_{d}\ggis_{\om_d}$ with
\begin{equation}\label{EThetaAction}
\zv\zw=\zv\Theta_d(\zw)\qquad (\zv\in \ggis^{\la,\bi}\zC_{d}\ggis_{\om_d},\ \zw\in W_d(\Zig_\ell)).
\end{equation}

For $j\in J$, we recall from \cite[\S5.2b]{KIS} the element 
$\lgath_{d,j}
$. Following our usual conventions, the same element in the regraded algebra $\zC_d$ is denoted $\lgathz_{d,j}$. 
More generally, for $(\la,\bi)\in\Comp^\col(n,d)$, we have the element 
\begin{equation}\label{EGath}
\lgathz_{\la,\bi}=\lgathz_{\la_1,i_1}\otimes\dots \lgathz_{\la_n,i_n}\in \zC_\la\subseteq \zC_d.
\end{equation}
Some key properties of these elements established in \cite{KIS} are collected in the following theorems. Recalling (\ref{EIdBj}), we use the notation
\begin{equation}\label{ETrickyIdNot}
\ggis(\la,\bi):=\ggis^{\,i_1^{\la_1}\cdots\, i_n^{\la_n}}=\ggis^{\,\om_d,\,i_1^{\la_1}\cdots\, i_n^{\la_n}}.
\end{equation}

\begin{Theorem} \label{TGath} Let $(\la,\bi)\in\Comp^\col(n,d)$. Then:
\begin{enumerate}
\item[{\rm (i)}] 
{\rm \cite[Lemma 5.2.12]{KIS}} 
We have $\lgathz_{\la,\bi}\in \ggis^{\la,\bi}\zC_d\ggis(\la,\bi)\subseteq \ggis^{\la,\bi}\zC_d\ggis_{\om_d}$. 
\item[{\rm (ii)}] {\rm \cite[Lemmas 5.2.13, 5.2.33, 5.2.34]{KIS}} 
The element $\lgathz_{\la,\bi}$ is non-zero and $\bideg(\lgathz_{\la,\bi})=(0,\0)$. Moreover, the degree $0$ component $\ggis^{\la,\bi}\zC_\la^0\ggis(\la,\bi)$ of the idempotent truncation $\ggis^{\la,\bi}\zC_\la\ggis(\la,\bi)$ is a free $\k$-module of rank $1$ spanned by 
$\lgathz_{\la,\bi}$. 
\item[{\rm (iii)}] {\rm \cite[Lemma 9.2.5]{KIS}}
For any $w\in\Si_\la$ considered as an element of $W_d(\Zig_\ell)$, we have $\lgathz_{\la,\bi}w=\lgathz_{\la,\bi}$ for the action (\ref{EThetaAction}). 

\item[{\rm (iv)}] {\rm \cite[Lemma 5.2.18]{KIS}}
We have $
\ggis^{\la,\bi}\zC_d\ggis_{\om_d}=\lgathz_{\la,\bi}\ggis_{\om_d}\zC_d\ggis_{\om_d}.
$
\end{enumerate}
\end{Theorem}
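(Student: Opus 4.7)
The plan is to proceed in the order (i), (iii), (ii), (iv), each part feeding into the next.  The foundational input is the explicit formula for $\lgath_{d,j}\in \hat C_d$ given in \cite[\S5.2b]{KIS}, which presents this element as the image of a specific monomial of the form $1_{\ggw^{d,j}}\psi_w\,1_{(\ggw^{1,j})^d}$ in $R_{d\de}$, where $w$ is a reduced permutation in the ambient symmetric group $\Si_{d(2\ell+1)}$ moving the colour sequence of $(\ggw^{1,j})^d$ onto that of $\ggw^{d,j}$.  Both (i) and the bidegree half of (ii) read off directly from this presentation: the bracketing idempotents force membership in $\ggis^{d,j}\zC_d\ggis^{j^d}$, and the supershift parameters (\ref{EShiftsCuspidal}) are designed so that the intrinsic bidegree of $\psi_w$ in $R_{d\de}$ is cancelled by the difference of the supershifts assigned to $((d),(j))$ and $(\om_d,j^d)$, yielding total bidegree $(0,\0)$.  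The tensor-product definition (\ref{EGath}) then propagates both properties from the single-block elements $\lgathz_{\la_s,i_s}$ to $\lgathz_{\la,\bi}$.

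For (iii), the same tensor-product structure reduces the invariance statement to the case $n=1,\ \la=d,\ \bi=j$, i.e.\ to checking $\lgathz_{d,j}\cdot\dot\zs_r=\lgathz_{d,j}$ for $1\le r<d$, where $\dot\zs_r$ is the image of $s_r$ under the isomorphism $\zF_d$ of Theorem~\ref{THCIsoZ}.  I would carry out this computation inside $R_{d\de}$ using the braid and quadratic relations (\ref{R6})--(\ref{R7}) together with the vanishing of non-cuspidal $1_\bi$ in $\hat C_d$: most rearrangement terms land on non-cuspidal words and die in the cuspidal quotient, leaving only the identity contribution, and the sign twists $(-1)^{(j_r+1)(j_{r+1}+1)}$ baked into $\zF_d$ are tuned precisely to absorb the superalgebra signs this produces when $j_r=j_{r+1}=j$.

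For the non-vanishing and rank-$1$ claims of (ii), the strategy is a dimension count.  By Theorem~\ref{TNonNeg}, the degree-zero component of $\zC_d$ is forced to preserve the colour statistics on both sides, and an argument analogous to the one leading to (\ref{EDegZero}) identifies $\ggis^{\la,\bi}\zC_\la^0\ggis(\la,\bi)$ with a weight space of an irreducible module over a classical Schur-type algebra indexed by the colour data of $(\la,\bi)$.  Applying Lemma~\ref{L!} to convert divided-power idempotents into ordinary ones, the relevant weight multiplicity is computed to equal $1$, so any non-zero degree-$0$ element spans this component; non-vanishing of $\lgathz_{\la,\bi}$ itself is then confirmed by exhibiting a cuspidal module (for instance an appropriate induction from a weight space) on which it acts non-trivially.

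The main obstacle is (iv), the generation statement $\ggis^{\la,\bi}\zC_d\ggis_{\om_d}=\lgathz_{\la,\bi}\,\ggis_{\om_d}\zC_d\ggis_{\om_d}$.  The inclusion $\supseteq$ is immediate from (i); for the reverse, the plan is to show that any basis monomial $\psi_u\,y^{\mathbf k}\,1_\bi\in 1_{\ggw^{\la,\bi}}R_{d\de}\,1_{\ggw^{\om_d,\bj}}$ factors modulo the cuspidal ideal as $\lgathz_{\la,\bi}\cdot r$ with $r\in\ggis_{\om_d}\zC_d\ggis_{\om_d}$.  The idea is to choose the reduced expression for $u$ so that it begins with the distinguished word $w$ entering the definition of $\lgathz_{\la,\bi}$, which is combinatorially possible because any element of $1_{\ggw^{\la,\bi}}R_{d\de}\,1_{\ggw^{\om_d,\bj}}$ must transport the coarsely-grouped colour sequence of $\ggw^{\la,\bi}$ onto the finely-grouped sequence of $\ggw^{\om_d,\bj}$; cuspidality then kills precisely those terms in which $w$ cannot be isolated as an initial factor, leaving the desired decomposition.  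This combinatorial extraction in the cuspidal quotient is the delicate step and requires a careful analysis of reduced words in $\Si_{d(2\ell+1)}$ modulo the stabilisers of the Gelfand-Graev colour sequences.
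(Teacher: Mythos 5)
The paper contains no proof of this theorem: every part is cited from the companion paper \cite{KIS} (Lemmas 5.2.12, 5.2.13, 5.2.33, 5.2.34, 9.2.5, 5.2.18), so your attempt is a speculative reconstruction of arguments that live elsewhere, not a candidate replacement for a proof in this paper.

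As a reconstruction, the outline points in the right direction but stops well short of a proof at every step. Parts (i) and the bidegree in (ii) require verifying that the supershift parameters (\ref{EShiftsCuspidal}) exactly cancel the degree of the defining $\psi_w$, which you assert but do not check. The rank-$1$ claim in (ii) is the softest part: Theorem~\ref{TNonNeg} only constrains which blocks can be non-zero, and the appeal to ``an argument analogous to (\ref{EDegZero})'' names a conclusion rather than supplying one; computing that the relevant multiplicity is exactly one is precisely the content of \cite[Lemmas 5.2.33, 5.2.34]{KIS}, and it relies on Gelfand--Graev fragment machinery you would have to rebuild, not merely invoke. For (iii), the slogan ``most rearrangement terms land on non-cuspidal words and die'' is plausible, but you must exhibit this for the specific words $\ggw^{d,j}$ and chase the signs built into $\zF_d$ --- this is a genuine computation, not a one-liner. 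For (iv), you correctly identify the hard inclusion (the $\supseteq$ direction is trivial from (i)), but the proposed extraction of the distinguished word as an initial factor modulo the cuspidal ideal requires combinatorial control over reduced words relative to the Gelfand--Graev colour stabilisers, and your sketch offers the idea without any demonstration that the relevant terms can always be so organised or that cuspidality kills exactly what you need it to. The overall plan is coherent and consistent with how such results are typically proved in the KLR setting, but none of the four parts is actually established.
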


\begin{Theorem} \label{TGath2} {\rm \cite[Corollary 9.2.8]{KIS}} 
Let $(\la,\bi),(\mu,\bj)\in\Comp^\col(n,d)$. If  
$\zv\in\ggis^{\mu,\bj}\zC_d^0 \ggis(\la,\bi)$ satisfies $\zv w=\zv$ for all $w\in\Si_\la$ then $\zv=\zc\lgathz_{\la,\bi}$ for some $\zc\in \ggis^{\mu,\bj}\zC_d^0 \ggis^{\la,\bi}$. 
\end{Theorem}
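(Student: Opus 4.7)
The plan is to show that right multiplication by $\lgathz_{\la,\bi}$ defines a surjection
\[
\Psi\colon \ggis^{\mu,\bj}\zC_d^0\ggis^{\la,\bi}\,\twoheadrightarrow\,\bigl(\ggis^{\mu,\bj}\zC_d^0\ggis(\la,\bi)\bigr)^{\Si_\la},\qquad \zc\mapsto \zc\lgathz_{\la,\bi},
\]
onto the subspace of $\Si_\la$-invariants; the theorem follows by taking $\zc$ to be any preimage of $\zv$. Well-definedness is immediate from Theorem~\ref{TGath}: part~(i) gives $\lgathz_{\la,\bi}\in\ggis^{\la,\bi}\zC_d\ggis(\la,\bi)$, part~(ii) gives $\bideg(\lgathz_{\la,\bi})=(0,\0)$, and part~(iii) guarantees that $\zc\lgathz_{\la,\bi}w=\zc\lgathz_{\la,\bi}$ for every $w\in\Si_\la$.

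For surjectivity, the main input is Theorem~\ref{TGath}(iv). Multiplying both sides on the right by $\ggis(\la,\bi)$ and using $\lgathz_{\la,\bi}=\lgathz_{\la,\bi}\ggis(\la,\bi)$ from part~(i) yields
\[
\ggis^{\la,\bi}\zC_d\ggis(\la,\bi)=\lgathz_{\la,\bi}\,\ggis(\la,\bi)\zC_d\ggis(\la,\bi),
\]
so that every element of $\ggis^{\mu,\bj}\zC_d\ggis^{\la,\bi}\zC_d\ggis(\la,\bi)$ can be written as $\zc_0\lgathz_{\la,\bi}\zx$ with $\zc_0\in\ggis^{\mu,\bj}\zC_d\ggis^{\la,\bi}$ and $\zx\in\ggis(\la,\bi)\zC_d\ggis(\la,\bi)$. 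Restricting to bidegree $(0,\0)$ and invoking Theorem~\ref{TNonNeg} forces $|\mu,\bj|_k=|\la,\bi|_k$ for all $k\in J$ and puts $\zx$ into $\ggis(\la,\bi)\zC_d^0\ggis(\la,\bi)$; combined with the $\Si_\la$-invariance of $\zv$ and the rank-one statement of Theorem~\ref{TGath}(ii), the factor $\zx$ can be absorbed into a new $\zc\in\ggis^{\mu,\bj}\zC_d^0\ggis^{\la,\bi}$ with $\zv=\zc\lgathz_{\la,\bi}$.

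The main obstacle is proving that every $\Si_\la$-invariant $\zv\in \ggis^{\mu,\bj}\zC_d^0\ggis(\la,\bi)$ actually factors through $\ggis^{\la,\bi}$, i.e.\ lies in the two-sided ideal piece $\ggis^{\mu,\bj}\zC_d\ggis^{\la,\bi}\zC_d\ggis(\la,\bi)$; a priori a path from $\ggis^{\mu,\bj}$ to $\ggis(\la,\bi)$ in $\zC_d$ need not visit $\ggis^{\la,\bi}$, and this is where the essential combinatorial content of the result sits. I would attack this by transporting the problem through the isomorphism $\zF_d$ of Theorem~\ref{THCIsoZ} to the wreath superproduct $W_d(\Zig_\ell)$, where the right $\Si_\la$-action becomes the internal permutation action on the $\la$-packets of tensor factors, and then use the color-content matching from Theorem~\ref{TNonNeg} together with an explicit analysis of a PBW-style basis of $\hat C_d$ in bidegree $(0,\0)$ to collapse all summands outside the desired ideal by $\Si_\la$-averaging. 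Once this passage through $\ggis^{\la,\bi}$ is secured, the remaining absorption of the $\zx$ factor is a straightforward consequence of Theorem~\ref{TGath}(ii).
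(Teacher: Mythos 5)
The statement you are proving is not established in this paper at all: Theorem~\ref{TGath2} is imported verbatim from \cite[Corollary 9.2.8]{KIS}, so there is no in-paper argument to compare against. Evaluated on its own terms, your proposal identifies the correct skeleton but leaves the essential step unproved, and you say so yourself. The well-definedness of the map $\zc\mapsto\zc\lgathz_{\la,\bi}$ and the identity $\ggis^{\la,\bi}\zC_d\ggis(\la,\bi)=\lgathz_{\la,\bi}\ggis(\la,\bi)\zC_d\ggis(\la,\bi)$ extracted from Theorem~\ref{TGath}(iv) only become relevant after one knows that every $\Si_\la$-invariant $\zv\in\ggis^{\mu,\bj}\zC_d^0\ggis(\la,\bi)$ already lies in $\ggis^{\mu,\bj}\zC_d\ggis^{\la,\bi}\zC_d\ggis(\la,\bi)$; that reduction is precisely the content of the theorem, and you offer for it only an unexecuted strategy, not an argument. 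The proof is therefore conditional on its own conclusion.

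Beyond being incomplete, the sketched strategy has two concrete defects. First, you propose transporting the problem through the isomorphism $\zF_d\colon H_d(\Zig_\ell)\iso\ggis_{\om_d}\zC_d\ggis_{\om_d}$ of Theorem~\ref{THCIsoZ}, but $\ggis^{\mu,\bj}$ is a divided-power Gelfand--Graev idempotent and is \emph{not} a summand of $\ggis_{\om_d}$ unless $\mu=\om_d$; thus $\ggis^{\mu,\bj}\zC_d^0\ggis(\la,\bi)$ is not even in the source of the identification you wish to use, so the transport does not apply to the general $(\mu,\bj)$. Second, the closing ``absorption'' is circular: you arrive at an expression $\zc_0\lgathz_{\la,\bi}\zx$ with $\zx$ a \emph{right} factor, and moving $\zx$ to the left amounts to asserting $\lgathz_{\la,\bi}\zx=\zc'\lgathz_{\la,\bi}$ for a suitable $\zc'$, which is exactly the shape of the statement to be proved (in the special case $\mu=\la$, $\bj=\bi$). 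Your appeal to Theorem~\ref{TGath}(ii) does not rescue this, because that rank-one statement concerns $\ggis^{\la,\bi}\zC_\la^0\ggis(\la,\bi)$ for the \emph{parabolic} $\zC_\la$, not $\zC_d$, and you have no control of the degree-zero piece of $\ggis^{\la,\bi}\zC_d\ggis(\la,\bi)$ here. The genuine work of \cite[Corollary 9.2.8]{KIS} is exactly the passage from $\Si_\la$-invariance to factorization through $\ggis^{\la,\bi}$, and it is missing.
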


Recall from (\ref{EMLaBi}) the graded right $W_d(\Zig_\ell)$-supermodule $M_{\la,\bi}
$ with generator $m_{\la,\bi}
$.

\begin{Lemma} \label{LUptheta} 
There exists a bidegree $(0,\0)$ homomorphism of right graded $W_d(\Zig_\ell)$-supermodules
$$
\upzeta_{\la,\bi}:M_{\la,\bi}\to \ggis^{\la,\bi}\zC_{d}\ggis_{\om_d},\ m_{\la,\bi}\mapsto \lgathz_{\la,\bi}.
$$
\end{Lemma}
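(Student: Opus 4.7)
The plan is to apply Frobenius reciprocity (the adjointness of induction and restriction) to the induced module $M_{\la,\bi} = \k_{\la,\bi}\otimes_{W_{\la,\bi}(\Zig_\ell)}\ze^{\la,\bi}W_d(\Zig_\ell)$. Since $m_{\la,\bi}$ generates $M_{\la,\bi}$ and satisfies exactly the relations $m_{\la,\bi}\ze^{\la,\bi}=m_{\la,\bi}$ and $m_{\la,\bi}w=m_{\la,\bi}$ for all $w\in\Si_\la$, to produce a homogeneous graded $W_d(\Zig_\ell)$-supermodule homomorphism $M_{\la,\bi}\to \ggis^{\la,\bi}\zC_d\ggis_{\om_d}$ sending $m_{\la,\bi}\mapsto v$, it suffices to exhibit a $(0,\0)$-homogeneous element $v\in \ggis^{\la,\bi}\zC_d\ggis_{\om_d}$ which is fixed on the right by $\ze^{\la,\bi}$ and by every $w\in\Si_\la$ (where the $W_d(\Zig_\ell)$-action is that of (\ref{EThetaAction})).

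The candidate is $v:=\lgathz_{\la,\bi}$, and the four required properties are then exactly what has already been established:

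\textbf{Membership.} By Theorem~\ref{TGath}(i), $\lgathz_{\la,\bi}\in \ggis^{\la,\bi}\zC_d\ggis(\la,\bi)\subseteq \ggis^{\la,\bi}\zC_d\ggis_{\om_d}$, so $v$ lies in the target.

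\textbf{Bidegree.} By Theorem~\ref{TGath}(ii), $\bideg(\lgathz_{\la,\bi})=(0,\0)$.

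\textbf{Invariance under $\ze^{\la,\bi}$.} By (\ref{EThetaId}), the action of $\ze^{\la,\bi}\in W_d(\Zig_\ell)$ on $\ggis^{\la,\bi}\zC_d\ggis_{\om_d}$ through $\Theta_d$ is right multiplication by $\ggis(\la,\bi)$. Since Theorem~\ref{TGath}(i) places $\lgathz_{\la,\bi}$ in $\ggis^{\la,\bi}\zC_d\ggis(\la,\bi)$, we have $\lgathz_{\la,\bi}\cdot\ze^{\la,\bi}=\lgathz_{\la,\bi}\ggis(\la,\bi)=\lgathz_{\la,\bi}$.

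\textbf{$\Si_\la$-invariance.} This is precisely Theorem~\ref{TGath}(iii).

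Given all four, Frobenius reciprocity produces the unique bidegree $(0,\0)$ homogeneous right $W_d(\Zig_\ell)$-supermodule homomorphism $\upzeta_{\la,\bi}:M_{\la,\bi}\to \ggis^{\la,\bi}\zC_d\ggis_{\om_d}$ with $m_{\la,\bi}\mapsto \lgathz_{\la,\bi}$, completing the proof. There is no real obstacle here: the lemma is essentially a repackaging of the properties of the Gelfand--Graev elements $\lgathz_{\la,\bi}$ imported from \cite{KIS} via the universal property of the induced module $M_{\la,\bi}$.
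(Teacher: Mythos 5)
Your proof is correct and takes essentially the same route as the paper: verify membership, bidegree, $\ze^{\la,\bi}$-fixedness, and $\Si_\la$-invariance of $\lgathz_{\la,\bi}$ via Theorem~\ref{TGath}(i),(ii),(iii) and~(\ref{EThetaId}), then apply Frobenius reciprocity to the induced module $M_{\la,\bi}$. The only cosmetic difference is that the paper first constructs the degree-$(0,\0)$ map $\k_{\la,\bi}\to \ggis^{\la,\bi}\zC_d\ggis_{\om_d}$ of right $W_{\la,\bi}(\Zig_\ell)$-supermodules explicitly before invoking adjunction, whereas you state the universal property of the induced module directly; these are the same argument.
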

\begin{proof}
Recalling the element $\ze^{\la,\bi}\in W_d(\Zig_\ell)$ from (\ref{EZeI}) we note that $\Theta_{d}(\ze^{\la,\bi})=\ggis(\la,\bi)$ by (\ref{EThetaId}). Since   
$\lgathz_{\la,\bi}\in \ggis^{\la,\bi}\zC_d\ggis(\la,\bi)$ by Theorem~\ref{TGath}(i), we deduce that 
$$\lgathz_{\la,\bi}\ze^{\la,\bi}=\lgathz_{\la,\bi}\Theta_{d}(\ze^{\la,\bi})=\lgathz_{\la,\bi}\ggis(\la,\bi)=\lgathz_{\la,\bi}.
$$
Moreover, by Theorem~\ref{TGath}(iii), for any $w\in\Si_\la$, we have $\lgathz_{\la,\bi}w=\lgathz_{\la,\bi}$. Since 
$\bideg(\lgathz_{\la,\bi})=(0,\0)$ by Theorem~\ref{TGath}(ii), there is a bidegree $(0,\0)$ 
homogeneous homomorphism of right graded $W_{\la,\bi}(\Zig_\ell)$-supermodules $\k_{\la,\bi}\to \ggis^{\la,\bi}\zC_{d}\ggis_{\om_d},\ 1_\k\mapsto \lgathz_{\la,\bi}$. By Frobenius Reciprocity, this homomorphism induces the desired homomorphism $\upzeta_{\la,\bi}$.
\end{proof}

\section{Cyclotomic quiver Hecke superalgebra}
\subsection{Cyclotomic quiver Hecke superalgebra $H_\theta$}
\label{SSHTheta}

Let $\theta\in Q_+$. The {\em cyclotomic quiver Hecke superalgebra $H_\theta$} is the graded $\k$-superalgebra defined as $R_\theta$ modulo the relations
\begin{equation}
y_1=0\quad \text{and}\quad  1_\bi=0\ \text{if}\ i_1\neq 0 \ \ (\text{for all}\ \bi=i_1\cdots i_n\in I^\theta).
\label{RCyc}
\end{equation}
We have the natural projection map
\begin{equation}\label{EPi}
\pi_\theta \colon R_\theta\onto H_\theta.
\end{equation} 
For an element $x\in R_\theta$, we often denote $\pi_\theta(x)\in H_\theta$ again by $x$. Thus we have elements of the form $\psi_w, y_r, 1_\bi$, etc. in $H_\theta$. 

We use the additional index $\k$ to write $H_\theta=H_{\theta,\k}$ when it is important to emphasize the ground ring we are working over. 
If $\F$ is a field of fractions of $\k$ or a quotient field of $\k$ then 
$H_{\theta,\F}\cong \F\otimes_\k H_{\theta,\k}$, and we always identify the two $\F$-algebras. 

For $\theta,\eta\in Q_+$, we have the {\em cyclotomic parabolic subalgebra} 
\begin{equation}\label{ECycPar}
H_{\theta,\eta}:=\pi_{\theta+\eta}(R_{\theta,\eta})\subseteq 1_{\theta,\eta}H_\theta 1_{\theta,\eta}.
\end{equation}
The natural embedding 
$R_{\theta}\to R_{\theta,\eta}, \ x\mapsto 
x\otimes 1_\eta
$  followed by 
The map $\pi_{\theta+\eta}$ factors through the quotient $H_\theta$ to give the natural {\em unital} algebra homomorphism
\begin{equation}\label{EZetaHom}
\zeta_{\theta,\eta}: H_\theta\to H_{\theta,\eta}.
\end{equation}

Recall the set of weights $\Weights$ from (\ref{EWeights}).

\begin{Theorem} \label{TCyc} 
We have:
\begin{enumerate}
\item[{\rm (i)}] As a $\k$-module, $H_\theta$ is free of finite rank.
\item[{\rm (ii)}] $H_\theta\neq 0$ if and only if $\theta\in\Weights$.
\end{enumerate}
\end{Theorem}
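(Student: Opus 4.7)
The plan is to prove both parts simultaneously via the categorification of the irreducible integrable highest weight $\g$-module $V(\La_0)$ by the family $\{\mod{H_\theta}\}_{\theta\in Q_+}$, established in the super setting by Kang-Kashiwara-Tsuchioka \cite{KKT}. Once this categorification is in place, the statement that $H_\theta$ is free of rank equal to $\dim V(\La_0)_{\La_0-\theta}$ yields both part (i), via the finite-dimensionality of weight spaces of integrable modules, and part (ii), since by definition $\Weights=\{\theta\in Q_+\mid V(\La_0)_{\La_0-\theta}\neq 0\}$.

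Concretely I would proceed in two steps. First, for the upper bound, start from the $R_\theta$-bases listed immediately after the defining relations (\ref{R7}) and use the Bernstein-type relation (\ref{R5}), in the form $\psi_r y_{r+1} 1_\bi=(-1)^{|i_r||i_{r+1}|}y_r\psi_r 1_\bi+\delta_{i_r,i_{r+1}}1_\bi$, together with the cyclotomic relations (\ref{RCyc}), to push powers of $y_r$ to the left and ultimately reduce them using $y_1 1_\bi=0$. A straightforward induction on $r$ then bounds the exponent of $y_r$ in any basis monomial by a quantity depending only on the multiplicity of $i_r$ in the prefix $i_1\cdots i_{r-1}$, producing a finite spanning set of $H_\theta$ whose size matches the predicted rank. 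Second, for the matching lower bound, construct the categorical $\g$-action via the induction functor $F_i(M)=H_{\theta+\al_i}1_{\theta,\al_i}\otimes_{H_\theta}M$ (using the parabolic embedding $\zeta_{\theta,\al_i}$ of (\ref{EZetaHom})) and the restriction functor $E_i(N)=1_{\theta-\al_i,\al_i}\cdot N$, viewed as a module over $H_{\theta-\al_i}$ via $\zeta_{\theta-\al_i,\al_i}$. The quiver Hecke relations supply Mackey-style commutation between $E_i$ and $F_j$ matching the (super) $\g$-relations, so that $\bigoplus_\theta K_0(\mod{H_\theta})\otimes \Q$ becomes a cyclic highest weight $\g$-module generated by $[H_0]=[\k]$, hence a quotient of $V(\La_0)$. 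Combined with Step~1, this forces equality of ranks.

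The main obstacle is the careful sign bookkeeping needed to verify the supercommutation relations between $E_i$ and $F_j$ in Step~2: the parities (\ref{EIParity}), especially the odd parity of the vertex $0$ where the cyclotomic relation is imposed, interact nontrivially with the Mackey filtration computing $E_i F_j$, and each of the relations (\ref{R3})-(\ref{R7}) contributes a sign. Fortunately all of this has been worked out in \cite{KKT}, whose main results give the statement directly; in our context it therefore suffices to cite them.
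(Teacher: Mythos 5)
Your overall strategy (reduce both parts to the supercategorification of $V(\La_0)$ by cyclotomic quiver Hecke superalgebras) is the same strategy the paper uses, so the route is not fundamentally different. But there are two substantive problems with the way you fill in the details.

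First, your Step~1 is not the ``straightforward induction'' you describe. Showing that the cyclotomic quotient $H_\theta$ is free of finite rank, equivalently that each $y_r$ is nilpotent in $H_\theta$, is one of the genuinely hard points in the theory of cyclotomic KLR (super)algebras; it is not obtained by pushing $y$'s to the left through (\ref{R5}) with a prefix-multiplicity bound, and for Cartan types beyond finite type $A$ (and a fortiori in the super setting of type $\ttA_{2\ell}^{(2)}$) no such elementary reduction is known. The paper's part (i) is instead proved by citing KKO1's result that $1_{\theta,\al_i}H_{\theta+\al_i}$ is finitely generated projective as a left $H_\theta$-module, together with the induction scheme of Kang--Kashiwara (their Remark 4.20(ii)); your sketch, if taken literally, would not close.

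Second, and more importantly, you treat the categorification as if it directly gives both (i) and (ii) over an arbitrary PID $\k$. The categorification theorem you want (which is in KKO1, not KKT; KKT sets up the quiver Hecke superalgebras and their PBW bases, while the supercategorification of $V(\La_0)$ and the nonvanishing criterion are in KKO1, Theorem~10.2) is a statement over a field. The paper carefully proves (i) over $\k$ first, and only then deduces (ii) over $\k$ from the field case by base change, which is legitimate precisely because $H_\theta$ is $\k$-free by (i). Your proposal elides this step: without (i) already in hand you cannot pass from ``$H_{\theta,\F}\ne 0$ for some quotient field $\F$'' to ``$H_{\theta,\k}\ne 0$'' and vice versa. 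So the order of the argument matters in a way your sketch ignores.
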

\begin{proof}
(i) By \cite[Theorem 8.7]{KKO1}, $1_{\theta,\al_i}H_{\theta+\al_i}$ is finitely generated projective as a left $H_{\theta}$-module, cf. the proof of \cite[Theorem 8.9]{KKO1}. Now the lemma follows by induction as in \cite[Remark 4.20(ii)]{KK}. (Our running  assumption that $2$ is invertible in $\k$ might not be necessary but it is the running assumption in \cite{KKO1}, see \cite[(4.1)]{KKO1}). 

(ii) When $\k$ is a field (of characteristic $\neq 2$) it follows from the Kang-Kashiwara-Oh categorification theorem \cite[Theorem 10.2]{KKO1} that $H_\theta\neq 0$ if and only if the weight space $V(\La_0)_{\La_0-\theta}\neq 0$. 
The case where $\k$ is a PID (in which $2$ is invertible)  follows from the field case by extension of scalars using (i). 
\end{proof}

\begin{Lemma} \label{LIrrHTheta} {\rm \cite[Lemma 3.1.10]{KlLi}}
If\, $\k$ is a field and $d=d(\theta)$ then $|\Irr(H_\theta)|=|\Par^\ell( d)|$.
\end{Lemma}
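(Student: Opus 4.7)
The plan is to combine the Kang--Kashiwara--Oh superalgebra categorification theorem with a classical weight--multiplicity formula for the basic representation of $\g$ of type $A_{2\ell}^{(2)}$.

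First I would invoke \cite[Theorem 10.2]{KKO1}: for $\k$ a field, the direct sum over $\theta\in Q_+$ of the Grothendieck groups of finitely generated projective graded $H_\theta$-supermodules categorifies (an integral form of) the basic representation $V(\La_0)$. Setting up the counting of simples carefully (see the main obstacle below), this yields
\[
|\Irr(H_\theta)|\;=\;\dim_\C V(\La_0)_{\La_0-\theta}.
\]

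Second, writing $\theta=\rho(\theta)+d\de$ with $\rho(\theta)=\La_0-w\La_0$ for some $w\in W$ and $d=d(\theta)$, the $W$-invariance of the weight multiplicities of the integrable $\g$-module $V(\La_0)$ combined with the $W$-fixedness of $\de$ gives
\[
\dim V(\La_0)_{\La_0-\theta}\;=\;\dim V(\La_0)_{w\La_0-d\de}\;=\;\dim V(\La_0)_{\La_0-d\de}.
\]

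Third, I would appeal to the classical formula for the principal string function of $V(\La_0)$ in type $A_{2\ell}^{(2)}$, see for instance \cite[Chapter 14]{Kac}:
\[
\sum_{d\geq 0}\dim V(\La_0)_{\La_0-d\de}\,q^d \;=\; \prod_{n\geq 1}(1-q^n)^{-\ell}.
\]
Extracting the coefficient of $q^d$, which classically equals the number $|\Par^\ell(d)|$ of $\ell$-multipartitions of $d$ (a small sanity check: for $\ell=1$ this specializes to the ordinary partition generating function, matching the spin block count for characteristic $3$), completes the argument.

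The main obstacle will be in the first step: carefully matching ``isomorphism class of simple graded supermodule under $\cong$'' (an equivalence that absorbs all bidegree shifts, both grading shift and parity change) with a basis vector of the weight space $V(\La_0)_{\La_0-\theta}$. One must track simples of type $\Qtype$, for which $\Uppi M \cong M$, separately from those of type $\Mtype$; once the correct $\Z[q,q^{-1}][\pi]/(\pi^2-1)$-integral form is set up on the categorification side, the two counts match unambiguously. The remaining two steps are standard affine-Lie-theoretic and combinatorial identities.
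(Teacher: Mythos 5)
Your argument is correct, and the three steps are the right ones.  The paper itself only cites \cite[Lemma 3.1.10]{KlLi} for this fact; that reference proves it by exactly the chain of reductions you describe (KKO categorification, $W$-invariance plus $W$-fixedness of $\de$, then the $\La_0$-string of the basic representation of $A_{2\ell}^{(2)}$), so your blind reconstruction is essentially the argument on record.

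Two small remarks.  First, your ``main obstacle'' about type $\Qtype$ simples is actually not an obstacle here: for cyclotomic quiver Hecke superalgebras at $\La_0$ \emph{all} graded irreducible supermodules are of type $\Mtype$ (\cite[Proposition 6.15]{HW}, \cite[\S4.2]{KKO1}, \cite[Theorem 8.6]{KKO2} -- the present paper records this in \S\ref{SSGGFragm}), so the count from the categorified Grothendieck group matches the weight multiplicity with no sign-twist bookkeeping.  Second, for the string-function identity it is cleaner to argue directly than to cite a table: for $A_{2\ell}^{(2)}$ every positive imaginary root $n\de$ has multiplicity $\ell$, so the associated homogeneous Heisenberg subalgebra $\hat\h=\C c\oplus\bigoplus_{n\neq 0}\g_{n\de}$ has $\ell$ degree-$n$ annihilation/creation operators for each $n$; the $\La_0$-string $\bigoplus_{d\geq 0}V(\La_0)_{\La_0-d\de}$ is the irreducible Fock module for $\hat\h$ generated by the highest-weight vector, giving $\sum_d\dim V(\La_0)_{\La_0-d\de}\,q^d=\prod_{n\geq 1}(1-q^n)^{-\ell}$, whose $q^d$-coefficient is $|\Par^\ell(d)|$ as you say.
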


\begin{Lemma} \label{LIdO}
Let $\F$ is a field of fractions of $\k$ or a quotient field of $\k$, and $e,f\in H_{\theta,\k}$ be idempotents. Denote $e_\F:=1_\F\otimes e\in H_{\theta,\F}$ and $f_\F:=1_\F\otimes f\in H_{\theta,\F}$. Then $eH_{\theta,\k}f$ is free of finite rank as a $\k$-module and 
$e_{\F}H_{\theta,\F}f_{\F}\cong \F\otimes_\k eH_{\theta,\k}f$. In particular, 
$e$ is non-zero in $H_{\theta,\k}$ if and only if $e_{\F}$ is non-zero in $H_{\theta,\F}$. 
\end{Lemma}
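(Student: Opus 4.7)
The strategy is to realize $eH_{\theta,\k}f$ as a $\k$-direct summand of $H_{\theta,\k}$ via the Peirce decomposition, and then leverage the two key facts already in hand: Theorem~\ref{TCyc}(i), which says $H_{\theta,\k}$ is a free $\k$-module of finite rank, and the hypothesis that $\k$ is a PID (so that finitely generated projective $\k$-modules are free of finite rank).

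First, I would write the Peirce decomposition relative to the orthogonal idempotent pairs $\{e,1-e\}$ and $\{f,1-f\}$:
$$
H_{\theta,\k} = eH_{\theta,\k}f \oplus eH_{\theta,\k}(1-f) \oplus (1-e)H_{\theta,\k}f \oplus (1-e)H_{\theta,\k}(1-f).
$$
This exhibits $eH_{\theta,\k}f$ as a direct summand of a free $\k$-module of finite rank, hence as a finitely generated projective $\k$-module. Since $\k$ is a PID, $eH_{\theta,\k}f$ is free of finite rank.

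Second, I would tensor the Peirce decomposition above with $\F$ over $\k$, use $\F\otimes_\k H_{\theta,\k}=H_{\theta,\F}$, and match summands against the analogous Peirce decomposition for $H_{\theta,\F}$ relative to the idempotents $e_\F,f_\F$ (noting that tensor product respects direct sums, and that the multiplication map $e\otimes x\otimes f\mapsto exf$ intertwines with its counterpart over $\F$). This yields the natural isomorphism $\F\otimes_\k eH_{\theta,\k}f\iso e_\F H_{\theta,\F} f_\F$ of $\F$-vector spaces (in fact of $(e_\F H_{\theta,\F}e_\F, f_\F H_{\theta,\F}f_\F)$-bimodules).

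Finally, for the last assertion, the implication $e=0\Rightarrow e_\F=0$ is trivial. Conversely, if $e\neq 0$, specialize to $f=e$: then $eH_{\theta,\k}e$ is a unital $\k$-algebra with identity element $e$, so it is non-zero, and by the first part it is free of $\k$-rank $\geq 1$. By the second part $e_\F H_{\theta,\F}e_\F\cong\F\otimes_\k eH_{\theta,\k}e$ is therefore a non-zero $\F$-algebra whose identity is $e_\F$, forcing $e_\F\neq 0$. No step in this plan looks delicate; the only possible subtlety is confirming that the isomorphism of the Peirce summands is canonical enough to carry the ring structure when $e=f$, but this is immediate from the naturality of the multiplication map $H_{\theta,\k}\otimes_\k H_{\theta,\k}\to H_{\theta,\k}$.
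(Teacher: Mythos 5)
Your proposal is correct and follows essentially the same route as the paper: both reduce to Theorem~\ref{TCyc}(i), use that $eH_{\theta,\k}f$ is a direct summand of a finite-rank free module over a PID and hence itself free, observe that idempotent truncation commutes with extension of scalars, and then deduce the non-vanishing statement from freeness of $eH_{\theta,\k}e$. The only difference is that you spell out the Peirce decomposition explicitly, where the paper simply asserts that $eH_{\theta,\k}f$ is a direct summand.
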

\begin{proof}
By Theorem~\ref{TCyc}(i), $H_{\theta,\k}$ is a free $\k$-module of finite rank. 
Since $eH_{\theta,\k}f$ is a direct summand of $H_{\theta,\k}$ and $\k$ is a PID, $eH_{\theta,\k}f$ is also a free  $\k$-module of finite rank.  Moreover, $e_{\F}H_{\theta,\F}f_{\F}\cong \F\otimes_\k eH_{\theta,\k}f$. 
Now, $e\neq 0$ if and only if $eH_{\theta,\k}e\neq 0$, and $e_\F\neq 0$ if and only if $e_\F H_{\theta,\k}e_\F\neq 0$. 
As  $eH_{\theta,\k}e$ is free, we have $eH_{\theta,\k}e\neq 0$ if and only if $e_\F H_{\theta,\k}e_\F\cong \F\otimes_\k eH_{\theta,\k}e\neq 0$. 
\end{proof}

\subsection{The algebra $H_{\rho}$}
Recall the notation of  \S\ref{SSW}. Let $\rho\in \Nuclei$, i.e. $\rho=\La_0-w\La_0$ for some $w\in W$. 
Let 
$w=r_{i_t}\cdots r_{i_1}$ be a reduced decomposition and for 
$k=1,\dots,t$, set 
$
a_k:=( r_{i_{k-1}}\cdots r_{i_1}\La_0\mid \al_{i_k}^\vee). 
$ 
It is well-known that $a_1,\dots,a_t\in\N$,  
see for example \cite[Lemma 2.4.11]{KlLi}. Define 
\begin{equation}\label{EIRho}
\bi_\rho:=i_1^{(a_1)}\cdots\,  i_t^{(a_t)}\in I^\rho_\di \qquad\text{and}\qquad
\bar\bi_\rho:=i_1^{a_1}\cdots\,  i_t^{a_t}\in I^\rho
\end{equation}
(note that $\bi_\rho$ and $\bar\bi_\rho$ in general depend on the choices made). 

\begin{Lemma} \label{LMatrix} 
Let $\rho\in \Nuclei$. Then $1_{\bi_\rho}H_{\rho}1_{\bi_\rho}$ is free $\k$-module of rank $1$, and the natural map $H_{\rho}\to \End_\k(H_{\rho}1_{\bi_\rho})$ is an isomorphism of graded $\k$-superalgebras. 
In particular, $H_{\rho}$ is a graded matrix superalgebra over $\k$. 
\end{Lemma}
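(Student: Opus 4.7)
The plan is to reduce to the case where $\k$ is a field, and then use the super-categorification of $V(\La_0)$ to identify $H_{\rho,\F}$ as a matrix superalgebra with $1_{\bi_\rho}$ as a primitive idempotent. By Theorem~\ref{TCyc}(i) and Lemma~\ref{LIdO}, $H_\rho$ and the truncation $1_{\bi_\rho}H_\rho 1_{\bi_\rho}$ are both finite-rank free $\k$-modules, so the rank-one claim can be checked on any quotient field $\F=\k/\m$; moreover, the natural algebra map $H_\rho\to\End_\k(H_\rho 1_{\bi_\rho})$ sits between two free $\k$-modules and will be an isomorphism iff it becomes one after each such base change. So I may replace $\k$ by $\F$.

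Over $\F$, the assumption $\rho\in\Nuclei$ gives $d(\rho)=0$, hence $|\Irr(H_{\rho,\F})|=|\Par^\ell(0)|=1$ by Lemma~\ref{LIrrHTheta}. Let $L$ be the unique irreducible graded supermodule. The Kang--Kashiwara--Oh categorification theorem \cite[Theorem 10.2]{KKO1} identifies the complexified Grothendieck group of $\mod{H_{\rho,\F}}$ with the weight space $V(\La_0)_{\La_0-\rho}=V(\La_0)_{w\La_0}$, which is one-dimensional since $w\La_0$ is a Weyl translate of the highest weight. A dimension count then forces $H_{\rho,\F}$ to be semisimple, so $H_{\rho,\F}\cong\End_\F(L)^\sop$ is a matrix superalgebra over $\F$. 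To locate $1_{\bi_\rho}$, I would show $\dim_\F 1_{\bi_\rho} L=1$: the divided power word $\bi_\rho=i_1^{(a_1)}\cdots i_t^{(a_t)}$, built from the reduced expression $w=r_{i_t}\cdots r_{i_1}$, is precisely the sequence of divided Kashiwara operators carrying the highest weight element of the crystal $B(\La_0)$ to the extremal element at weight $w\La_0$, and the character formula for $L$ read in divided-power idempotents then gives $\dim 1_{\bi_\rho} L=1$.

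Consequently $1_{\bi_\rho}$ is a primitive idempotent, $H_{\rho,\F} 1_{\bi_\rho}\cong L$ up to a bidegree shift, and $1_{\bi_\rho}H_{\rho,\F} 1_{\bi_\rho}\cong\End_{H_{\rho,\F}}(L)\cong\F$ is one-dimensional. The natural map $H_{\rho,\F}\to\End_\F(H_{\rho,\F} 1_{\bi_\rho})$ is then an injective graded superalgebra homomorphism between algebras of the same $\F$-dimension (injectivity because a simple algebra has no nonzero proper two-sided ideals), hence an isomorphism; combining with the first paragraph recovers the statement over $\k$. The main obstacle is the computation $\dim 1_{\bi_\rho} L=1$, which requires matching the specific divided-power residue sequence $\bi_\rho$ with the crystal-theoretic path to the extremal vector in $V(\La_0)_{w\La_0}$. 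A more hands-on alternative is to induct on the length of $w$: writing $w=r_{i_t}w'$ with $w'$ shorter and $\rho'=\La_0-w'\La_0$, use the cyclotomic parabolic embedding $\zeta_{\rho',a_t\al_{i_t}}$ from (\ref{EZetaHom}) together with the defect-zero compatibility of divided-power induction at the $i_t$-th root to transfer the matrix structure from $H_{\rho'}$ to $H_\rho$; either route rests on the same identification of $\bi_\rho$ as the canonical extremal path, which is the main technical point.
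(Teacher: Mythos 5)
Your reduction to the field case is essentially the same as the paper's: both use the freeness from Theorem~\ref{TCyc}(i) and Lemma~\ref{LIdO} to see that both sides of the natural map are finite-rank free $\k$-modules whose base changes behave well, so the statement can be checked over each residue field. Where you diverge is that the paper then simply cites \cite[Lemma 3.2.5]{KlLi} for the field case, while you attempt to reprove it from categorification.

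Your sketch of the field case has two genuine gaps. First, ``a dimension count then forces $H_{\rho,\F}$ to be semisimple'' is not a dimension count: knowing $|\Irr(H_{\rho,\F})|=1$ only says $H_{\rho,\F}$ has a unique simple (e.g.\ a local algebra also has this). What is actually needed is the identification of the Cartan pairing with the Shapovalov form on the extremal weight space $V(\La_0)_{w\La_0}$; the extremal weight vector has norm $1$, so the $1\times1$ Cartan matrix is $(1)$, giving $P=L$ and semisimplicity. This argument also implicitly needs $L$ to be absolutely irreducible (so that $\End(L)=\F$), which again is not automatic but must be extracted from the same computation; without it, the ``same $\F$-dimension'' step in your final paragraph is not justified. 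Second, the equality $\dim 1_{\bi_\rho}L=1$ is exactly the combinatorial content of the cited lemma: matching the divided-power word $\bi_\rho$ built from a reduced expression for $w$ to the extremal path in the crystal is nontrivial (one needs compatibility of divided-power functors with Kashiwara operators at each step and control of the degree shifts), and you rightly flag it as the main obstacle without actually carrying it out. Your ``hands-on alternative'' by induction on $\ell(w)$ using the parabolic maps is in fact closer to how such lemmas are usually proved, but again it is only sketched. As it stands, the proposal correctly reproduces the paper's reduction step but leaves the core of the field case unproved where the paper instead cites \cite[Lemma 3.2.5]{KlLi}.
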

\begin{proof} 
In the case where $\k$ is a field, this is \cite[Lemma 3.2.5]{KlLi}. The general case is reduced to the case of the field using Lemma~\ref{LIdO}, cf. the proof of \cite[Lemma 5.8]{EK2}.
\end{proof}

\subsection{The centralizer algebra $\Cent_{\rho,d}$}
Suppose $H_\theta\neq 0$, i.e. $\theta\in\Weights$, see Theorem~\ref{TCyc}. Recall the homomorphism (\ref{EZetaHom}) and the notation of  \S\ref{SSW}.

\begin{Lemma} \label{LZeta} 
Let $\theta\in \Weights$, $\rho=\rho(\theta)$ and $d=d(\theta)$. 
Then the homomorphism 
$
\zeta_{\rho,d\de}:H_{\rho}\to 1_{\rho,d\de}H_\theta 1_{\rho,d\de}
$ 
is injective. 
\end{Lemma}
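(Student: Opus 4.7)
The plan is to use the simplicity of $H_\rho$ over a field, and then reduce the injectivity of $\zeta_{\rho,d\de}$ to the non-vanishing of the single idempotent $1_{\rho,d\de}$ in $H_\theta$.

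First, I reduce to the case where $\k$ is a field. By Theorem~\ref{TCyc}(i), $H_{\rho,\k}$ is free of finite rank as a $\k$-module, hence torsion-free, so the natural map $H_{\rho,\k}\to H_{\rho,\F}$ is injective for $\F$ the field of fractions of $\k$. Consequently, if $x\in\ker(\zeta_{\rho,d\de,\k})$, then its image in $H_{\rho,\F}$ lies in $\ker(\zeta_{\rho,d\de,\F})$, and (together with Lemma~\ref{LIdO}) injectivity of $\zeta_{\rho,d\de,\F}$ forces $x=0$. Hence it suffices to treat the field case.

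Second, over a field, $H_\rho$ is a graded matrix superalgebra by Lemma~\ref{LMatrix}, hence a graded simple superalgebra with no proper nonzero graded two-sided superideals. Since $\zeta_{\rho,d\de}$ is unital, sending $1_{H_\rho}$ to $1_{\rho,d\de}$ (the identity of the parabolic subalgebra $H_{\rho,d\de}$), its kernel is either $0$ or all of $H_\rho$, with the latter occurring iff $1_{\rho,d\de}=0$ in $H_\theta$. Thus the lemma reduces, in the field case, to showing $1_{\rho,d\de}\ne 0$ in $H_\theta$.

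Third, by Theorem~\ref{TCyc}(ii), $H_\theta\ne 0$ (since $\theta=\rho+d\de\in\Weights$). To produce a nonzero $1_{\rho,d\de}$, it is enough to exhibit a single nonzero $1_{\bi\bj}\in H_\theta$ with $\bi\in I^\rho$ and $\bj\in I^{d\de}$. The natural candidate is $\bi:=\bar\bi_\rho$ from (\ref{EIRho}) (observing that $1_{\bar\bi_\rho}\ne 0$ in $H_\rho$ by Lemma~\ref{LMatrix} combined with Lemma~\ref{L!}), together with $\bj$ built from Gelfand-Graev divided-power words, for example the concatenation of $d$ copies of $\bar\ggw^{1,0}$ from (\ref{EGGW}).

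The main obstacle is this last step: verifying that $1_{\bar\bi_\rho\bj}\ne 0$ in $H_\theta$ for such a residue sequence. This rests on the Kang--Kashiwara--Oh categorification theorem relating $H_\theta$-modules to weight spaces of the integrable module $V(\La_0)$. Concretely, since the simple roots at $\La_0$ force any reduced decomposition of $w$ (with $\rho=\La_0-w\La_0$) to begin with $s_0$, the crystal of $V(\La_0)$ admits a path from the highest weight vector that first descends via the reduced decomposition encoded by $\bi_\rho$ to reach $\La_0-\rho$, and then proceeds by $d$ successive $\de$-increments (realized by Gelfand--Graev words) to reach $\La_0-\theta$. The residue sequence of this path is precisely $\bar\bi_\rho\bj$, and the categorification theorem guarantees that the corresponding idempotent is nonzero in $H_\theta$.
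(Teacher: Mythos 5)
Your reduction to the field case is sound and matches the paper (which reduces via Theorem~\ref{TCyc}(i) and then invokes \cite[Corollary 3.2.7]{KlLi} for the field case). Your second and third steps are also correct: over a field, Lemma~\ref{LMatrix} makes $H_\rho$ a matrix algebra, hence simple, so the kernel of the unital map $\zeta_{\rho,d\de}$ is either $0$ or all of $H_\rho$, and the latter happens exactly when $1_{\rho,d\de}=0$ in $H_\theta$. So you correctly reduce the lemma to the single assertion $1_{\rho,d\de}\ne 0$.

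The gap is in the final step. You propose to exhibit a nonzero idempotent $1_{\bar\bi_\rho\,\bj}$ with $\bj$ a concatenation of $d$ copies of $\bar\ggw^{1,0}$, and you appeal to a crystal path that first descends along the extremal string to $w\La_0$ and then performs ``$d$ successive $\de$-increments realized by Gelfand--Graev words.'' Two things are missing. First, it is not established that any element $b\in B(\La_0)$ of weight $\La_0-\theta$ can be reached by a path that \emph{passes through} the extremal element $b_{w\La_0}$; a priori the string from the highest weight element to $b$ need not factor through the extremal weight. Second, and more seriously, it is not argued that the Gelfand--Graev residue sequence is compatible with the crystal structure below the extremal weight for an \emph{arbitrary} $\theta\in\Weights$. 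The Gelfand--Graev idempotents in the paper are analyzed (and shown to be nonzero, e.g.\ via Theorem~\ref{TDimRoCKTrunc} and \cite{KlDim}) specifically in the RoCK situation, where the inequalities $(\theta\mid\al_0^\vee)\ge 2d$, $(\theta\mid\al_i^\vee)\ge d-1$ guarantee enough ``room''; for a non-RoCK $\theta$ nothing in the paper or your proposal guarantees $1_{\bar\bi_\rho\,\bj}\ne 0$ for this particular $\bj$. In fact, whether a given $\tilde f_i$ is nonzero on $b_{w\La_0}$ depends on $(w\La_0\mid\al_i^\vee)$, and there is no reason $(w\La_0\mid\al_\ell^\vee)>0$ (the Gelfand--Graev word begins with $\ell$) for general $w$. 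Finally, even granting the crystal path, you would still need to translate ``the path exists in $B(\La_0)$'' into ``$1_{\bar\bi_\rho\,\bj}V\ne 0$ for some irreducible $V\in\mod{H_\theta}$''; the Kang--Kashiwara--Oh theorem gives a character identity on Grothendieck groups and a crystal bijection on irreducibles, but extracting the nonvanishing of a specific $1_{\bi}V$ from that requires a concrete statement (say, about the residue sequence of a path to $b$ from $b_{\La_0}$), which you do not supply. So the proposal, while architecturally reasonable, does not actually prove $1_{\rho,d\de}\ne 0$; it replaces the cited \cite[Corollary 3.2.7]{KlLi} with a heuristic.
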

\begin{proof}
By Theorem~\ref{TCyc}(i), it suffices to prove that the map is injective when $\k$ is a field. The field case follows from \cite[Corollary 3.2.7]{KlLi}.
\end{proof}

Let $\theta\in \Weights$, $\rho=\rho(\theta)$ and $d=d(\theta)$.  By Lemma~\ref{LZeta}, we can identify 
\begin{equation}\label{ECoreSubalg}
H_{\rho}=
\zeta_{\rho,d\de}(H_{\rho})\subseteq 
1_{\rho,d\de}H_\theta 1_{\rho,d\de}.
\end{equation}
We consider the supercentralizer of $H_{\rho}$ in $1_{\rho,d\de}H_\theta 1_{\rho,d\de}$:
\begin{equation}\label{ECentralizer}
\Cent_{\rho,d} := \Cent_{1_{\rho,d\de}H_\theta 1_{\rho,d\de}} (H_{\rho}).
\end{equation}

\begin{Lemma} \label{LJuly4} 
Let $\theta\in \Weights$, $\rho=\rho(\theta)$ and $d=d(\theta)$.  We have an isomorphism of   graded superalgebras  
$$
H_{\rho}\otimes \Cent_{\rho,d}\iso 1_{\rho,d\de}H_\theta 1_{\rho,d\de}, \ h\otimes z\mapsto hz.
$$
\end{Lemma}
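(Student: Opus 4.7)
The plan is to recognize this as an immediate consequence of Lemma~\ref{LEvseev} once the hypotheses are verified. I would apply that lemma with $A = 1_{\rho,d\de} H_\theta 1_{\rho,d\de}$ and $B = H_\rho$, viewing $H_\rho$ as a graded subsuperalgebra of $A$ via the embedding $\zeta_{\rho,d\de}$ of (\ref{EZetaHom}) as identified in (\ref{ECoreSubalg}).

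First, I would check that $H_\rho$ is a \emph{unital} graded subsuperalgebra of $A$. The homomorphism $\zeta_{\rho,d\de}$ is unital by construction in (\ref{EZetaHom}), i.e.\ $\zeta_{\rho,d\de}(1_{H_\rho}) = 1_{\rho,d\de}$, which is precisely the unit of $A$. Injectivity of $\zeta_{\rho,d\de}$ is given by Lemma~\ref{LZeta}, so the identification in (\ref{ECoreSubalg}) really does realize $H_\rho$ as a unital graded subsuperalgebra of $A$.

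Next, I would invoke Lemma~\ref{LMatrix} to conclude that $H_\rho$ is isomorphic to a graded matrix superalgebra over $\k$. With these two hypotheses in hand, Lemma~\ref{LEvseev} applied to the pair $(A,B)$ produces the desired isomorphism of graded superalgebras
\[
H_\rho \otimes \Cent_A(H_\rho) \iso A, \qquad h \otimes z \mapsto hz.
\]
Unravelling the notation via (\ref{ECentralizer}), $\Cent_A(H_\rho) = \Cent_{\rho,d}$, which gives exactly the claimed isomorphism.

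Since the entire argument is just an application of a general fact (Lemma~\ref{LEvseev}) to inputs that have been prepared by earlier lemmas, there is no real obstacle to overcome. The only point one should be careful about is the unital aspect: Lemma~\ref{LEvseev} requires $B$ to be a \emph{unital} subalgebra, whose unit equals $1_A$, and this is exactly guaranteed by the fact that $\zeta_{\rho,d\de}$ is unital as a map $H_\rho \to 1_{\rho,d\de} H_\theta 1_{\rho,d\de}$.
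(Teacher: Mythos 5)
Your proposal is correct and matches the paper's proof exactly: the paper also derives the statement as an immediate consequence of Lemmas~\ref{LMatrix} and \ref{LEvseev}, and your spelled-out verification of the hypotheses (unitality of $\zeta_{\rho,d\de}$, injectivity from Lemma~\ref{LZeta}, the matrix-superalgebra structure from Lemma~\ref{LMatrix}) is the right way to flesh out the one-line citation the paper gives.
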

\begin{proof}
This follows from Lemmas~\ref{LMatrix} and \ref{LEvseev}.
\end{proof}


\begin{Lemma} \label{LIdSymNew} 
Let $\theta\in \Weights$, $\rho=\rho(\theta)$ and $d=d(\theta)$.  
For the idempotent 
$\eps:=\iota_{\rho,d\de}(1_{\bi_\rho}\otimes 1_{d\de})$, we have an isomorphism of graded algebras $\eps H_\theta\eps\cong \Cent_{\rho,d}.$
In particular, 
\begin{enumerate}
\item[{\rm (i)}] As a $\k$-module, $\Cent_{\rho,d}$ is free of finite rank. Moreover, if $\F$ is a field of fractions of $\k$ or a quotient field of $\k$ then $\Cent_{\rho,d,\F}\cong \F\otimes_\k \Cent_{\rho,d}$. 
\item[{\rm (ii)}] $\Cent_{\rho,d}$ is a symmetric superalgebra 
whenever $H_\theta$ is a symmetric superalgebra. 
\end{enumerate}
\end{Lemma}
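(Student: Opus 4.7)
The approach is to combine Lemma~\ref{LJuly4} (which factors $1_{\rho,d\de}H_\theta 1_{\rho,d\de}$ as a tensor product) with Lemma~\ref{LMatrix} (which collapses $1_{\bi_\rho}H_\rho 1_{\bi_\rho}$ to $\k$), and to observe that truncating at $\eps$ isolates precisely the centralizer factor. First note that $\eps\in 1_{\rho,d\de}H_\theta 1_{\rho,d\de}$ by construction of $\iota_{\rho,d\de}$, so $\eps H_\theta\eps = \eps(1_{\rho,d\de}H_\theta 1_{\rho,d\de})\eps$. Unwinding the definition (\ref{EZetaHom}) and the identification (\ref{ECoreSubalg}), we have $\eps = \zeta_{\rho,d\de}(1_{\bi_\rho})$; equivalently, under the graded superalgebra isomorphism
$$H_\rho\otimes \Cent_{\rho,d}\iso 1_{\rho,d\de}H_\theta 1_{\rho,d\de},\quad h\otimes z\mapsto hz,$$
of Lemma~\ref{LJuly4}, the idempotent $\eps$ corresponds to $1_{\bi_\rho}\otimes 1_{\Cent_{\rho,d}}$. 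Truncating at this element yields
$$\eps H_\theta\eps\;\cong\;(1_{\bi_\rho}H_\rho 1_{\bi_\rho})\otimes \Cent_{\rho,d}.$$
By Lemma~\ref{LMatrix}, $1_{\bi_\rho}H_\rho 1_{\bi_\rho}$ is a free $\k$-module of rank one, concentrated in bidegree $(0,\0)$ since $1_{\bi_\rho}$ is a bidegree $(0,\0)$ idempotent; hence it is isomorphic to $\k$ as a graded superalgebra, and we obtain $\eps H_\theta\eps\cong \Cent_{\rho,d}$.

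For part (i), Theorem~\ref{TCyc}(i) says $H_\theta$ is a free $\k$-module of finite rank, and since $\k$ is a PID, its submodule $\eps H_\theta\eps$ is also free of finite rank; transferring this along the isomorphism just established gives the freeness of $\Cent_{\rho,d}$. The base-change assertion then follows because $\eps_\F H_{\theta,\F}\eps_\F\cong \F\otimes_\k \eps H_\theta\eps$ (tensoring the direct sum decomposition by $\F$), combined with the main isomorphism applied over both $\k$ and $\F$. For part (ii), Lemma~\ref{LSymIdTr} guarantees that the idempotent truncation $\eps H_\theta\eps$ at an even idempotent of a symmetric superalgebra is itself a symmetric superalgebra; transferring via $\eps H_\theta\eps\cong \Cent_{\rho,d}$ finishes the proof.

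The only delicate point is the identification of $\eps$ with $1_{\bi_\rho}\otimes 1_{\Cent_{\rho,d}}$ under Lemma~\ref{LJuly4}, but this is essentially bookkeeping: the map $R_\rho\to R_{\rho,d\de},\ x\mapsto x\otimes 1_{d\de}$ followed by $\iota_{\rho,d\de}$ and $\pi_\theta$ is by definition $\zeta_{\rho,d\de}$, so $\iota_{\rho,d\de}(1_{\bi_\rho}\otimes 1_{d\de})$ in $H_\theta$ equals $\zeta_{\rho,d\de}(1_{\bi_\rho})$, which is the image of $1_{\bi_\rho}\in H_\rho$ under the embedding (\ref{ECoreSubalg}).
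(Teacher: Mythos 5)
Your proposal is correct and follows essentially the same route as the paper's (one-line) proof: the key isomorphism $\eps H_\theta\eps\cong\Cent_{\rho,d}$ is obtained from Lemma~\ref{LJuly4} together with Lemma~\ref{LMatrix}, and then (i) and (ii) fall out via idempotent truncation considerations. The only cosmetic difference is that for (i) you re-derive the base-change and freeness facts from Theorem~\ref{TCyc}(i) directly, whereas the paper simply cites Lemma~\ref{LIdO}, which packages exactly that argument.
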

\begin{proof}
We have $\eps H_\theta\eps\cong \Cent_{\rho,d}$ by 
Lemmas~\ref{LJuly4} and \ref{LMatrix}. 
Now (i) follows from Lemma~\ref{LIdO} and (ii) follows from  Lemma~\ref{LSymIdTr}. 
\end{proof}

\begin{Lemma} \label{LBiBiRho}
Let $\k$ be a field, $\theta\in \Weights$, $\rho=\rho(\theta)$, $d=d(\theta)$, $\bi\in I^\rho_\di$ and $\bj\in I^{d\de}_\di$. If 
$V\in\mod{H_\theta}$ then $\dim 1_{\bi\bj}V=(\dim 1_{\bi}H_\rho1_{\bi_\rho})(\dim 1_{\bi_\rho\bj}V)$.
\end{Lemma}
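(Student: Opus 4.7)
The plan is to exploit the tensor decomposition
\[
\Phi\colon H_\rho \otimes \Cent_{\rho,d}\iso 1_{\rho,d\de}H_\theta 1_{\rho,d\de}, \quad h\otimes z\mapsto hz,
\]
from Lemma~\ref{LJuly4}. Since $1_{\bi\bj}\in 1_{\bar\bi\bar\bj}H_\theta 1_{\bar\bi\bar\bj}\subseteq 1_{\rho,d\de}H_\theta 1_{\rho,d\de}$, it suffices to analyze $1_{\bi\bj}V$ inside the restricted module $1_{\rho,d\de}V$.

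First, I identify $1_{\bi\bj}$ under $\Phi$. Compatibility of the divided power construction with the parabolic embedding $\iota_{\rho,d\de}$ (a direct check from the definition in \cite[\S3.2a]{KlLi}) yields $1_{\bi\bj}=\pi_\theta(\iota_{\rho,d\de}(1_\bi\otimes 1_\bj))$. Factoring $1_\bi\otimes 1_\bj=(1_\bi\otimes 1_{d\de})(1_\rho\otimes 1_\bj)$ in $R_{\rho,d\de}$ and applying $\pi_\theta\circ\iota_{\rho,d\de}$, one obtains $1_{\bi\bj}=\zeta_{\rho,d\de}(1_\bi)\cdot \eta_\bj$ with $\eta_\bj:=\pi_\theta(\iota_{\rho,d\de}(1_\rho\otimes 1_\bj))$. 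Since $\eta_\bj$ is an even idempotent coming from the second tensor factor of $R_{\rho,d\de}$, it supercommutes with the image of $H_\rho$; hence $\eta_\bj\in\Cent_{\rho,d}$, and I denote this element $c_\bj$. Thus under $\Phi$,
\[
1_{\bi\bj}\longleftrightarrow 1_\bi\otimes c_\bj,\qquad\text{and in particular}\qquad 1_{\bi_\rho\bj}\longleftrightarrow 1_{\bi_\rho}\otimes c_\bj.
\]

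By Lemma~\ref{LMatrix}, $H_\rho$ is a graded matrix superalgebra whose unique (up to isomorphism) simple supermodule is $P:=H_\rho 1_{\bi_\rho}$, with $\dim 1_{\bi_\rho}P=\dim 1_{\bi_\rho}H_\rho 1_{\bi_\rho}=1$. The standard double-centralizer argument then shows that every $(H_\rho \otimes \Cent_{\rho,d})$-supermodule $M$ decomposes as $M\simeq P\boxtimes W$, where $W:=\Hom_{H_\rho}(P,M)$ carries the natural $\Cent_{\rho,d}$-supermodule structure induced from the commuting action. Applied to $M=1_{\rho,d\de}V$, this gives $1_{\rho,d\de}V\simeq P\boxtimes W$ for some $\Cent_{\rho,d}$-supermodule $W$.

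Finally, since $1_\bi$ and $c_\bj$ are both even, $(1_\bi\otimes c_\bj)(P\boxtimes W)=(1_\bi P)\otimes_\k (c_\bj W)$, and therefore
\[
\dim 1_{\bi\bj} V=\dim(1_\bi P)\cdot \dim (c_\bj W)=(\dim 1_\bi H_\rho 1_{\bi_\rho})\cdot\dim (c_\bj W).
\]
Specializing to $\bi=\bi_\rho$ yields $\dim 1_{\bi_\rho\bj} V=\dim(c_\bj W)$; substituting back gives the claimed identity. The main technical point is the multiplicative compatibility of divided power idempotents with the parabolic embedding $\iota_{\rho,d\de}$; once this bookkeeping is in hand, the rest of the argument reduces to the Morita picture for matrix superalgebras.
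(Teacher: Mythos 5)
Your proof is correct, and it is close in spirit to the paper's argument: the key structural input in both is Lemma~\ref{LMatrix}, that $H_\rho$ is a graded matrix superalgebra whose corner $1_{\bi_\rho}H_\rho 1_{\bi_\rho}$ has rank $1$. The paper's proof is terser: it reduces directly to the dimension formula $\dim 1_\bi W=(\dim 1_\bi H_\rho 1_{\bi_\rho})(\dim 1_{\bi_\rho}W)$ valid for any finite-dimensional $H_\rho$-module $W$, and proves this from $W\simeq(H_\rho 1_{\bi_\rho})^{\oplus\dim 1_{\bi_\rho}W}$. What the paper leaves implicit, and you make explicit, is that the idempotent $1_{\bi\bj}$ factors as $\zeta_{\rho,d\de}(1_\bi)\cdot c_\bj$ with $c_\bj\in\Cent_{\rho,d}$, so that $c_\bj\cdot 1_{\rho,d\de}V$ is an $H_\rho$-submodule to which that dimension formula applies. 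You additionally route the argument through the full tensor decomposition $H_\rho\otimes\Cent_{\rho,d}\iso 1_{\rho,d\de}H_\theta 1_{\rho,d\de}$ of Lemma~\ref{LJuly4} and the Morita decomposition $1_{\rho,d\de}V\simeq P\boxtimes W$; this is slightly more machinery than strictly needed — supercommutation of $c_\bj$ with $\zeta_{\rho,d\de}(H_\rho)$ already suffices, without appealing to Lemma~\ref{LEvseev} — but it makes the commutation argument automatic and the bookkeeping transparent. Your unjustified step, compatibility of divided power idempotents with parabolic embeddings (so that $1_{\bi\bj}=\pi_\theta(\iota_{\rho,d\de}(1_\bi\otimes 1_\bj))$), is indeed a routine consequence of the associativity of the parabolic embeddings $\iota_{\underline\theta}$, so it is acceptable to assert this as a direct check. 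Overall: same core idea, with a more structured (and somewhat heavier) scaffold that also repairs a small gap in the brevity of the paper's own reduction step.
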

\begin{proof}
Note that $1_{\bi\bj}=1_{\bi\bj}\zeta_{\rho,d\de}(1_\bi)$, so it suffices to prove that 
$$\dim \zeta_{\rho,d\de}(1_\bi)V=(\dim 1_{\bi}H_\rho1_{\bi_\rho})(\dim \zeta_{\rho,d\de}(1_{\bi_\rho})V).
$$
Considering $1_{\rho,d\de}V$ as an $H_\rho$-module via the embedding 
$
\zeta_{\rho,d\de}:H_{\rho}\to 1_{\rho,d\de}H_\theta 1_{\rho,d\de}
$ 
of Lemma~\ref{LZeta}, it suffices to prove for any finite dimensional $H_\rho$-module $W$ that 
\begin{equation}\label{E190924_2}
\dim 1_{\bi}W=(\dim 1_{\bi}H_\rho1_{\bi_\rho})(\dim 1_{\bi_\rho}W).
\end{equation}
But by Lemma~\ref{LMatrix}, $W$ is a direct sum of modules of the from $H_\rho1_{\bi_\rho}$. As $\dim 1_{\bi_\rho}H_\rho1_{\bi_\rho}=1$, we actually have $W\simeq (H_\rho1_{\bi_\rho})^{\oplus \dim1_{\bi_\rho}W}$, which implies (\ref{E190924_2}).
\end{proof}

\subsection{Kang-Kashiwara-Tsuchioka isomorphism}
\label{SSKKT}

Throughout this subsection we assume that $\k$ is an algebraically closed field of characteristic different from $2$. 

Recall from Example~\ref{ExT} the twisted group superalgebra $\cT_n$ of the symmetric group $\Si_n$. 
The {\em spin Jucys-Murphy elements} $\cm_1,\dots,\cm_n\in\cT_n$
are defined inductively via:
\begin{equation*}\label{ERecY}
\cm_1=0,\quad \cm_{r+1}=-\ct_r\cm_r\ct_r+\ct_r.
\end{equation*}
Note that the elements $\cm_r$ are odd, so the elements $\cm_r^2$ are even. By \cite[Theorem 3.2]{BKdurham}, the elements $\cm_1^2,\dots,\cm_n^2$ commute. 

Suppose first that $\cha \k>0$ and write $\cha \k=2\ell+1$. We consider the elements $i\in I=\{0,1,\dots,\ell\}$ as elements of $\k$, i.e. we identify $i=i\cdot 1_\k$. 

Let $V$ be a finite dimensional $\cT_n$-supermodule. For any $\bi=i_1\cdots i_n\in I^n$, we consider the simultaneous generalized eigenspace 
\begin{equation}\label{ETWtSp}
\hspace{3mm}
V_\bi:=\{v\in V\mid (\cm_r^2-i_r(i_r+1)/2)^Nv=0\ \text{for $N\gg0$ and $r=1,\dots,n$}\}.\index{$V_\bi$}
\end{equation}
By \cite[Lemma 3.3]{BKdurham}, we then have the {\em weight space decomposition}\,
$
V=\bigoplus_{\bi\in I^n}V_\bi.
$ 

Considering the weight space decomposition of the regular $\cT_n$-module, we deduce that there is a system $\{e(\bi) \mid \bi \in I^n\}$ of mutually orthogonal even idempotents in $\cT_n$ summing to the identity, uniquely determined by the property that $e(\bi)V = V_\bi$ for each $V\in\mod{\cT_n}$ (note that some of the $e(\bi)$ might be zero).

For  $\theta\in Q_+$ with $\height(\theta)=n$ we set
$
e_\theta:=\sum_{\bi\in I^\theta}e(\bi)\in\cT_n.
$
In this way we get a family of orthogonal central idempotents $\{e_\theta\mid \theta\in Q_+,\, \height(\theta)=n\}$ (some of which could be zero), cf. \cite{BKdurham}, \cite[\S5.3a]{KlLi}, and we have the corresponding {\em superblocks} $\cT_\theta:=e_\theta\cT_n$ with 
$$
\cT_n=\bigoplus_{\theta\in Q_+, \,  \height(\theta)=n}\cT_\theta.
$$
The (non-zero) superblocks $\cT_\theta$ are symmetric superalgebras upon restriction of the symmetrizing form $\t$ from $\cT_n$ to $\cT_\theta$, cf. Example~\ref{ExT}. 

As explained in \cite[\S5.3a]{KlLi}, the superblocks $\Blo^{\rho,d}$ appearing in the introduction are related to the superblocks $\cT_\theta$ as follows:
\begin{equation}\label{EBloRel}
\Blo^{\rho,d}=\cT_{\cont(\rho)+d\de}
\end{equation}
where $\cont(\rho)$ is as in (\ref{ECont}), see \cite[(2.3.7)]{KlLi}. Moreover, if $\theta=\cont(\rho)+d\de$ then $\cont(\rho)=\rho(\theta)$ and $d=d(\theta)$, see \S\ref{SSW} and \cite[Lemma 3.1.39]{KlLi}. 

Recall from Example~\ref{ExTC} the wreath superproduct 
$
\cY_n\cong\cT_n\otimes \cC_n.
$ 
We identify $\cY_n$ with $\cT_n\otimes \cC_n$ via this isomorphism described explicitly in \cite[Proposition 5.1.3]{KlLi} and  \cite[Lemma 13.2.3]{Kbook}. 
Recalling the idempotents $e_\theta\in\cT_n$, we have the idempotents $\{e_\theta\otimes 1\mid \theta\in Q_+,\, \height(\theta)=n\}$ in $\cY_n$, cf. \cite{BKdurham}, \cite[\S5.3b]{KlLi}, and we have the corresponding {\em superblocks} 
$\cY_\theta:=(e_\theta\otimes 1)\cY_n\cong \cT_\theta\otimes \cC_n$ (some of which could be zero) with 
$$
\cY_n=\bigoplus_{\theta\in Q_+, \,  \height(\theta)=n}\cY_\theta.
$$
The (non-zero) superblocks $\cY_\theta$ are symmetric superalgebras upon restriction of the symmetrizing form $\t$ from $\cY_n$ to $\cY_\theta$, cf. Example~\ref{ExTC}. 

Let 
$$\hat I:=\{0,1,1',2,2',\dots,\ell,\ell'\}\index{i@$\hat I$}$$ 
We have the embedding $I\subset \hat I$ and  the surjection 
$$
\pr:\hat I\to I,\ 0\mapsto 0,\ i\mapsto i,\ i'\mapsto i
\qquad(\text{for}\ i=1,\dots,\ell).
$$
So we also have $I^n\subseteq \hat I^n$ and the surjection 
$
\pr:\hat I^n\to I^n,\ \hat i_1\cdots \hat i_n\mapsto \pr(\hat i_1)\cdots \pr(\hat i_n).
$ 

The following theorem is proved in {\rm \cite[Theorem 5.4]{KKT}}, cf. \cite[Lemma 5.3.26,\,Proposition 5.3.27,\,Theorem 5.3.28]{KlLi} for more details.

\begin{Theorem} \label{TKKT1} Let $\k$ be an algebraically closed field of odd characteristic $2\ell+1$ and $\theta=\sum_{i\in I}m_i\al_i\in Q_+$ with $n=\height(\theta)$. Then there are even orthogonal idempotents $\{\mathpzc{e}(\hat\bi)\mid \hat\bi\in\hat I^n\}$ in $\cY_n$ summing to the identity 
with the following properties:
\begin{enumerate}
\item[{\rm (i)}] For any $\bi\in I^n$, in $\cY_n=\cT_n\otimes\cC_n$ we have
$$e(\bi)\otimes 1=\sum_{\hat\bi\in\hat I^n\,\text{with}\,\,\pr(\hat \bi)=\bi}\mathpzc{e}(\hat\bi).$$ Moreover, for any $\hat \bi\in \hat I^n$ and any finite dimensional $\cY_n$-supermodule $V$, 
$$(e(\pr(\hat\bi))\otimes 1)V\simeq (\mathpzc{e}(\hat\bi)V)^{\bigoplus 2^{n-m_0}}.$$ 
\item[{\rm (ii)}] Using the embedding $I^\theta\subseteq \hat I^n$, we have the idempotent 
$\mathpzc{e}:=\sum_{\bi\in I^\theta}\mathpzc{e}(\bi)$ in $\cY_\theta$ such that $\mathpzc{e}\cY_\theta \mathpzc{e}$ is Morita superequivalent to $\cY_\theta$. Moreover, there is an isomorphism of superalgebras 
$H_\theta\otimes\cC_{m_0}\cong \mathpzc{e}\cY_\theta \mathpzc{e}$ under which the idempotent $1_\bi\otimes 1\in  H_\theta\otimes\cC_{m_0}$ corresponds to the idempotent $\mathpzc{e}(\bi)$. 
\end{enumerate}
\end{Theorem}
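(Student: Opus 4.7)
The plan is to construct refined simultaneous eigenspaces inside $\cY_n$ for a suitable family of commuting even operators that separate the $\pm i$ ambiguity hidden inside the spin Jucys-Murphy decomposition of $\cT_n$. Concretely, inside $\cY_n=\cT_n\otimes\cC_n$ the elements $\cm_r$ are odd and the Clifford generators $\cc_r$ are odd, so the products $\cm_r\cc_r$ (together with suitable auxiliary terms involving the $\ct$ and $\cc$ generators) are even and can be shown to pairwise commute. Their squares project onto the central elements $\cm_r^2\otimes 1$ whose eigenvalues are $i(i+1)/2$ for $i\in I$, while the operators themselves have eigenvalues in a two-sheeted cover of $I$: at $i=0$ the eigenvalue is forced to be $0$, but for $i\neq 0$ the two sign choices $i$ and $i'$ split the fibre. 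This is exactly the indexing set $\hat I$, and the spectral projectors yield the orthogonal even idempotents $\mathpzc{e}(\hat\bi)$.

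With these idempotents in hand, part~(i) is essentially built in: summing $\mathpzc{e}(\hat\bi)$ over all $\hat\bi$ with $\pr(\hat\bi)=\bi$ recovers the spectral projector onto the $\cm^2$-eigenspace with parameter $\bi$, which equals $e(\bi)\otimes 1$. The multiplicity statement $(e(\pr(\hat\bi))\otimes 1)V\simeq (\mathpzc{e}(\hat\bi)V)^{\oplus 2^{n-m_0}}$ should be read off by observing that the Clifford generators $\cc_r$ at positions $r$ with $i_r\neq 0$ anticommute with the new operators in a controlled way, producing a free module structure under a Clifford subalgebra of rank $n-m_0$; at positions where $i_r=0$ no such splitting occurs, matching the exponent $n-m_0$.

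For part~(ii), I would lift each Khovanov--Lauda--Rouquier generator to an element of $\mathpzc{e}\cY_\theta\mathpzc{e}$: the idempotents $1_\bi\otimes 1$ map to $\mathpzc{e}(\bi)$ by design, the $y_r$'s lift to the refined even analogues of $\cm_r^2$ (rescaled by universal denominators that are invertible in characteristic $2\ell+1$), and the $\psi_r$'s lift to normalised intertwiners built out of $\ct_r$ and $\cc_r,\cc_{r+1}$ that already appear in the standard KLR-style constructions. Then all the relations (\ref{R1})--(\ref{R7}) together with the cyclotomic relations (\ref{RCyc}) must be checked on weight vectors in each $\mathpzc{e}(\hat\bi)$-summand; the extra factor $\cC_{m_0}$ emerges from the Clifford generators $\cc_r$ at positions $r$ whose colour is $0$, where no sign-splitting was available in the first paragraph and the action of the $\cc_r$'s supercommutes with everything coming from $H_\theta$.

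The genuinely hard step, and the one that occupies most of the KKT argument, is verifying the precise quadratic relation (\ref{R6}) with the correct polynomials $Q_{i,j}(u,v)$ and the braid-like relation (\ref{R7}) with $B_{i,j,k}(u,v)$. These require delicate expansions in the spin group algebra involving the identities $(\ct_r\ct_{r+1})^3=1$ and the anticommutation rules, plus a careful comparison of signs coming from the Clifford swap $\cc_r\cc_{r+1}=-\cc_{r+1}\cc_r$. Once these are in place, the Morita superequivalence $\mathpzc{e}\cY_\theta\mathpzc{e}\sim_{\sM}\cY_\theta$ follows from the fact that every simple $\cY_\theta$-supermodule meets some weight space $\mathpzc{e}(\hat\bi)V$ nontrivially (an immediate consequence of the weight space decomposition), so $\mathpzc{e}$ is a full idempotent in $\cY_\theta$; the isomorphism $H_\theta\otimes\cC_{m_0}\cong\mathpzc{e}\cY_\theta\mathpzc{e}$ then follows by a dimension count, using Theorem~\ref{TCyc}(i) on the $H_\theta$ side and the explicit basis of $\cY_n$ from Example~\ref{ExTC} on the other side.
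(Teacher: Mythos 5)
Your approach is genuinely different from the paper's. The paper does not reconstruct the Kang--Kashiwara--Tsuchioka isomorphism: it invokes the isomorphism $\cY_n\cong RC_n^{\La_0}$ and the built-in idempotents $\mathpzc{e}(\hat\bi)$ of the quiver Hecke--Clifford superalgebra directly from \cite{KKT} and \cite[\S5.3]{KlLi}, then obtains part (ii) by citing \cite[Lemma 5.3.26, Proposition 5.3.27]{KlLi}. The only new argument in the paper's proof is the multiplicity formula in part (i), which is a short counting argument using the KKT relation $\cc_r\mathpzc{e}(\hat\bi)=\mathpzc{e}(c_r(\hat\bi))\cc_r$. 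Your plan is instead to rebuild the KKT isomorphism from scratch inside $\cY_n=\cT_n\otimes\cC_n$, which is defensible in principle, but it reopens all of the hard technical content that the cited sources dispose of.

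Two concrete gaps. First, the mechanism you give for the exponent $2^{n-m_0}$ does not produce it. A ``free module structure under a Clifford subalgebra of rank $n-m_0$'' would, under Morita equivalence, produce a $2^{\lceil(n-m_0)/2\rceil}$-fold multiplicity (the size of the Clifford simple), not $2^{n-m_0}$. The correct argument is purely combinatorial: a word $\bi\in I^\theta$ has exactly $n-m_0$ nonzero entries, so precisely $2^{n-m_0}$ words $\hat\bj\in\hat I^n$ satisfy $\pr(\hat\bj)=\bi$; since $\cc_r$ is invertible and $\cc_r\mathpzc{e}(\hat\bi)=\mathpzc{e}(c_r(\hat\bi))\cc_r$, all the summands $\mathpzc{e}(\hat\bj)V$ in $e(\bi)\otimes1$ are isomorphic, whence the $2^{n-m_0}$-fold repetition. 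Second, your final step for part (ii) --- establishing $H_\theta\otimes\cC_{m_0}\cong\mathpzc{e}\cY_\theta\mathpzc{e}$ ``by a dimension count, using Theorem~\ref{TCyc}(i) on the $H_\theta$ side and the explicit basis of $\cY_n$'' --- is not an argument: Theorem~\ref{TCyc}(i) gives freeness of $H_\theta$ but not its rank, and the basis of $\cY_n$ gives $\dim\cY_n=n!\,2^n$ but no direct handle on $\dim\mathpzc{e}\cY_\theta\mathpzc{e}$; knowing $\dim H_\theta$ requires the full Kang--Kashiwara--Oh categorification. You also acknowledge that the verifications of the quadratic relation (\ref{R6}) and the braid relation (\ref{R7}) --- the genuinely hard part --- are left entirely unaddressed, so the proposal as written is a plan rather than a proof.
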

\begin{proof}
We explain how to apply the results in \cite{KlLi} to deduce the desired results. By  \cite[(5.3.16), Lemma 5.3.17, Theorem 5.3.28]{KlLi}, there is an isomorphism $\cY_n\cong RC_n^{\La_0}$ of superalgebras, where $RC_n^{\La_0}$ is the quiver Hecke-Clifford superalgebra defined in \cite{KKT}, see \cite[\S5.3c]{KlLi}. The quiver Hecke-Clifford superalgebra comes with the orthogonal idempotents $\{\mathpzc{e}(\hat\bi)\mid \hat\bi\in\hat I^n\}$ summing to the identity. Moreover, as shown in the proof of \cite[Theorem 5.3.28]{KlLi}, under this isomorphism, the idempotent $e(\bi)\otimes 1$ corresponds to the idempotent $\sum_{\hat\bi\in\hat I^n\,\text{with}\,\,\pr(\hat \bi)=\bi}\mathpzc{e}(\hat\bi)$ (for any $\bi\in I^n$).
To complete the proof of (i), let $\hat\bi\in \hat I^n$, $\bi=\pr(\hat\bi)$, and $V$ is a finite dimensional $RC_n^{\La_0}$-module. 

Moreover, by the last relation in (i) in \cite[Definition 3.5]{KKT}, for the Clifford generators $\cc_1,\dots,\cc_n$ of $RC_n^{\La_0}$, we have 
$$\cc_r\mathpzc{e}(\hat\bi)=\mathpzc{e}(c_r(\hat\bi))\cc_r\qquad(1\leq r\leq n,\,\hat\bi\in\hat I^n),$$
where $c_r(\hat\bi)=\hat\bj$ is defined as $\hat j_s=\hat i_s$ for $s\neq r$, $\hat j_r=0$ if $\hat i_r=0$, $\hat j_r=i'$ if $\hat i_r=i\neq 0$, and $\hat j_r=i$ if $\hat i_r=i'\neq 0$. Since the elements $\cc_r$ are invertible, for any finite dimensional $RC_n^{\La_0}$-supermodule $V$, it follows that $\mathpzc{e}(\hat\bi)V\simeq \mathpzc{e}(c_r(\hat\bi))V$, hence $\mathpzc{e}(\hat\bi)V\simeq \mathpzc{e}(\hat\bj)V$ if $\pr(\hat\bi)=\pr(\hat\bj)$. 
Now, if $\bi\in I^\theta$ then the word  $\bi$ has exactly $m_1+\dots+m_\ell=n-m_0$ non-zero entries so there are exactly $2^{n-m_0}$ idempotents in the sum 
$\sum_{\hat\bj\in\hat I^n\,\text{with}\,\,\pr(\hat \bj)=\bi}\mathpzc{e}(\hat\bj)$, which implies that for any $\hat\bi\in \hat I^n$ with $\pr(\hat\bi)=\bi$ we have 
$$\Big(\sum_{\hat\bj\in\hat I^n\,\text{with}\,\,\pr(\hat \bj)=\bi}\mathpzc{e}(\hat\bj)\Big)V\simeq (\mathpzc{e}(\hat\bi)V)^{\bigoplus 2^{n-m_0}}.$$ 

Now (ii) follows by application of \cite[Lemma 5.3.26,\,Proposition 5.3.27]{KlLi}. 
\end{proof}

We will also need

\begin{Theorem}\label{T5.3.31} {\rm \cite[Theorem 5.3.31]{KlLi}}
Let $\k$ be an algebraically closed field of odd characteristic $2\ell+1$ and $\Blo^{\rho,d}$ be a superblock of $\k\tilde\Si_n$. 
Write $\theta:=\cont(\rho)+d\de$ and $\cont(\rho) = \sum_{i\in I}r_i\al_i$. Then 
$
\Blo^{\rho,d}\sim_{\sM} H_\theta\otimes \cC_{n-r_0}.
$
\end{Theorem}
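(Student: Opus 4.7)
The plan is to deduce the theorem by chaining together three ingredients already available in the excerpt: the identification $\Blo^{\rho,d}=\cT_\theta$ from (\ref{EBloRel}), the isomorphism $\cY_n\cong \cT_n\otimes \cC_n$ from Example~\ref{ExTC}, and the Kang-Kashiwara-Tsuchioka Morita superequivalence of Theorem~\ref{TKKT1}(ii). Once these are combined, the theorem reduces to a parity computation and some standard Clifford algebra bookkeeping.

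First I would note that since $\k$ is algebraically closed of odd characteristic, it contains a square root of $-2$, so the isomorphism of superalgebras $\cY_n \cong \cT_n \otimes \cC_n$ of Example~\ref{ExTC} holds. Restricting to superblocks, we obtain $\cY_\theta\cong \cT_\theta\otimes \cC_n$. Writing $\theta=\sum_{i\in I}m_i\alpha_i$, Theorem~\ref{TKKT1}(ii) then yields $H_\theta\otimes \cC_{m_0}\cong \mathpzc{e}\cY_\theta\mathpzc{e}\sim_{\sM}\cY_\theta$. Combining gives the key intermediate superequivalence
\begin{equation*}
H_\theta\otimes \cC_{m_0}\ \sim_{\sM}\ \cT_\theta\otimes \cC_n.
\end{equation*}

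Next I would compute $m_0$ explicitly. Since $\theta=\cont(\rho)+d\delta$ with $\cont(\rho)=\sum_i r_i\alpha_i$ and $\delta=2\alpha_0+2\alpha_1+\cdots+2\alpha_{\ell-1}+\alpha_\ell$, we get $m_0=r_0+2d$. In particular $m_0+n$ and $n-r_0$ differ by $2r_0+2d$, which is even, so $m_0+n\equiv n-r_0\pmod 2$.

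Finally I would tensor both sides of the displayed superequivalence with $\cC_n$. The right-hand side becomes $\cT_\theta\otimes\cC_{2n}\sim_{\sM}\cT_\theta$ because, by (\ref{ECliffEq}), $\cC_{2n}$ is super-Morita equivalent to $\k$ (being a matrix superalgebra over the algebraically closed field $\k$). The left-hand side becomes $H_\theta\otimes\cC_{m_0+n}$, and since $m_0+n\equiv n-r_0\pmod 2$, applying (\ref{ECliffEq}) once more gives $\cC_{m_0+n}\sim_{\sM}\cC_{n-r_0}$, hence $H_\theta\otimes \cC_{m_0+n}\sim_{\sM}H_\theta\otimes \cC_{n-r_0}$. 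Combining yields the desired equivalence $\Blo^{\rho,d}=\cT_\theta\sim_{\sM}H_\theta\otimes\cC_{n-r_0}$. There is no real obstacle here: the substantive content is packed entirely into Theorem~\ref{TKKT1} and the identification $\cY_n\cong \cT_n\otimes\cC_n$, and the only care needed is in the parity check ensuring that the leftover Clifford factors match up correctly.
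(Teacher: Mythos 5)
Your proof is correct and is essentially the route one would expect, given that the paper only cites \cite[Theorem 5.3.31]{KlLi} and does not reproduce an argument. The key steps — chaining $\Blo^{\rho,d}=\cT_\theta$, the isomorphism $\cY_\theta\cong\cT_\theta\otimes\cC_n$ of Example~\ref{ExTC}, and the superequivalence $H_\theta\otimes\cC_{m_0}\cong\mathpzc{e}\cY_\theta\mathpzc{e}\sim_{\sM}\cY_\theta$ of Theorem~\ref{TKKT1}(ii), then cancelling Clifford factors via (\ref{ECliffEq}) — are all sound, and the parity computation $m_0=r_0+2d$ (from $\delta=2\al_0+\cdots+2\al_{\ell-1}+\al_\ell$) giving $m_0+n\equiv n-r_0\pmod 2$ is exactly what makes the Clifford bookkeeping close up. The only implicit points worth being aware of are that Morita superequivalence is preserved under $-\otimes C$ for a fixed superalgebra $C$ (true, by tensoring the Morita bimodules with $C$), and that $\cC_{2n}\sim_{\sM}\k$ over an algebraically closed field, both of which you handle correctly via (\ref{ECliffEq}).
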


We now also allow the case $\cha \k=0$. 
Recall from Example~\ref{ExTC(q)} the Olshanski's superalgebra 
$\cY_n(q)$. The superalgebra $\cY_n(q)$ comes with a family of orthogonal central idempotents $\{e_\theta\mid \theta\in Q_+,\, \height(\theta)=n\}$, cf. \cite{BKdurham}, \cite[\S5.3b]{KlLi}, and we have the corresponding {\em superblocks} $\cY_\theta(q):=e_\theta\cY_n(q)$ with 
$$
\cY_n(q)=\bigoplus_{\theta\in Q_+, \,  \height(\theta)=n}\cY_\theta(q)
$$
(using the same symbol for the idempotents $e_\theta\in\cT_n$ and $e_\theta\in \cY_n(q)$ should lead to no confusion). 
The (non-zero) superblocks $\cY_\theta(q)$ are symmetric superalgebras upon restriction of the symmetrizing form $\t$ from $\cY_n(q)$ to $\cY_\theta(q)$, cf. Example~\ref{ExTC(q)}.

The following theorem is proved in {\rm \cite[Corollary 4.8]{KKT}}, cf. \cite[Lemma 5.3.26,\,Proposition 5.3.27,\,Theorem 5.3.29]{KlLi}.

\begin{Theorem} \label{TKKT2} Let $\k$ be an algebraically closed field of characteristic different from $2$, the parameter $q\in\k^\times$ be a primitive $(2\ell+1)$st root of unity, and $\theta=\sum_{i\in I}m_i\al_i\in Q_+$. Then there is an even  idempotent $\mathpzc{e}\in  \cY_\theta(q)$ and an isomorphism of superalgebras 
$H_\theta\otimes\cC_{m_0}\cong \mathpzc{e}\cY_\theta(q) \mathpzc{e}$. Moreover, $\mathpzc{e}\cY_\theta(q) \mathpzc{e}$ is Morita superequivalent to $\cY_\theta(q)$. 
\end{Theorem}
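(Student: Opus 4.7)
The plan is to adapt the argument used for Theorem~\ref{TKKT1} to the quantum setting, with $\cY_n(q)$ playing the role of $\cY_n$. The idea is to construct weight-space idempotents in $\cY_n(q)$ by diagonalizing suitable quantum analogs of the spin Jucys--Murphy elements, then invoke the Kang--Kashiwara--Tsuchioka isomorphism for the affine quiver Hecke--Clifford superalgebra (in its quantum version from \cite[Corollary 4.8]{KKT}) to obtain the claimed presentation of the cyclotomic idempotent truncation.

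First I would introduce quantum spin Jucys--Murphy elements $\cm_1(q),\dots,\cm_n(q)\in\cY_n(q)$, defined inductively using the generators $T_r,\cc_r$, whose squares pairwise commute. Using that $q$ is a primitive $(2\ell+1)$st root of unity, a finite-dimensional $\cY_n(q)$-supermodule $V$ decomposes into simultaneous generalized eigenspaces $V_{\bi}$ for the $\cm_r(q)^2$, with eigenvalues indexed by residues $i\in I$; this produces a system of orthogonal even idempotents $\{e(\bi)\mid \bi\in I^n\}$ in $\cY_n(q)$ summing to the identity and producing the central idempotents $e_\theta$ defining the superblock decomposition. The existence of such a weight-space decomposition in the quantum setting is parallel to \cite[Lemma 3.3]{BKdurham} and the modular case used in Theorem~\ref{TKKT1}.

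Next, I would apply \cite[Corollary 4.8]{KKT}, which identifies $\cY_n(q)$ (modulo appropriate conventions) with the affine quiver Hecke--Clifford superalgebra $RC_n$ of type $\ttA_{2\ell}^{\hspace{-.3mm}{}^{(2)}}$, together with the corresponding cyclotomic quotient identification at weight $\La_0$. Under this identification, for each $\hat\bi\in\hat I^n$ one obtains idempotents $\mathpzc{e}(\hat\bi)$; using the same refinement argument as in the proof of Theorem~\ref{TKKT1}, the central idempotent $e_\theta$ decomposes as a sum of $\mathpzc{e}(\hat\bi)$'s indexed by lifts of words in $I^\theta$, and one sets $\mathpzc{e}:=\sum_{\bi\in I^\theta}\mathpzc{e}(\bi)$. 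Precisely as in the proof of Theorem~\ref{TKKT1}(ii), invoking \cite[Lemma 5.3.26, Proposition 5.3.27]{KlLi} together with the KKT identification at the cyclotomic level will yield the superalgebra isomorphism $H_\theta\otimes\cC_{m_0}\cong \mathpzc{e}\cY_\theta(q)\mathpzc{e}$, with $1_\bi\otimes 1$ corresponding to $\mathpzc{e}(\bi)$.

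Finally, for the Morita superequivalence $\mathpzc{e}\cY_\theta(q)\mathpzc{e}\sim_\sM \cY_\theta(q)$, I would argue as in Theorem~\ref{TKKT1}(i): the Clifford generators $\cc_r$ of $\cY_n(q)$ conjugate $\mathpzc{e}(\hat\bi)$ to $\mathpzc{e}(c_r(\hat\bi))$, so all idempotents $\mathpzc{e}(\hat\bj)$ with $\pr(\hat\bj)=\bi\in I^\theta$ produce isomorphic summands on any finite-dimensional supermodule $V$, showing that $\mathpzc{e}V\neq 0$ whenever $e_\theta V\neq 0$. The main obstacle, and the place requiring genuine care, is verifying that the Clifford conjugation argument from the modular proof carries over unchanged to the quantum setting despite the more intricate relations in $\cY_n(q)$; but this is precisely the content of the relations in \cite[Definition 3.5]{KKT} as used in the proof of Theorem~\ref{TKKT1}, so no essentially new difficulty arises.
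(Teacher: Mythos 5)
The paper supplies no proof for this theorem; it is stated with only the citation to \cite[Corollary 4.8]{KKT} together with \cite[Lemma 5.3.26, Proposition 5.3.27, Theorem 5.3.29]{KlLi}. Your sketch is a faithful reconstruction of what those sources do, running precisely parallel to the proof the paper does give for Theorem~\ref{TKKT1} (identify $\cY_n(q)$ with the quiver Hecke--Clifford superalgebra, form the idempotents $\mathpzc{e}(\hat\bi)$, use Clifford conjugation for the Morita superequivalence, and invoke the KlLi lemmas for the isomorphism), so it is essentially the same approach and I see no gap.
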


The assumptions made on the ground field in the following lemma are probably unnecessary, cf. \cite[Remark 4.2.58]{KlLi},  but we need them to use Theorems \ref{TKKT1},\,\ref{TKKT2}. Note that the case $\cha \k=2\ell+1$, which is most important for this paper, is not excluded.  

\begin{Lemma} \label{LSymmConditions} 
Let $\k$ be an algebraically closed field of characteristic different from $2$. Exclude the case where $0<\cha \k<2\ell+1$ and $\cha \k$ divides $2\ell+1$. Then, for any $\theta\in Q_+$, the superalgebra $H_\theta$ is a symmetric superalgebra. 
\end{Lemma}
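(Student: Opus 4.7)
The plan is to transfer the known symmetric structures on the (twisted or Olshanski) wreath superalgebras $\cY_\theta$ and $\cY_\theta(q)$ onto $H_\theta$ via the Kang--Kashiwara--Tsuchioka isomorphisms. Writing $\theta=\sum_{i\in I}m_i\al_i$, I would treat the cases $\cha\k=2\ell+1$ and $\cha\k\neq 2\ell+1$ separately. The supporting ingredients are Lemma~\ref{LSymIdTr} (idempotent truncation by an even idempotent preserves symmetricity), Lemma~\ref{LSymTensCliff} (which allows us to shed the Clifford factor $\cC_{m_0}$), and Theorem~\ref{TCyc}(i) which ensures that $H_\theta$ is free of finite rank over $\k$ so that Lemma~\ref{LSymTensCliff} applies.

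In the case $\cha\k=2\ell+1$, Theorem~\ref{TKKT1}(ii) provides an even idempotent $\mathpzc{e}\in\cY_\theta$ together with an isomorphism of superalgebras $H_\theta\otimes\cC_{m_0}\cong \mathpzc{e}\,\cY_\theta\,\mathpzc{e}$. The block $\cY_\theta$ is symmetric as a direct summand of the symmetric superalgebra $\cY_n$ of Example~\ref{ExTC}, so Lemma~\ref{LSymIdTr} yields that the truncation $\mathpzc{e}\,\cY_\theta\,\mathpzc{e}$ is again symmetric. Hence $H_\theta\otimes\cC_{m_0}$ is symmetric, and Lemma~\ref{LSymTensCliff} (or, when $m_0=0$, the trivial identification $\cC_0=\k$) gives that $H_\theta$ is symmetric.

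In the case $\cha\k\neq 2\ell+1$, the exclusion hypothesis of the lemma guarantees that $\cha\k$ does not divide $2\ell+1$, so the algebraic closedness of $\k$ supplies a primitive $(2\ell+1)$st root of unity $q\in\k^\times$. Theorem~\ref{TKKT2} then produces an even idempotent $\mathpzc{e}\in\cY_\theta(q)$ with $H_\theta\otimes\cC_{m_0}\cong \mathpzc{e}\,\cY_\theta(q)\,\mathpzc{e}$. Example~\ref{ExTC(q)} shows that $\cY_\theta(q)$ is symmetric, and the same chain of reasoning as in the previous case, applied now to $\cY_\theta(q)$, completes the argument.

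The proof is essentially bookkeeping around the two KKT theorems, so no step should present a genuine obstacle; the only point requiring mild care is that the truncating idempotent $\mathpzc{e}$ is genuinely even in both~\ref{TKKT1} and~\ref{TKKT2} (which is built into their statements) so that Lemma~\ref{LSymIdTr} is directly applicable.
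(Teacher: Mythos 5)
Your proposal is correct and follows essentially the same argument as the paper: symmetricity of $\cY_\theta$ (resp.\ $\cY_\theta(q)$), truncation by the even idempotent $\mathpzc{e}$ via Lemma~\ref{LSymIdTr}, transfer through the Kang--Kashiwara--Tsuchioka isomorphism from Theorem~\ref{TKKT1} (resp.\ \ref{TKKT2}), and finally stripping the Clifford factor with Lemma~\ref{LSymTensCliff}. Your explicit case split on whether $\cha\k$ equals $2\ell+1$ (which determines whether the untwisted $\cY_\theta$ or the Olshanski algebra $\cY_\theta(q)$ is the correct vehicle, since a primitive $(2\ell+1)$st root of unity exists precisely outside that case) makes the role of the exclusion hypothesis clearer than the paper's somewhat compressed phrasing, but it is the same proof.
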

\begin{proof}
The superalgebras $\cY_\theta$ and  $\cY_\theta(q)$, and hence their idempotent truncations $e\cY_\theta e$ and $e\cY_\theta(q) e$ are symmetric superalgebras, see Examples~\ref{ExTC},\,\ref{ExTC(q)} and Lemma~\ref{LSymIdTr}. It follows from the assumptions on $\k$ and $q$ and Theorems \ref{TKKT1},\,\ref{TKKT2} that the superalgebra $H_\theta\otimes\cC_{m_0}$ is also a symmetric superalgebra, where $m_0$ comes from  $\theta=\sum_{i\in I}m_i\al_i$. 
Now $H_\theta$ is a symmetric superalgebra by Lemma~\ref{LSymTensCliff}. 
\end{proof}

\subsection{Correspondence of irreducible modules and formal characters} The material of this subsection will only be used in \S\ref{SSGGFragm}. 
In this subsection we assume that $\k$ is an algebraically closed field of odd characteristic $2\ell+1$. 

Let $\theta=\sum_{i\in I}m_i\al_i\in Q_+$ and $D$ be an irreducible $\cT_\theta$-supermodule. By \cite[Theorem 22.3.1]{Kbook}, we have that $D$ is of type $\Mtype$ if $n-m_0$ is even and of type $\Qtype$ if $n-m_0$ is odd. Let $U_n$ be the Clifford supermodule, i.e. the only irreducible $\cC_n$-supermodule up to isomorphism, see \cite[Example 12.2.4]{Kbook}. We have that $U_n$ is of type $\Mtype$ if $n$ is even and of type $\Qtype$ if $n$ is odd. Moreover, $\dim U_n=2^{\lceil n/2\rceil}$. By \cite[Lemma 12.2.13]{Kbook}, we have a bijection
$$
\Irr(\cT_\theta)\iso \Irr(\cY_\theta),\ D\mapsto D\circledast U_n,
$$
with $D\circledast U_n$ of type $\Mtype$ if and only if $D$ and $U_n$ have the same type, i.e. if and only if $n-m_0$ and $n$ have the same parity, i.e. if and only if $m_0$ is even. Moreover, 
$$
\dim (e(\bi)\otimes 1_{\cC_n})(D\circledast U_n)=
\left\{
\begin{array}{ll}
2^{\lceil n/2\rceil-1}\dim e(\bi)D &\hbox{if $n$ and $n-m_0$ are odd,}\\
2^{\lceil n/2\rceil}\dim e(\bi)D &\hbox{otherwise.}
\end{array}
\right.
$$
Applying the Morita superequivalence and the isomorphism $\cY_\theta\sim_{\sM}
\mathpzc{e}\cY_\theta \mathpzc{e}\cong H_\theta\otimes\cC_{m_0}$ of Theorem~\ref{TKKT1}, we now obtain a bijection 
$$
\Irr(\cT_\theta)\iso \Irr(H_\theta\otimes\cC_{m_0}),\ D\mapsto \mathpzc{e}(D\circledast U_n),
$$
with $\mathpzc{e}(D\circledast U_n)$ of type $\Mtype$ if and only if $m_0$ is even, and 
$$\dim (1_\bi\otimes 1_{\cC_{m_0}})\mathpzc{e}(D\circledast U_n)
=
2^{m_0-n}\dim (e(\bi)\otimes 1)(D\circledast U_n).
$$
Finally, taking into account that all irreducible $H_\theta$-supermodules are of type $\Mtype$ by \cite[Proposition 6.15]{HW}, \cite[\S4.2]{KKO1}, \cite[Theorem 8.6]{KKO2},
we have a bijection 
$$
\Irr(H_\theta)\iso\Irr(H_\theta\otimes\cC_{m_0}),\ L\mapsto L\boxtimes U_{m_0}
$$
with 
$$
\dim (1_\bi\otimes 1_{\cC_{m_0}})(L\boxtimes U_{m_0})=2^{\lceil m_0/2\rceil}\dim 1_\bi L.
$$
Composing this bijection with the inverse of previous one, we get a bijection
$
\beta:\Irr(H_\theta)\iso \Irr(\cT_\theta)
$
with 
$
\dim e(\bi)\beta(L)=2^{x(\theta)}\dim 1_\bi L$
where
$$
x(\theta)=\left\{
\begin{array}{ll}
\lceil m_0/2\rceil+n-m_0-\lceil n/2\rceil+1 &\hbox{if $n$ and $n-m_0$ are odd,}\\
\lceil m_0/2\rceil+n-m_0-\lceil n/2\rceil &\hbox{otherwise,}
\end{array}
\right.
$$
which is easily seen to be $\lceil(n-m_0)/2\rceil$.

We have proved

\begin{Theorem} \label{TWtHWtT}
Let $\theta\in Q_+$. 
There is a bijection $
\beta:\Irr(H_\theta)\iso \Irr(\cT_\theta)
$ such that for any $L\in \Irr(H_\theta)$ we have $\dim e(\bi)\beta(L)=2^{\lceil(n-m_0)/2\rceil}\dim 1_\bi L$. 
\end{Theorem}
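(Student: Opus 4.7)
The proof plan is to construct the bijection $\beta$ as a composition of three explicit bijections, tracking weight-space dimensions at each stage. All the ingredients are assembled in the discussion immediately preceding the theorem; the task is to organize them and carry out the arithmetic verifying the claimed exponent $\lceil (n-m_0)/2\rceil$.

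First I would construct the bijection $\Irr(\cT_\theta)\iso\Irr(\cY_\theta)$ via $D\mapsto D\circledast U_n$, where $U_n$ is the unique irreducible $\cC_n$-supermodule (of dimension $2^{\lceil n/2\rceil}$). By \cite[Theorem 22.3.1]{Kbook}, an irreducible $D\in\Irr(\cT_\theta)$ has type $\Mtype$ iff $n-m_0$ is even, and $U_n$ has type $\Mtype$ iff $n$ is even; \cite[Lemma 12.2.13]{Kbook} then gives the claimed bijection, and a direct computation of $\dim(e(\bi)\otimes 1_{\cC_n})(D\circledast U_n)$ shows that it equals $2^{\lceil n/2\rceil}\dim e(\bi)D$ in general, except when both $n$ and $n-m_0$ are odd (where one loses a factor of $2$ because $D\circledast U_n$ is also of type $\Qtype$, producing the usual factor-of-$2$ discrepancy in superblock character computations).

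Next I would apply the Morita superequivalence $\cY_\theta\sim_{\sM}\mathpzc{e}\cY_\theta\mathpzc{e}\cong H_\theta\otimes\cC_{m_0}$ provided by Theorem~\ref{TKKT1}(ii), which yields a bijection $\Irr(\cY_\theta)\iso\Irr(H_\theta\otimes\cC_{m_0})$, $V\mapsto \mathpzc{e}V$. By Theorem~\ref{TKKT1}(i), the idempotent $e(\bi)\otimes 1$ decomposes as $\sum_{\pr(\hat\bi)=\bi}\mathpzc{e}(\hat\bi)$ with $2^{n-m_0}$ terms, and all of the nonzero summands $\mathpzc{e}(\hat\bi)V$ are isomorphic. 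Under the identification $\mathpzc{e}\cY_\theta\mathpzc{e}\cong H_\theta\otimes\cC_{m_0}$, the idempotent $1_\bi\otimes 1_{\cC_{m_0}}$ corresponds to $\mathpzc{e}(\bi)$, so $\dim(1_\bi\otimes 1_{\cC_{m_0}})\mathpzc{e}V=2^{m_0-n}\dim(e(\bi)\otimes 1)V$.

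Then I would construct the final bijection $\Irr(H_\theta)\iso\Irr(H_\theta\otimes\cC_{m_0})$ via $L\mapsto L\boxtimes U_{m_0}$. This uses the fact, proved in \cite[Proposition 6.15]{HW}, \cite[\S4.2]{KKO1}, \cite[Theorem 8.6]{KKO2}, that every irreducible $H_\theta$-supermodule has type $\Mtype$, so that $L\boxtimes U_{m_0}$ is indeed irreducible and every irreducible on the right is obtained this way. Clearly $\dim(1_\bi\otimes 1_{\cC_{m_0}})(L\boxtimes U_{m_0})=2^{\lceil m_0/2\rceil}\dim 1_\bi L$. Composing the three bijections (in reverse) gives the bijection $\beta$, and multiplying the weight-space factors gives $\dim e(\bi)\beta(L)=2^{x(\theta)}\dim 1_\bi L$ where
\[
x(\theta)=\lceil m_0/2\rceil+(n-m_0)-\lceil n/2\rceil+\delta,
\]
with $\delta=1$ if $n$ and $n-m_0$ are both odd and $\delta=0$ otherwise. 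The main (though routine) remaining step is a short case-check on the parities of $n$ and $m_0$ to verify $x(\theta)=\lceil(n-m_0)/2\rceil$; for instance, if $n$ is odd and $m_0$ is even, then $\lceil n/2\rceil=(n+1)/2$, $\lceil m_0/2\rceil=m_0/2$, $\delta=1$, and $x(\theta)=m_0/2+(n-m_0)-(n+1)/2+1=(n-m_0+1)/2=\lceil(n-m_0)/2\rceil$, and the remaining three parity cases are analogous. No real obstacle arises since all the hard input (the KKT-type isomorphism and the type classification) is already provided.
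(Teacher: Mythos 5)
Your proposal reproduces the paper's proof exactly: compose the three bijections $\Irr(\cT_\theta)\iso\Irr(\cY_\theta)$ (via $D\mapsto D\circledast U_n$), $\Irr(\cY_\theta)\iso\Irr(H_\theta\otimes\cC_{m_0})$ (via the KKT Morita superequivalence of Theorem~\ref{TKKT1}), and $\Irr(H_\theta)\iso\Irr(H_\theta\otimes\cC_{m_0})$ (via $L\mapsto L\boxtimes U_{m_0}$), track weight-space dimensions through each step, and close with the parity check on $n$ and $m_0$. One small slip in a parenthetical aside: when $n$ and $n-m_0$ are both odd, $D$ and $U_n$ are each of type $\Qtype$, so $D\circledast U_n$ is of type $\Mtype$ (not $\Qtype$); the factor-of-$2$ drop arises because $D\otimes U_n$ then splits as two copies of $D\circledast U_n$, not because of the type of the result. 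This does not affect your (correct) dimension formula or the final arithmetic.
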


\section{RoCK blocks}
\label{SRock}
Let $\theta\in \Weights$, $\rho=\rho(\theta)$ and $d=d(\theta)$. 
Note that $(\theta\mid \al_i^\vee)=(\rho\mid \al_i^\vee)$ for all $i\in I$. 
We say that $\theta$ is {\em RoCK}\, if $(\theta\mid \al_0^\vee)\geq 2d$ and $(\theta\mid \al_i^\vee)\geq d-1$ for $i=1,\dots,\ell$. 
This is equivalent to the cyclotomic quiver Hecke algebra $H_\theta$ being a {\em RoCK block}, as defined in \cite[\S4.1]{KlLi}. 

Throughout the section, we assume that $\theta$ is RoCK, $\rho=\rho(\theta)$ and $d=d(\theta)$.

\subsection{RoCK blocks and the parabolics $H_{\rho,d\de}$}

Recall that in general we have a parabolic subalgebra $H_{\theta_1,\theta_2}\subseteq 1_{\theta_1,\theta_2}H_{\theta_1+\theta_2}1_{\theta_1,\theta_2}$. For RoCK blocks and $\theta_1=\rho,\, \theta_2=d\de$ we actually have the equality:

\begin{Lemma} \label{LKlLi4.1.18}
We have $1_{\rho,d\de}H_\theta 1_{\rho,d\de}=H_{\rho,d\de}$. 
\end{Lemma}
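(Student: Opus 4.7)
The inclusion $H_{\rho,d\de} \subseteq 1_{\rho,d\de}H_\theta 1_{\rho,d\de}$ is immediate from the definition in (\ref{ECycPar}), so the content of the lemma is the reverse inclusion. By Theorem~\ref{TCyc}(i) and Lemma~\ref{LIdO}, both sides are free $\k$-modules whose ranks are unchanged under extension of scalars to any quotient field of $\k$, so it suffices to verify the equality when $\k$ is a field.

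My plan is a direct algebraic reduction based on the basis of $R_\theta$ recorded just after (\ref{R7}). Any element of $1_{\rho,d\de}H_\theta 1_{\rho,d\de}$ is a $\k$-linear combination of images (under $\pi_\theta$) of elements of the form $\psi_w y_1^{k_1}\cdots y_n^{k_n} 1_{\bi}$ in which both $\bi$ and $w\cdot\bi$ lie in the concatenation $I^\rho\cdot I^{d\de}\subseteq I^\theta$. If $w \in \Si_{|\rho|}\times \Si_{d|\de|}$, such an element already sits inside $H_{\rho,d\de}$. Otherwise, any reduced decomposition $w=s_{r_1}\cdots s_{r_l}$ must contain at least one transposition $s_r$ that crosses the boundary between positions $|\rho|$ and $|\rho|+1$; I would isolate the leftmost such crossing and attempt to remove it via the quadratic relation (\ref{R6}) and the braid relation (\ref{R7}), combined with the sign-commutation relations (\ref{R4}) and (\ref{R65}). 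Each successful reduction either shortens $w$ or trades $\psi$-length for a $y$-monomial, supporting an induction on the lexicographic pair $(\ell(w), \sum_j k_j)$, with the base case absorbed by the cyclotomic relations (\ref{RCyc}) at the leftmost position.

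The \emph{main obstacle} is the boundary reduction itself: one must show that at every crossing $s_r$, the local residue pattern $(i_r, i_{r+1})$ forces either the $Q$-polynomial in (\ref{R6}) or the $B$-polynomial in (\ref{R7}) to produce only terms of strictly smaller length whose residue words still split as a $\rho$-word followed by a $d\de$-word. This is exactly what the RoCK hypothesis $(\theta\mid\al_0^\vee)\geq 2d$ and $(\theta\mid\al_i^\vee)\geq d-1$ for $i\geq 1$ is engineered to guarantee: these inequalities force $\rho$ to have ``enough room'' of every residue colour, so that the local configurations at the boundary which would otherwise stall the reduction simply cannot occur. In the abacus combinatorics underlying the description of RoCK cores in \cite{KlLi}, this corresponds to every runner of $\rho$ having at least the number of beads demanded by $d$.

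As an independent check (and possibly as an alternative proof route) one could count dimensions: the natural map $H_\rho\otimes H_{d\de}\to H_{\rho,d\de}\subseteq 1_{\rho,d\de}H_\theta 1_{\rho,d\de}$ is injective by Lemma~\ref{LZeta} applied to the first tensor factor together with the basis theorem for the second, so equality would follow from matching $\dim H_\rho\cdot\dim H_{d\de}$ with $\dim 1_{\rho,d\de}H_\theta 1_{\rho,d\de}$; the latter can be computed from the characters and decomposition numbers of $H_\theta$ which are controlled in the RoCK case by the combinatorics of \S\ref{SSW}. I expect the rewriting plan to be cleaner in principle but technically delicate at the boundary, so the dimension comparison is a useful safety net.
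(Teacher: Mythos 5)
The paper's own proof is essentially a citation: for a field it is a special case of \cite[Lemma~4.1.18]{KlLi}, and for general $\k$ it suffices (as in that proof) to show $1_{\rho,d\de}\psi_w 1_{\rho,d\de}=0$ for certain specific $w$'s, which follows by extending scalars to the fraction field $\K$ and using the field case together with $\k$-freeness of $H_\theta$. You instead try to reprove the field case from scratch by a rewriting argument, which is a genuinely different (and much more ambitious) route; but it is not complete.

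The step you yourself flag as the ``main obstacle'' --- that at every boundary crossing the local residue pattern forces $Q_{i_r,i_{r+1}}$ in (\ref{R6}) or $B_{i_r,i_{r+1},i_{r+2}}$ in (\ref{R7}) to produce only terms that either shorten $w$ or are killed by (\ref{RCyc}) --- is the entire mathematical content of the lemma and is asserted, not verified. Saying the RoCK inequalities $(\theta\mid\al_0^\vee)\geq 2d$, $(\theta\mid\al_i^\vee)\geq d-1$ are ``engineered to guarantee'' this is a plausibility statement; to make it a proof you would have to track, through the rewriting, exactly which residue words $\bi\in I^\theta$ can occur at each stage and show that every word not lying in $I^\rho\cdot I^{d\de}$ dies in $H_\theta$. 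That case analysis (via the combinatorics of reduced expressions and residue contents, or the abacus picture for $\bar p$-cores) is precisely what is carried out in \cite[Lemma~4.1.18]{KlLi} and cannot simply be asserted here. Also note that relation (\ref{R7}) preserves $\ell(w)$ on the braid part while adding a $y$-polynomial, so the lexicographic pair $(\ell(w),\sum_j k_j)$ does not visibly decrease at a braid move; you need an additional argument (e.g.\ nilpotency of the $y$'s in the finite-dimensional $H_\theta$) to see termination.

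Your ``safety-net'' dimension count also contains an error. The natural map you want, $H_\rho\otimes H_{d\de}\to H_{\rho,d\de}$, is not well-defined: the cyclotomic relation (\ref{RCyc}) kills $y_1$ and forces $i_1=0$, but inside $H_\theta$ the generators of the second tensor factor of $R_{\rho,d\de}$ map to $y_{\height(\rho)+1},\dots$ and $1_{\bi}$ with entries starting in position $\height(\rho)+1$, where no cyclotomic relation is imposed. So the map $R_{d\de}\to H_{\rho,d\de}$ does not factor through the cyclotomic quotient $H_{d\de}$; the correct target of a factorization is the imaginary cuspidal algebra $\hat C_d$ (this is exactly the content of Lemma~\ref{LOmegaFact}, which is itself nontrivial and uses the RoCK hypothesis). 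Consequently the injectivity and the dimension product you propose to compare are not available in the form you state.

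The initial reduction to the field case is a minor point: you reduce to $\F=\k/\m$, which is workable but needs a Nakayama-type argument on the cokernel of $H_{\rho,d\de}\hookrightarrow 1_{\rho,d\de}H_\theta 1_{\rho,d\de}$ (freeness and a rank count alone do not give equality of a submodule with the ambient module). The paper instead reduces to a vanishing statement $1_{\rho,d\de}\psi_w 1_{\rho,d\de}=0$, which extends cleanly to $\K$ by injectivity of $H_{\theta,\k}\into H_{\theta,\K}$.
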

\begin{proof}
When $\k$ is a field, this is a special case of \cite[Lemma 4.1.18]{KlLi}. For a general $\k$, as in the proof of \cite[Lemma 4.1.18]{KlLi}, it suffices to show that $1_{\rho,d\de}\psi_w1_{\rho,d\de}=0$ in $H_{\theta,\k}$ for certain some special $w$'s. Extending scalars to the field of fractions $\K$ of $\k$, and using the field case, we see that $1_{\rho,d\de}\psi_w1_{\rho,d\de}=0$ in $H_{\theta,\K}$, so $1_{\rho,d\de}\psi_w1_{\rho,d\de}=0$ in $H_{\theta,\k}$, cf. Theorem~\ref{TCyc}. 
\end{proof}

Recall from (\ref{ECoreSubalg}) (and Lemma~\ref{LZeta}) that we have identified 
$
H_{\rho}$ with the subalgebra 
$\zeta_{\rho,d\de}(H_{\rho})\subseteq 
1_{\rho,d\de}H_\theta 1_{\rho,d\de}.
$ 
By Lemma~\ref{LKlLi4.1.18}, the latter algebra is $H_{\rho,d\de}$, so the supercentralizer $\Cent_{\rho,d}$ from (\ref{ECentralizer}) becomes 
$
\Cent_{\rho,d}=
\Cent_{H_{\rho,d\de}} (H_{\rho}),
$ 
and Lemma~\ref{LJuly4} 
becomes: 

\begin{Lemma} \label{LJuly4RoCK} 
We have an isomorphism of graded superalgebras
$$
H_{\rho}\otimes \Cent_{\rho,d}\iso H_{\rho,d\de}, \ h\otimes z\mapsto hz.
$$
\end{Lemma}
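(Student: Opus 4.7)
The plan is to observe that this lemma is almost immediate from two results already established in the excerpt. First I would invoke Lemma~\ref{LJuly4}, which gives, without any RoCK hypothesis, the isomorphism of graded superalgebras
$$
H_{\rho}\otimes \Cent_{\rho,d}\iso 1_{\rho,d\de}H_\theta 1_{\rho,d\de}, \quad h\otimes z\mapsto hz,
$$
obtained by combining the matrix superalgebra structure on $H_\rho$ (Lemma~\ref{LMatrix}) with the general centralizer decomposition (Lemma~\ref{LEvseev}).

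The second ingredient, which is where the RoCK hypothesis enters, is Lemma~\ref{LKlLi4.1.18}: under the assumption that $\theta$ is RoCK, the parabolic $H_{\rho,d\de}$ (a priori only a subalgebra of $1_{\rho,d\de}H_\theta 1_{\rho,d\de}$) coincides with all of $1_{\rho,d\de}H_\theta 1_{\rho,d\de}$. Substituting this identification into the target of the isomorphism from Lemma~\ref{LJuly4} yields exactly the stated isomorphism $H_{\rho}\otimes \Cent_{\rho,d}\iso H_{\rho,d\de}$, $h\otimes z\mapsto hz$.

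There is no real obstacle here: the content of the lemma is the combination of two previously proved facts. The only verification needed is that the map $h\otimes z\mapsto hz$ lands in $H_{\rho,d\de}$, which is immediate once $1_{\rho,d\de}H_\theta 1_{\rho,d\de}=H_{\rho,d\de}$ is known, since $H_\rho\subseteq H_{\rho,d\de}$ by construction (\ref{ECoreSubalg}) together with Lemma~\ref{LKlLi4.1.18}, and $\Cent_{\rho,d}\subseteq 1_{\rho,d\de}H_\theta 1_{\rho,d\de}=H_{\rho,d\de}$ by the revised definition of $\Cent_{\rho,d}$ noted just before the lemma statement. Thus the whole proof is a one-line substitution.
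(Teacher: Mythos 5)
Your proposal matches the paper's own derivation exactly: the paper states Lemma~\ref{LJuly4RoCK} immediately after observing that, by Lemma~\ref{LKlLi4.1.18}, $1_{\rho,d\de}H_\theta 1_{\rho,d\de}=H_{\rho,d\de}$, so that Lemma~\ref{LJuly4} specializes to the stated isomorphism. No gap; same approach.
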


\subsection{RoCK blocks and imaginary cuspidal algebras}
Recalling (\ref{ECycPar}), (\ref{EIota}) and (\ref{EPi}), we consider the algebra homomorphism  
\begin{equation}\label{EOmega}
\Omega_{\rho,d}\colon R_{d\de}\to H_{\rho,d\de},\ x\mapsto \pi_\theta\big(\iota_{\rho,d\de}(1_{\rho}\otimes x)\big).
\end{equation}

Recall the cuspidal quotient $\hat C_d$ of $R_{d\de}$ from (\ref{ECuspidalAlgebra}).

\begin{Lemma}\label{LOmegaFact} 
The homomorphism $\Omega_{\rho,d}$ factors through the natural surjection $R_{d\de}\onto \hat C_d$. 
\end{Lemma}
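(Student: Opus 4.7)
The kernel of the surjection $R_{d\de}\onto\hat C_d$ is by definition (\ref{ECuspidalAlgebra}) the two-sided ideal generated by $\{1_\bi\mid \bi\in I^{d\de}\setminus I^{d\de}_\cus\}$, so the task reduces to showing $\Omega_{\rho,d}(1_\bi)=0$ in $H_{\rho,d\de}$ for every non-cuspidal $\bi$. Unpacking (\ref{EOmega}) and using $1_\rho=\sum_{\bj\in I^\rho}1_\bj$, one gets
\[
\Omega_{\rho,d}(1_\bi)=\pi_\theta\bigl(\iota_{\rho,d\de}(1_\rho\otimes 1_\bi)\bigr)=\sum_{\bj\in I^\rho}1_{\bj\bi}\in H_\theta,
\]
a sum of pairwise orthogonal idempotents. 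Hence the task reduces further to proving $1_{\bj\bi}=0$ in $H_\theta$ for all $\bj\in I^\rho$ and all non-cuspidal $\bi$. By Lemma~\ref{LBiBiRho}, the vanishing of $1_{\bj\bi}$ on every simple $H_\theta$-module is equivalent to the vanishing of $1_{\bi_\rho\bi}$ on every simple, so it suffices to prove $1_{\bi_\rho\bi}=0$ for the single word $\bi_\rho$ of (\ref{EIRho}).

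By Lemma~\ref{LIdO} I may extend scalars to an algebraically closed quotient field $\F$ of $\k$ of characteristic $\ne 2$ (in the application, either $0$ or $2\ell+1$), bringing the symmetricity of Lemma~\ref{LSymmConditions} and the Kang--Kashiwara--Tsuchioka machinery of \S\ref{SSKKT} into play. In this setting, non-vanishing of $1_{\bi_\rho\bi}$ in $H_\theta$ is equivalent to the existence of a simple $V\in\mod{H_\theta}$ with $1_{\bi_\rho\bi}V\ne 0$, i.e.\ to the existence of a simple in the RoCK block with nonzero weight space at the concatenated residue sequence $\bi_\rho\bi$. This is where the RoCK hypothesis enters: I invoke the structural results of \cite{KlLi} to conclude that for RoCK $\theta$ every residue sequence with prefix $\bi_\rho$ supporting a nonzero weight space on a simple of $H_\theta$ must have cuspidal tail with respect to the convex preorder $\preceq$ of \S\ref{SSRootSystem}. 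Concretely, via the factorization $H_{\rho,d\de}\cong H_\rho\otimes \Cent_{\rho,d}$ of Lemma~\ref{LJuly4RoCK}, the $\Cent_{\rho,d}$-tensor factor of the $(\rho,d\de)$-restriction of every simple of $H_\theta$ is supported on cuspidal idempotents of $R_{d\de}$, which gives $1_{\bi_\rho\bi}V=0$ as required.

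The main obstacle is this last cuspidal-support statement for the $\Cent_{\rho,d}$-factor: the rest of the argument is a formal reduction via orthogonality of idempotents, Lemma~\ref{LBiBiRho} and base change. The cuspidal-support property is precisely where the RoCK inequalities on $\theta$ are used, and it is obtained from the crystal-theoretic analysis in \cite{KlLi} that controls how simples of $H_\theta$ branch through the parabolic $H_{\rho,d\de}\subseteq H_\theta$ under the chosen preorder $\preceq$.
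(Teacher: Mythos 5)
Your first two reductions are correct (the unpacking $\Omega_{\rho,d}(1_\bi)=\sum_{\bj\in I^\rho}1_{\bj\bi}$ and the application of Lemma~\ref{LBiBiRho} to pass to the single divided-power word $\bi_\rho$), and the base-change to a field via Lemma~\ref{LIdO} is exactly what the paper does. But the third paragraph, where the substantive content is supposed to live, is circular. You claim to obtain the vanishing "via the factorization $H_{\rho,d\de}\cong H_\rho\otimes\Cent_{\rho,d}$ of Lemma~\ref{LJuly4RoCK}", asserting that "the $\Cent_{\rho,d}$-tensor factor \dots is supported on cuspidal idempotents of $R_{d\de}$." That assertion is precisely what Lemma~\ref{LOmegaFact} together with Lemma~\ref{LOmHatCOntoCent} establish (that $\Om_{\rho,d}$ kills the non-cuspidal $1_\bi$ and identifies $\Cent_{\rho,d}$ with a quotient of $\hat C_d$); it cannot be invoked to prove Lemma~\ref{LOmegaFact}. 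Lemma~\ref{LJuly4RoCK} by itself is a purely formal Morita/centralizer statement (Lemmas~\ref{LMatrix} and \ref{LEvseev}) and contains no information about cuspidality or the convex preorder. The actual mathematics — that the RoCK inequalities on $\theta$ force $1_{\bj\bi}=0$ for non-cuspidal $\bi$ — is deferred in your sketch to "the crystal-theoretic analysis in \cite{KlLi}", which is not an argument; the paper simply cites \cite[Lemma~4.1.16]{KlLi} for the field case and is honest that this is where the work is done.

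Two further minor points: the appeal to Lemma~\ref{LSymmConditions} and "the KKT machinery of \S\ref{SSKKT}" is a red herring — neither plays any role in what follows — and the reduction via Lemma~\ref{LBiBiRho}, while correct, is unnecessary overhead if one is in any case going to invoke \cite[Lemma~4.1.16]{KlLi}, which is stated for all $\bj\in I^\rho$.
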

\begin{proof}
In view of Lemma~\ref{LIdO}, this follows from the case where $\k$ is a field, proved in \cite[Lemma 4.1.16]{KlLi}.
\end{proof}

In view of the lemma,  we will consider $\Om_{\rho,d}$ as a homomorphism 
\begin{equation*}\label{EOm}
\Om_{\rho,d}: \hat C_d\to H_{\rho,d\de}.
\end{equation*}

\begin{Lemma} \label{LOmHatCOntoCent}
We have $\Cent_{\rho,d}=\Om_{\rho,d}(\hat C_d)$. 
\end{Lemma}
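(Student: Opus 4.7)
The strategy is to prove two inclusions, the first by a direct super-commutation check and the second by a rank-counting argument based on Lemma~\ref{LJuly4RoCK}.

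\textbf{Step 1 (The inclusion $\Om_{\rho,d}(\hat C_d)\subseteq \Cent_{\rho,d}$).} Inside $R_\rho\otimes R_{d\de}$, any element of the form $h\otimes 1$ super-commutes with any element of the form $1\otimes x$. Applying the embedding $\iota_{\rho,d\de}$ and then the projection $\pi_\theta$ preserves this super-commutation relation, so for any $h\in H_\rho$ and any $x\in R_{d\de}$ (whence any $x\in \hat C_d$ via Lemma~\ref{LOmegaFact}), the element $\Om_{\rho,d}(x)$ super-commutes with $\zeta_{\rho,d\de}(h)=h$ inside $H_{\rho,d\de}=1_{\rho,d\de}H_\theta 1_{\rho,d\de}$ (using Lemma~\ref{LKlLi4.1.18}). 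Hence $\Om_{\rho,d}(\hat C_d)\subseteq \Cent_{\rho,d}$.

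\textbf{Step 2 (Generation of $H_{\rho,d\de}$).} As a $\k$-algebra, $R_\rho\otimes R_{d\de}$ is generated by its subalgebras $R_\rho\otimes 1$ and $1\otimes R_{d\de}$. Applying $\pi_\theta\circ\iota_{\rho,d\de}$, we see that $H_{\rho,d\de}$ is generated as a $\k$-superalgebra by $H_\rho$ (the image of $R_\rho\otimes 1$) and $\Om_{\rho,d}(\hat C_d)$ (the image of $1\otimes R_{d\de}$, using Lemma~\ref{LOmegaFact}). Combining this generation with the super-commutation from Step~1, every element of $H_{\rho,d\de}$ can be rewritten (up to signs) as a $\k$-linear combination of products $h\cdot \Om_{\rho,d}(x)$ with $h\in H_\rho$ and $x\in \hat C_d$. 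Thus
\[
H_{\rho,d\de} \;=\; H_\rho \cdot \Om_{\rho,d}(\hat C_d).
\]

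\textbf{Step 3 (Reverse inclusion via Lemma~\ref{LJuly4RoCK}).} By Lemma~\ref{LJuly4RoCK}, the multiplication map $H_\rho\otimes \Cent_{\rho,d}\to H_{\rho,d\de}$ is an isomorphism of $\k$-modules. By Lemma~\ref{LMatrix}, $H_\rho$ is a graded matrix superalgebra over $\k$; in particular it is $\k$-free, and we may choose a homogeneous $\k$-basis $\{h_i\}$ of $H_\rho$ containing $1$. Every element of $H_{\rho,d\de}$ then admits a unique expression $\sum_i h_i z_i$ with $z_i\in \Cent_{\rho,d}$. Suppose, for contradiction, that there exists $z_0\in \Cent_{\rho,d}\setminus \Om_{\rho,d}(\hat C_d)$. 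By Step~2 we can write $z_0=\sum_i h_i\cdot \Om_{\rho,d}(x_i)$ for some $x_i\in \hat C_d$; but then uniqueness of the decomposition (together with $\Om_{\rho,d}(\hat C_d)\subseteq \Cent_{\rho,d}$ from Step~1) forces $z_0=\Om_{\rho,d}(x_1)\in \Om_{\rho,d}(\hat C_d)$, a contradiction. Hence $\Cent_{\rho,d}=\Om_{\rho,d}(\hat C_d)$.

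The main subtlety lies in Step~3: one has to be careful that the $\k$-module isomorphism of Lemma~\ref{LJuly4RoCK} really gives a \emph{unique} decomposition $\sum h_i z_i$ with $z_i\in \Cent_{\rho,d}$, which is where freeness of $H_\rho$ as a $\k$-module (via Lemma~\ref{LMatrix}) is essential. Step~1 is routine but requires tracking signs through $\iota_{\rho,d\de}$ and $\pi_\theta$.
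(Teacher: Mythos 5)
Your proof is correct. The paper's own proof of this lemma is a one-line citation to \cite[Lemma 4.1.22]{KlLi} for the field case, with the remark that the PID case is handled identically; you instead give a self-contained argument. Your two-inclusion strategy is the natural one and almost certainly mirrors what the cited reference does: the easy inclusion $\Om_{\rho,d}(\hat C_d)\subseteq \Cent_{\rho,d}$ follows because $\iota_{\rho,d\de}$ and $\pi_\theta$ are superalgebra maps and hence transport the supercommutation of $R_\rho\otimes 1$ with $1\otimes R_{d\de}$, and the reverse inclusion is extracted from the tensor factorization of Lemma~\ref{LJuly4RoCK} together with $\k$-freeness of $H_\rho$ (Lemma~\ref{LMatrix}). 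One pleasant feature of your version is that it runs directly over the PID $\k$ in a single pass, rather than splitting into a field case plus a base-change argument; the only freeness you actually need is that of $H_\rho$, which Lemma~\ref{LMatrix} supplies, and your uniqueness step in the decomposition $\sum_i h_i z_i$ is handled correctly. A tiny stylistic point: the contradiction in Step~3 is unnecessary---the direct reading of the unique decomposition already gives $z_0=\Om_{\rho,d}(x_1)$---but this does not affect correctness.
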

\begin{proof}
For $\k$ a field this is proved in \cite[Lemma 4.1.22]{KlLi}. In the general case the proof is the same, cf. \cite[Lemma 5.19]{EK2}.
\end{proof}

In view of the lemma, we now consider $\Om_{\rho,d}$ as a surjective homomoprhism $\Om_{\rho,d}: \hat C_d\to \Cent_{\rho,d}$. 
Given any element $c\in \hat C_d$, we denote 
\begin{equation}\label{EBar}
\bar c:=\Om_{\rho,d}(c)\in\Cent_{\rho,d}
\end{equation}
In particular, we have idempotents $\bar\ggi^{\la,\bi},\bar\ggi_d,\bar\ggi_{\om_d}$, etc. in $\Cent_{\rho,d}$.

\subsection{Gelfand-Graev truncation of $\Cent_{\rho,d}$}
Recall from \S\ref{SSGGW}, the Gelfand-Graev truncation $C_d=\ggi_{d}\hat C_d\ggi_{d}$. We now consider the corresponding truncation of the quotient $\Cent_{\rho,d}=\Om_{\rho,d}(\hat C_d)$.
Recalling the notation (\ref{EBar}), set 
\begin{equation}\label{EGGCent}
X_{\rho,d}:=\bar\ggi_d\Cent_{\rho,d}\bar \ggi_d.
\end{equation}
The surjection $\Om_{\rho,d}:\hat C_d\,\onto\, \Cent_{\rho,d}$ restricts to the surjection (of graded superalgebras)
\begin{equation}\label{EOmCX}
\Om_{\rho,d}:C_d\,\onto\, X_{\rho,d}.
\end{equation}

\begin{Lemma} \label{LXFree}
As a $\k$-module, $X_{\rho,d}$ is free of finite rank. 
Moreover, if $\F$ is a field of fractions of $\k$ or a quotient field of $\k$ then $X_{\rho,d,\F}\cong \F\otimes_\k X_{\rho,d}$. 
\end{Lemma}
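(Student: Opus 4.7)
The plan is to reduce both claims to the corresponding statements for $\Cent_{\rho,d}$ already established in Lemma~\ref{LIdSymNew}(i), using only the formal fact that $X_{\rho,d}$ is obtained from $\Cent_{\rho,d}$ by idempotent truncation by a bidegree $(0,\0)$ idempotent.

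First I would note that $\bar\ggi_d=\Om_{\rho,d}(\ggi_d)$ is a bidegree $(0,\0)$ idempotent in $\Cent_{\rho,d}$, being the image of a bidegree $(0,\0)$ idempotent under the graded superalgebra homomorphism (\ref{EOmCX}). Consequently, using the Peirce decomposition
$$
\Cent_{\rho,d}=\bar\ggi_d\Cent_{\rho,d}\bar\ggi_d\oplus\bar\ggi_d\Cent_{\rho,d}(1-\bar\ggi_d)\oplus(1-\bar\ggi_d)\Cent_{\rho,d}\bar\ggi_d\oplus(1-\bar\ggi_d)\Cent_{\rho,d}(1-\bar\ggi_d),
$$
the submodule $X_{\rho,d}=\bar\ggi_d\Cent_{\rho,d}\bar\ggi_d$ is a $\k$-module direct summand of $\Cent_{\rho,d}$. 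By Lemma~\ref{LIdSymNew}(i), $\Cent_{\rho,d}$ is free of finite rank as a $\k$-module; since $\k$ is a PID, every direct summand of a finitely generated free $\k$-module is free of finite rank, and the first claim follows.

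For the base change claim, I would apply $\F\otimes_\k-$ to the Peirce decomposition above. Since tensoring is exact, it commutes with the idempotent truncation, giving
$$
\F\otimes_\k X_{\rho,d}\cong (1_\F\otimes\bar\ggi_d)\bigl(\F\otimes_\k\Cent_{\rho,d}\bigr)(1_\F\otimes\bar\ggi_d).
$$
The isomorphism $\F\otimes_\k\Cent_{\rho,d}\iso\Cent_{\rho,d,\F}$ provided by Lemma~\ref{LIdSymNew}(i) carries $1_\F\otimes\bar\ggi_d$ to the image of $\ggi_{d,\F}\in\hat C_{d,\F}$ under the base-changed version of $\Om_{\rho,d}$, i.e.\ to $\bar\ggi_{d,\F}$, whence
$$
\F\otimes_\k X_{\rho,d}\cong\bar\ggi_{d,\F}\Cent_{\rho,d,\F}\bar\ggi_{d,\F}=X_{\rho,d,\F}.
$$
The entire argument is a formal consequence of Lemma~\ref{LIdSymNew}(i) combined with the compatibility of idempotent truncation with direct sum decompositions and base change, so I do not expect any genuine obstacle here; the only point meriting attention is ensuring that $\bar\ggi_d$ is indeed defined and homogeneous of bidegree $(0,\0)$, which is immediate from the definitions.
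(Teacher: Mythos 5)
Your argument is correct and follows essentially the same route as the paper's: the paper also notes that $X_{\rho,d}$ is an idempotent truncation of $\Cent_{\rho,d}$ and invokes Lemma~\ref{LIdSymNew}(i) together with Lemma~\ref{LIdO}, the latter of which packages exactly the Peirce-decomposition-plus-base-change argument you carry out by hand. The only presentational difference is that the paper applies the pre-existing Lemma~\ref{LIdO} (stated for idempotent truncations of $H_\theta$, which covers $X_{\rho,d}\cong (\eps\bar\ggi_d)H_\theta(\eps\bar\ggi_d)$ after composing with the isomorphism from Lemma~\ref{LIdSymNew}), while you redo its short proof directly inside $\Cent_{\rho,d}$; the mathematical content is identical.
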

\begin{proof}
As $X_{\rho,d}$ is an idempotent truncation of $\Cent_{\rho,d}$, this follows from Lemmas~\ref{LIdSymNew}(i) and \ref{LIdO}. 
\end{proof}

\begin{Lemma} \label{LProjGen}
If $\k$ is a field with $\cha\k=0$ or  $\cha\k>d$ then $X_{\rho,d}\bar \ggi_{\om_d}$ is a projective generator for $X_{\rho,d}$. 
\end{Lemma}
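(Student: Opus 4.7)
\medskip
\noindent
\textbf{Proof plan.} The proof should follow by transport via the surjection $\Om_{\rho,d}\colon C_d\onto X_{\rho,d}$ of \eqref{EOmCX}, using Theorem~\ref{TLargeChar} as the essential input. First note that since $\ggi_{\om_d}$ is an even idempotent of $C_d$ and $\Om_{\rho,d}$ is a homomorphism of graded superalgebras, $\bar\ggi_{\om_d}=\Om_{\rho,d}(\ggi_{\om_d})$ is an even idempotent in $X_{\rho,d}$; hence $X_{\rho,d}\bar\ggi_{\om_d}$ is a direct summand of $X_{\rho,d}$ as a left graded $X_{\rho,d}$-supermodule, and in particular it is projective.

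It remains to verify that $X_{\rho,d}\bar\ggi_{\om_d}$ is a generator. By general nonsense this amounts to showing $\bar\ggi_{\om_d}L\neq 0$ for every irreducible graded $X_{\rho,d}$-supermodule $L$, since
\[
\Hom_{X_{\rho,d}}(X_{\rho,d}\bar\ggi_{\om_d},L)\cong \bar\ggi_{\om_d}L.
\]
Given such an $L$, inflate it along the surjection $\Om_{\rho,d}\colon C_d\onto X_{\rho,d}$ to obtain a graded $C_d$-supermodule, which is again irreducible because the $C_d$-submodules of the inflation coincide with the $X_{\rho,d}$-submodules. Applying Theorem~\ref{TLargeChar} under the standing hypothesis $\cha\k=0$ or $\cha\k>d$, the module $C_d\ggi_{\om_d}$ is a projective generator for $C_d$, so $\ggi_{\om_d}L\neq 0$. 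Since $\ggi_{\om_d}$ acts on the inflated module as $\bar\ggi_{\om_d}=\Om_{\rho,d}(\ggi_{\om_d})$, this yields $\bar\ggi_{\om_d}L\neq 0$, completing the argument.

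There is essentially no obstacle here: once one observes that $\Om_{\rho,d}$ is a surjection carrying $\ggi_{\om_d}$ to $\bar\ggi_{\om_d}$, the statement is an immediate consequence of Theorem~\ref{TLargeChar} by inflation of irreducibles. The only point to be mildly careful about is that the argument takes place in the graded supermodule category, but the characterization of projective generators in terms of $\Hom$-spaces into irreducibles applies there without change, so no additional work is needed.
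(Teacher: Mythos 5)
Your proposal is correct and follows exactly the paper's argument: reduce the generator property to showing $\bar\ggi_{\om_d}L\neq 0$ for every irreducible $L$, inflate $L$ to an irreducible $C_d$-supermodule along the surjection $\Om_{\rho,d}$, and apply Theorem~\ref{TLargeChar}. You merely spell out a few more routine details (projectivity from idempotents, irreducibility of the inflation, the $\Hom$ characterization of generators) than the paper does.
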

\begin{proof}
It is enough to show that for every  $L\in\Irr(X_{\rho,d})$,  we have 
$\bar \ggi_{\om_d}L\neq 0$. This comes from inflating $L$ to an irreducible $C_d$-module via $\Om_{\rho,d}$ and applying Theorem~\ref{TLargeChar}. 
\end{proof}

\begin{Corollary} \label{C8.11} 
Let $\k$ have the field of fractions of characteristic $0$. Then the $X_{\rho,d}$-module $X_{\rho,d}\bar \ggi_{\om_d}$ is faithful.
\end{Corollary}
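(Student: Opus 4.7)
The plan is to reduce to the field case handled by Lemma~\ref{LProjGen} by extending scalars to the field of fractions. Let $\K$ denote the field of fractions of $\k$; by hypothesis, $\cha\K = 0$. Since Lemma~\ref{LXFree} tells us that $X_{\rho,d}$ is free as a $\k$-module and that $X_{\rho,d,\K}\cong \K\otimes_\k X_{\rho,d}$, the natural map
$$\iota\colon X_{\rho,d}\,\hookrightarrow\, X_{\rho,d,\K},\qquad a\mapsto 1\otimes a$$
is an injective homomorphism of $\k$-algebras. Moreover, since the idempotent $\bar\ggi_{\om_d}\in X_{\rho,d}$ is sent to the corresponding idempotent of $X_{\rho,d,\K}$, this injection restricts to an injection $X_{\rho,d}\bar\ggi_{\om_d}\hookrightarrow X_{\rho,d,\K}\bar\ggi_{\om_d}$, and tensoring with $\K$ identifies $X_{\rho,d,\K}\bar\ggi_{\om_d}$ with $\K\otimes_\k(X_{\rho,d}\bar\ggi_{\om_d})$.

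Now suppose $a\in X_{\rho,d}$ lies in the annihilator of $X_{\rho,d}\bar\ggi_{\om_d}$ as a left $X_{\rho,d}$-module, i.e.\ $a X_{\rho,d}\bar\ggi_{\om_d}=0$. Applying $\iota$, we get $(1\otimes a)\,\iota(X_{\rho,d}\bar\ggi_{\om_d})=0$ inside $X_{\rho,d,\K}$. But $\iota(X_{\rho,d}\bar\ggi_{\om_d})$ $\K$-spans $X_{\rho,d,\K}\bar\ggi_{\om_d}$, so $1\otimes a$ annihilates the left $X_{\rho,d,\K}$-module $X_{\rho,d,\K}\bar\ggi_{\om_d}$.

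By Lemma~\ref{LProjGen} applied over the field $\K$ (whose characteristic is $0$), the module $X_{\rho,d,\K}\bar\ggi_{\om_d}$ is a projective generator for $X_{\rho,d,\K}$. A projective generator is automatically faithful: for any unital ring $R$ with projective generator $P$, the natural map $R\to\End_R(P)^{\mathrm{op}}$ is injective. Therefore $1\otimes a=0$ in $X_{\rho,d,\K}$, and by the injectivity of $\iota$ we conclude $a=0$, as required.

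There is essentially no hard step here: the argument is a clean descent along a flat base-change, with Lemma~\ref{LXFree} supplying the needed freeness and Lemma~\ref{LProjGen} supplying the characteristic-zero input. The only point worth double-checking in writing this up carefully is that the various idempotents $\bar\ggi_{\om_d}$ and truncations behave compatibly under extension of scalars, but this is immediate from the definitions in \S\ref{SRock}.
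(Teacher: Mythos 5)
Your proof is correct and follows exactly the same strategy as the paper: use the $\k$-freeness from Lemma~\ref{LXFree} to embed $X_{\rho,d}$ into $X_{\rho,d,\K}$, then invoke Lemma~\ref{LProjGen} over the characteristic-zero field of fractions $\K$ and the fact that a projective generator is faithful. You have simply spelled out the base-change details that the paper leaves implicit.
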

\begin{proof}
Let $\K$ be the field of fractions of $\k$. By Lemma~\ref{LXFree}, the algebra  $X_{\rho,d}$ is $\k$-free, so it is enough to prove the result over $\K$, i.e. to show that the $X_{\rho,d,\K}$-module $X_{\rho,d,\K}\bar \ggi_{\om_d}$ is faithful, which follows from Lemma~\ref{LProjGen}. 
\end{proof}

Recall the notation (\ref{ELaBjj}). 

\begin{Theorem} \label{TDimRoCKTrunc} 
Let $(\la,\bi)\in \Comp^\col(n,d)$. Then the $\k$-module $\bar \ggi^{\la,\bi}X_{\rho,d}\bar f_{\om_d}$ is free of rank $$\binom{d}{\la_1\,\cdots\,\la_n}\,4^{d-|\la,\bi|_{\ell-1}}\,3^{|\la,\bi|_{\ell-1}}.$$ 
\end{Theorem}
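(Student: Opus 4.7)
The target rank matches $\rank_\k M_{\la,\bi}$ from Lemma~\ref{LDimM}, which strongly suggests constructing a $\k$-module isomorphism $M_{\la,\bi}\cong \bar\ggi^{\la,\bi}X_{\rho,d}\bar\ggi_{\om_d}$ via the Gelfand--Graev machinery and concluding by a reduction to characteristic zero.

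I would first reduce to working over a field. By Lemma~\ref{LXFree}, $X_{\rho,d}$ is $\k$-free of finite rank, hence so is its direct summand $\bar\ggi^{\la,\bi}X_{\rho,d}\bar\ggi_{\om_d}$; its $\k$-rank therefore equals its $\F$-dimension after base change to any field quotient $\F$ of $\k$. I would choose $\F$ so that Lemma~\ref{LProjGen} applies (for instance $\cha\F=0$ or $\cha\F>d$). I would also pass to the regraded algebra $\zX_{\rho,d}$ of \S\ref{SSRegradingCd}: this is the same $\k$-module as $X_{\rho,d}$ and carries the corresponding idempotents $\bar\ggis^{\la,\bi}$ and $\bar\ggis_{\om_d}$, so the problem reduces to computing $\dim_\F \bar\ggis^{\la,\bi}\zX_{\rho,d,\F}\bar\ggis_{\om_d}$.

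Next, I would construct a comparison map. Composing the isomorphism $\zF_d\colon H_d(\Zig_\ell)\iso \ggis_{\om_d}\zC_d\ggis_{\om_d}$ of Theorem~\ref{THCIsoZ} with the surjection $\Om_{\rho,d}\colon\zC_d\onto \zX_{\rho,d}$ of (\ref{EOmCX}) yields an algebra map $H_d(\Zig_\ell)\to \bar\ggis_{\om_d}\zX_{\rho,d}\bar\ggis_{\om_d}$. Restricting along the embedding $\iota^{\Zig_\ell}\colon W_d(\Zig_\ell)\hookrightarrow H_d(\Zig_\ell)$ equips $\bar\ggis^{\la,\bi}\zX_{\rho,d}\bar\ggis_{\om_d}$ with a right $W_d(\Zig_\ell)$-supermodule structure. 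Composing the map $\upzeta_{\la,\bi}$ of Lemma~\ref{LUptheta} with $\Om_{\rho,d}$ then produces a bidegree $(0,\0)$ homogeneous $W_d(\Zig_\ell)$-equivariant homomorphism
$$
\bar\upzeta_{\la,\bi}\colon M_{\la,\bi}\to \bar\ggis^{\la,\bi}\zX_{\rho,d}\bar\ggis_{\om_d},\qquad m_{\la,\bi}\mapsto \bar\lgathz_{\la,\bi}.
$$
Applying $\Om_{\rho,d}$ to Theorem~\ref{TGath}(iv) gives $\bar\ggis^{\la,\bi}\zX_{\rho,d}\bar\ggis_{\om_d} = \bar\lgathz_{\la,\bi}\cdot \bar\ggis_{\om_d}\zX_{\rho,d}\bar\ggis_{\om_d}$, so the target is cyclic over $\bar\ggis_{\om_d}\zX_{\rho,d}\bar\ggis_{\om_d}$ with generator $\bar\lgathz_{\la,\bi}$.

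To finish, I would establish that $\bar\upzeta_{\la,\bi}\otimes_\k\F$ is an isomorphism. For surjectivity, one must show that the images of the polynomial generators $\bar\zz_r$ of $H_d(\Zig_\ell)$ act redundantly on $\bar\lgathz_{\la,\bi}$ inside the RoCK quotient, so that already $\bar\lgathz_{\la,\bi}\cdot (\Om_{\rho,d}\circ\zF_d)(\iota^{\Zig_\ell}(W_d(\Zig_\ell)))$ hits the whole cyclic module. For injectivity, in characteristic $0$ Lemma~\ref{LProjGen} gives that $X_{\rho,d,\F}\bar\ggi_{\om_d}$ is a projective generator, so $X_{\rho,d,\F}$ is Morita equivalent to $\bar\ggi_{\om_d}X_{\rho,d,\F}\bar\ggi_{\om_d}$, and the $\F$-dimensions of the weight spaces $\bar\ggis^{\la,\bi}\zX_{\rho,d,\F}\bar\ggis_{\om_d}$ can be computed either from the abelian-defect Morita equivalence \cite[Proposition~5.4.10]{KlLi} or, more generally, by transporting known character-theoretic weight multiplicities for $\cT_\theta$ through the Kang--Kashiwara--Tsuchioka identification of \S\ref{SSKKT} together with Theorem~\ref{TWtHWtT}; in either case the $\F$-dimension matches $\dim_\F M_{\la,\bi,\F}$ given by Lemma~\ref{LDimM}.

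The main obstacle I anticipate is the surjectivity step: proving that the extra polynomial generators $\bar\zz_r$ of $H_d(\Zig_\ell)$ do not enlarge $\bar\lgathz_{\la,\bi}\cdot \Theta_d(W_d(\Zig_\ell))$ inside $\bar\ggis^{\la,\bi}\zX_{\rho,d}\bar\ggis_{\om_d}$. This should follow from the cyclotomic relation $y_1=0$ together with the RoCK hypothesis $(\theta\mid\al_0^\vee)\geq 2d$ and $(\theta\mid\al_i^\vee)\geq d-1$ for $i\geq 1$, by tracking how the defining generators of $\zC_d$ (and in particular the elements $\dot\zz_r$) descend to $\zX_{\rho,d}$ under $\Om_{\rho,d}$ and using the explicit formulas of Theorem~\ref{THCIsoZ}.
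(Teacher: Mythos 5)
Your approach diverges fundamentally from the paper's, and has a genuine gap at its core. The paper's proof is much shorter and relies on an \emph{external} dimension count: after reducing to a field via Lemma~\ref{LXFree}, it uses Lemma~\ref{LIdSymNew} to rewrite $\bar\ggi^{\la,\bi}X_{\rho,d}\bar\ggi_{\om_d}$ as the idempotent truncation $\iota_{\rho,d\de}(1_{\bi_\rho}\otimes\ggi^{\la,\bi})\,H_\theta\,\iota_{\rho,d\de}(1_{\bi_\rho}\otimes\ggi_{\om_d})$ of $H_\theta$ itself, and then simply cites the main theorem of \cite{KlDim}, a companion paper whose entire purpose is to compute precisely this dimension. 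The dimension formula is the actual mathematical content here, and it is not derivable from the machinery internal to the present paper.

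Your proposal instead tries to prove the result by constructing a $W_d(\Zig_\ell)$-equivariant map $M_{\la,\bi}\to\bar\ggis^{\la,\bi}\zX_{\rho,d}\bar\ggis_{\om_d}$ and showing it is an isomorphism. This is in fact exactly the content of Theorem~\ref{TIdentifyM}, which the paper proves \emph{after} and \emph{using} Theorem~\ref{TDimRoCKTrunc}: surjectivity of $\upeta_{\la,\bi}$ comes from Theorem~\ref{TGath}(iv) and Corollary~\ref{Cor1}, and injectivity then follows because both sides have the same rank --- which is where Theorem~\ref{TDimRoCKTrunc} is invoked. So if you try to complete your sketch you are led in a circle: your last step asserts that ``the $\F$-dimension matches $\dim_\F M_{\la,\bi,\F}$ given by Lemma~\ref{LDimM},'' but that equality \emph{is} the statement being proved. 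The two routes you offer for establishing it do not close the gap: the abelian-defect Morita equivalence of \cite[Proposition~5.4.10]{KlLi} is only available for $d<p$, whereas the theorem is stated for arbitrary $d$; and Theorem~\ref{TWtHWtT} gives weight multiplicities of irreducible modules, from which recovering the dimension of a Hom-space between projective truncations would require Cartan data that you have not produced. Your reduction to a field is also unnecessarily restrictive (you insist on $\cha\F=0$ or $\cha\F>d$ to invoke Lemma~\ref{LProjGen}); since the module is $\k$-free by Lemma~\ref{LXFree}, the paper can and does work over an arbitrary field quotient, which is what makes the citation to \cite{KlDim} apply uniformly. In short: the missing ingredient is the dimension formula of \cite[Main Theorem]{KlDim}, which is not something you can re-derive here, and once you have it the isomorphism construction you describe is better deferred, as the paper does, to Theorem~\ref{TIdentifyM}.
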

\begin{proof}
By Lemma~\ref{LXFree}, we may assume that $\k$ is a field. Recalling the idempotent $1_{\bi_\rho}$ from Lemma~\ref{LMatrix}, we have the idempotents 
$\iota_{\rho,d\de}(1_{\bi_\rho}\otimes \ggi^{\la,\bi})$ and 
$\iota_{\rho,d\de}(1_{\bi_\rho}\otimes \ggi_{\om_d})$ in $H_\theta$. By Lemma~\ref{LIdSymNew}, we have 
$$
\dim \bar \ggi^{\la,\bi}X_{\rho,d}\bar f_{\om_d}=\dim \iota_{\rho,d\de}(1_{\bi_\rho}\otimes \ggi^{\la,\bi})H_\theta \iota_{\rho,d\de}(1_{\bi_\rho}\otimes \ggi_{\om_d}).
$$ 
This dimension equals the expression in the statement by \cite[Main Theorem]{KlDim}. 
\end{proof}

\subsection{Regrading of $X_{\rho,d}$}
Recall from \S\ref{SSRegradingCd} that we have a regrading $\zC_d$ of $C_d$. We now define the corresponding regrading of $X_{\rho,d}$. Recalling (\ref{EShiftsCuspidal})
and the general setting of \S\ref{SSRegr}, we obtain the new graded superalgebra 
\begin{equation}\label{EReGradingX}
\zX_{\rho,d}:=\bigoplus_{(\la,\bi),(\mu,\bj)\in\EC^\col(d)}
\funQ^{t_{\la,\bi}-t_{\mu,\bj}}\Uppi^{\eps_{\la,\bi}-\eps_{\mu,\bj}} \bar \ggi^{\mu,\bj}X_{\rho,d}\bar \ggi^{\la,\bi}.
\end{equation}

Comparing to (\ref{EReGradingC}), the homomorphism (\ref{EOmCX}) yields the surjective graded superalgebra homomorphism 
\begin{equation}\label{EOmCX'}
\Om_{\rho,d}:\zC_d\,\onto\, \zX_{\rho,d}.
\end{equation}
In accordance with (\ref{EBar}), for an element $\zc\in\zC_d$ we  denote $\Om_{\rho,d}(\zc)\in\zX_{\rho,d}$ by $\bar \zc$. In particular, we elements $ \bar \lgathz_{\la,\bi},\,\bar \ggis_{\om_d},\,\bar \ggis^\bj\in\zX_{\rho,d}$.

Upon restriction, $\Om_{\rho,d}$ yields a homomorhism $\ggis_{\om_d}\zC_d\ggis_{\om_d} \onto \bar \ggis_{\om_d}\zX_{\rho,d}\bar \ggis_{\om_d}$. Recalling the homomorphism $\Theta_d$ from (\ref{EThetad}), we have a graded superalgebra homomorphism
\begin{equation}
\label{EXid}
\Xi_{\rho,d}:=\Om_{\rho,d}\circ \Theta_d: W_d(\Zig_\ell)\to \bar \ggis_{\om_d}\zX_{\rho,d}\bar \ggis_{\om_d}.
\end{equation}
By (\ref{EThetaId}), for all $\bj\in J^d$ we have 
\begin{equation}\label{EXiId}
\Xi_{\rho,d}(\ze^\bj)=\bar \ggis^\bj. 
\end{equation}

\begin{Theorem} \label{TXiIso} 
Suppose that $H_{\rho+\de,\k/\m}$ is a symmetric superalgebra for any maximal ideal $\m$ of $\k$. Then 
the homomorphism $\Xi_{\rho,d}$ (over $\k$) is an isomorphism. 
\end{Theorem}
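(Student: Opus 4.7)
The plan is a rank-comparison plus surjectivity argument, reducing the ring-theoretic content to fibers over residue fields of $\k$. First I compute both $\k$-ranks. The source $W_d(\Zig_\ell)=\Zig_\ell^{\otimes d}\otimes\k\Si_d$ is free of rank $d!(4\ell-1)^d$ by (\ref{EzejZig}). For the target, decomposing $\bar\ggis_{\om_d}=\sum_{\bi\in J^d}\bar\ggis^{\om_d,\bi}$ and applying Theorem~\ref{TDimRoCKTrunc} summand by summand, the binomial theorem yields
\[
\rank_\k\bigl(\bar\ggis_{\om_d}\zX_{\rho,d}\bar\ggis_{\om_d}\bigr)=\sum_{\bi\in J^d}d!\cdot 4^{d-m(\bi)}\cdot 3^{m(\bi)}=d!\cdot(4(\ell-1)+3)^d=d!\cdot(4\ell-1)^d,
\]
with $m(\bi):=|\{r:i_r=\ell-1\}|$. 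Both sides are therefore free of the same finite rank over the PID $\k$, and a surjection between such modules has torsion-free kernel of rank zero, hence is an isomorphism. Thus it suffices to show that $\Xi_{\rho,d}$ is surjective.

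By Lemma~\ref{LXFree} and right-exactness of $(\k/\m)\otimes_\k-$, surjectivity over $\k$ reduces to surjectivity of each reduction $\Xi_{\rho,d,\F}$ with $\F=\k/\m$, where the hypothesis that $H_{\rho+\de,\F}$ is a symmetric superalgebra becomes available. Working over such an $\F$, I use the factorization $\Xi_{\rho,d}=\Om_{\rho,d}\circ\zF_d\circ\iota^{\Zig_\ell}$: since $\zF_d$ is the isomorphism of Theorem~\ref{THCIsoZ} and since $\iota^{\Zig_\ell}$ identifies $W_d(\Zig_\ell)$ with a subalgebra of $H_d(\Zig_\ell)$ complementary to the two-sided ideal generated by $\zz_1,\dots,\zz_d$, surjectivity of $\Xi_{\rho,d,\F}$ is equivalent to showing that each image $\Om_{\rho,d}(\zF_d(\zz_r))$ already lies in $\Om_{\rho,d}(\zF_d(\iota^{\Zig_\ell}(W_d(\Zig_\ell))))$. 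Using the explicit formula (\ref{EZFormula}) for $\dot z$ together with the recipe of Theorem~\ref{THCIsoZ}, $\zF_d(\zz_r)$ is an explicit polynomial in the generators $y_k$ of $\hat C_d$; the cyclotomic relations in $H_\theta$, sharpened by the RoCK inequalities $(\theta\mid\al_0^\vee)\geq 2d$ and $(\theta\mid\al_i^\vee)\geq d-1$, will be used to express this image, after Gelfand--Graev truncation by $\bar\ggis_{\om_d}$, inside the wreath-subalgebra image. The symmetricity hypothesis enters via Lemma~\ref{LIdSymNew}(ii), which yields symmetricity of $\Cent_{\rho,1,\F}$ and, through the parabolic/tensor compatibility between $\Cent_{\rho,d,\F}$ and $\Cent_{\rho,1,\F}^{\otimes d}$ available in the RoCK setting (cf.\ Lemma~\ref{LJuly4RoCK}), induces a symmetrizing form on $\bar\ggis_{\om_d}\zX_{\rho,d,\F}\bar\ggis_{\om_d}$ compatible with the natural one on $W_d(\Zig_\ell)$. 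Combined with rank equality, this symmetry matching rules out proper inclusion of the image.

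The main obstacle is precisely the concrete Step concerning $\Om_{\rho,d}(\zF_d(\zz_r))$: tracing the $y$-polynomial to which $\dot z_r$ pulls back, and verifying that its image lands in the wreath-subalgebra image after the cyclotomic quotient (\ref{EPi}) and the Gelfand--Graev truncation. The RoCK hypothesis is what makes this collapse possible, but executing the verification demands careful bookkeeping through the explicit formulas of Theorem~\ref{THCIsoZ} combined with the parabolic embedding (\ref{EIota}) inside $H_\theta$. A secondary but nontrivial technical challenge is bootstrapping symmetricity from the assumed $d=1$ case to the general-$d$ target via the parabolic tensor decomposition of $H_{\rho,d\de}$ furnished by Lemma~\ref{LJuly4RoCK}.
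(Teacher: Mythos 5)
Your rank comparison is correct: a basis count gives $\rank_\k W_d(\Zig_\ell)=d!(4\ell-1)^d$, and decomposing $\bar\ggis_{\om_d}=\sum_{\bi\in J^d}\bar\ggis^{\bi}$, applying Theorem~\ref{TDimRoCKTrunc} termwise and using the binomial identity $\sum_{\bi\in J^d}4^{d-m(\bi)}3^{m(\bi)}=(4(\ell-1)+3)^d$ gives the same rank for the target. Over a PID a surjection of free modules of equal finite rank is an isomorphism, and right-exactness of $(\k/\m)\otimes_\k-$ together with Nakayama reduces surjectivity over $\k$ to surjectivity over each residue field $\F=\k/\m$. The reduction of surjectivity of $\Xi_{\rho,d,\F}$ to showing that $\Om_{\rho,d}(\zF_d(\zz_r))$ already lies in $\Xi_{\rho,d}(W_d(\Zig_\ell))$ is also sound, because Theorem~\ref{TAffBasis} shows $H_d(\Zig_\ell)$ is generated by $W_d(\Zig_\ell)$ together with the $\zz_r$, and $\Om_{\rho,d}\circ\zF_d$ is surjective.

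However, the surjectivity argument itself has two genuine gaps. First, the actual content — verifying, from (\ref{EZFormula}), the explicit map in Theorem~\ref{THCIsoZ}, the cyclotomic relation (\ref{RCyc}) and the RoCK inequalities, that $\Om_{\rho,d}(\zF_d(\zz_r))$ lands inside $\Xi_{\rho,d}(W_d(\Zig_\ell))$ — is never carried out; you describe what \emph{would} have to be shown rather than showing it, and this is precisely where the theorem lives. Second, the proposed symmetricity bridge is not available: Lemma~\ref{LJuly4RoCK} gives $H_\rho\otimes\Cent_{\rho,d}\cong H_{\rho,d\de}$, not any tensor decomposition $\Cent_{\rho,d}\cong\Cent_{\rho,1}^{\otimes d}$, and no such decomposition holds in general (the parabolic $\hat C_1^{\otimes d}\hookrightarrow\hat C_d$ is a strict subalgebra). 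Even granted symmetricity of $\bar\ggis_{\om_d}\zX_{\rho,d,\F}\bar\ggis_{\om_d}$ (which does follow from Corollary~\ref{Cor4} and Lemma~\ref{LSymIdTr}), "symmetry matching rules out proper inclusion" is not a valid deduction: a symmetric algebra can contain proper symmetric subalgebras of smaller dimension, so over $\F$ you would still need either injectivity or an explicit surjectivity argument, and you supply neither.

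The paper's actual proof is quite different in character: it does not touch the elements $\zz_r$ at all. Instead it sets up the regrading $A_\ell$ of $\Zig_\ell$ from (\ref{EAEll}) and uses the commutative square from \cite[Proposition 3.2.28]{KIS} and \cite[Theorem 4.2.47]{KIS} to identify $\Xi_{\rho,d}$ (up to regrading) with the homomorphism $\Om_{\rho,d}\circ F_d\circ\iota^{A_\ell}$ that appears in \cite[Theorem 4.4.31]{KlLi}. That theorem, applied under the hypothesis that $H_{\rho+\de,\k/\m}$ is QF2 (a consequence of symmetricity), already shows $\Om_{\rho,d}\circ F_d\circ\iota^{A_\ell}$ is an isomorphism over each $\k/\m$, and hence over $\k$. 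In short, the heavy lifting is outsourced to the predecessor paper rather than redone; your approach would be a more self-contained alternative, but as written it stops short exactly at the technical heart.
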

\begin{proof}
This follows from \cite[Theorem 4.4.31]{KlLi} by regrading. To give more details, let $A_\ell$ be a regrading of $\Zig_\ell$ given as follows:
\begin{equation}\label{EAEll}
A_\ell=\bigoplus_{i,j\in J}(\Uppi \funQ^{2})^{i-j}\ze^{[i]}\Zig_\ell \ze^{[j]},
\end{equation} 
see \cite[\S3.2a]{KIS}, and $H_d(A_\ell)$ be the affine zigzag superalgebra of $A_\ell$ with the natural embedding $\iota^{A_\ell}:W_d(A_\ell)\,\into\, H_d(A_\ell)$, see \cite{KlLi},\cite[\S3.2b]{KIS}. 
By \cite[Theorem 4.2.47]{KIS}, there is an explicit isomorphism $F_d:H_d(A_\ell)\iso 
\ggi_{\om_d}C_d\ggi_{\om_d}$.

Moreover, by \cite[Proposition 3.2.28]{KIS}, 
there is an explicit isomorphism of graded superalgebras 
$$
\phi:\bigoplus_{\bi,\bj\in J^n}\funQ^{t_{\om_d,\bi}-t_{\om_d,\bj}}\Uppi^{\eps_{\om_d,\bi}-\eps_{\om_d,\bj}}e^\bj H_d(A_\ell)e^\bi \iso H_d(\Zig_\ell).
$$
Upon restriction, $\phi$ yields the isomorphism $\bar \phi:W_d(A_\ell)\iso W_d(\Zig_\ell)$. Note that as (ungraded non-super) algebras, $\ggi_{\om_d}C_d\ggi_{\om_d}=\ggis_{\om_d}\zC_d\ggis_{\om_d}$,  $\bar \ggi_{\om_d}X_{\rho,d}\bar \ggi_{\om_d}=\bar \ggis_{\om_d}\zX_{\rho,d}\bar \ggis_{\om_d}$. 
So we have a diagram
\begin{equation*}\label{OHCD}
\begin{tikzcd}
W_d(A_\ell)\arrow[rr,"\iota^{A_\ell}" above, hook]
\arrow[d,"\bar\phi" left]
&&H_d(A_\ell)
\arrow[d,"\phi" left]
\arrow[rr,"F_d" above]
&& \ggi_{\om_d}C_d\ggi_{\om_d}
\arrow[rr,"\Om_{\rho,d}" above]
\arrow[d,equals]
&& \bar \ggi_{\om_d}X_{\rho,d}\bar \ggi_{\om_d}
\arrow[d,equals]
\\
W_d(\Zig_\ell)\arrow[rr,"\iota^{\Zig_\ell}" above,hook]
&&
H_d(\Zig_\ell)
\arrow[rr,"\zF_d" above]
&&
\ggis_{\om_d}\zC_d\ggis_{\om_d}
\arrow[rr,"\Om_{\rho,d}" above]
&& \bar \ggis_{\om_d}\zX_{\rho,d}\bar \ggis_{\om_d}
\end{tikzcd}
\end{equation*}
By construction in \cite{KIS}, we have $\zF_d\circ \phi=F_d$, so the diagram commutes. 

Now, if $H_{\rho+\de,\k/\m}$ is a symmetric superalgebra then it is a QF2 algebra, so we conclude from \cite[Theorem 4.4.31]{KlLi} that 
$\Om_{\rho,d}\circ F_d\circ \iota^{A_\ell}$ is an isomorphism over $\k/\m$ for any maximal ideal $\m$ of $\k$. We conclude that 
$\Om_{\rho,d}\circ F_d\circ \iota^{A_\ell}$ is an isomorphism over $\k$. So $\Xi_{\rho,d}=\Om_{\rho,d}\circ \zF_d\circ \iota^{\Zig_\ell}$ is an isomorphism as required. 
\end{proof}

\section{RoCK blocks and generalized Schur algebras}
\label{SRoCKSchur}

We now assume that $\k$ is a discrete valuation ring with the quotient field $\F:=\k/\m$ of odd characteristic $p$ and the field of fractions $\K$ of characteristic $0$. 
We exclude the case where $p$ divides $2\ell+1$ but $p<2\ell+1$ (thus, our main case of interest $p=2\ell+1$ is not excluded).  
We set $\bar\K$ to be the algebraic closure of $\K$. 
These assumptions allow us to apply Lemma~\ref{LSymmConditions} to deduce:

\begin{Lemma} \label{LHEtaSym} 
For any $\eta\in Q_+$, the $\F$-superalgebra $H_{\eta,\F}$ and the $\bar\K$-superalgebra $H_{\eta,\bar\K}$ are symmetric superalgebras. 
\end{Lemma}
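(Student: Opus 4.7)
The plan is to reduce both claims to Lemma~\ref{LSymmConditions} applied over algebraically closed fields, and then descend in the $\F$-case. Both fields $\bar\K$ and $\F$ have odd characteristic (either $0$ or $p$), so the first hypothesis of Lemma~\ref{LSymmConditions} is met for their algebraic closures; and by our running hypothesis on $\k$ we have excluded exactly the case that would trigger the second exclusion of that lemma in positive characteristic.

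For $\bar\K$: since $\bar\K$ is algebraically closed of characteristic $0$, the exclusion in Lemma~\ref{LSymmConditions} is vacuous, and the lemma directly yields that $H_{\eta,\bar\K}$ is a symmetric superalgebra. This disposes of the second assertion.

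For the $\F$-case, first pass to the algebraic closure $\bar\F$. This is an algebraically closed field of odd characteristic $p$, and the case $p<2\ell+1$ with $p\mid 2\ell+1$ is excluded by hypothesis; Lemma~\ref{LSymmConditions} therefore applies and gives that $H_{\eta,\bar\F}$ is a symmetric $\bar\F$-superalgebra. By Theorem~\ref{TCyc}(i), $H_{\eta,\k}$ is free of finite rank as a $\k$-module, and base change commutes with tensor product, so $H_{\eta,\bar\F}\cong \bar\F\otimes_\F H_{\eta,\F}$.

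It then remains to descend the symmetric superalgebra structure from $\bar\F$ to $\F$. The key characterization is that a finite-dimensional $\F$-superalgebra $A$ is symmetric iff $A\cong A^*$ as $(A,A)$-bisupermodules, where $A^*=\Hom_\F(A,\F)$ is endowed with the super-twisted bimodule structure. The $\F$-vector space $\Hom_{A\otimes A^\sop}(A,A^*)$ is finite-dimensional and its base change to $\bar\F$ is $\Hom_{\bar A\otimes \bar A^\sop}(\bar A,\bar A^*)$; the existence of an $\bar\F$-isomorphism corresponds to the non-vanishing of a determinant-type polynomial function on the former space. One concludes via Krull--Schmidt uniqueness applied to $A$ and $A^*$ as $(A,A)$-bisupermodules: since their base changes to $\bar\F$ are isomorphic, the multiplicities of their indecomposable summands agree, forcing $A\cong A^*$ already over $\F$. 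The main (mild) obstacle is this descent step, which is routine for ordinary algebras but requires a little care with sign conventions in the super setting; alternatively one could replace the descent argument by a direct construction of a symmetrizing form on $H_{\eta,\F}$ via the Morita-type constructions underlying the proof of Lemma~\ref{LSymmConditions}.
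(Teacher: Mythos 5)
For $\bar\K$ your argument matches the paper's: both you and the paper apply Lemma~\ref{LSymmConditions} directly. For $\F$, though, you depart. In Section~\ref{SRoCKSchur} the residue field $\F=\k/\m$ is implicitly taken to be algebraically closed (as in the Introduction's conventions, and as is in any case required later in the section---for instance the proof of Theorem~\ref{TAMainBody} invokes Theorem~\ref{T5.3.31}, whose hypotheses demand an algebraically closed $\F$). With this understood, the paper's argument for Lemma~\ref{LHEtaSym} is simply to apply Lemma~\ref{LSymmConditions} to $\F$ just as to $\bar\K$: there is no descent step.

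Your extra descent from $\bar\F$ to $\F$ is therefore unneeded, and it is also not fully argued. You juxtapose two different strategies---a ``non-vanishing of a determinant-type polynomial'' heuristic (which presupposes $\F$ infinite, a hypothesis you have not secured) and Krull--Schmidt uniqueness for bisupermodules---without completing either, and the equivalence ``$A$ is symmetric if and only if $A\cong A^*$ as $(A,A)$-bisupermodules'' would need a precise sign convention on the $A^*$-bisupermodule structure and must be reconciled with the paper's extra requirement $\t(A_\1)=0$ and its use of \emph{ordinary} (not super) symmetry $\t(ab)=\t(ba)$. You flag these issues yourself but do not resolve them. The intended and much shorter route is simply to note that $\F$ is algebraically closed and apply Lemma~\ref{LSymmConditions} directly, as the paper does.
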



Throughout the section, we assume that $\theta$ is RoCK, $\rho=\rho(\theta)$ and $d=d(\theta)$.

\subsection{Symmetricity}

Recall the homomorphism of graded superalgebras $\Xi_{\rho,d}:W_d(\Zig_\ell)\to \bar\ggis_{\om_d}\zX_{\rho,d}\bar\ggis_{\om_d}$ defined in (\ref{EXid}). In view of Lemma~\ref{LHEtaSym}, Theorem~\ref{TXiIso} yields:

\begin{Corollary} \label{Cor1} The homomorphism $\Xi_{\rho,d}$ (over $\k$) is an isomorphism. 
\end{Corollary}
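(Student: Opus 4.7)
The plan is straightforward: we only need to verify the hypothesis of Theorem~\ref{TXiIso}. By the running assumption of this section, $\k$ is a discrete valuation ring, so it possesses a unique maximal ideal $\m$, and $\k/\m = \F$. Consequently, the universal quantifier in the hypothesis of Theorem~\ref{TXiIso} (``for any maximal ideal $\m$ of $\k$'') collapses to the single condition that $H_{\rho+\de,\F}$ is a symmetric superalgebra.

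But this is precisely the content of Lemma~\ref{LHEtaSym}, applied to the element $\eta := \rho+\de \in Q_+$. Applying Theorem~\ref{TXiIso} then yields that $\Xi_{\rho,d}$ is an isomorphism over $\k$, which is exactly the assertion of the corollary.

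There is no substantive obstacle: the corollary is a direct application of the two preceding results. The real content lies elsewhere. Theorem~\ref{TXiIso} (whose proof proceeds by regrading from the Brauer tree algebra $A_\ell$ setting and invokes the QF2 property of blocks built from $H_{\rho+\de}$) and Lemma~\ref{LHEtaSym} (which via the Kang--Kashiwara--Tsuchioka isomorphisms of Theorems~\ref{TKKT1} and~\ref{TKKT2} transports symmetricity from the twisted group superalgebras $\cT_n$ and the Olshanski superalgebras $\cY_n(q)$ to the cyclotomic quiver Hecke superalgebras $H_\eta$) are doing all the work. Note in particular that the exclusion of the case where $p$ divides $2\ell+1$ but $p<2\ell+1$ is exactly what is needed to apply Lemma~\ref{LSymmConditions} (and hence Lemma~\ref{LHEtaSym}) for the field $\F$ of characteristic $p$.
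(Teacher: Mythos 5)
Your proof is correct and matches the paper's (very terse) argument exactly: since $\k$ is a DVR with unique maximal ideal $\m$ and $\k/\m = \F$, the hypothesis of Theorem~\ref{TXiIso} reduces to symmetricity of $H_{\rho+\de,\F}$, which is Lemma~\ref{LHEtaSym} with $\eta = \rho+\de$. You've simply spelled out the steps that the paper compresses into the phrase ``In view of Lemma~\ref{LHEtaSym}, Theorem~\ref{TXiIso} yields.''
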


From Lemmas~\ref{LHEtaSym} and \ref{LIdSymNew}, we also immediately get: 

\begin{Corollary} \label{Cor2} 
The $\F$-superalgebra $\Cent_{\rho,d,\F}\cong \F\otimes _\k\Cent_{\rho,d}$ and $\bar\K$-superalgebra $\Cent_{\rho,d,\bar\K}\cong \bar\K\otimes _\k\Cent_{\rho,d}$ are symmetric superalgebras. 
\end{Corollary}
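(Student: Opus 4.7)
The plan is simply to combine Lemma \ref{LHEtaSym} with Lemma \ref{LIdSymNew}(ii). Specializing Lemma \ref{LHEtaSym} to $\eta = \theta$ gives that both $H_{\theta,\F}$ and $H_{\theta,\bar\K}$ are symmetric superalgebras (note that the hypothesis on $\k$ made at the start of \S\ref{SRoCKSchur} — in particular, $\F$ has odd characteristic $p$ and we exclude the case $p\mid 2\ell+1$ with $p<2\ell+1$ — is exactly what Lemma \ref{LHEtaSym} requires).

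Next I would apply Lemma \ref{LIdSymNew}(ii) over the field $\F$: by that lemma the supercentralizer $\Cent_{\rho,d,\F}$ (defined as in (\ref{ECentralizer}) but over $\F$) is isomorphic to the idempotent truncation $\eps H_{\theta,\F}\eps$ for the even idempotent $\eps = \iota_{\rho,d\de}(1_{\bi_\rho}\otimes 1_{d\de})$. Since $H_{\theta,\F}$ is symmetric, so is any idempotent truncation by even idempotents (Lemma \ref{LSymIdTr}), which yields symmetricity of $\Cent_{\rho,d,\F}$. Moreover, Lemma \ref{LIdSymNew}(i) identifies this with $\F \otimes_\k \Cent_{\rho,d}$, so the first assertion follows.

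For the $\bar\K$ case I would argue exactly the same way, working over the field $\bar\K$. The key observation is that $\bar\K \otimes_\k \Cent_{\rho,d} \cong \bar\K \otimes_\K (\K \otimes_\k \Cent_{\rho,d}) \cong \bar\K \otimes_\K \Cent_{\rho,d,\K}$ by Lemma \ref{LIdSymNew}(i), and this latter algebra is isomorphic to $\eps H_{\theta,\bar\K}\eps$ (via base change of the isomorphism of Lemma \ref{LIdSymNew}). Since $H_{\theta,\bar\K}$ is symmetric by Lemma \ref{LHEtaSym}, another application of Lemma \ref{LSymIdTr} finishes the argument.

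There is no real obstacle to overcome here — the corollary is essentially a formal consequence of the two cited lemmas, and the only minor point to verify is that the base-change identification in Lemma \ref{LIdSymNew}(i) for $\F$ a quotient field of $\k$ propagates correctly to $\bar\K$, which is handled by the flatness of the successive field extensions $\k \to \K \to \bar\K$.
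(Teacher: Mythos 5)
Your proposal is correct and follows exactly the same approach as the paper, which simply says the result follows immediately from Lemmas~\ref{LHEtaSym} and \ref{LIdSymNew}; you have spelled out the same combination, namely that $H_{\theta,\F}$ and $H_{\theta,\bar\K}$ are symmetric (Lemma~\ref{LHEtaSym}) and that $\Cent_{\rho,d}$ is an even idempotent truncation of $H_\theta$ inheriting symmetricity (Lemma~\ref{LIdSymNew} and Lemma~\ref{LSymIdTr}), together with the base-change identifications.
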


Recall from (\ref{EGGCent}) that the (graded) superalgebra $X_{\rho,d}$ is an idempotent truncation of $\Cent_{\rho,d}$. 
Using Lemmas~\ref{LSymIdTr}, \ref{LIdSymNew}(i) and Corollary~\ref{Cor2}, we now also deduce

\begin{Corollary} \label{Cor3} 
The $\F$-superalgebra $X_{\rho,d,\F}\cong \F\otimes _\k X_{\rho,d}$ and the $\bar\K$-superalgebra $X_{\rho,d,\bar\K}\cong \bar\K\otimes _\k X_{\rho,d}$ are symmetric superalgebras. 
\end{Corollary}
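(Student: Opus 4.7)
The plan is to deduce Corollary~\ref{Cor3} directly from Corollary~\ref{Cor2} by applying the symmetricity/idempotent-truncation machinery already set up in the preliminaries.

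First, I would recall from (\ref{EGGCent}) that $X_{\rho,d}=\bar\ggi_d\,\Cent_{\rho,d}\,\bar\ggi_d$, so over either $\F$ or $\bar\K$ the algebra $X_{\rho,d,?}$ is the idempotent truncation of $\Cent_{\rho,d,?}$ by the even idempotent $\bar\ggi_d$ (even because $\bar\ggi_d$ is the image under $\Om_{\rho,d}$ of a sum of bidegree $(0,\0)$ idempotents in $\hat C_d$). The extension-of-scalars statements $X_{\rho,d,\F}\cong \F\otimes_\k X_{\rho,d}$ and $X_{\rho,d,\bar\K}\cong \bar\K\otimes_\k X_{\rho,d}$ are already given by Lemma~\ref{LXFree}, so the only real content to verify is the symmetricity of these base-changed algebras.

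Next, I would invoke Corollary~\ref{Cor2}, which states that $\Cent_{\rho,d,\F}$ and $\Cent_{\rho,d,\bar\K}$ are symmetric superalgebras. Since $X_{\rho,d,\F}=\bar\ggi_d\,\Cent_{\rho,d,\F}\,\bar\ggi_d$ and $X_{\rho,d,\bar\K}=\bar\ggi_d\,\Cent_{\rho,d,\bar\K}\,\bar\ggi_d$, each of these is an even-idempotent truncation of a symmetric superalgebra. Applying Lemma~\ref{LSymIdTr}, which asserts that the idempotent truncation $eAe$ of a symmetric superalgebra $A$ by an even idempotent $e$ remains a symmetric superalgebra, yields the desired conclusion for both $\F$ and $\bar\K$.

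There is no real obstacle here: the work was done in establishing Corollary~\ref{Cor2} (which in turn rested on Lemma~\ref{LHEtaSym} and Lemma~\ref{LIdSymNew}) and in Lemma~\ref{LXFree}. The present corollary is a two-line consequence, and the only thing to check carefully is that $\bar\ggi_d$ is indeed an even idempotent, which is immediate from (\ref{EGad}) together with the fact that each $\ggi^{\mu,\bj}=1_{\ggw^{\mu,\bj}}$ is a bidegree $(0,\0)$ idempotent of $\hat C_d$ and $\Om_{\rho,d}$ is a homomorphism of graded superalgebras.
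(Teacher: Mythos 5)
Your proof is correct and matches the paper's approach: the paper also deduces Corollary~\ref{Cor3} from Corollary~\ref{Cor2} together with Lemma~\ref{LSymIdTr} (even-idempotent truncations of symmetric superalgebras are symmetric) and Lemma~\ref{LIdSymNew}(i) for the base-change identification. The only cosmetic difference is that you cite Lemma~\ref{LXFree} for the identification $X_{\rho,d,?}\cong{?}\otimes_\k X_{\rho,d}$, but since Lemma~\ref{LXFree} is itself derived from Lemma~\ref{LIdSymNew}(i), this is the same argument.
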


The (graded) superalgebra $\zX_{\rho,d}$ is obtained from $X_{\rho,d}$ by regrading, see (\ref{EReGradingX}).  So, applying Corollary~\ref{Cor3} and Lemma~\ref{LSymRegr}, we get

\begin{Corollary} \label{Cor4} 
The $\F$-superalgebra $\zX_{\rho,d,\F}\cong \F\otimes _\k \zX_{\rho,d}$ and $\bar\K$-superalgebra $\zX_{\rho,d,\bar\K}\cong \bar\K\otimes _\k \zX_{\rho,d}$ are symmetric superalgebras. 
\end{Corollary}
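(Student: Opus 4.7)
The plan is to apply Lemma~\ref{LSymRegr} directly, since the bulk of the work has already been done in establishing Corollary~\ref{Cor3}. Recall from (\ref{EReGradingX}) that $\zX_{\rho,d}$ is by definition the regrading of $X_{\rho,d}$ with respect to the bidegree $(0,\0)$ orthogonal idempotent decomposition $\bar\ggi_d=\sum_{(\la,\bi)\in\EC^\col(d)}\bar\ggi^{\la,\bi}$, using the grading supershift parameters $t_{\la,\bi}$ and $\eps_{\la,\bi}$ from (\ref{EShiftsCuspidal}). Lemma~\ref{LSymRegr} tells us that a regrading (as in \S\ref{SSRegr}) of any symmetric superalgebra is again a symmetric superalgebra --- the symmetrizing form transports unchanged as a linear functional, only the $\Z$- and $\Z/2$-gradings are altered, and the vanishing on the odd part is preserved because $\t(e_iAe_j)=0$ unless $i=j$.

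First I will observe that scalar extension commutes with regrading: the images of the idempotents $\bar\ggi^{\la,\bi}$ in $\zX_{\rho,d,\F}=\F\otimes_\k\zX_{\rho,d}$ and $\zX_{\rho,d,\bar\K}=\bar\K\otimes_\k\zX_{\rho,d}$ are still orthogonal bidegree $(0,\0)$ idempotents summing to $\bar\ggi_d$, and the same shift parameters $t_{\la,\bi},\eps_{\la,\bi}$ define the regradings over $\F$ and $\bar\K$. Thus $\zX_{\rho,d,\F}$ is the regrading of $X_{\rho,d,\F}$, and $\zX_{\rho,d,\bar\K}$ is the regrading of $X_{\rho,d,\bar\K}$. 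Combining Corollary~\ref{Cor3} (symmetricity of $X_{\rho,d,\F}$ and $X_{\rho,d,\bar\K}$) with Lemma~\ref{LSymRegr} then yields the desired symmetric structure on $\zX_{\rho,d,\F}$ and $\zX_{\rho,d,\bar\K}$.

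There is essentially no obstacle in this argument --- the statement is a one-step deduction. The substantive content is already packaged in Corollary~\ref{Cor3} (which ultimately rests on Lemma~\ref{LSymmConditions}, Lemma~\ref{LIdSymNew}(ii), and the idempotent truncation Lemma~\ref{LSymIdTr}) and in the abstract regrading principle of Lemma~\ref{LSymRegr}. The only small point worth stating explicitly in the proof is the compatibility of the regrading construction with base change $\k\to\F$ and $\k\to\bar\K$, which is immediate from the definition.
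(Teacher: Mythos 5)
Your proof is correct and follows the same one-step route as the paper: observe that $\zX_{\rho,d}$ is a regrading of $X_{\rho,d}$ as in \eqref{EReGradingX}, then combine Corollary~\ref{Cor3} with Lemma~\ref{LSymRegr}. The additional remark you make about compatibility of regrading with the base change $\k\to\F$ and $\k\to\bar\K$ is a small point the paper leaves implicit, but it does not constitute a different argument.
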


\subsection{Identifying $\bar \ggis^{\la,\bi}\zX_{\rho,d}\bar \ggis_{\om_d}$ and $M_{\la,\bi}$} Let $(\la,\bi)\in \Comp^\col(n,d)$. Recall from (\ref{EThetaAction}) the right graded $W_d(\Zig_\ell)$-supermodule structure on $\ggis^{\la,\bi}\zC_{d}\ggis_{\om_d}$. Similarly, 
$\bar \ggis^{\la,\bi}\zX_{\rho,d}\bar \ggis_{\om_d}$ is naturally a right graded $\bar \ggis_{\om_d}\zX_{\rho,d}\bar \ggis_{\om_d}$-supermodule, and, using the isomorphism $\Xi_{\rho,d}$ of  Corollary~\ref{Cor1}, we have the structure of right graded $W_d(\Zig_\ell)$-supermodule on $\bar \ggis^{\la,\bi}\zX_{\rho,d}\bar \ggis_{\om_d}$ with
\begin{equation}\label{EXiAction}
\zv h=\zv\Xi_{\rho,d}(h)\qquad (\zv\in \bar \ggis^{\la,\bi}\zX_{\rho,d}\bar \ggis_{\om_d},\ h\in W_d(\Zig_\ell)). 
\end{equation}
By definition, the restriction of $\Om_{\rho,d}$ gives a bidegree $(0,\0)$ surjective homomorphism of graded right $W_d(\Zig_\ell)$-supermodules
$
\Om_{\la,\bi}:\ggis^{\la,\bi}\zC_{d}\ggis_{\om_d}\,\onto\,\bar \ggis^{\la,\bi}\zX_{\rho,d}\bar \ggis_{\om_d}.
$

\begin{Theorem} \label{TIdentifyM} 
For any $(\la,\bi)\in\Comp^\col(n,d)$ there exists a bidegree $(0,\0)$ isomorphism of right graded $W_d(\Zig_\ell)$-supermodules 
$$
\upeta_{\la,\bi}:M_{\la,\bi}\iso \bar \ggis^{\la,\bi}\zX_{\rho,d}\bar \ggis_{\om_d},\ m_{\la,\bi}\mapsto \bar \lgathz_{\la,\bi}. 
$$
\end{Theorem}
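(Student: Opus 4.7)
The plan is to construct $\upeta_{\la,\bi}$ as a composition and then verify that it is an isomorphism by combining cyclic generation with a rank count.

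First, I would define $\upeta_{\la,\bi} := \Om_{\la,\bi} \circ \upzeta_{\la,\bi}$, where $\upzeta_{\la,\bi}$ is the map of right graded $W_d(\Zig_\ell)$-supermodules produced by Lemma~\ref{LUptheta}, and $\Om_{\la,\bi}$ denotes the restriction of the surjective graded superalgebra homomorphism $\Om_{\rho,d}: \zC_d \onto \zX_{\rho,d}$ from (\ref{EOmCX'}) to the idempotent truncations $\ggis^{\la,\bi}\zC_d\ggis_{\om_d} \onto \bar\ggis^{\la,\bi}\zX_{\rho,d}\bar\ggis_{\om_d}$. The identity $\Xi_{\rho,d} = \Om_{\rho,d}\circ \Theta_d$ from (\ref{EXid}) ensures that $\Om_{\la,\bi}$ intertwines the $W_d(\Zig_\ell)$-actions defined by (\ref{EThetaAction}) and (\ref{EXiAction}), so the composition $\upeta_{\la,\bi}$ is a bidegree $(0,\0)$ morphism in the category of right graded $W_d(\Zig_\ell)$-supermodules and sends $m_{\la,\bi} \mapsto \lgathz_{\la,\bi} \mapsto \bar\lgathz_{\la,\bi}$, as required.

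Next, I would establish surjectivity. Theorem~\ref{TGath}(iv) gives the equality
$$\ggis^{\la,\bi}\zC_d\ggis_{\om_d} = \lgathz_{\la,\bi}\cdot \ggis_{\om_d}\zC_d\ggis_{\om_d},$$
and applying the surjection $\Om_{\rho,d}$ yields
$$\bar\ggis^{\la,\bi}\zX_{\rho,d}\bar\ggis_{\om_d} = \bar\lgathz_{\la,\bi}\cdot \bar\ggis_{\om_d}\zX_{\rho,d}\bar\ggis_{\om_d}.$$
By Corollary~\ref{Cor1}, $\Xi_{\rho,d}$ is an isomorphism, so every element of $\bar\ggis_{\om_d}\zX_{\rho,d}\bar\ggis_{\om_d}$ acts on the right via some element of $W_d(\Zig_\ell)$. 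Thus $\bar\lgathz_{\la,\bi}$ generates $\bar\ggis^{\la,\bi}\zX_{\rho,d}\bar\ggis_{\om_d}$ as a right $W_d(\Zig_\ell)$-supermodule, and since $m_{\la,\bi}$ generates $M_{\la,\bi}$, surjectivity of $\upeta_{\la,\bi}$ follows at once.

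Finally I would compare $\k$-ranks. Lemma~\ref{LDimM} gives
$$\rank_\k M_{\la,\bi} = \binom{d}{\la_1\,\cdots\,\la_n}\,4^{d-|\la,\bi|_{\ell-1}}\,3^{|\la,\bi|_{\ell-1}}.$$
The regrading $\zX_{\rho,d}$ of $X_{\rho,d}$ does not alter the underlying $\k$-module structure of any idempotent truncation, so Theorem~\ref{TDimRoCKTrunc} together with Lemma~\ref{LXFree} yields the same rank for $\bar\ggis^{\la,\bi}\zX_{\rho,d}\bar\ggis_{\om_d}$. Since $\k$ is a DVR, both modules are free of equal finite rank; the short exact sequence $0 \to \ker\upeta_{\la,\bi} \to M_{\la,\bi} \to \bar\ggis^{\la,\bi}\zX_{\rho,d}\bar\ggis_{\om_d} \to 0$ splits over the free target, so $M_{\la,\bi} \cong \ker\upeta_{\la,\bi} \oplus \bar\ggis^{\la,\bi}\zX_{\rho,d}\bar\ggis_{\om_d}$ and a rank count forces $\ker\upeta_{\la,\bi} = 0$. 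The real work sits entirely upstream: the hard inputs are Theorem~\ref{TGath}(iv) (cyclicity of $\ggis^{\la,\bi}\zC_d\ggis_{\om_d}$ over $\ggis_{\om_d}\zC_d\ggis_{\om_d}$ with cyclic vector $\lgathz_{\la,\bi}$) and Theorem~\ref{TDimRoCKTrunc} (the precise dimension formula for the truncated RoCK algebra, resting on \cite{KlDim}). With those in hand, the remainder is a direct assembly, and no further obstacle arises.
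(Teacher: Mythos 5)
Your proof is correct and follows essentially the same route as the paper's: same construction $\upeta_{\la,\bi}=\Om_{\la,\bi}\circ\upzeta_{\la,\bi}$, surjectivity via Theorem~\ref{TGath}(iv) and Corollary~\ref{Cor1}, and injectivity by the rank comparison from Lemma~\ref{LDimM} and Theorem~\ref{TDimRoCKTrunc}. Your extra comment on the split short exact sequence is just an explicit spelling-out of why a surjection of finite free $\k$-modules of equal rank is an isomorphism, which the paper leaves implicit.
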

\begin{proof}
Let the homomorphism $\upzeta_{\la,\bi}:M_{\la,\bi}\to \ggis^{\la,\bi}\zC_{d}\ggis_{\om_d}$ be as in Lemma~\ref{LUptheta}. Then 
$$\upeta_{\la,\bi}:=\Om_{\la,\bi}\circ\upzeta_{\la,\bi}:M_{\la,\bi}\to \bar \ggis^{\la,\bi}\zX_{\rho,d}\bar \ggis_{\om_d}$$ 
is a bidegree $(0,\0)$ homomorphism of right graded $W_d(\Zig_\ell)$-supermodules which maps $m_{\la,\bi}$ onto 
$\bar \lgathz_{\la,\bi}$. By Theorem~\ref{TGath}(iv), $\ggis^{\la,\bi}\zC_{d}\ggis_{\om_d}$ is generated by $\lgathz_{\la,\bi}$ as a right $\ggis_{\om_d}\zC_{d}\ggis_{\om_d}$-module. Hence $\bar \ggis^{\la,\bi}\zX_{\rho,d}\bar \ggis_{\om_d}=\Om_{\la,\bi}(\ggis^{\la,\bi}\zC_{d}\ggis_{\om_d})$ is generated by $\bar \lgathz_{\la,\bi}$ as a right $\bar \ggis_{\om_d}\zX_{\rho,d}\bar \ggis_{\om_d}$-module. Since $\Xi_{\rho,d}$ is an isomorphism by Corollary~\ref{Cor1}, we deduce that $\bar \ggis^{\la,\bi}\zX_{\rho,d}\bar \ggis_{\om_d}$ is generated by $\bar \lgathz_{\la,\bi}$ as a right $W_d(\Zig_\ell)$-module. Thus $\upeta_{\la,\bi}$ is surjective. But by Lemma~\ref{LDimM} and Theorem~\ref{TDimRoCKTrunc}, the $\k$-modules $M_{\la,\bi}$ and $\bar \ggis^{\la,\bi}\zX_{\rho,d}\bar \ggis_{\om_d}$ are free of the same finite rank. So $\upeta_{\la,\bi}$ is an isomorphism. 
\end{proof}

\subsection{The endomorphism algebra $\zE_{\rho,d,n}$}
Let $n\in \N_+$. 
Recall from \S\ref{SSPar} that we consider multicompositions in $\Comp^J(n,d)$ as colored compositions in $\Comp^\col(n\ell,d)$ via the injection $\upgamma^J_{n,d}$ from (\ref{EBeta}). In particular, for any $\bla\in\Comp^J(n,d)$,  we have defined in Remark~\ref{RBLa} the module $M_\bla=M_{\upgamma^J_{n,d}(\bla)}$ with generator $m_\bla$ such that $m_\bla\ze^\bla=m_\bla$ and $m_\bla w=m_\bla$ for all $w\in\Si_\bla$, see (\ref{ESiBla}). 
Similarly, for any $\bla\in\Comp^J(n,d)$, we have the Gelfand-Graev word
\begin{equation}\label{EGGBla}
\ggw^\bla=\ggw^{\upgamma^J_{n,d}(\bla)}
\end{equation}
the 
idempotents $\ggis^\bla=\ggis^{\upgamma^J_{n,d}(\bla)}$  
and $\ggis(\bla)=\ggis(\upgamma^J_{n,d}(\bla))$  
in $\zC_d$, the element $\bar \lgathz_{\bla}=\bar \lgathz_{\upgamma^J_{n,d}(\bla)}\in \bar  \ggis^\bla\zX_{\rho,d}\bar \ggis_{\om_d}$, etc. In particular, from Theorem~\ref{TIdentifyM}, we have the  isomorphism of right graded $W_d(\Zig_\ell)$-supermodules 
\begin{equation}\label{EEtaBla}
\upeta_{\bla}:M_{\bla}\iso\bar  \ggis^{\bla}\zX_{\rho,d}\bar \ggis_{\om_d},\ m_{\bla}\mapsto \bar \lgathz_{\bla}.
\end{equation}

Define the graded left $\zX_{\rho,d}$-supermodule 
$$
\Ga_{\rho,d,n}:=\bigoplus_{\bla\in\Comp^J(n,d)}\zX_{\rho,d}\bar \ggis^\bla
$$
and the graded superalgebra
$$
\zE_{\rho,d,n}:=\End_{\zX_{\rho,d}}(\Ga_{\rho,d,n})^\sop.
$$
Via the isomorphism (\ref{EEndAei}), we identify 
\begin{equation}\label{EIdentifyE}
\zE_{\rho,d,n}=\bigoplus_{\bla,\bmu\in\Comp^J(n,d)}\bar \ggis^\bmu\zX_{\rho,d}\bar \ggis^\bla.
\end{equation}

\begin{Lemma} \label{LEXMor} 
If $n\geq d$ then the graded superalgebras $\zE_{\rho,d,n}$ and $\zX_{\rho,d}$ are graded Morita superequivalent.  
\end{Lemma}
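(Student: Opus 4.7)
The plan is to verify that $\Ga_{\rho,d,n}$ is a projective generator for $\zX_{\rho,d}$, which by Example~\ref{ExProgenerator} yields the desired graded Morita superequivalence. Projectivity is automatic from the decomposition $\Ga_{\rho,d,n}=\bigoplus_{\bla}\zX_{\rho,d}\bar\ggis^\bla$, since each summand is a summand of the regular module; the real content is the generator property $\sum_{\bla\in\Comp^J(n,d)}\zX_{\rho,d}\bar\ggis^\bla\zX_{\rho,d}=\zX_{\rho,d}$. By Lemma~\ref{LMorExtScal} (the DVR $\k$ has a unique maximal ideal $\m$) it suffices to prove this after reducing modulo $\m$, so I pass to the residue field $\F=\k/\m$.

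Over $\F$, Corollary~\ref{Cor4} gives that $\zX_{\rho,d,\F}$ is a finite-dimensional symmetric superalgebra; for such algebras the head and socle of each indecomposable projective coincide up to a bidegree twist, so a projective module is a generator if and only if it is faithful. Thus the task reduces to showing that $\Ga_{\rho,d,n,\F}$ is faithful. The hypothesis $n\geq d$ enters here through the inclusion $\Comp^J(d,d)\subseteq\Comp^J(n,d)$: for each $\bj=j_1\cdots j_d\in J^d$, define $\bla^{(\bj)}\in\Comp^J(d,d)$ by $\la^{(i)}_r=\de_{i,j_r}$; a direct calculation from (\ref{EBeta}) and (\ref{EGGBla}) gives $\ggw^{\bla^{(\bj)}}=\ggw^{1,j_1}\cdots\ggw^{1,j_d}=\ggw^{\om_d,\bj}$, whence $\bar\ggis^{\bla^{(\bj)}}=\bar\ggi^\bj$. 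Summing over $\bj\in J^d$ exhibits $\zX_{\rho,d}\bar\ggis_{\om_d}=\bigoplus_{\bj\in J^d}\zX_{\rho,d}\bar\ggi^\bj$ as a direct summand of $\Ga_{\rho,d,n}$, so faithfulness of $\Ga_{\rho,d,n,\F}$ will follow from faithfulness of the smaller module $\zX_{\rho,d,\F}\bar\ggis_{\om_d,\F}$.

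The last faithfulness claim is the main obstacle. In characteristic zero or $>d$ it is immediate from Lemma~\ref{LProjGen}, which yields the much stronger progenerator assertion; in the principal non-abelian-defect case $0<\cha\F\leq d$ that lemma is unavailable, and a separate argument is required. The plan is to transfer the $\k$-faithfulness supplied by Corollary~\ref{C8.11} down to $\F$ by exploiting $\k$-freeness (Lemma~\ref{LXFree}): writing $V=\zX_{\rho,d}\bar\ggis_{\om_d}$, the action yields a $\k$-free inclusion $\zX_{\rho,d}\hookrightarrow\End_\k(V)$, and the delicate point is that its cokernel be $\k$-torsion-free so that reducing modulo $\m$ preserves injectivity, equivalently that $V_\F$ is $\zX_{\rho,d,\F}$-faithful. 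The necessary torsion-freeness should be extractable by an intrinsic argument over $\F$ showing that every simple $\zX_{\rho,d,\F}$-module is detected by some Gelfand-Graev idempotent $\bar\ggi^\bj$, invoking the labelling $|\Irr(H_{\theta,\F})|=|\Par^\ell( d)|$ from Lemma~\ref{LIrrHTheta} together with the structural identifications of $\Cent_{\rho,d,\F}$ supplied by Lemmas~\ref{LJuly4RoCK}, \ref{LMatrix}, and~\ref{LKlLi4.1.18}.
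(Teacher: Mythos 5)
Your proposal goes off the rails at the reduction step. You correctly observe that projectivity is automatic and that the generator property is the content. You also correctly compute that $\ggw^{\bla^{(\bj)}}=\ggw^{\om_d,\bj}$, whence $\bar\ggis^{\bla^{(\bj)}}=\bar\ggis^\bj$ and $\zX_{\rho,d}\bar\ggis_{\om_d}=\bigoplus_{\bj\in J^d}\zX_{\rho,d}\bar\ggis^{\bla^{(\bj)}}$ is a direct summand of $\Ga_{\rho,d,n}$. But you then try to deduce faithfulness of $\Ga_{\rho,d,n,\F}$ from faithfulness of $\zX_{\rho,d,\F}\bar\ggis_{\om_d,\F}$, and this is a reduction to a \emph{false} statement in exactly the case you acknowledge is the hard one. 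Under the Morita superequivalence $\zX_{\rho,d,\F}\sim T^{\Zig_\ell}(n,d)_\F$ eventually established, $\zX_{\rho,d,\F}\bar\ggis_{\om_d,\F}$ corresponds to $T^{\Zig_\ell}(n,d)_\F\cdot\sum_{\bj\in J^d}\xi_{\bla^{(\bj)}}$, and $\big(\sum_\bj\xi_{\bla^{(\bj)}}\big)T^{\Zig_\ell}(n,d)_\F\big(\sum_\bj\xi_{\bla^{(\bj)}}\big)\cong W_{d,\F}^{\Zig_\ell}$ is precisely the idempotent truncation appearing in Lemma~\ref{LTTrunc}; that lemma says this truncation is \emph{not} Morita equivalent to $T^{\Zig_\ell}(n,d)_\F$ when $0<\cha\F\leq d$, so that projective module is not a generator, hence (by the symmetricity of $T^{\Zig_\ell}(n,d)_\F$, \cite[Corollary 6.7]{KM2}) not faithful. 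The hand-waved last paragraph cannot succeed because it is trying to prove something untrue; Corollary~\ref{C8.11} supplies faithfulness of $\zX_{\rho,d}\bar\ggis_{\om_d}$ only over $\k$ (equivalently over the characteristic-zero fraction field), and this genuinely does not descend to $\F$.

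The argument the paper uses is far more elementary and avoids all of this machinery. The identity $1_{\zX_{\rho,d}}=\bar\ggis_d$ is, by (\ref{EGad}), a sum of orthogonal idempotents $\bar\ggis^{\mu,\bj}$ indexed by essential colored compositions $(\mu,\bj)\in\EC^\col(d)$. Such a $(\mu,\bj)$ has at most $d$ nonzero parts (all parts positive and summing to $d$), and since each block of $\bk^n=(0\,1\cdots(\ell-1))^n$ contains every color $j\in J$, whenever $n\geq d$ one can place the $s\leq d$ colors $j_1,\dots,j_s$ as a subsequence of $\bk^n$ and thereby realize $(\mu,\bj)$ (after deleting zero parts) as $\upgamma^J_{n,d}(\bla)$ for some $\bla\in\Comp^J(n,d)$, so that $\bar\ggis^{\mu,\bj}=\bar\ggis^\bla$. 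Choosing one such $\bla$ for each $(\mu,\bj)$ gives a subset $D\subseteq\Comp^J(n,d)$ with $\sum_{\bla\in D}\bar\ggis^\bla=1_{\zX_{\rho,d}}$. Then $\bigoplus_{\bla\in D}\zX_{\rho,d}\bar\ggis^\bla\cong\zX_{\rho,d}$ is a direct summand of $\Ga_{\rho,d,n}$, so $\Ga_{\rho,d,n}$ is trivially a projective generator, and Example~\ref{ExProgenerator} finishes. No symmetric-algebra theory, no base change, and no faithfulness of the $\om_d$-truncation are needed.
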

\begin{proof}
In view of (\ref{EGad}), using the assumption $n\geq d$, we see that $\sum_{\bla\in D}\ggis^\bla=\bar \ggis_d=1_{\zX_{\rho,d}}$ for some subset $D\subseteq \Comp^J(n,d)$, so $\Ga_{\rho,d,n}$ is a projective generator for $\zX_{\rho,d}$. It remains to apply  Example~\ref{ExProgenerator}. 
\end{proof}

\begin{Lemma} \label{L8.22} 
The $\F$-algebra $\F\otimes_\k \zE_{\rho,d,n}$ and the $\bar\K$-algebra $\bar\K\otimes_\k \zE_{\rho,d,n}$ are symmetric superalgebras.
\end{Lemma}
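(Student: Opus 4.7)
The plan is to combine the two symmetricity-transfer lemmas proved earlier with base change. More precisely, by Corollary~\ref{Cor4}, the $\F$-superalgebra $\zX_{\rho,d,\F}$ and the $\bar\K$-superalgebra $\zX_{\rho,d,\bar\K}$ are symmetric, and by Lemma~\ref{LXFree} they are finite-dimensional as $\F$- (respectively $\bar\K$-)vector spaces. Moreover, each idempotent $\bar\ggis^\bla\in\zX_{\rho,d}$ is homogeneous of bidegree $(0,\0)$, hence in particular even. So one is exactly in the setting of Lemma~\ref{LSymIdTr}.

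First I will unravel base change for $\zE_{\rho,d,n}$. Using the identification~(\ref{EIdentifyE}),
\[
\zE_{\rho,d,n}=\bigoplus_{\bla,\bmu\in\Comp^J(n,d)}\bar\ggis^\bmu\zX_{\rho,d}\bar\ggis^\bla,
\]
and applying Lemma~\ref{LXFree} to each summand (each $\bar\ggis^\bmu\zX_{\rho,d}\bar\ggis^\bla$ is a direct summand of the $\k$-free module $\zX_{\rho,d}$, hence $\k$-free of finite rank, and its scalar extension commutes with $\F$- or $\bar\K$-), we get canonical identifications of graded superalgebras
\[
\F\otimes_\k\zE_{\rho,d,n}\cong \End_{\zX_{\rho,d,\F}}\!\Big(\bigoplus_{\bla\in\Comp^J(n,d)}\zX_{\rho,d,\F}\,(\bar\ggis^\bla)_\F\Big)^{\!\sop},
\]
and similarly over $\bar\K$.

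Next I apply the transfer results. By Corollary~\ref{Cor4} together with Lemma~\ref{LSymIdTr}, the endomorphism superalgebra $\End_{\zX_{\rho,d,\F}}\!\big(\bigoplus_\bla\zX_{\rho,d,\F}(\bar\ggis^\bla)_\F\big)$ is symmetric, and then its opposite is symmetric by Lemma~\ref{LSopSymm}. This gives symmetricity of $\F\otimes_\k\zE_{\rho,d,n}$, and exactly the same argument over $\bar\K$ gives symmetricity of $\bar\K\otimes_\k\zE_{\rho,d,n}$.

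There is no real obstacle here: both inputs (Corollary~\ref{Cor4} on symmetricity of $\zX_{\rho,d}$ after scalar extension, and Lemma~\ref{LSymIdTr} on symmetricity passing to endomorphism superalgebras of sums of cyclic projectives generated by even idempotents) are already in hand, and the only thing to be careful about is that base change commutes with the idempotent-truncation decomposition of $\zE_{\rho,d,n}$, which is immediate from the $\k$-freeness of each $\bar\ggis^\bmu\zX_{\rho,d}\bar\ggis^\bla$.
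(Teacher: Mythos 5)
Your proposal is correct and follows essentially the same route as the paper: base-change the endomorphism superalgebra using $\k$-freeness, then combine Corollary~\ref{Cor4} with Lemmas~\ref{LSymIdTr} and~\ref{LSopSymm}. The only cosmetic difference is that you justify the base-change isomorphism by passing through the direct-sum decomposition~(\ref{EIdentifyE}), whereas the paper invokes the isomorphism $\F\otimes_\k\End_{\zX_{\rho,d}}(\Ga_{\rho,d,n})^\sop\cong \End_{\F\otimes_\k\zX_{\rho,d}}(\F\otimes_\k\Ga_{\rho,d,n})^\sop$ directly.
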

\begin{proof}
We have 
$$
\F\otimes_\k \zE_{\rho,d,n}=\F\otimes_\k\End_{\zX_{\rho,d}}(\Ga_{\rho,d,n})^\sop\cong \End_{\F\otimes_\k\zX_{\rho,d}}(\F\otimes_\k\Ga_{\rho,d,n})^\sop.
$$
Now the claim for $\F$ follows from Corollary~\ref{Cor4} and Lemmas~\ref{LSopSymm},\,\ref{LSymIdTr}. The argument for $\bar\K$ is similar.
\end{proof}

\subsection{The isomorphism $\Upphi_{\rho,d,n}$}
Let $\zx\in \bar \ggis^\bmu\zX_{\rho,d}\bar \ggis^\bla$. In view of 
 the identification (\ref{EIdentifyE}), we consider $\zx$ as an element of $\zE_{\rho,d,n}$. Recalling from (\ref{EXiAction}) the right graded $W_d(\Zig_\ell)$-supermodule structure on $\bar \ggis^\bla\zX_{\rho,d}\bar \ggis_{\om_d}$, we have a  homomorphism of right graded $W_d(\Zig_\ell)$-supermodules
$$
\bar \ggis^\bla\zX_{\rho,d}\bar \ggis_{\om_d}\to \bar \ggis^\bmu\zX_{\rho,d}\bar \ggis_{\om_d},\ \zv\mapsto \zx\zv.
$$
Identifying $\bar \ggis^\bla\zX_{\rho,d}\bar \ggis_{\om_d}$ with $M_\bla$ and $\bar \ggis^\bmu\zX_{\rho,d}\bar \ggis_{\om_d}$ with $M_\bmu$ via the isomorphisms (\ref{EEtaBla}), we obtain an element 
$
\Upphi_{\rho,d,n}(\zx)\in\Hom_{W_d(\Zig_\ell)}(M_\bla,M_\bmu).
$
In other words, for $v\in M_\bla$, we have 
\begin{equation}\label{EPhiRhoDN}
\Upphi_{\rho,d,n}(\zx):M_\bla\to M_\bmu,\ v\mapsto \upeta_{\bmu}^{-1}(\zx\upeta_{\bla}(v)). 
\end{equation}
Recall from (\ref{EIdentifyXilaSXimu}) that $\Hom_{W_d(\Zig_\ell)}(M_\bla,M_\bmu)$ is identified with $\xi_\bmu S^{\Zig_\ell}(n,d) \xi_\bla$. So the assignments $\zx\mapsto \Upphi_{\rho,d,n}(\zx)$ for all 
$\bla,\bmu\in \Comp^J(n,d)$ and all $\zx\in \bar \ggis^\bmu\zX_{\rho,d}\bar \ggis^\bla$ extend uniquely to a (bidegree $(0,\0)$)  homomorphism of graded $\k$-superalgebras  
\begin{equation}\label{EPhi}
\Upphi_{\rho,d,n}:\zE_{\rho,d,n}\to S^{\Zig_\ell}(n,d)
\end{equation}
with 
\begin{equation}\label{EPhiId}
\Upphi_{\rho,d,n}(\bar\ggis^{\bla})= \xi_\bla\qquad(\bla\in\Comp^J(n,d)).
\end{equation}

\begin{Lemma} \label{C8.19}
The homomorphism $\Upphi_{\rho,d,n}$ is injective.
\end{Lemma}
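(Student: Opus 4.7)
The plan is to derive the injectivity of $\Upphi_{\rho,d,n}$ directly from the faithfulness of $\zX_{\rho,d}\bar\ggis_{\om_d}$ as a $\zX_{\rho,d}$-module. This faithfulness is available: Corollary~\ref{C8.11} applies because our running assumption in Section~\ref{SRoCKSchur} guarantees that $\k$ is a DVR whose field of fractions $\K$ has characteristic $0$, and $\zX_{\rho,d}$ coincides with $X_{\rho,d}$ as an algebra (only the grading differs via (\ref{EReGradingX})), so $\zX_{\rho,d}\bar\ggis_{\om_d}$ and $X_{\rho,d}\bar\ggi_{\om_d}$ agree as modules.

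Concretely, I would first unwind the definition (\ref{EPhiRhoDN}). For a homogeneous $\zx\in\bar\ggis^\bmu\zX_{\rho,d}\bar\ggis^\bla$, the isomorphisms $\upeta_\bla:M_\bla\iso \bar\ggis^\bla\zX_{\rho,d}\bar\ggis_{\om_d}$ and $\upeta_\bmu:M_\bmu\iso \bar\ggis^\bmu\zX_{\rho,d}\bar\ggis_{\om_d}$ of Theorem~\ref{TIdentifyM} conjugate the map $\Upphi_{\rho,d,n}(\zx):M_\bla\to M_\bmu$ to left multiplication by $\zx$ on $\bar\ggis^\bla\zX_{\rho,d}\bar\ggis_{\om_d}$. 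Hence $\Upphi_{\rho,d,n}(\zx)=0$ is equivalent to $\zx\cdot\bar\ggis^\bla\zX_{\rho,d}\bar\ggis_{\om_d}=0$; since $\zx=\zx\bar\ggis^\bla$, this in turn is equivalent to $\zx\cdot\zX_{\rho,d}\bar\ggis_{\om_d}=0$, i.e.\ $\zx$ annihilates the left $\zX_{\rho,d}$-module $\zX_{\rho,d}\bar\ggis_{\om_d}$.

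Next, for a general homogeneous element $\Xi=\sum_{\bla,\bmu}\zx_{\bmu,\bla}\in\zE_{\rho,d,n}$ with $\zx_{\bmu,\bla}\in\bar\ggis^\bmu\zX_{\rho,d}\bar\ggis^\bla$ in the decomposition (\ref{EIdentifyE}), I observe that under $\Upphi_{\rho,d,n}$ the components $\zx_{\bmu,\bla}$ land in distinct summands $\xi_\bmu S^{\Zig_\ell}(n,d)\xi_\bla$ of $S^{\Zig_\ell}(n,d)$ (using $\Upphi_{\rho,d,n}(\bar\ggis^\bla)=\xi_\bla$ from (\ref{EPhiId}) together with the orthogonal idempotent decomposition (\ref{EXiLa})). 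Therefore $\Upphi_{\rho,d,n}(\Xi)=0$ forces $\Upphi_{\rho,d,n}(\zx_{\bmu,\bla})=0$ for every pair $(\bla,\bmu)$, and by the previous paragraph each $\zx_{\bmu,\bla}$ annihilates $\zX_{\rho,d}\bar\ggis_{\om_d}$. Faithfulness (Corollary~\ref{C8.11}) then gives $\zx_{\bmu,\bla}=0$ for all $\bla,\bmu$, whence $\Xi=0$.

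There is essentially no hard step: the argument is a direct bookkeeping consequence of Theorem~\ref{TIdentifyM} plus Corollary~\ref{C8.11}. The only subtlety worth double-checking is that faithfulness was proved in Corollary~\ref{C8.11} only for $\k$ with characteristic-zero field of fractions, which is exactly the hypothesis in force in Section~\ref{SRoCKSchur}; over a field $\F=\k/\m$ of positive characteristic one could not argue this way, but we do not need to, since the injectivity over $\k$ automatically restricts to the image modules over the reductions.
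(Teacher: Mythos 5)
Your proof is correct and follows essentially the same route as the paper's: in both cases the injectivity of $\Upphi_{\rho,d,n}$ is reduced, via the identification in (\ref{EPhiRhoDN}) and the orthogonal decomposition (\ref{EIdentifyE}), to the statement that no nonzero $\zx\in\bar\ggis^\bmu\zX_{\rho,d}\bar\ggis^\bla$ can annihilate $\zX_{\rho,d}\bar\ggis_{\om_d}$, which is exactly the faithfulness given by Corollary~\ref{C8.11}. You are merely slightly more explicit about the component-by-component reduction, which the paper treats as immediate.
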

\begin{proof}
Otherwise there exist $\bla,\bmu\in \Comp^J(n,d)$ and a non-zero $\zx\in \bar \ggis^\bmu\zX_{\rho,d}\bar \ggis^\bla$ such that $\zx\bar \ggis^\bla\zX_{\rho,d}\bar \ggis_{\om_d}=0$. Equivalently, $\zx\zX_{\rho,d}\bar \ggis_{\om_d}=0$. But this contradicts Corollary~\ref{C8.11}. 
\end{proof}

\begin{Lemma} \label{L8.20}
The image $\Upphi_{\rho,d,n}(\zE_{\rho,d,n})$ contains the degree zero component $S^{\Zig_\ell}(n,d)^0$. 
\end{Lemma}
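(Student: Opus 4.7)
The plan is to invoke Theorem~\ref{TGath2} after a suitable lift from $\zX_{\rho,d}$ to $\zC_d$. Using the identifications (\ref{EIdentifyE}) and (\ref{EIdentifyXilaSXimu}) together with Theorem~\ref{TIdentifyM}, it suffices to show: for every $\bla,\bmu\in\Comp^J(n,d)$ and every bidegree-$(0,\0)$ element $\bar\zy\in\bar\ggis^\bmu\zX_{\rho,d}\bar\ggis(\bla)$ that is $\Si_\bla$-invariant (for the right action via $\Xi_{\rho,d}$ of Corollary~\ref{Cor1}), there exists $\bar\zx\in\bar\ggis^\bmu\zX_{\rho,d}\bar\ggis^\bla$ with $\bar\zx\cdot\bar\lgathz_\bla=\bar\zy$. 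Indeed, any $\xi\in\Hom_{W_d(\Zig_\ell)}(M_\bla,M_\bmu)^0$ corresponds via Theorem~\ref{TIdentifyM} to such a $\bar\zy=\upeta_\bmu(\xi(m_\bla))$: the idempotent identity $\bar\zy\cdot\bar\ggis(\bla)=\bar\zy$ reflects $m_\bla\ze^\bla=m_\bla$, and the $\Si_\bla$-invariance reflects $m_\bla\cdot w=m_\bla$ for $w\in\Si_\bla$. Once $\bar\zx$ is produced, the equality $\Upphi_{\rho,d,n}(\bar\zx)=\xi$ is immediate from the definition (\ref{EPhiRhoDN}).

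The mechanism is to lift $\bar\zy$ via the degree-preserving graded superalgebra surjection $\Om_{\rho,d}:\zC_d\onto\zX_{\rho,d}$ of (\ref{EOmCX'}) to an element $\zy\in\ggis^\bmu\zC_d^0\ggis(\bla)$, and then apply Theorem~\ref{TGath2}: provided $\zy$ can be chosen $\Si_\bla$-invariant, Theorem~\ref{TGath2} yields $\zy=\zc\cdot\lgathz_\bla$ for some $\zc\in\ggis^\bmu\zC_d^0\ggis^\bla$, whence $\bar\zx:=\Om_{\rho,d}(\zc)\in\bar\ggis^\bmu\zX_{\rho,d}\bar\ggis^\bla$ satisfies $\bar\zx\cdot\bar\lgathz_\bla=\Om_{\rho,d}(\zy)=\bar\zy$, completing the argument.

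The main obstacle is producing a $\Si_\bla$-invariant lift of $\bar\zy$ over the DVR $\k$: in characteristic zero one simply averages over $\Si_\bla$, but $|\Si_\bla|$ need not be invertible modulo the maximal ideal $\m$. The cleanest way forward is first to establish the surjectivity of $\Upphi_{\rho,d,n}^0$ after base-change to the algebraic closure $\bar\K$ of the fraction field of $\k$ (where averaging is permitted), and then descend to $\k$ using that both $\zE_{\rho,d,n}^0$ and $S^{\Zig_\ell}(n,d)^0$ are finitely generated free $\k$-modules (by Lemma~\ref{LXFree} and the construction of $S^{\Zig_\ell}(n,d)$) together with the injectivity of $\Upphi_{\rho,d,n}$ from Lemma~\ref{C8.19}. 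The final gap, namely saturation of the image inside $S^{\Zig_\ell}(n,d)^0$, should close using the symmetric superalgebra structures provided by Corollary~\ref{Cor4} and Lemma~\ref{L8.22}, or alternatively by a direct analysis of the degree-zero kernel of $\Om_{\rho,d}$ relying on the explicit structure of $\lgathz_\bla$ developed in \cite{KIS}.
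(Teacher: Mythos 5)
You correctly identify Theorem~\ref{TGath2} as the engine, but your plan to \emph{lift} the element $\bar\zy\in\bar\ggis^\bmu\zX_{\rho,d}\bar\ggis(\bla)$ back up to $\zC_d$ while preserving $\Si_\bla$-invariance runs into the obstacle you yourself name — $|\Si_\bla|$ need not be invertible in $\k$ — and the workaround you sketch (base change to $\bar\K$, then descend) is left explicitly incomplete: the ``saturation'' step is exactly the missing content, and neither Corollary~\ref{Cor4} nor Lemma~\ref{L8.22} nor a vague appeal to the structure of $\lgathz_\bla$ closes it. (Lemma~\ref{L8.22} is in fact the ingredient of the symmetricity argument used later, in Theorem~\ref{TEIsoT}, to bound the image from \emph{above}; it does not provide the integral surjectivity in degree zero needed here.)

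The idea you are missing is that one should not try to lift $\bar\zy$ from $\zX_{\rho,d}$ at all. Instead, start from $\xi\in\Hom_{W_d(\Zig_\ell)}(M_\bla,M_\bmu)^0$ and use that $M_\bmu$ is cyclic on $m_\bmu$ to write $\xi(m_\bla)=m_\bmu h$ for a degree-zero $h\in W_d(\Zig_\ell)$. The invariance relations $m_\bmu h\,\ze^\bla=m_\bmu h$ and $m_\bmu h\,w=m_\bmu h$ (for $w\in\Si_\bla$) are forced on the element $m_\bmu h$ because $\xi$ is a module map and $m_\bla$ has these properties. Now apply the homomorphism $\upzeta_\bmu:M_\bmu\to\ggis^\bmu\zC_d\ggis_{\om_d}$ of Lemma~\ref{LUptheta} (note: this lands in $\zC_d$, \emph{not} in $\zX_{\rho,d}$). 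The resulting element $\zv=\upzeta_\bmu(m_\bmu h)=\lgathz_\bmu\Theta_d(h)$ lives in $\zC_d$ already and, since $\upzeta_\bmu$ is a $W_d(\Zig_\ell)$-module homomorphism for the $\Theta_d$-action (\ref{EThetaAction}), $\zv$ automatically satisfies $\zv\,\Theta_d(\ze^\bla)=\zv$ and $\zv\,\Theta_d(w)=\zv$ for $w\in\Si_\bla$. So Theorem~\ref{TGath2} applies directly in $\zC_d$ over $\k$, yielding $\zx\in\ggis^\bmu\zC_d^0\ggis^\bla$ with $\zv=\zx\lgathz_\bla$; applying $\Om_{\rho,d}$ gives $\bar\lgathz_\bmu\Xi_{\rho,d}(h)=\bar\zx\bar\lgathz_\bla$, and unwinding (\ref{EPhiRhoDN}) shows $\Upphi_{\rho,d,n}(\bar\zx)=\xi$. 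No averaging, no base change, and no saturation argument are needed.

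In short: your proof has a genuine gap (the $\Si_\bla$-invariant lift over the DVR, which you flag but do not produce), and the fix is a reparametrization — replace ``lift $\bar\zy$'' by ``express $\xi(m_\bla)$ as $m_\bmu h$ and push forward with $\upzeta_\bmu$'' — which gets you into $\zC_d$ with the right invariance for free.
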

\begin{proof}
Let $\bla,\bmu\in \Comp^J(n,d)$ and take an arbitrary  degree $0$ element 
$$\xi\in \xi_\bmu S^{\Zig_\ell}(n,d)^0\xi_\bla=\Hom_{W_d(\Zig_\ell)}(M_\bla,M_\bmu)^0.
$$ 
Then we can write $\xi(m_\bla)=m_\bmu h$ for some degree $0$ element $h\in W_d(\Zig_\ell)$. Since $\xi(m_\bla)\ze^\bla=\xi(m_\bla \ze^\bla)=\xi(m_\bla)$ and $\xi(m_\bla)w=\xi(m_\bla w)=\xi(m_\bla)$ for all $w\in\Si_\bla$, we have $m_\bmu h\ze^\bla=m_\bmu h$ and $m_\bmu h w=m_\bmu h$ for all $w\in\Si_\bla$. 
From (\ref{EThetaAction}) and Lemma~\ref{LUptheta}, the degree $0$ element 
$$
\zv:=\upzeta_\bmu(m_\bmu h)= \lgathz_\bmu\Theta_d(h)\in \ggis^{\la,\bi}\zC_{d}\ggis_{\om_d}
$$
satisfies $\zv=\zv\Theta_d(\ze^\bla)$ and $\zv=\zv\Theta_d(w)$ for all $w\in\Si_\bla$. Note that $\Theta_d(\ze^\bla)=\ggis(\bla)$. So we can apply Theorem~\ref{TGath2} to deduce the existstence of a (degree $0$) element $\zx\in \ggis^\bmu\zC_d\ggis^\bla$ such that $\zv=\zx\lgathz_\bla$. 
Thus $\lgathz_\bmu\Theta_d(h)=\zx\lgathz_\bla$. 
Applying $\Om_{\rho,d}$, we get 
$
\bar\lgathz_\bmu\Xi_d(h)=\bar \zx\bar\lgathz_\bla.
$
Therefore 
$$
\Upphi_{\rho,d,n}(\bar\zx)(m_\bla)=\upeta_\bmu^{-1}(\bar\zx\upeta_\bla(m_\bla))=\upeta_\bmu^{-1}(\bar\zx \bar\lgathz_\bla)=\upeta_\bmu^{-1}(\bar\lgathz_\bmu\Xi_d(h))
=\upeta_\bmu^{-1}(\bar\lgathz_\bmu)h=m_\bmu h.
$$
So $\Upphi_{\rho,d,n}(\bar\zx)(m_\bla)=\xi(m_\bla)$, hence 
$\Upphi_{\rho,d,n}(\bar\zx)=\xi$, i.e. $\xi\in \Upphi_{\rho,d,n}(\zE_{\rho,d,n})$. 
\end{proof}

Recall the map $\tti^\bla$ from (\ref{Etti}). 

\begin{Lemma} \label{L8.21} 
For any $\bla\in\Comp^J(n-1,d-1)$ and $\zx\in \Zig_\ell$, we have $\tti^\bla(\zx)\in\Upphi_{\rho,d,n}(\zE_{\rho,d,n})$. 
\end{Lemma}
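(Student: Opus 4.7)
Write $\zx\in\ze^{[j]}\Zig_\ell\ze^{[k]}$, so that $\tti^\bla(\zx)$ is the unique $W_d(\Zig_\ell)$-module map $M_{\bla_{\{k\}}}\to M_{\bla_{\{j\}}}$ sending $m_{\bla_{\{k\}}}\mapsto m_{\bla_{\{j\}}}\zx_1$ (and killing the other summands $M_\bmu$). The plan is to produce $\zy\in\bar\ggis^{\bla_{\{j\}}}\zX_{\rho,d}\bar\ggis^{\bla_{\{k\}}}$ with $\Upphi_{\rho,d,n}(\zy)=\tti^\bla(\zx)$. Using the defining formula (\ref{EPhiRhoDN}) together with the right $W_d(\Zig_\ell)$-linearity of the isomorphisms $\upeta_\bmu$ of Theorem~\ref{TIdentifyM}, this reduces to finding $\zy$ satisfying
$$
\zy\,\bar\lgathz_{\bla_{\{k\}}}\;=\;\bar\lgathz_{\bla_{\{j\}}}\,\Xi_{\rho,d}(\zx_1)
$$
inside $\bar\ggis^{\bla_{\{j\}}}\zX_{\rho,d}\bar\ggis_{\om_d}$; the vanishing $\zy\,\bar\lgathz_\bmu=0$ for $\bmu\ne\bla_{\{k\}}$ is automatic from $\zy\in\bar\ggis^{\bla_{\{j\}}}\zX_{\rho,d}\bar\ggis^{\bla_{\{k\}}}$. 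Compared with Lemma~\ref{L8.20}, the new feature is that $\Xi_{\rho,d}(\zx_1)$ and $\zy$ may have positive degree $\deg(\zx)\in\{0,2,4\}$.

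Mirroring the pattern of Lemma~\ref{L8.20}, I would work at the level of $\zC_d$ and push down via $\Om_{\rho,d}$ at the end, setting $\zy:=\bar\zc$ for a suitable $\zc\in\ggis^{\bla_{\{j\}}}\zC_d\ggis^{\bla_{\{k\}}}$ obtained from
$$
\zv\;:=\;\lgathz_{\bla_{\{j\}}}\,\Theta_d(\zx_1)\,\ggis(\bla_{\{k\}})\;\in\;\ggis^{\bla_{\{j\}}}\zC_d\,\ggis(\bla_{\{k\}}).
$$
Two preliminary facts about $\zv$ should be checked. First, using $\zx=\ze^{[j]}\zx\ze^{[k]}$ and $m_{\bla_{\{j\}}}\ze^{\bla_{\{j\}}}=m_{\bla_{\{j\}}}$ one verifies the identity $m_{\bla_{\{j\}}}\zx_1=m_{\bla_{\{j\}}}\zx_1\ze^{\bla_{\{k\}}}$ in $M_{\bla_{\{j\}}}$; translated through $\upeta_{\bla_{\{j\}}}$ this shows $\Om_{\rho,d}(\zv)=\bar\lgathz_{\bla_{\{j\}}}\Xi_{\rho,d}(\zx_1)$, so a factorisation $\zv=\zc\,\lgathz_{\bla_{\{k\}}}$ will push down to the required identity for $\zy=\bar\zc$. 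Second, $\zv$ is invariant under the right action of $\Si_{\bla_{\{k\}}}\subseteq\Si_d\subseteq W_d(\Zig_\ell)$ (acting via $\Theta_d$): the subgroups $\Si_{\bla_{\{j\}}}$ and $\Si_{\bla_{\{k\}}}$ coincide inside $\Si_d$ (both equal the copy of $\Si_\bla$ fixing position $1$ and permuting $\{2,\dots,d\}$), and for such $w$ the wreath relation gives $w\zx_1=\zx_1 w$, the Gelfand--Graev word $\ggw^{\bla_{\{k\}}}$ is pointwise $w$-fixed so $\ggis(\bla_{\{k\}})$ and $w$ commute, and $\lgathz_{\bla_{\{j\}}}w=\lgathz_{\bla_{\{j\}}}$ by Theorem~\ref{TGath}(iii).

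The central step is then the factorisation $\zv=\zc\,\lgathz_{\bla_{\{k\}}}$. In the degree-$0$ analogue used in the proof of Lemma~\ref{L8.20}, this is exactly the content of Theorem~\ref{TGath2}. The main obstacle here is that $\zv$ may have positive degree, so what I really need is a positive-degree version of Theorem~\ref{TGath2}: any $\Si_\la$-invariant element of $\ggis^{\mu,\bj}\zC_d\ggis(\la,\bi)$, of arbitrary degree, factors as $\zc\,\lgathz_{\la,\bi}$ with $\zc\in\ggis^{\mu,\bj}\zC_d\ggis^{\la,\bi}$. I expect this generalisation to follow by revisiting the Gelfand--Graev/Frobenius reciprocity arguments of \cite{KIS} that prove Theorem~\ref{TGath2}, since the degree hypothesis there is used only to pin down the rank-one statement of Theorem~\ref{TGath}(ii) rather than through any intrinsically degree-sensitive mechanism.

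Should this extension prove more delicate than expected, the fallback is a case-by-case construction over the basis $\{\ze^{[i]},\zu,\za^{[i,i\pm 1]},\zc^{[i]}\}$ of $\Zig_\ell$. The case $\zx=\ze^{[j]}$ is immediate with $\zy=\bar\ggis^{\bla_{\{j\}}}$ via (\ref{EPhiId}). For the degree-$2$ generators $\zu$ and $\za^{[i,i\pm 1]}$ one uses the explicit formulas of Theorem~\ref{THCIsoZ} to express $\Xi_{\rho,d}(\zx_1)$ in terms of $\bar{\dot\zu}_1$ and $\bar{\dot\za}^{[i,i\pm 1]}_1$, and then assembles $\zy$ as a product of these with degree-$0$ endomorphisms supplied by Lemma~\ref{L8.20} that implement the passage between the relevant idempotents $\bar\ggis^{\bla_{\{j\}}}$ and $\bar\ggis^{\bla_{\{k\}}}$; the degree-$4$ cases $\zc^{[0]}=\zu^2$ and $\zc^{[i]}=\za^{[i,i-1]}\za^{[i-1,i]}$ follow by composing two such degree-$2$ factorisations inside $\Upphi_{\rho,d,n}(\zE_{\rho,d,n})$.
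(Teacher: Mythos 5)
Your reduction of the lemma to the identity $\zy\,\bar\lgathz_{\bla_{\{k\}}}=\bar\lgathz_{\bla_{\{j\}}}\,\Xi_{\rho,d}(\zx_1)$ with $\zy\in\bar\ggis^{\bla_{\{j\}}}\zX_{\rho,d}\bar\ggis^{\bla_{\{k\}}}$ is correct, as is the identification of $\zv:=\lgathz_{\bla_{\{j\}}}\Theta_d(\zx_1)\ggis(\bla_{\{k\}})$ as the natural lift. But the positive-degree extension of Theorem~\ref{TGath2} that your main route rests on is \emph{false} over the DVR $\k$ in the relevant range, and the paper itself shows why: if the factorization $\zv=\zc\,\lgathz_{\la,\bi}$ held for $\Si_\la$-invariant $\zv$ of arbitrary degree, then repeating the argument of Lemma~\ref{L8.20} degree by degree would show that $\Upphi_{\rho,d,n}$ is surjective onto all of $S^{\Zig_\ell}(n,d)$. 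That contradicts Theorem~\ref{TEIsoT}, since $T^{\Zig_\ell}(n,d)\subsetneq S^{\Zig_\ell}(n,d)$ whenever $d\geq p=\cha(\k/\m)$. So the degree-zero hypothesis in Theorem~\ref{TGath2} is not merely a convenience of the proof in \cite{KIS}; it is essential, and your "I expect this generalisation to follow" is precisely the place the proof breaks.

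The paper's actual argument sidesteps Theorem~\ref{TGath2} entirely. The key observation, read off from the explicit formulas in Theorem~\ref{THCIsoZ}, is that $\Theta_d(\zx_1)$ already lies in the image of $\zC_1\hookrightarrow\zC_{\om_d}$, i.e.\ $\Theta_d(\zx_1)=\zc_1=\zc\otimes\zf_1^{\otimes(d-1)}$ for some $\zc\in\ggis^j\zC_1\ggis^k$. Combined with the tensor decompositions $\ggis^{\bla_{\{i\}}}=\ggis^i\otimes\ggis^\bla$ and $\lgathz_{\bla_{\{i\}}}=\ggis^i\otimes\lgathz_\bla$ inside $\zC_{(1,d-1)}=\zC_1\otimes\zC_{d-1}$ (via (\ref{EGath})), one gets the desired factorization $\zc_1\lgathz_{\bla_{\{k\}}}=\lgathz_{\bla_{\{j\}}}\zc_1$ by a direct one-line calculation: there is no abstract factorization theorem to invoke, because $\Theta_d(\zx_1)$ acts only in the first tensor slot and so visibly commutes past $\lgathz_\bla$ sitting in the remaining $d-1$ slots. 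Your fallback ("assemble $\zy$ as a product of $\bar{\dot\zu}_1$, $\bar{\dot\za}^{[i,i\pm1]}_1$ with degree-zero endomorphisms") is gesturing towards this, but it is not carried out, and as stated it does not explain why the resulting product lands in $\bar\ggis^{\bla_{\{j\}}}\zX_{\rho,d}\bar\ggis^{\bla_{\{k\}}}$ with the required action; the parabolic tensor structure is exactly the ingredient that makes that bookkeeping work.
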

\begin{proof}
We may assume that $\zx\in\ze^{[j]}\Zig_\ell\ze^{[k]}$. Recalling the notation (\ref{EInsertion}), we have the element $\zx_1=\zx\otimes 1_{\Zig_\ell}^{\otimes (d-1)}\in \Zig_\ell^{\otimes d}\subseteq W_d(\Zig_\ell)$. 
Recall the parabolic subalgebra $\zC_{\om_d}=\zC_1^{\otimes d}\subseteq \ggis_{\om_d}\zC_d\ggis_{\om_d}$ from (\ref{ECParz}). We have the embedding $\zC_1\to  \zC_{\om_d},\ \zc\mapsto \zc_1:=\zc\otimes \zf_1^{\otimes (d-1)}.
$ 

Recall the homomorphism 
$\Theta_d=\zF_d\circ \iota^{\Zig_\ell}: W_d(\Zig_\ell)\to \ggis_{\om_d}\zC_{d}\ggis_{\om_d}$ from (\ref{EThetad}).
From the explicit description of $\zF_d$ in Theorem~\ref{THCIsoZ}, it follows that $\Theta_d(\zx_1)=\zc_1$ for some $\zc\in\ggis^j\zC_1\ggis^k$. 

Using the notation $\bla_{\{i\}}$ from \S\ref{SSSpecialElements}, note using 
(\ref{EGath}) for the second equality that for any $i\in J$ we have  
\begin{align*}
&\ggis^{\bla_{\{i\}}}=\ggis^i\otimes \ggis^\bla\in \zC_1\otimes \zC_{d-1}=\zC_{(1,d-1)}\subseteq \zC_d,
\\
&\lgathz_{\bla_{\{i\}}}=\ggis^i\otimes \lgathz_\bla\in \zC_1\otimes \zC_{d-1}=\zC_{(1,d-1)}\subseteq \zC_d.
\end{align*}
It follows that $\zc_1\lgathz_{\bla_{\{k\}}}=\lgathz_{\bla_{\{j\}}}\zc_1$, and that 
$
\zb:=\ggis^{\bla_{\{j\}}}\zc_1=\zc_1\ggis^{\bla_{\{k\}}}$ belongs to $
\ggis^{\bla_{\{j\}}}\zC_d\ggis^{\bla_{\{k\}}}.
$ 
Moreover, since $\ggis^{\bla_{\{k\}}}\lgathz_{\bla_{\{k\}}}=\lgathz_{\bla_{\{k\}}}$ by Theorem~\ref{TGath}(i), we have 
$$
\zb\lgathz_{\bla_{\{k\}}}=
\zc_1\ggis^{\bla_{\{k\}}}\lgathz_{\bla_{\{k\}}}=\zc_1\lgathz_{\bla_{\{k\}}}=\lgathz_{\bla_{\{j\}}}\zc_1=
\lgathz_{\bla_{\{j\}}}\Theta_d(\zx_1).
$$
Applying the homomorphism $\Om_{\rho,d}$ and recalling (\ref{EXid}), we get 
$$
\bar\zb\bar\lgathz_{\bla_{\{k\}}}=
\lgathz_{\bla_{\{j\}}}\Om_{\rho,d}(\Theta_d(\zx_1))=
\bar\lgathz_{\bla_{\{j\}}}\Xi_{\rho,d}(\zx_1).
$$
Now, recalling (\ref{EPhiRhoDN}), we have 
\begin{align*}
\Upphi_{\rho,d,n}(\bar\zb)(m_{\bla_{\{k\}}})&=\upeta_{\bla_{\{j\}}}^{-1}(\bar\zb\upeta_{\bla_{\{k\}}}(m_{\bla_{\{k\}}}))
=\upeta_{\bla_{\{j\}}}^{-1}(\bar\zb\bar\lgathz_{\bla_{\{k\}}})
=\upeta_{\bla_{\{j\}}}^{-1}(\bar\lgathz_{\bla_{\{j\}}}\Xi_{\rho,d}(\zx_1))
\\
&=m_{\bla_{\{j\}}}\zx_1
=\tti^\bla(m_{\bla_{\{j\}}}). 
\end{align*}
So $\Upphi_{\rho,d,n}(\bar\zb)=\tti^\bla$. 
\end{proof}

\begin{Theorem} \label{TEIsoT} 
Let $n\geq d$. Then the homomorphism $\Upphi_{\rho,d,n}:\zE_{\rho,d,n}\to S^{\Zig_\ell}(n,d)$ has image $T^{\Zig_\ell}(n,d)$ and defines an isomorphism of graded $\k$-superalgebras $\zE_{\rho,d,n}\cong T^{\Zig_\ell}(n,d)$. 
\end{Theorem}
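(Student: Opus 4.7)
The plan is to combine the injectivity statement (Lemma~\ref{C8.19}) with the preparatory Lemmas~\ref{L8.20}, \ref{L8.21}, \ref{L8.22}, Corollary~\ref{CGen2}, and the maximal-symmetricity Theorem~\ref{TMaxSymm} to squeeze the image of $\Upphi_{\rho,d,n}$ between $T^{\Zig_\ell}(n,d)$ and itself. Set $C:=\Upphi_{\rho,d,n}(\zE_{\rho,d,n})\subseteq S^{\Zig_\ell}(n,d)$, which is a graded $\k$-subsuperalgebra. Since $\Upphi_{\rho,d,n}$ is a $(0,\bar 0)$-homogeneous homomorphism of graded superalgebras, it suffices to prove $C=T^{\Zig_\ell}(n,d)$; the isomorphism $\zE_{\rho,d,n}\cong T^{\Zig_\ell}(n,d)$ then follows from Lemma~\ref{C8.19}.

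First I would establish the inclusion $T^{\Zig_\ell}(n,d)\subseteq C$. By Corollary~\ref{CGen2}, which applies under our hypothesis $n\geq d$, the subalgebra $T^{\Zig_\ell}(n,d)$ of $S^{\Zig_\ell}(n,d)$ is generated by the degree zero component $S^{\Zig_\ell}(n,d)^0$ together with the elements $\tti^\bla(\zx)$ for $\zx\in \Zig_\ell$ and $\bla\in\Comp^J(n-1,d-1)$. Lemma~\ref{L8.20} gives $S^{\Zig_\ell}(n,d)^0\subseteq C$, and Lemma~\ref{L8.21} gives $\tti^\bla(\zx)\in C$ for every such pair $(\bla,\zx)$. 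Since $C$ is a subalgebra, these generators combine to yield $T^{\Zig_\ell}(n,d)\subseteq C$.

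The main obstacle will be the reverse containment $C\subseteq T^{\Zig_\ell}(n,d)$, for which I would invoke maximality in the sense of Theorem~\ref{TMaxSymm}. The injective homomorphism $\Upphi_{\rho,d,n}\colon\zE_{\rho,d,n}\to S^{\Zig_\ell}(n,d)$ restricts to an isomorphism $\zE_{\rho,d,n}\iso C$ of graded $\k$-superalgebras. Since $\k$ is a DVR, the only maximal ideal is $\m$, and $\k/\m=\F$. Because $\zE_{\rho,d,n}=\End_{\zX_{\rho,d}}(\Ga_{\rho,d,n})^\sop$ is free of finite rank over $\k$ (by Lemma~\ref{LXFree} applied to the pieces $\bar\ggis^\bmu\zX_{\rho,d}\bar\ggis^\bla$), the isomorphism $\zE_{\rho,d,n}\iso C$ survives base change to give $\F\otimes_\k C\cong \F\otimes_\k\zE_{\rho,d,n}$ as $\F$-superalgebras. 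By Lemma~\ref{L8.22}, the latter is a symmetric $\F$-superalgebra, so the hypothesis of Theorem~\ref{TMaxSymm} is satisfied for $C$: we have $T^{\Zig_\ell}(n,d)\subseteq C\subseteq S^{\Zig_\ell}(n,d)$ with $(\k/\m)\otimes_\k C$ symmetric for every maximal ideal $\m$ of $\k$. The theorem then forces $C=T^{\Zig_\ell}(n,d)$.

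Combining the two inclusions gives $\Upphi_{\rho,d,n}(\zE_{\rho,d,n})=T^{\Zig_\ell}(n,d)$, and together with the injectivity from Lemma~\ref{C8.19} this produces the desired isomorphism of graded $\k$-superalgebras $\zE_{\rho,d,n}\cong T^{\Zig_\ell}(n,d)$. The only non-routine point in the argument is the verification that Theorem~\ref{TMaxSymm} applies to $C$ itself (rather than merely to $\zE_{\rho,d,n}$), which is why the free-ness of $\zE_{\rho,d,n}$ as a $\k$-module and the resulting compatibility of the isomorphism $\zE_{\rho,d,n}\iso C$ with base change to $\F$ is important.
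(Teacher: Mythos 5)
Your proof is correct and follows essentially the same route as the paper: injectivity from Lemma~\ref{C8.19}, the containment $T^{\Zig_\ell}(n,d)\subseteq \Upphi_{\rho,d,n}(\zE_{\rho,d,n})$ from Corollary~\ref{CGen2} together with Lemmas~\ref{L8.20} and \ref{L8.21}, and then maximal symmetricity (Theorem~\ref{TMaxSymm} with Lemma~\ref{L8.22}) to pin the image down. The only difference is that you spell out the freeness-over-$\k$ justification for the base-change step $\F\otimes_\k C\cong\F\otimes_\k\zE_{\rho,d,n}$, which the paper leaves implicit.
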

\begin{proof}
By Lemma~\ref{C8.19}, $\Upphi_{\rho,d,n}$ is an injective graded superalgebra homomorphism. By Corollary~\ref{CGen2} and Lemmas~\ref{L8.20},\,\ref{L8.21}, we have that the subalgebra  $T^{\Zig_\ell}(n,d)$ is contained in $\Upphi_{\rho,d,n}(\zE_{\rho,d,n})$. By Lemma~\ref{L8.22}, the $\F$-superalgebra $\F\otimes_\k\Upphi_{\rho,d,n}(\zE_{\rho,d,n})\cong 
\F\otimes_\k\zE_{\rho,d,n}$ is a symmetric superalgebra. Since $\F$ is the only field of the form $\k/\m$ for a maximal ideal $\m\subset \k$, Theorem~\ref{TMaxSymm} implies $\Upphi_{\rho,d,n}(\zE_{\rho,d,n})=T^{\Zig_\ell}(n,d)$. 
\end{proof}

\subsection{Morita equivalences}
Recall that throughout Section~\ref{SRoCKSchur} we are assuming that $H_\theta$ is a RoCK block with $\rho=\rho(\theta)$ and $d=d(\theta)$.

\begin{Theorem} \label{TMainText} 
If $n\geq d$ then the graded $\k$-superalgebras $H_\theta$ and $T^{\Zig_\ell}(n,d)$ are graded Morita superequivalent. 
\end{Theorem}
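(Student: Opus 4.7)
The strategy is to concatenate four graded Morita superequivalences:
\[
H_\theta\ \sim_{\gsM}\ X_{\rho,d}\ \sim_{\gsM}\ \zX_{\rho,d}\ \sim_{\gsM}\ \zE_{\rho,d,n}\ \cong\ T^{\Zig_\ell}(n,d).
\]
The last isomorphism is Theorem~\ref{TEIsoT}. The penultimate link is Lemma~\ref{LEXMor}, and this is the unique step at which the hypothesis $n\geq d$ is used: under this assumption $\sum_{\bla\in\Comp^J(n,d)}\bar\ggis^\bla=\bar\ggis_d=1_{\zX_{\rho,d}}$, so that $\Ga_{\rho,d,n}$ is a projective generator for $\zX_{\rho,d}$. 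The middle link $X_{\rho,d}\sim_{\gsM}\zX_{\rho,d}$ is purely formal, produced by the regrading mechanism of \S\ref{SSRegr} applied to the decomposition (\ref{EReGradingX}) that defines $\zX_{\rho,d}$ from $X_{\rho,d}$.

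What remains is the opening link $H_\theta\sim_{\gsM} X_{\rho,d}$, which I will realize through an explicit idempotent truncation. Take the bidegree $(0,\0)$ even idempotent
\[
\eps\ :=\ 1_{\bi_\rho}\,\bar\ggi_d\ \in\ H_\theta,
\]
where $1_{\bi_\rho}\in H_\rho$ is the rank-one idempotent of Lemma~\ref{LMatrix}, viewed inside $H_\theta$ through the embedding (\ref{ECoreSubalg}) together with Lemma~\ref{LKlLi4.1.18}, and $\bar\ggi_d=\sum_{(\mu,\bj)\in\EC^\col(d)}\bar\ggi^{\mu,\bj}\in\Cent_{\rho,d}$ is the full Gelfand-Graev sum of (\ref{EGad}). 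The two factors supercommute inside $H_{\rho,d\de}$ because $\bar\ggi_d$ lies in the supercentralizer of $H_\rho$ there. Combining Lemmas~\ref{LKlLi4.1.18}, \ref{LJuly4RoCK} and~\ref{LMatrix} then yields an isomorphism of graded superalgebras
\[
\eps H_\theta\eps\ \cong\ \big(1_{\bi_\rho}H_\rho 1_{\bi_\rho}\big)\otimes\big(\bar\ggi_d\,\Cent_{\rho,d}\,\bar\ggi_d\big)\ \cong\ \k\otimes X_{\rho,d}\ \cong\ X_{\rho,d},
\]
so that $H_\theta\sim_{\gsM} X_{\rho,d}$ will follow at once from the fullness assertion $H_\theta\,\eps\, H_\theta=H_\theta$.

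The main (and essentially the only) obstacle is this fullness, and it is precisely where the RoCK hypothesis enters in a substantive way. The plan is to verify it fiberwise by means of Lemma~\ref{LMorExtScal}: it suffices to show, for every residue field $\F=\k/\m$, that $\eps_\F L\neq 0$ for every irreducible graded $H_{\theta,\F}$-supermodule $L$. Using the matrix-algebra structure of $H_{\rho,\F}$ via Lemma~\ref{LBiBiRho} one can dispose of the $1_{\bi_\rho}$ factor and reduce to producing, for each such $L$, a Gelfand-Graev datum $(\mu,\bj)\in\EC^\col(d)$ with $1_{\bar\bi_\rho\,\bar\ggw^{\mu,\bj}}L\neq 0$, where Lemma~\ref{L!} is used to pass freely between divided-power idempotents and their word incarnations. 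The existence of such a Gelfand-Graev weight in every irreducible module of a RoCK block is the Gelfand-Graev fragment property for cyclotomic quiver Hecke superalgebras established in \cite{KlLi}; it is also the same input that ultimately underlies Theorem~\ref{TXiIso} and Corollary~\ref{Cor1}. Concatenating the resulting $H_\theta\sim_{\gsM} X_{\rho,d}$ with the three links discussed in the first paragraph then completes the proof.
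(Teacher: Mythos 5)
Your overall architecture (the chain $H_\theta\sim_{\gsM}X_{\rho,d}\sim_{\gsM}\zX_{\rho,d}\sim_{\gsM}\zE_{\rho,d,n}\cong T^{\Zig_\ell}(n,d)$, with the first link an idempotent truncation $X_{\rho,d}\cong eH_\theta e$ for $e=\eps\bar\ggi_d$) matches the paper's, and the last three links are handled correctly. The gap is in your justification of the first link.

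You propose to show fullness of $e$ by verifying, over each residue field $\F$, that $\eps_\F L\neq 0$ for every irreducible $L$ — i.e., that every irreducible of the RoCK block carries a nonzero Gelfand-Graev fragment — and you assert that this ``Gelfand-Graev fragment property'' is established in \cite{KlLi}. That reference does not contain this fact for arbitrary defect. What \cite{KlLi} provides (and what this paper records in Theorem~\ref{TLargeChar} and Lemma~\ref{LProjGen}) is only the version under the hypothesis $\cha\F=0$ or $\cha\F>d$, i.e.\ in the abelian defect regime. Removing that hypothesis is precisely the nontrivial content of the present theorem; if the fragment property were already available in \cite{KlLi} the whole section would be redundant. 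It is also not the input behind Theorem~\ref{TXiIso}/Corollary~\ref{Cor1}: those rest on the $\ggis_{\om_d}$-truncation being $W_d(\Zig_\ell)$, which is a rank computation plus symmetricity, and says nothing about every irreducible surviving the $\ggi_d$-truncation.

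The paper sidesteps the direct verification by an indirect counting argument, and this is the genuinely different (and decisive) step. Having already established $e_\F H_{\theta,\F}e_\F\sim_{\gsM}T^{\Zig_\ell}(n,d)_\F$, it invokes the general principle that an idempotent truncation $e_\F H_{\theta,\F}e_\F$ of a finite-dimensional algebra is Morita superequivalent to $H_{\theta,\F}$ as soon as the two algebras have the same (finite) number of irreducibles. It then matches counts via Lemmas~\ref{LIrrHTheta} and \ref{LIrrT}: both $H_{\theta,\F}$ and $T^{\Zig_\ell}(n,d)_\F$ (for $n\geq d$) have exactly $|\Par^\ell(d)|$ irreducibles. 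This trades a hard direct module-theoretic statement for an already-known enumeration, and only afterwards does the nonvanishing $\eps_\F L\neq 0$ for all $L\in\Irr(H_{\theta,\F})$ become available as a \emph{consequence}. Replace your appeal to the fragment property with this counting step (or prove the fragment property independently — but that is a new argument, not a citation to \cite{KlLi}), and the proof is complete.
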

\begin{proof}
By Lemma~\ref{LIdSymNew}, there exists a bidegree $(0,\0)$ idempotent $\eps\in H_\theta$ such that $\Cent_{\rho,d}\cong\eps H_\theta\eps$. We have further idempotent truncation 
$X_{\rho,d}=\bar\ggi_d\Cent_{\rho,d}\bar \ggi_d$, see (\ref{EGGCent}). Thus $X_{\rho,d}=eH_\theta e$ for the bidegree $(0,\0)$ idempotent $e=\eps \bar\ggi_d=\bar\ggi_d\eps$. 
The regrading $\zX_{\rho,d}$ of $X_{\rho,d}$ defined in (\ref{EReGradingX}) is graded Morita superequivalent to $X_{\rho,d}$, see \S\ref{SSRegr}. Moreover, $\zX_{\rho,d}$ is graded Morita superequivalent to $\zE_{\rho,d,n}$, see Lemma~\ref{LEXMor}. Finally, by Theorem~\ref{TEIsoT}, we have the isomorphism of graded superalgebras $\zE_{\rho,d,n}\cong T^{\Zig_\ell}(n,d)$. So it suffices to prove that $eH_\theta e$ is graded Morita superequivalent to $H_\theta$, or by Lemma~\ref{LMorExtScal} that $e_\F H_{\theta,\F} e_\F$ is graded Morita superequivalent to $H_{\theta,\F}$. 

It is well-known, see for example \cite[Corollary 2.2.15]{KlLi}, that $e_\F H_{\theta,\F} e_\F$ is graded Morita superequivalent to $H_{\theta,\F}$ if $|\Irr(e_\F H_{\theta,\F} e_\F)|= |\Irr(H_{\theta,\F})|<\infty$. By the first paragraph, the algebras $e_\F H_{\theta,\F} e_\F$ and $T^{\Zig_\ell}(n,d)_\F$ are graded Morita superequivalent, so it suffices to prove that 
$|\Irr(H_{\theta,\F})|=|\Irr(T^{\Zig_\ell}(n,d)_\F)|$. But this follows from Lemmas~\ref{LIrrHTheta} and \ref{LIrrT}. 
\end{proof}

We spell out explicitly the graded superequivalence functor 
$\mod{H_\theta}\to\mod{T^{\Zig_\ell}(n,d)}$ coming from Theorem~\ref{TMainText}. Following the proof of the theorem, we set  
$\eps:=\iota_{\rho,d\de}(1_{\bi_\rho}\otimes 1_{d\de})$
as in Lemma~\ref{LIdSymNew}. Then, recalling (\ref{EGad}) and (\ref{EGGIdempotent}), we have the idempotent
$$
e=\eps \bar\ggi_d=\eps\sum_{(\la,\bi)\in\EC^\col(d)}\bar\ggi^{\la,\bi}=\sum_{(\la,\bi)\in\EC^\col(d)}1_{\bi_\rho\ggw^{\la,\bi}}\in H_\theta 
$$
and the functor
$$
\mod{H_\theta}\to\mod{eH_\theta e},\ V\mapsto eV=\bigoplus_{(\la,\bi)\in\EC^\col(d)}1_{\bi_\rho\ggw^{\la,\bi}}V.
$$
Here $1_{\bi_\rho\ggw^{\bla}}$ is the divided power idempotent corresponding to the concatenation of the divided power words $\bi_\rho$ from (\ref{EIRho}) and $\ggw^{\bla}$ from (\ref{EGGBla}). 
By Lemma~\ref{LIdSymNew} and (\ref{EGGCent}), 
$$X_{\rho,d}\cong eH_\theta e\cong\bigoplus_{(\la,\bi),(\mu,\bj)\in\EC^\col(d)}1_{\bi_\rho\ggw^{\mu,\bj}}H_\theta1_{\bi_\rho\ggw^{\la,\bi}}.
$$
Combining with passing from $X_{\rho,d}$ to its regrading 
$$\zX_{\rho,d}\cong \bigoplus_{(\la,\bi),(\mu,\bj)\in\EC^\col(d)}
\funQ^{t_{\la,\bi}-t_{\mu,\bj}}\Uppi^{\eps_{\la,\bi}-\eps_{\mu,\bj}} 
1_{\bi_\rho\ggw^{\mu,\bj}}H_\theta1_{\bi_\rho\ggw^{\la,\bi}}
$$ 
defined in (\ref{EReGradingX}) yields the functor 
$$
\mod{H_\theta}\to\mod{\zX_{\rho,d}},\ V\mapsto \bigoplus_{(\la,\bi)\in\EC^\col(d)}
\funQ^{-t_{\la,\bi}}\Uppi^{-\eps_{\la,\bi}} 
1_{\bi_\rho\ggw^{\la,\bi}}V.
$$
Recalling (\ref{EIdentifyE}), the Gelfand-Graev words  
$\ggw^{\bla}$ from (\ref{EGGBla}) and the isomorphism $\Upphi_{\rho,d,n}$ from Theorem~\ref{TEIsoT}, for $n\geq d$, we have the algebra 
\begin{align*}
T^{\Zig_\ell}(n,d)\cong \zE_{\rho,d,n}&=\bigoplus_{\bla,\bmu\in\Comp^J(n,d)}\bar \ggis^\bmu\zX_{\rho,d}\bar \ggis^\bla
\\
&\cong\bigoplus_{\bla,\bmu\in\Comp^J(n,d)}\funQ^{t_{\bla}-t_{\bmu}}\Uppi^{\eps_{\bla}-\eps_{\bmu}} 
1_{\bi_\rho\ggw^{\bmu}}H_\theta1_{\bi_\rho\ggw^{\bla}},
\end{align*} 
which now gives the functor 
\begin{equation}\label{EFRhoND}
\funF_{\rho,d,n}:\mod{H_\theta}\to\mod{T^{\Zig_\ell}(n,d)},\ V\mapsto \bigoplus_{\bla\in\Comp^J(n,d)}\funQ^{-t_{\bla}}\Uppi^{-\eps_{\bla}}1_{\bi_\rho\ggw^{\bla}} V.
\end{equation}
In view of (\ref{EPhiId}), the direct sum decomposition in the right hand side above is precisely the decomposition of $\funF_{\rho,d,n}(V)$ into the weight spaces $\xi_\bla\funF_{\rho,d,n}(V)$:
\begin{equation}\label{E1Xi}
\funQ^{-t_{\bla}}\Uppi^{-\eps_{\bla}}1_{\bi_\rho\ggw^{\bla}} V
=\xi_\bla\funF_{\rho,d,n}(V)\qquad(\bla\in\Comp^J(n,d)). 
\end{equation}

The proof of Theorem~\ref{TMainText} shows precisely that the functor $\funF_{\rho,d,n}$ is a graded superequivalence. Let ${\mathbb L}=\F$ or $\K$. 
Extending the scalars from $\k$ to ${\mathbb L}$, we get the graded superequivalence 
$$
\funF_{\rho,d,n,{\mathbb L}}:\mod{H_{\theta,{\mathbb L}}}\to\mod{T^{\Zig_\ell}(n,d)_{\mathbb L}},\ V\mapsto \bigoplus_{\bla\in\Comp^J(n,d)}\funQ^{-t_{\bla}}\Uppi^{-\eps_{\bla}}1_{\bi_\rho\ggw^{\bla}} V.
$$
In particular:

\begin{Corollary} \label{CMainField} 
Let ${\mathbb L}=\F$ or $\K$. 
If $n\geq d$ then the graded ${\mathbb L}$-superalgebras $H_{\theta,{\mathbb L}}$ and $T^{\Zig_\ell}(n,d)_{\mathbb L}$ are graded Morita superequivalent. 
\end{Corollary}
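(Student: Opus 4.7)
The plan is to obtain this corollary as a direct base-change consequence of Theorem~\ref{TMainText}. The graded Morita superequivalence $H_\theta\sim_{\gsM} T^{\Zig_\ell}(n,d)$ over $\k$ is built as a composition of four explicit steps: the idempotent truncation $H_\theta \sim_{\gsM} eH_\theta e = X_{\rho,d}$ along the bidegree $(0,\0)$ idempotent $e=\eps\bar\ggi_d\in H_\theta$; the regrading equivalence $X_{\rho,d} \sim_{\gsM} \zX_{\rho,d}$ from \S\ref{SSRegr}; the progenerator equivalence $\zX_{\rho,d} \sim_{\gsM} \zE_{\rho,d,n}$ from Lemma~\ref{LEXMor}; and the isomorphism $\zE_{\rho,d,n} \cong T^{\Zig_\ell}(n,d)$ from Theorem~\ref{TEIsoT}. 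The bimodules realizing each of these equivalences are $\k$-free of finite rank: the first is the direct summand $H_\theta e$ of $H_\theta$, which is $\k$-free by Theorem~\ref{TCyc}(i); the regrading bimodules in \S\ref{SSRegr} are $\k$-free by construction; the progenerator $\Ga_{\rho,d,n}$ is a direct summand of a direct sum of copies of $\zX_{\rho,d}$, which is $\k$-free by Lemma~\ref{LXFree}; the last step is an algebra isomorphism.

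For ${\mathbb L}=\K$ (the field of fractions of the DVR $\k$), the map $\k\to\K$ is flat. Tensoring each structural isomorphism $M\otimes_B N\simeq A$ and $N\otimes_A M\simeq B$ realizing the Morita superequivalence with $\K$ preserves them, yielding $H_{\theta,\K}\sim_{\gsM} T^{\Zig_\ell}(n,d)_\K$ via the base-changed bimodules $\K\otimes_\k M$ and $\K\otimes_\k N$. For ${\mathbb L}=\F=\k/\m$, the first (and only delicate) step base-changes to $H_{\theta,\F}\sim_{\gsM} e_\F H_{\theta,\F} e_\F$ by Lemma~\ref{LMorExtScal}, which is precisely the reduction invoked in the proof of Theorem~\ref{TMainText}; the remaining three steps are either formal (the regrading) or are realized by bimodules and isomorphisms defined over $\k$ on $\k$-free modules, so they base-change to ${\mathbb L}=\F$ without alteration. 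Composing the four base-changed equivalences gives $H_{\theta,\F}\sim_{\gsM} T^{\Zig_\ell}(n,d)_\F$.

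Equivalently and more concretely, the explicit graded superequivalence realizing Theorem~\ref{TMainText} is the functor $\funF_{\rho,d,n}$ of (\ref{EFRhoND}), described in terms of the weight-space idempotents $1_{\bi_\rho\ggw^\bla}$ together with the grading supershifts $\funQ^{-t_\bla}\Uppi^{-\eps_\bla}$. This description makes sense verbatim after extending scalars from $\k$ to any ${\mathbb L}$, producing the functor $\funF_{\rho,d,n,{\mathbb L}}$ discussed before the corollary, which then inherits the superequivalence property. There is no substantive obstacle: the only verification required is $\k$-freeness of all bimodules in the chain, which is built in by construction via Theorem~\ref{TCyc}(i), Lemma~\ref{LXFree}, and the $\k$-free definition of $T^{\Zig_\ell}(n,d)$ inside $S^{\Zig_\ell}(n,d)$.
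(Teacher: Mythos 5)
Your proposal is correct and matches the paper's intent exactly: the corollary is obtained by base-changing the Morita superequivalence of Theorem~\ref{TMainText} along $\k\to\mathbb{L}$, and you simply unfold the four constituent steps and verify each survives extension of scalars. One small imprecision: you cite Lemma~\ref{LMorExtScal} as the tool for base-changing the idempotent-truncation step down to $\F$, but that lemma runs in the \emph{opposite} direction (it lifts equivalences from residue fields up to $\k$); the $\F$-level equivalence $H_{\theta,\F}\sim_{\gsM}e_\F H_{\theta,\F}e_\F$ is instead obtained directly in the proof of Theorem~\ref{TMainText} by comparing $|\Irr|$, and in any case follows from the $\k$-level equivalence via the standard base-change isomorphism $(\mathbb{L}\otimes_\k M)\otimes_{B_\mathbb{L}}(\mathbb{L}\otimes_\k N)\cong\mathbb{L}\otimes_\k(M\otimes_B N)$, which requires no appeal to $\k$-freeness at all.
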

\begin{proof}
Follows from Theorem~\ref{TMainText} upon extending scalars from $\k$ to $\F$.
\end{proof}

Recall the set   
$
\Irr(T^{\Zig_\ell}(n,d)_{\mathbb L})=\{L^{\Zig_\ell}_{n,d}(\bla)_{\mathbb L}\mid\bla\in \Comp^J_+(n,d)\}
$
from (\ref{EIrrT}). If $n\geq d$, we have $\Comp^J_+(n,d)=\Par^J(d)$. Recall the classical Schur algebra weight multiplicities $K_{\bla,\bmu,{\mathbb L}}$ from (\ref{EKBlaBmu}). 

\begin{Corollary} \label{CIrrHTheta}
Let $n\geq d$, and for $\bla\in \Par^J(d)$ denote $L_{\rho,d}(\bla)_{\mathbb L}:=\funF_{\rho,d,n,{\mathbb L}}(L^{\Zig_\ell}_{n,d}(\bla)_{\mathbb L})$. Then:
\begin{enumerate}
\item[{\rm (i)}] We have $\Irr(H_{\theta,{\mathbb L}})=\{L_{\rho,d}(\bla)_{\mathbb L}\mid\bla\in \Par^J(d)\}$.
\item[{\rm (ii)}] For any $\bla\in\Par^J(d)$ and $\bmu\in\Comp^J(n,d)$, we have an isomorphism of graded superspaces $1_{\bi_\rho\ggw^{\bmu}}L_{\rho,d}(\bla)_{\mathbb L}\simeq \xi_\bmu L^{\Zig_\ell}_{n,d}(\bla)_{\mathbb L}$. In particular,
$$\dim 1_{\bi_\rho\ggw^{\bmu}}L_{\rho,d}(\bla)_{\mathbb L}=
K_{\bla,\bmu,{\mathbb L}}.$$
Moreover, for any $\bi\in I^\rho_\di$, we have
$$\dim 1_{\bi\ggw^{\bmu}}L_{\rho,d}(\bla)_{\mathbb L}=K_{\bla,\bmu,{\mathbb L}}\cdot\dim 1_\bi H_{\rho,{\mathbb L}} 1_{\bi_\rho}.$$

\item[{\rm (iii)}] The graded $H_{\theta,{\mathbb L}}$-supermodule $L_{\rho,d}(\bla)_{\mathbb L}$ does not depend on the choice of $n\geq d$.
\end{enumerate}
\end{Corollary}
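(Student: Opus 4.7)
My approach splits along the three parts of the statement. For (i), I would combine Corollary~\ref{CMainField}, which provides the graded Morita superequivalence $\funF_{\rho,d,n,{\mathbb L}}$ between $H_{\theta,{\mathbb L}}$ and $T^{\Zig_\ell}(n,d)_{\mathbb L}$, with the labelling $\Irr(T^{\Zig_\ell}(n,d)_{\mathbb L}) = \{L^{\Zig_\ell}_{n,d}(\bla)_{\mathbb L} \mid \bla \in \Comp^J_+(n,d)\}$ from (\ref{EIrrT}). Since $n \geq d$ forces $\Comp^J_+(n,d) = \Par^J(d)$, the Morita equivalence transports the parametrization across, yielding the desired description of $\Irr(H_{\theta,{\mathbb L}})$.

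For (ii), I would unpack the explicit description (\ref{EFRhoND}) of the functor $\funF_{\rho,d,n,{\mathbb L}}$: applying (\ref{E1Xi}) with $V = L_{\rho,d}(\bla)_{\mathbb L}$ and index $\bmu$ identifies the summand $1_{\bi_\rho\ggw^{\bmu}}L_{\rho,d}(\bla)_{\mathbb L}$ (up to a grading supershift) with the $\bmu$-weight space $\xi_\bmu L^{\Zig_\ell}_{n,d}(\bla)_{\mathbb L}$ of the corresponding generalized Schur algebra irreducible. Taking dimensions and invoking (\ref{EWtMulBMuBLa}) gives $K_{\bla,\bmu,{\mathbb L}}$. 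For the last identity involving $\bi \in I^\rho_\di$, I would apply Lemma~\ref{LBiBiRho} with the divided power word $\bj := \ggw^{\bmu} \in I^{d\de}_\di$ and $V = L_{\rho,d}(\bla)_{\mathbb L}$, which directly yields the factor $\dim 1_{\bi} H_{\rho,{\mathbb L}} 1_{\bi_\rho}$ coming from the graded matrix superalgebra structure on $H_\rho$ (Lemma~\ref{LMatrix}).

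For (iii), I would begin with the observation that the Gelfand-Graev divided power word $\ggw^{\bmu}$ attached to $\bmu \in \Comp^J(d,d)$ is unchanged when $\bmu$ is re-viewed inside $\Comp^J(n,d)$ for $n > d$, because padding by zero compositions contributes no factors to the concatenation in (\ref{EGHatG}). By (ii), the dimensions $\dim 1_{\bi_\rho \ggw^{\bmu}} L_{\rho,d}(\bla)_{\mathbb L}$ for $\bmu \in \Comp^J(d,d)$ therefore depend only on $\bla$, not on the choice of $n \geq d$. The main obstacle I expect is promoting this character-level equality to an actual $H_{\theta,{\mathbb L}}$-module isomorphism; to handle this I would argue via the Morita equivalence for the single choice $n=d$: two irreducibles of $T^{\Zig_\ell}(d,d)_{\mathbb L}$ with identical weight-space dimensions $\dim \xi_{\bmu} \cdot$ for all $\bmu \in \Par^J(d)$ must be isomorphic, thanks to the unitriangularity of the matrix $(K_{\bla,\bmu,{\mathbb L}})$ with respect to the dominance order on multipartitions (a standard highest-weight argument in the spirit of \cite{Green}). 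Transporting this back through $\funF_{\rho,d,d,{\mathbb L}}$ then shows that $L_{\rho,d}(\bla)_{\mathbb L}$ defined using any $n \geq d$ is isomorphic, as an $H_{\theta,{\mathbb L}}$-module, to the one defined using $n = d$.
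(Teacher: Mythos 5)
Your proposal is correct and uses the same references the paper relies on: the graded Morita superequivalence from Corollary~\ref{CMainField} together with (\ref{EIrrT}) for (i), the weight-space identification (\ref{E1Xi}), the formula (\ref{EWtMulBMuBLa}), and Lemma~\ref{LBiBiRho} for (ii), and linear independence of formal characters (equivalently, unitriangularity of $(K_{\bla,\bmu})$) for (iii). Your expansion of (iii) is a bit more explicit than the paper's one-line justification, and in particular your observation that the Gelfand-Graev word $\ggw^\bmu$ is insensitive to zero-padding when one passes from $\Comp^J(d,d)$ to $\Comp^J(n,d)$ is exactly the implicit ingredient needed to make the character-comparison between different values of $n$ legitimate.
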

\begin{proof}
(i) follows from the fact that $\funF_{\rho,d,n,{\mathbb L}}$ is an equivalence, (ii) follows from (\ref{E1Xi}), (\ref{EWtMulBMuBLa}), and Lemma~\ref{LBiBiRho}, (iii) follows from (ii) since the formal characters of the modules $L^{\Zig_\ell}_{n,d}(\bla)_{\mathbb L}$ are linearly independent. 
\end{proof}

From Theorem~\ref{TMainText} and Lemma~\ref{LTTrunc}, we immediately deduce. 

\begin{Corollary} \label{PWreathMorita} 
The graded $\K$-superalgebras $H_{\theta,\K}$ and $W_{d,\K}^{\Zig_\ell}$ are graded Morita superequivalent. If $p>d$ then 
the $H_{\theta,\F}$ and $W_{d,\F}^{\Zig_\ell}$ are also graded Morita superequivalent. 
\end{Corollary}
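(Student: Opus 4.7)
The plan is to chain together two graded Morita superequivalences, one provided by the field version of Theorem~\ref{TMainText} and one provided by the general fact about the truncated generalized Schur algebra. Since graded Morita superequivalence is clearly transitive (composition of tensor-product equivalences), once both links are in place the corollary falls out.

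First, I would fix $n \geq d$ (any such $n$ works, for instance $n = d$) and invoke Corollary~\ref{CMainField}, which is precisely the field-theoretic consequence of Theorem~\ref{TMainText}. It yields, for ${\mathbb L} \in \{\F, \K\}$, a graded Morita superequivalence
\[
H_{\theta,{\mathbb L}} \sim_{\gsM} T^{\Zig_\ell}(n,d)_{\mathbb L}.
\]
This part requires no hypothesis on the characteristic of ${\mathbb L}$ (beyond the standing assumption that $2$ is invertible, which is built into our setup).

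Next, I would apply Lemma~\ref{LTTrunc}, which under the assumption $n \geq d$ asserts that $T^{\Zig_\ell}(n,d)_{\mathbb L}$ is graded Morita superequivalent to $W_{d,{\mathbb L}}^{\Zig_\ell}$ if and only if $\cha {\mathbb L} = 0$ or $\cha {\mathbb L} > d$. For ${\mathbb L} = \K$ we are in characteristic $0$, so this hypothesis is automatic and the equivalence holds unconditionally. For ${\mathbb L} = \F$ with $\cha \F = p$, the hypothesis is exactly $p > d$, which is the assumption of the second half of the statement.

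Finally, composing the two graded Morita superequivalences produced above gives the desired
\[
H_{\theta,\K} \sim_{\gsM} W_{d,\K}^{\Zig_\ell}, \qquad H_{\theta,\F} \sim_{\gsM} W_{d,\F}^{\Zig_\ell} \text{ (when } p > d\text{)}.
\]
There is no real obstacle here: the work has already been done in establishing Theorem~\ref{TMainText} (hence Corollary~\ref{CMainField}) and Lemma~\ref{LTTrunc}, and the corollary is purely a matter of concatenating the two equivalences.
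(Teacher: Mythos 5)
Your proof is correct and follows the paper's argument exactly: the paper also deduces the corollary by composing the graded Morita superequivalence $H_{\theta,{\mathbb L}} \sim_{\gsM} T^{\Zig_\ell}(n,d)_{\mathbb L}$ from Theorem~\ref{TMainText} (via Corollary~\ref{CMainField}) with the equivalence $T^{\Zig_\ell}(n,d)_{\mathbb L} \sim_{\gsM} W_{d,{\mathbb L}}^{\Zig_\ell}$ from Lemma~\ref{LTTrunc}, which is available precisely when $\cha {\mathbb L} = 0$ or $\cha {\mathbb L} > d$.
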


\subsection{Theorem A and variations} We now prove Theorem A from the introduction. 
\label{SSTA}

\begin{Theorem} \label{TAMainBody}
Let $\cha \F=2\ell+1$ and $\Blo^{\rho,d}$ be a RoCK spin block of $\F\tilde\Si_n$. 
\begin{enumerate}
\item[{\rm (i)}] If $|\rho|-h(\rho)+d$ is even, then $\Blo^{\rho,d} \sim_{\sM} T^{\Zig_\ell}(d,d)_\F$ and $\Blo^{\rho,d}_\0 \sim_{\Mor}T^{\Zig_\ell}(d,d)_{\F,\0}$. 
\item[{\rm (ii)}] If $|\rho|-h(\rho)+d$ is odd, then $\Blo^{\rho,d} \sim_{\sM} T^{\Zig_\ell}(d,d)_\F\otimes \cC_1$ and $\Blo^{\rho,d}_\0 \sim_{\Mor}T^{\Zig_\ell}(d,d)_\F$. 
\end{enumerate}
\end{Theorem}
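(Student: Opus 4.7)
The plan is to deduce Theorem~\ref{TAMainBody} by concatenating three Morita superequivalences established earlier in the paper: the reduction of spin blocks to cyclotomic quiver Hecke superalgebras (Theorem~\ref{T5.3.31}), the Morita superequivalence between RoCK cyclotomic blocks and generalized Schur algebras (Corollary~\ref{CMainField}), and the Morita triviality of even-rank Clifford superalgebras~(\ref{ECliffEq}).

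First I would set $\theta := \cont(\rho) + d\delta$ and write $\cont(\rho) = \sum_{i \in I} r_i \alpha_i$, so that $\rho(\theta) = \cont(\rho)$ and $d(\theta) = d$. Theorem~\ref{T5.3.31} gives
\[ \Blo^{\rho,d} \sim_{\sM} H_{\theta,\F} \otimes \cC_{n-r_0}. \]
Since $\Blo^{\rho,d}$ is RoCK, $\theta$ satisfies the Lie-theoretic RoCK condition of \S\ref{SRock} (the equivalence with the combinatorial condition of \cite{KlLi} is recorded there), so Corollary~\ref{CMainField} with $n = d$ gives $H_{\theta,\F} \sim_{\gsM} T^{\Zig_\ell}(d,d)_\F$, and in particular $H_{\theta,\F} \sim_{\sM} T^{\Zig_\ell}(d,d)_\F$. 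Concatenating produces
\[ \Blo^{\rho,d} \sim_{\sM} T^{\Zig_\ell}(d,d)_\F \otimes \cC_{n-r_0}. \]

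Next I would match parities. Since $\height(\delta) = p = 2\ell+1$, we have $n = |\rho| + dp \equiv |\rho| + d \pmod 2$, hence $n - r_0 \equiv |\rho| - r_0 + d \pmod 2$. The standard combinatorial identity $r_0 \equiv h(\rho) \pmod 2$ for every $\bar p$-core $\rho$ (which can be read off the explicit description of $\bar p$-core residue contents in \cite{KlLi}) then yields $n - r_0 \equiv |\rho| - h(\rho) + d = \parity(\Blo^{\rho,d}) \pmod 2$. Applying (\ref{ECliffEq}) we obtain $\cC_{n-r_0} \sim_{\sM} \F$ when the parity is even and $\cC_{n-r_0} \sim_{\sM} \cC_1$ when it is odd, which proves the claims about $\Blo^{\rho,d}$ in (i) and (ii).

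For the even parts, any Morita superequivalence $A \sim_{\sM} B$ restricts to an ordinary Morita equivalence $A_\0 \sim_{\Mor} B_\0$ (by taking even parts of the bimodules implementing the superequivalence), so in case~(i) we immediately get $\Blo^{\rho,d}_\0 \sim_{\Mor} T^{\Zig_\ell}(d,d)_{\F,\0}$. In case~(ii) we obtain $\Blo^{\rho,d}_\0 \sim_{\Mor} (T^{\Zig_\ell}(d,d)_\F \otimes \cC_1)_\0$, and we invoke the standard isomorphism of ungraded $\F$-algebras $(A \otimes \cC_1)_\0 \cong A$ valid for any $\F$-superalgebra $A$ whenever $\F$ contains a square root $i$ of $-1$; explicitly, the map $a_\0 + a_\1 \mapsto a_\0 \otimes 1 + i\, a_\1 \otimes \cc_1$ is an algebra isomorphism thanks to $\cc_1^2 = 1$ and the Koszul sign rule. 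As $\F$ is algebraically closed of odd characteristic, such an $i$ exists. The main obstacle in this argument is essentially nonexistent: all the substantive content has been absorbed into Corollary~\ref{CMainField} and Theorem~\ref{T5.3.31}, and what remains is parity bookkeeping together with routine Clifford-algebra manipulations.
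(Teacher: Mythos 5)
Your proof follows the same three-step strategy as the paper: reduce to the cyclotomic quiver Hecke superalgebra via Theorem~\ref{T5.3.31}, apply Corollary~\ref{CMainField}, and then use (\ref{ECliffEq}) to dispose of the even-rank Clifford factor after the parity bookkeeping. The parity computation and the isomorphism $(A \otimes \cC_1)_\0 \cong A$ (via $a_\0 + a_\1 \mapsto a_\0 \otimes 1 + i\,a_\1 \otimes \cc_1$) are correct.

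However, there is a genuine gap in your treatment of the even parts. You assert that ``any Morita superequivalence $A \sim_{\sM} B$ restricts to an ordinary Morita equivalence $A_\0 \sim_{\Mor} B_\0$ (by taking even parts of the bimodules implementing the superequivalence).'' This is false in general. For a simple counterexample, take $A = \cC_2$ and $B = \k$ over an algebraically closed field of odd characteristic: $\cC_2$ is a graded matrix superalgebra, hence $\cC_2 \sim_{\sM} \k$, but $(\cC_2)_\0 = \k \cdot 1 \oplus \k \cdot \cc_1\cc_2 \cong \k[x]/(x^2+1) \cong \k \times \k$, which has two simple modules and is therefore \emph{not} Morita equivalent to $\k$. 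The failure is that if $M$, $N$ implement the superequivalence, then $M_\0 \otimes_{B_\0} N_\0$ need not be isomorphic to $(M \otimes_B N)_\0 \simeq A_\0$; the even part of a tensor product over $B$ mixes $M_\0 \otimes N_\0$ and $M_\1 \otimes N_\1$, and the balancing must be rethought over $B_\0$. Passing from a Morita superequivalence to a Morita equivalence of even parts requires additional structural information about the implementing bisupermodules (or about the superalgebras themselves, e.g.\ the types of the simple supermodules involved). The paper handles this by invoking \cite[Lemmas 2.2.17, 2.2.20]{KlLi}, which encode precisely the delicate relationship between supermodule categories and module categories over the even subalgebra. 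To close the gap you would need to reproduce (or at least correctly cite) that machinery rather than rely on the false blanket statement.
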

\begin{proof}
Recalling (\ref{EBloRel}), let $\theta=\cont(\rho)+d\de$, so $\Blo^{\rho,d}=\cT_{\theta}$ and  $\cont(\rho)=\rho(\theta)$, $d=d(\theta)$. Write $\cont(\rho)=\sum_{i\in I}r_i\al_i$. 
By Theorem~\ref{T5.3.31}, we have 
$\Blo^{\rho,d}\sim_{\sM} H_{\theta,\F}\otimes \cC_{n-r_0}$. 

Now, using \cite[(22.15)]{Kbook} for the second congruence, we see that 
$$n-r_0=|\rho|+dp-r_0\equiv 
|\rho|+d-r_0 \pmod{2}
\equiv
|\rho|-h(\rho)+d\pmod{2}.$$

By (\ref{ECliffEq}), we have 
$$
\Blo^{\rho,d}\sim_{\sM}
\left\{
\begin{array}{ll}
H_{\theta,\F} &\hbox{if $|\rho|-h(\rho)+d$ is even,}\\
H_{\theta,\F}\otimes \cC_1 &\hbox{if $|\rho|-h(\rho)+d$ is odd.}
\end{array}
\right.
$$
From Corollary~\ref{CMainField}, we now deduce 
$$
\Blo^{\rho,d}\sim_{\sM}
\left\{
\begin{array}{ll}
T^{\Zig_\ell}(d,d)_\F &\hbox{if $|\rho|-h(\rho)+d$ is even,}\\
T^{\Zig_\ell}(d,d)_\F\otimes \cC_1 &\hbox{if $|\rho|-h(\rho)+d$ is odd.}
\end{array}
\right.
$$
The statement for $\Blo^{\rho,d}_\0$ now follows from \cite[Lemmas 2.2.17,\,2.2.20]{KlLi}.
\end{proof}

We have the following $q$-analogue of Theorem~\ref{TAMainBody} for Olshanski's algebra $\cY_\theta(q)$:

\begin{Theorem} \label{TA'}
Let $\mathbb L$ be an algebraically closed field of characteristic different from $2$, the parameter $q\in{\mathbb L}^\times$ be a primitive $(2\ell+1)$st root of unity, and $\theta=\sum_{i\in I}m_i\al_i\in Q_+$ be RoCK. Then
$$
\cY_\theta(q)\sim_{\sM}
\left\{
\begin{array}{ll}
T^{\Zig_\ell}(d,d)_{\mathbb L} &\hbox{if $m_0$ is even,}\\
T^{\Zig_\ell}(d,d)_{\mathbb L}\otimes \cC_1 &\hbox{if $m_0$ is odd.}
\end{array}
\right.
$$
\end{Theorem}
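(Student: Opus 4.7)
The plan is to mirror the proof of Theorem~\ref{TAMainBody}, with Theorem~\ref{TKKT2} replacing Theorem~\ref{T5.3.31} as the bridge between Olshanski's algebra $\cY_\theta(q)$ and the cyclotomic quiver Hecke superalgebra $H_\theta$. The overall scheme is to pass first from $\cY_\theta(q)$ to $H_\theta \otimes \cC_{m_0}$, then from $H_\theta$ to $T^{\Zig_\ell}(d,d)_{\mathbb L}$, and finally to absorb the Clifford factor via (\ref{ECliffEq}).

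First, I would observe that since $q\in{\mathbb L}^\times$ is a primitive $(2\ell+1)$st root of unity, $\cha {\mathbb L}$ cannot divide $2\ell+1$. In particular, the excluded case ``$p$ divides $2\ell+1$ but $p<2\ell+1$'' of Section~\ref{SRoCKSchur} never occurs, so Corollary~\ref{CMainField} is available over ${\mathbb L}$ (after choosing a suitable DVR $\k'$ with ${\mathbb L}$ as its residue field when $\cha{\mathbb L}>0$, or by scalar extension from the characteristic-zero case when $\cha{\mathbb L}=0$). Taking $n=d$ and forgetting the grading gives the Morita superequivalence $H_{\theta,{\mathbb L}} \sim_{\sM} T^{\Zig_\ell}(d,d)_{\mathbb L}$.

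Next, Theorem~\ref{TKKT2} provides an even idempotent $\mathpzc{e}\in\cY_\theta(q)$ together with an isomorphism of superalgebras $\mathpzc{e}\,\cY_\theta(q)\,\mathpzc{e} \cong H_\theta\otimes\cC_{m_0}$ and a Morita superequivalence $\mathpzc{e}\,\cY_\theta(q)\,\mathpzc{e} \sim_{\sM} \cY_\theta(q)$. Composing these with the preceding step yields
$$
\cY_\theta(q) \sim_{\sM} H_\theta\otimes\cC_{m_0} \sim_{\sM} T^{\Zig_\ell}(d,d)_{\mathbb L}\otimes\cC_{m_0}.
$$
The Clifford equivalences (\ref{ECliffEq}) state that $\cC_{m_0}\sim_{\sM}{\mathbb L}$ if $m_0$ is even and $\cC_{m_0}\sim_{\sM}\cC_1$ if $m_0$ is odd, giving the two cases of the theorem.

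The only mild obstacle is justifying that Corollary~\ref{CMainField}, which is formulated over the residue field $\F$ and fraction field $\K$ of a specific DVR $\k$ from Section~\ref{SRoCKSchur}, transfers to the given algebraically closed field ${\mathbb L}$. For $\cha{\mathbb L}=0$ one reduces to the $\K$-case by base change from a characteristic-zero subfield, using that graded Morita superequivalences and the construction $T^{\Zig_\ell}(-,-)$ both commute with base change. For odd $\cha{\mathbb L}=p\nmid 2\ell+1$, one can take $\k'$ to be the ring of Witt vectors $W({\mathbb L})$, whose residue field is ${\mathbb L}$ and whose fraction field has characteristic zero; the excluded hypothesis on $p$ is then vacuous by the first observation. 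Once this setup is in place, the remainder of the argument is a formal chain of Morita superequivalences.
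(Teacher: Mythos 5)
Your proof is correct and takes essentially the same route as the paper: chain the Morita superequivalence $\cY_\theta(q)\sim_{\sM}H_\theta\otimes\cC_{m_0}$ from Theorem~\ref{TKKT2} with $H_\theta\sim_{\sM}T^{\Zig_\ell}(d,d)_{\mathbb L}$ from Corollary~\ref{CMainField}, then absorb the Clifford factor using (\ref{ECliffEq}). The paper's proof is a one-line citation of the same two results, so your version simply supplies the missing bookkeeping; your preliminary observation that the existence of a primitive $(2\ell+1)$st root of unity forces $\cha{\mathbb L}\nmid(2\ell+1)$ (making the exclusion of \S\ref{SRoCKSchur} automatic and allowing a Witt-vector DVR $\k'$) is exactly the right justification for why Corollary~\ref{CMainField} applies over ${\mathbb L}$.
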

\begin{proof}
This follows from Theorem~\ref{TKKT2} and Corollary~\ref{CMainField}. 
\end{proof}

\subsection{Irreducible $\cT_n$-supermodules in RoCK blocks.}
\label{SSGGFragm}
Throughout this subsection we assume that $\cha \F=2\ell+1$. 
Moreover, recall that throughout Section~\ref{SRoCKSchur} we are assuming that $H_\theta$ is a RoCK block with $\rho=\rho(\theta)$ and $d=d(\theta)$. In particular, $\theta=\cont(\rho)+d\de$ and $\Blo^{\rho,d}=\cT_{\theta}$. 

We have the idempotents $\{e(\bi)\mid \bi\in I^\theta\}$ in $\cT_\theta$ defined in \S\ref{SSKKT}. Recall the bijection $\beta:\Irr(H_\theta)\iso \Irr(\cT_\theta)$ from Theorem~\ref{TWtHWtT}
and the set $\Irr(H_{\theta,\F})=\{L_{\rho,d}(\bla)_\F\mid\bla\in \Par^J(d)\}$ from Corollary~\ref{CIrrHTheta}.
From (\ref{EIrrT}) we have the set   
$
\Irr(T^{\Zig_\ell}(d,d)_\F)=\{L^{\Zig_\ell}_{n,d}(\bla)_\F\mid\bla\in \Comp^J_+(d,d)\}
$. Note that $\Comp^J_+(d,d)=\Par^J(d)$.

If 
$|\rho|-h(\rho)+d$ is even then by Theorem~\ref{TAMainBody} we have $\Blo^{\rho,d} \sim_{\sM} T^{\Zig_\ell}(d,d)_\F$, and so to every $\bla\in\Par^J(d)$, we have the irreducible $\Blo^{\rho,d}$-supermodule $D(\bla)$ of type $\Mtype$ corresponding to $L^{\Zig_\ell}_{n,d}(\bla)_\F$ under the Morita superequivalence. 
If 
$|\rho|-h(\rho)+d$ is odd then by Theorem~\ref{TAMainBody} we have $\Blo^{\rho,d} \sim_{\sM} T^{\Zig_\ell}(d,d)_\F\otimes\cC_1$, and so to every $\bla\in\Par^J(d)$, we have the irreducible $\Blo^{\rho,d}$-supermodule $D(\bla)$ of type $\Qtype$ corresponding to $L^{\Zig_\ell}_{n,d}(\bla)_\F\boxtimes U_1$ under the Morita superequivalence.

\begin{Theorem} \label{T190924} 
We have 
$\Irr(\Blo^{\rho,d})=\{D(\bla)\mid \bla\in\Par^J(d)\}.$ 
Moreover, for each $\bla\in\Par^J(d)$, we have:
\begin{enumerate}
\item[{\rm (i)}] $D(\bla)$ is of type of type $\Mtype$ if $|\rho|-h(\rho)+d$ is even, and of type of type $\Qtype$ if $|\rho|-h(\rho)+d$ is odd;
\item[{\rm (ii)}] $D(\bla)=\beta(L_{\rho,d}(\bla)_\F)$. 
\end{enumerate} 
\end{Theorem}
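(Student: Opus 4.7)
The plan is as follows. The opening assertion $\Irr(\Blo^{\rho,d})=\{D(\bla)\mid\bla\in\Par^J(d)\}$ will follow immediately from Theorem~\ref{TAMainBody}, since the Morita superequivalences $\Blo^{\rho,d}\sim_{\sM}T^{\Zig_\ell}(d,d)_\F$ (even parity case) and $\Blo^{\rho,d}\sim_{\sM}T^{\Zig_\ell}(d,d)_\F\otimes\cC_1$ (odd case), together with the parametrization (\ref{EIrrT}) giving $\Irr(T^{\Zig_\ell}(d,d)_\F)=\{L^{\Zig_\ell}_{d,d}(\bla)_\F\mid\bla\in\Par^J(d)\}$ (using $\Comp^J_+(d,d)=\Par^J(d)$) and the fact that $\cC_1$ has a unique irreducible supermodule $U_1$, yield the bijection $\bla\mapsto D(\bla)$. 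Claim (i) is then a tautology from the introductory characterization that ``the even (resp.\ odd) parity blocks are the ones whose irreducible supermodules have type $\Mtype$ (resp.\ type $\Qtype$)'' together with the definition of $\parity(\Blo^{\rho,d})$ as $|\rho|-h(\rho)+d\pmod{2}$.

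For (ii), both $D(\bla)$ and $\beta(L_{\rho,d}(\bla)_\F)$ are irreducible $\cT_\theta$-supermodules, and since distinct irreducible $\cT_\theta$-supermodules have linearly independent formal characters with respect to the Jucys-Murphy weight-space idempotents $\{e(\bi)\mid\bi\in I^\theta\}$ (a standard fact for spin symmetric groups), it will suffice to establish the identity
\[
\dim e(\bi)D(\bla) = 2^{\lceil(n-m_0)/2\rceil}\dim 1_\bi L_{\rho,d}(\bla)_\F \qquad(\bi\in I^\theta),
\]
the right-hand side being equal to $\dim e(\bi)\beta(L_{\rho,d}(\bla)_\F)$ by Theorem~\ref{TWtHWtT}.

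To compute the left-hand side I would trace $D(\bla)$ through the chain
\[
\cT_\theta\,\sim_{\sM}\,\cT_\theta\otimes\cC_n=\cY_\theta\,\sim_{\sM}\,\mathpzc{e}\cY_\theta\mathpzc{e}\,\cong\,H_\theta\otimes\cC_{m_0},
\]
which underlies both Theorem~\ref{TWtHWtT}'s construction of $\beta$ and (via Theorem~\ref{T5.3.31}) Theorem~\ref{TAMainBody}'s construction of $D(\bla)$. Under this chain, $D(\bla)$ corresponds first to $D(\bla)\circledast U_n$, then to $\mathpzc{e}(D(\bla)\circledast U_n)$, and finally---by the very construction of $D(\bla)$---to $L_{\rho,d}(\bla)_\F\boxtimes U_{m_0}$. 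Applying Theorem~\ref{TKKT1}(i) to $V=D(\bla)\circledast U_n$ gives
\[
\dim(e(\bi)\otimes 1)V = 2^{n-m_0}\dim\mathpzc{e}(\bi)V;
\]
expanding both sides using the standard $\circledast$- and $\boxtimes$-dimension formulas from \cite[Lemma 12.2.13]{Kbook} in terms of $\dim e(\bi)D(\bla)$ and $\dim 1_\bi L_{\rho,d}(\bla)_\F$ respectively (introducing factors of $2^{\lceil n/2\rceil}$ and $2^{\lceil m_0/2\rceil}$, occasionally halved when two $\Qtype$ factors meet), and performing the same case analysis on the parities of $n$ and $m_0$ as at the end of the proof of Theorem~\ref{TWtHWtT}, will collapse the overall exponent to $\lceil(n-m_0)/2\rceil$ and yield the required equality.

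The main obstacle I anticipate is the compatibility check in the preceding paragraph: verifying that the Morita superequivalence $\Blo^{\rho,d}\sim_{\sM}H_\theta\otimes\cC_{n-r_0}$ of Theorem~\ref{T5.3.31} is genuinely induced by the same KKT factorization used in the construction of $\beta$, up to a canonical Clifford cancellation (both $\cC_{m_0}$ and $\cC_{n-r_0}$ have the same parity $|\rho|-h(\rho)+d$ and so are Morita equivalent to the same Clifford factor). This is a bookkeeping issue rather than a conceptual one, but it requires careful inspection of the constructions in \cite{KlLi}; once settled, the character identity and hence (ii) will fall out.
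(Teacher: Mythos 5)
Your treatment of the opening claim and of part (i) is exactly the paper's: both drop out of Theorem~\ref{TAMainBody} together with the parametrization (\ref{EIrrT}), and (i) is just the observation that tensoring with $\cC_1$ flips the type of all irreducibles. Where you diverge from the paper is in part (ii). The paper's proof is a bare definition-chase: $D(\bla)$ is by definition the $\cT_\theta$-supermodule matched to $L^{\Zig_\ell}_{d,d}(\bla)_\F$ (possibly $\boxtimes U_1$) under the Morita superequivalence of Theorem~\ref{TAMainBody}, $L_{\rho,d}(\bla)_\F$ is by definition matched to $L^{\Zig_\ell}_{d,d}(\bla)_\F$ under $H_\theta\sim T^{\Zig_\ell}(d,d)$, and $\beta$ is by definition the bijection induced by the KKT chain; so once one grants that Theorem~\ref{T5.3.31}'s Morita superequivalence is the same KKT chain (up to Clifford cancellation), the conclusion $D(\bla)=\beta(L_{\rho,d}(\bla)_\F)$ is immediate. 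You wrap this in a character-comparison argument, but notice the circularity in your middle paragraph: the step ``finally---by the very construction of $D(\bla)$---to $L_{\rho,d}(\bla)_\F\boxtimes U_{m_0}$'' is precisely the compatibility assertion you later flag as the ``main obstacle.'' The actual definition of $D(\bla)$ runs through $\Blo^{\rho,d}\sim_{\sM}H_\theta\otimes\cC_{n-r_0}$ (Theorem~\ref{T5.3.31}), not through the $H_\theta\otimes\cC_{m_0}$ chain used to build $\beta$; identifying the image under the latter chain with $L_{\rho,d}(\bla)_\F\boxtimes U_{m_0}$ is exactly what one must prove. So the character layer does not buy you an independent check---it re-derives the compatibility after having assumed it. Once the compatibility is granted (as it is, by the way Theorem~\ref{T5.3.31} is proved in \cite{KlLi}), the conclusion is already in hand and the character comparison is superfluous; if it is not granted, your character computation cannot get off the ground. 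Your last paragraph correctly isolates the one real ingredient; I would simply drop the character scaffolding and make the compatibility argument directly, which is what the paper's short proof is implicitly doing.
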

\begin{proof}
The first statement and part (i) come from Theorem~\ref{TAMainBody}. Part (ii)comes from the definition of the modules $D(\bla)$, the definition of the bijection $\beta$ in \S\ref{SSGGFragm}, and the definition of the modules $L_{\rho,d}(\bla)_\F$ in Corollary~\ref{CIrrHTheta}.  
\end{proof}

Let $\bmu=(\mu^{(0)},\dots,\mu^{(\ell-1)})\in\Comp^J(n,d)$. Recalling (\ref{EGGBla}),  (\ref{EGGW}) and (\ref{EGG!}), we have the Gelfand-Graev word
$$
\ggw^\bmu=\ggw^{\mu^{(0)}_1,0}\cdots \ggw^{\mu^{(0)}_n,0}\ \cdots\ \ggw^{\mu^{(\ell-1)}_1,\ell-1}\cdots \ggw^{\mu^{(\ell-1)}_n,\ell-1}
$$
with 
$$
\ggw^\bmu!=\prod_{j\in J}\prod_{r=1}^n\ggw^{\mu^{(j)}_r,j}!
=\prod_{j\in J}\prod_{r=1}^n((2\mu^{(j)}_r)!)^{\ell-j}(\mu^{(j)}_r!)^{2j+1}.
$$
Recalling (\ref{EIRho}), we now have 
\begin{equation}\label{EIRhoGGW!}
(\bi_\rho\ggw^\bmu)!=\bi_\rho!\,\ggw^\bmu!=a_1!\cdots a_t!\prod_{j\in J}\prod_{r=1}^n((2\mu^{(j)}_r)!)^{\ell-j}(\mu^{(j)}_r!)^{2j+1}.
\end{equation}

Recalling the notation (\ref{EBiBar}), we have the non-divided power word $\bar\bi_\rho\bar\ggw^\bmu$.

\begin{Theorem} \label{TGGFragment} 
Let $\theta=\sum_{i\in I}m_i\al_i$ be of height $n$ with with $\rho=\rho(\theta)$ and $d=d(\theta)$, and $\bi\in I^\rho$. 
For each $\bla\in\Par^J(d)$ and $\bmu\in\Comp^J(n,d)$, we have  
$$\dim e(\bi\bar\ggw^\bmu)D(\bla)= 2^{\lceil(n-m_0)/2\rceil}\,(\dim 1_\bi H_{\rho,\F}1_{\bi_\rho})\,K_{\bla,\bmu,\F}\prod_{j\in J}\prod_{r=1}^n((2\mu^{(j)}_r)!)^{\ell-j}(\mu^{(j)}_r!)^{2j+1}.$$
In particular, 
\begin{align*}
\dim e(\bar\bi_\rho\bar\ggw^\bmu)D(\bla)&
=a_1!\cdots a_t!\,2^{\lceil(n-m_0)/2\rceil}K_{\bla,\bmu,\F}\,\prod_{j\in J}\prod_{r=1}^n((2\mu^{(j)}_r)!)^{\ell-j}(\mu^{(j)}_r!)^{2j+1}.
\end{align*}
\end{Theorem}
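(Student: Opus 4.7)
The proof will be a short assembly of three ingredients that have already been established in the paper, so there will be no serious obstacle; the task is mainly bookkeeping of divided-power normalizations.

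The plan is to start from Theorem~\ref{T190924}(ii), which identifies $D(\bla) = \beta(L_{\rho,d}(\bla)_\F)$, and then apply Theorem~\ref{TWtHWtT} with the word $\bi\bar\ggw^\bmu \in I^\theta$ (a genuine, non-divided-power word of weight $\theta$, since $\bi\in I^\rho$ and $\bar\ggw^\bmu$ is the non-divided-power form of the Gelfand--Graev word $\ggw^\bmu$). This will give
\[
\dim e(\bi\bar\ggw^\bmu)D(\bla) \;=\; 2^{\lceil (n-m_0)/2\rceil}\,\dim 1_{\bi\bar\ggw^\bmu} L_{\rho,d}(\bla)_\F,
\]
reducing the problem to computing a weight space dimension of $L_{\rho,d}(\bla)_\F$.

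Next, I would apply Lemma~\ref{L!}, the divided-power multiplicativity formula. Since $\bi\in I^\rho$ has no nontrivial divided powers (so $\bar\bi=\bi$ and $\bi!=1$), the concatenated divided-power word $\bi\ggw^\bmu \in I^\theta_\di$ has $\overline{\bi\ggw^\bmu} = \bi\bar\ggw^\bmu$ and $(\bi\ggw^\bmu)! = \ggw^\bmu!$. Thus Lemma~\ref{L!} yields
\[
\dim 1_{\bi\bar\ggw^\bmu} L_{\rho,d}(\bla)_\F \;=\; \ggw^\bmu!\cdot \dim 1_{\bi\ggw^\bmu} L_{\rho,d}(\bla)_\F.
\]
Then Corollary~\ref{CIrrHTheta}(ii) computes the right-hand factor as $K_{\bla,\bmu,\F}\cdot \dim 1_\bi H_{\rho,\F} 1_{\bi_\rho}$, and the explicit formula (\ref{EGG!}) provides
\[
\ggw^\bmu! \;=\; \prod_{j\in J}\prod_{r=1}^n ((2\mu^{(j)}_r)!)^{\ell-j}(\mu^{(j)}_r!)^{2j+1}.
\]
Combining these three identities gives the general formula.

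For the particular case $\bi=\bar\bi_\rho$, the remaining quantity $\dim 1_{\bar\bi_\rho} H_{\rho,\F} 1_{\bi_\rho}$ has to equal $a_1!\cdots a_t! = \bi_\rho!$. This is again a direct application of Lemma~\ref{L!} to the regular $H_{\rho,\F}$-module $H_{\rho,\F}1_{\bi_\rho}$, combined with Lemma~\ref{LMatrix}, which asserts that $1_{\bi_\rho} H_{\rho,\F} 1_{\bi_\rho}$ is one-dimensional. Plugging this in and using formula (\ref{EIRhoGGW!}) for $(\bi_\rho\ggw^\bmu)! = a_1!\cdots a_t!\cdot\ggw^\bmu!$ yields the second displayed identity. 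No step presents a genuine obstacle; the only subtlety is being careful about which words are divided-power and which are not when invoking Lemma~\ref{L!}.
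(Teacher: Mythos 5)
Your proof is correct and uses exactly the same ingredients as the paper's proof (Theorem~\ref{T190924}(ii), Theorem~\ref{TWtHWtT}, Lemma~\ref{L!}, Corollary~\ref{CIrrHTheta}(ii), and the explicit factorial formulas), only in the reverse logical order: you establish the general formula for arbitrary $\bi\in I^\rho$ first and then obtain the $\bar\bi_\rho$-formula as a specialization via Lemma~\ref{L!} and Lemma~\ref{LMatrix}, whereas the paper proves the $\bar\bi_\rho$-formula first and then says the general one follows using Lemma~\ref{LBiBiRho} (which is anyway what underlies the ``Moreover'' part of Corollary~\ref{CIrrHTheta}(ii) that you invoke). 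The bookkeeping of when to switch between divided-power idempotents $1_{\bi\ggw^\bmu}$ and ordinary idempotents $1_{\bi\bar\ggw^\bmu}$, and the observation that $\dim 1_{\bar\bi_\rho}H_{\rho,\F}1_{\bi_\rho}=\bi_\rho!$ coming from Lemma~\ref{LMatrix}, are all handled correctly.
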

\begin{proof}
By Lemma~\ref{L!}, Corollary~\ref{CIrrHTheta}(ii),  and Theorems~\ref{TWtHWtT},\,\ref{T190924}(ii), we have 
\begin{align*}
\dim e(\bar\bi_\rho\bar\ggw^\bmu)D(\bla)&=(\bi_\rho\ggw^\bmu)!\,2^{\lceil(n-m_0)/2\rceil}\dim 1_{\bi_\rho\ggw^\bmu} L_{\rho,d}(\bla)_\F
\\
&=(\bi_\rho\ggw^\bmu)!\,2^{\lceil(n-m_0)/2\rceil}K_{\bla,\bmu,\F}
\\
&=a_1!\cdots a_t!\,2^{\lceil(n-m_0)/2\rceil}K_{\bla,\bmu,\F}\prod_{j\in J}\prod_{r=1}^n((2\mu^{(j)}_r)!)^{\ell-j}(\mu^{(j)}_r!)^{2j+1}.
\end{align*}
The first formula follows from the second one using Lemmas~\ref{LBiBiRho} and \ref{L!}.
\end{proof}

\begin{Remark} 
{\rm 
Note that $\dim 1_\bi H_{\rho,\F}1_{\bi_\rho}$ appearing in the theorem is easy to compute. Indeed, 
$\dim 1_\bi H_{\rho,\F}1_{\bi_\rho}=a_1!\cdots a_t!\dim 1_\bi H_{\rho,\F}1_{\bar\bi_\rho}$ thanks to \cite[Lemma 3.2.4]{KlLi}, and $\dim 1_\bi H_{\rho,\F}1_{\bar\bi_\rho}$ is computed  in terms of standard tableaux using \cite[Theorem 3.1.31]{KlLi}. 
}
\end{Remark}

\subsection{Remarks on $A_\ell$ vs. $\Zig_\ell$}
\label{SSRegrRem}
Throughout this subsection we work over ${\mathbb L}=\F$ or $\K$ (and  drop ${\mathbb L}$ from all the indices). 
Let $H_\theta$ be a RoCK block (of cyclotomic quiver Hecke superalgebras) with $d=d(\theta)$.

Let $A_\ell$ be the regrading of $\Zig_\ell$ defined in (\ref{EAEll}). (Note that, annoyingly, $A_\ell$ was denoted $\Zig_\ell$ in \cite{KlLi}). It is a somewhat non-trivial fact which follows from \cite[Proposition 3.2.28]{KIS} that the wreath superproduct $W_d^{A_\ell}$ is isomorphic to a regrading of the wreath superproduct $W_d^{\Zig_\ell}$. 

One of the main results of \cite{KlLi} is that  
the graded ${\mathbb L}$-superalgebras $H_{\theta}$ and $W_{d}^{A_\ell}$ are graded Morita superequivalent as long as ${\mathbb L}=\K$ or ${\mathbb L}=\F$ and $p>d$. Since a graded superalgebra is graded Morita superequivalent to its regrading by \S\ref{SSRegr}, this result also follows immediately from  Corollary~\ref{PWreathMorita} and the isomorphism mentioned in the previous paragraph. 

It has been conjectured in \cite[Conjecture 3]{KlLi} that in general  $H_{\theta}$ and $T^{A_\ell}(d,d)$ are graded Morita superequivalent. In Corollary~\ref{CMainField}, we prove instead that $H_{\theta}$ and $T^{\Zig_\ell}(d,d)$ are graded Morita superequivalent. In view of this result, Conjecture 3 of \cite{KlLi} is in fact {\em false}, since $T^{A_\ell}(d,d)$ and $T^{\Zig_\ell}(d,d)$ are {\em not} in general graded Morita superequivalent (and not even Morita equivalent). The last statement follows from the fact that the irreducible modules of $T^{A_\ell}(d,d)$ and $T^{\Zig_\ell}(d,d)$ have the same dimensions (being inflations of the irreducible modules over the common quotient algebra appearing in the right hand side of (\ref{EDegZero})), while the dimensions of the algebras $T^{A_\ell}(d,d)$ and $T^{\Zig_\ell}(d,d)$ are in general different, so these algebras in general have different Cartan invariants.

Thus, while for the case where $p>d$ and $p=0$ (`abelian defect case', see \cite{KlLi}) one can get away without regrading, for the general case regrading is crucial.

\end{document}